\definecolor{shadecolor}{gray}{0.875}
\newtheorem{thrm}{Theorem}[section]
\newtheorem{lem}[thrm]{Lemma}
\newtheorem{cor}[thrm]{Corollary}
\newtheorem{prop}[thrm]{Proposition}
\theoremstyle{definition}
\newtheorem{defn}[thrm]{Definition}
\newtheorem{exmple}[thrm]{Example}
\newtheorem{rmk}[thrm]{Remark}
\newtheorem{ques}[thrm]{Question}
\DeclareMathOperator{\Eff}{\overline{Eff}}
\DeclareMathOperator{\Nef}{Nef}
\DeclareMathOperator{\Mov}{Mov}
\DeclareMathOperator{\Supp}{Supp}
\DeclareMathOperator{\vol}{vol}
\DeclareMathOperator{\CI}{CI}
\DeclareMathOperator{\mult}{mult}
\DeclareMathOperator{\Amp}{Amp}
\DeclareMathOperator{\Cvx}{Cvx}
\DeclareMathOperator{\HConc}{HConc}
\DeclareMathOperator{\BC}{BC}
\title{\textbf{Convexity and Zariski decomposition structure}}
\author{{Brian Lehmann and Jian Xiao}}
\date{}
\begin{document}

\begin{abstract}
This is the first part of our work on Zariski decomposition structures, where we study Zariski decompositions using Legendre-Fenchel type transforms.  In this way we define a Zariski decomposition for curve classes.  This decomposition enables us to develop the theory of the volume function for curves defined by the second named author, yielding some fundamental positivity results for curve classes.  For varieties with special structures, the Zariski decomposition for curve classes admits an interesting geometric interpretation.
\end{abstract}

\maketitle
%\tableofcontents

%\begin{shaded}
%We need add some discussion on $\sigma$-decompositions, in order to illustrate that it can also fit well with the formal results, e.g. cite results of the second part, or put the results on movable divisor classes in this part.
%\end{shaded}

\section{Introduction}
In \cite{zariski62} Zariski introduced a fundamental tool for studying linear series on a surface now known as a Zariski decomposition.  Over the past 50 years the Zariski decomposition and its generalizations to divisors in higher dimensions have played a central role in birational geometry.  In this paper we apply abstract convex analysis to the study of Zariski decompositions.  The key perspective is that a Zariski decomposition captures the failure of strict log concavity of a volume function, and thus can be studied using Legendre-Fenchel type transforms.  Surprisingly,  such transforms capture rich geometric information about the variety, a posteriori motivating many well-known geometric inequalities for pseudo-effective divisors.  %In this way we define a Zariski decomposition for curve classes generalizing Zariski's original construction

There are two natural dualities for cones of divisors and curves: the nef cone of divisors $\Nef^{1}(X)$ is dual to the pseudo-effective cone of curves $\Eff_{1}(X)$ and the pseudo-effective cone of divisors $\Eff^{1}(X)$ is dual to the movable cone of curves $\Mov_{1}(X)$.  In this paper we study the first duality, obtaining a Zariski decomposition for curve classes on varieties of arbitrary dimension which generalizes Zariski's original construction.  In the sequel \cite{lehmannxiao2015b}, we will focus on the second duality and study $\sigma$-decompositions from the perspective of convex analysis.

Throughout we work over $\mathbb{C}$, but the main results also hold over an algebraically closed field or in the K\"ahler setting (see Section \ref{outlinesec}).

\subsection{Zariski decomposition}
We define a Zariski decomposition for big curve classes -- elements of the interior of the pseudo-effective cone of curves $\Eff_{1}(X)$.

\begin{defn}
\label{def zariski decomposition}
Let $X$ be a projective variety of dimension $n$ and let $\alpha \in \Eff_{1}(X)^{\circ}$ be a big curve class. Then a Zariski decomposition for $\alpha$ is a decomposition
\begin{align*}
\alpha = B^{n-1} + \gamma
\end{align*}
where $B$ is a big and nef $\mathbb{R}$-Cartier divisor class, $\gamma$ is pseudo-effective, and $B \cdot \gamma = 0$.  We call $B^{n-1}$ the ``positive part'' and $\gamma$ the ``negative part" of the decomposition.
\end{defn}

This definition directly generalizes Zariski's original definition, which (for big classes) is given by similar intersection criteria.  As we will see shortly in Section \ref{zardecomandconvexity}, it also mirrors the $\sigma$-decomposition of \cite{Nak04} and the Zariski decomposition of \cite{fl14}.  Our first theorem is:

\begin{thrm}
\label{thm decomposition mainthrm}
Let $X$ be a projective variety of dimension $n$ and let $\alpha \in \Eff_{1}(X)^{\circ}$ be a big curve class. Then $\alpha$ admits a unique Zariski decomposition $\alpha = B^{n-1} + \gamma$.
\end{thrm}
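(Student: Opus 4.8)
The plan is to identify the Zariski decomposition of $\alpha$ with the solution of a convex optimization problem, namely a Legendre--Fenchel type transform of the volume function $\vol(D)=D^{n}$ on $\Nef^{1}(X)$. Concretely, I would consider
\[
\inf\bigl\{\, D\cdot\alpha \ :\ D\in\Nef^{1}(X),\ D^{n}\ge 1 \,\bigr\}.
\]
First I would show that this infimum is attained: since $\alpha\in\Eff_{1}(X)^{\circ}$ and $\Nef^{1}(X)$ is dual to $\Eff_{1}(X)$, there is $c>0$ with $D\cdot\alpha\ge c\|D\|$ for all $D\in\Nef^{1}(X)$, so every sublevel set of the objective meets the (closed) feasible region in a compact set; let $B_{0}$ be a minimizer. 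Replacing $B_{0}$ by $(1-\epsilon)B_{0}$ for small $\epsilon>0$ shows that in fact $B_{0}^{n}=1$, so $B_{0}$ is big and nef.

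Next I would read off the decomposition from first-order optimality at $B_{0}$. The feasible set is $\Nef^{1}(X)\cap\{D:D^{n}\ge 1\}$, and since a large multiple of an ample class lies in the interior of both pieces, the normal cone of the intersection at $B_{0}$ splits as $N_{\Nef^{1}(X)}(B_{0})+\mathbb{R}_{\ge 0}\cdot(-B_{0}^{n-1})$, using that $\nabla (D^{n})^{1/n}|_{B_{0}}=B_{0}^{n-1}$ because $B_{0}^{n}=1$. As $\Nef^{1}(X)$ is a cone dual to $\Eff_{1}(X)$, one has $N_{\Nef^{1}(X)}(B_{0})=\bigl(-\Eff_{1}(X)\bigr)\cap B_{0}^{\perp}$. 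Optimality, $-\alpha\in N_{\Nef^{1}(X)}(B_{0})+\mathbb{R}_{\ge 0}(-B_{0}^{n-1})$, therefore reads
\[
\alpha=\gamma+\lambda\,B_{0}^{n-1},\qquad \gamma\in\Eff_{1}(X),\quad B_{0}\cdot\gamma=0,\quad \lambda\ge 0 .
\]
Since $\alpha$ is big we have $\alpha\cdot B_{0}>0$, which forces $\lambda>0$; then $B:=\lambda^{1/(n-1)}B_{0}$ is big and nef, $\gamma$ is pseudo-effective, $B\cdot\gamma=0$, and $\alpha=B^{n-1}+\gamma$. This gives existence.

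For uniqueness, let $\alpha=B^{n-1}+\gamma$ be any Zariski decomposition and put $D_{0}=B/(B^{n})^{1/n}$, so $D_{0}^{n}=1$ and $D_{0}\cdot\alpha=(B^{n})^{(n-1)/n}$ (using $B\cdot\gamma=0$). For every feasible $D$, the Khovanskii--Teissier inequality $B^{n-1}\cdot D\ge (B^{n})^{(n-1)/n}(D^{n})^{1/n}$ and $\gamma\in\Eff_{1}(X)$ give $D\cdot\alpha=D\cdot B^{n-1}+D\cdot\gamma\ge (B^{n})^{(n-1)/n}=D_{0}\cdot\alpha$, so $D_{0}$ is a minimizer of the problem above. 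But the minimizer is unique: any two non-proportional minimizers would have midpoint $M$ with $M^{n}>1$ by the equality case of Khovanskii--Teissier, contradicting $M^{n}=1$ (which every minimizer satisfies). Hence $D_{0}=B_{0}$, so $B=(\alpha\cdot B_{0})^{1/(n-1)}B_{0}$ and $\gamma=\alpha-B^{n-1}$ are determined by $\alpha$. The hard part is the second paragraph: correctly identifying the normal cone of the constraint set at $B_{0}$ --- in particular that $N_{\Nef^{1}(X)}(B_{0})=(-\Eff_{1}(X))\cap B_{0}^{\perp}$, checking the constraint qualification that permits the splitting, and arguing $\lambda\neq 0$ so that the $(n-1)$-st root defining the positive part is legitimate. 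This is precisely the step where the failure of strict log concavity of $\vol$ along the face of $\Nef^{1}(X)$ through $B_{0}$ is converted into the pseudo-effective negative part $\gamma$; the remaining ingredients are routine convex analysis and the classical Khovanskii--Teissier inequalities for nef classes.
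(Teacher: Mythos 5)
Your strategy is essentially the same as the paper's: the paper proves this statement (Theorem~\ref{thrm:existenceofzardecom}) by first developing an abstract framework of polar transforms and ``formal Zariski decompositions'' (Sections~\ref{legendresection}--\ref{formal zariski section}) and then specializing Theorem~\ref{strong zariski equivalence} to $f=\vol$ on $\Nef^1(X)$, whereas you run the same optimization problem and first-order analysis directly on $N^1(X)$. The optimization is identical (up to homogenizing the constraint), the existence-of-minimizer argument is the same compactness argument, and the uniqueness argument via Teissier proportionality (log concavity with equality characterization) is exactly what the paper uses.

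The one place you should be more careful is the normal-cone step, which you rightly flag as the crux. The set $\{D\in N^1(X): D^n\ge 1\}$ is \emph{not} convex in $N^1(X)$ (it is a superlevel set of a degree-$n$ polynomial, typically a hyperboloid-type region), so the intersection rule $N_{C_1\cap C_2}(B_0)=N_{C_1}(B_0)+N_{C_2}(B_0)$ is not directly applicable with $C_2=\{D^n\ge 1\}$. The feasible set is convex only because $(D^n)^{1/n}$ is concave \emph{on the nef cone}, and $B_0$ can sit on the boundary of $\Nef^1(X)$, where the superdifferential of the extended concave function $h(D)=(D^n)^{1/n}$ (with $h=-\infty$ off $\Nef^1(X)$) is larger than a single gradient. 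The end result you want, $N_{\{h\ge 1\}}(B_0)=N_{\Nef^1(X)}(B_0)+\mathbb{R}_{\ge 0}(-B_0^{n-1})$, is in fact true, but the clean way to prove the needed inclusion is what the paper does in Theorem~\ref{strong zariski equivalence}: for any ample $x$ and small $t>0$, the rescaled class $(B_0+tx)/((B_0+tx)^n)^{1/n}$ is feasible, so $\tfrac{d}{dt}\big|_{t=0^+}\tfrac{(B_0+tx)\cdot\alpha}{((B_0+tx)^n)^{1/n}}\ge 0$, which (using $B_0^n=1$) reads $x\cdot\alpha\ge (\alpha\cdot B_0)\,B_0^{n-1}\cdot x$; letting $x$ range over the ample cone gives directly that $\gamma:=\alpha-(\alpha\cdot B_0)B_0^{n-1}$ is pseudo-effective. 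This sidesteps the normal-cone bookkeeping entirely and is the substance of the paper's argument. Everything else in your proof---existence, $B_0^n=1$, $\lambda>0$, and uniqueness via the equality case of Khovanskii--Teissier---is correct and matches the paper.
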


%The goal of this paper is to develop the theory of Zariski decompositions of curves and the theory of $\widehat{\vol}$.  Due to their close relationship, we will see that is very fruitful to develop the two theories in parallel.

\begin{exmple}
\label{example zariski surface}
If $X$ is an algebraic surface, then the Zariski decomposition provided by Theorem \ref{thm decomposition mainthrm} coincides (for big classes) with the numerical version of the classical definition of \cite{zariski62}.  Indeed, using Proposition \ref{negpartrigid} one sees that the negative part $\gamma$ is represented by an effective curve $N$.  The self-intersection matrix of $N$ must be negative-definite by the Hodge Index Theorem.  (See e.g.~\cite{Nak04} for another perspective focusing on the volume function.)
\end{exmple}

\subsection{Convexity and Zariski decompositions} \label{zardecomandconvexity}
According to the philosophy of \cite{fl14}, the key property of the Zariski decomposition (or $\sigma$-decomposition for divisors) is that it captures the failure of the volume function to be strictly log-concave.  The Zariski decomposition for curves plays a similar role for the following interesting  volume-type function defined in \cite{xiao15}.

\begin{defn} (see \cite[Definition 1.1]{xiao15}) \label{defn:widehatvol}
Let $X$ be a projective variety of dimension $n$ and let $\alpha \in \Eff_{1}(X)$ be a pseudo-effective curve class.  Then the volume of $\alpha$ is defined to be

\begin{align*}
\widehat{\vol}(\alpha) = \inf_{A \textrm{ big and nef divisor class}} \left( \frac{A \cdot \alpha}{\vol(A)^{1/n}} \right)^{\frac{n}{n-1}}.
\end{align*}
We say that a big and nef divisor class $A$ computes $\widehat{\vol}(\alpha)$ if this infimum is achieved by $A$.  When $\alpha$ is a curve class that is not pseudo-effective, we set $\widehat{\vol}(\alpha)=0$.
\end{defn}

The function $\widehat{\vol}$ is a polar transformation of the volume function for ample divisors.  In our setting, the polar transformation plays the role of the Legendre-Fenchel transform of classical convex analysis, linking the differentiability of a function to the strict convexity of its transform.  From this viewpoint, Definition \ref{def zariski decomposition} is important precisely because it captures the log concavity of $\widehat{\vol}$.

\begin{thrm}
\label{thm intro strict concavity M hatvol}
Let $X$ be a smooth projective variety of dimension $n$. Let $\alpha_1, \alpha_2\in \Eff_1 (X)$ be two big curve classes.
Then $$\widehat{\vol}(\alpha_1+ \alpha_2)^{n-1/n}\geq \widehat{\vol}(\alpha_1)^{n-1/n}+ \widehat{\vol}(\alpha_2)^{n-1/n}$$ with equality if and only if the positive parts in the Zariski decompositions of $\alpha_1$ and $\alpha_2$ are proportional.
\end{thrm}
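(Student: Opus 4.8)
The plan is to pass to the function $f(\alpha):=\widehat{\vol}(\alpha)^{(n-1)/n}$, which by Definition \ref{defn:widehatvol} equals $\inf_{A}\frac{A\cdot\alpha}{\vol(A)^{1/n}}$, the infimum running over all big and nef divisor classes $A$. In these terms the inequality to be proved is the superadditivity $f(\alpha_1+\alpha_2)\geq f(\alpha_1)+f(\alpha_2)$, and this is immediate: for every fixed big and nef $A$ we have $\frac{A\cdot(\alpha_1+\alpha_2)}{\vol(A)^{1/n}}=\frac{A\cdot\alpha_1}{\vol(A)^{1/n}}+\frac{A\cdot\alpha_2}{\vol(A)^{1/n}}\geq f(\alpha_1)+f(\alpha_2)$, and one takes the infimum over $A$. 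The real content is the equality characterization, and for that I would use the Zariski decomposition of Theorem \ref{thm decomposition mainthrm} as a bridge, together with the Khovanskii--Teissier inequality for big and nef divisor classes.

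The first ingredient is a structural lemma tying $\widehat{\vol}$ to the positive part: if $\alpha=B^{n-1}+\gamma$ is the Zariski decomposition of a big curve class, then $\widehat{\vol}(\alpha)=(B^{n})$, the class $B$ realizes the infimum defining $\widehat{\vol}(\alpha)$, and, conversely, any big and nef $A$ that realizes this infimum is proportional to $B$. To prove it: for any big and nef $A$, nefness of $A$ and pseudo-effectivity of $\gamma$ give $A\cdot\alpha=A\cdot B^{n-1}+A\cdot\gamma\geq A\cdot B^{n-1}$, while the Khovanskii--Teissier inequality applied to $A$ and $B$ gives $A\cdot B^{n-1}\geq\vol(A)^{1/n}\vol(B)^{(n-1)/n}$, so that $\frac{A\cdot\alpha}{\vol(A)^{1/n}}\geq\vol(B)^{(n-1)/n}$ for every $A$; taking $A=B$ and using $B\cdot\gamma=0$ shows this bound is attained, whence $f(\alpha)=\vol(B)^{(n-1)/n}$ and $B$ computes $\widehat{\vol}(\alpha)$. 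If some other big and nef $A$ also computes it, both of the above inequalities must be equalities, so $A\cdot\gamma=0$ and $A\cdot B^{n-1}=\vol(A)^{1/n}\vol(B)^{(n-1)/n}$; the equality case of the Khovanskii--Teissier inequality on a smooth projective variety then forces $A$ proportional to $B$. This is the only place the smoothness hypothesis is used.

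Granting the lemma, here is how I would settle the equality case. For the forward implication, assume $f(\alpha_1+\alpha_2)=f(\alpha_1)+f(\alpha_2)$ and let $\alpha_1+\alpha_2=B^{n-1}+\gamma$ be its Zariski decomposition ($\alpha_1+\alpha_2$ is big). Since $B$ computes $\widehat{\vol}(\alpha_1+\alpha_2)$,
\[
f(\alpha_1)+f(\alpha_2)=f(\alpha_1+\alpha_2)=\frac{B\cdot(\alpha_1+\alpha_2)}{\vol(B)^{1/n}}=\frac{B\cdot\alpha_1}{\vol(B)^{1/n}}+\frac{B\cdot\alpha_2}{\vol(B)^{1/n}}\geq f(\alpha_1)+f(\alpha_2),
\]
so $\frac{B\cdot\alpha_i}{\vol(B)^{1/n}}=f(\alpha_i)$ for $i=1,2$, i.e.\ $B$ computes $\widehat{\vol}(\alpha_i)$ for each $i$; by the lemma $B$ is proportional to the positive part of $\alpha_i$, hence the positive parts of $\alpha_1$ and $\alpha_2$ are proportional to each other. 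For the reverse implication, write $\alpha_i=B_i^{n-1}+\gamma_i$ and suppose $B_2=\lambda B_1$ with $\lambda>0$ (the constant is positive since both $B_i$ are big). From $B_2\cdot\gamma_2=0$ we get $B_1\cdot\gamma_2=0$, so putting $B:=(1+\lambda^{n-1})^{1/(n-1)}B_1$ and $\gamma:=\gamma_1+\gamma_2$ one checks directly that $\alpha_1+\alpha_2=B^{n-1}+\gamma$ with $B$ big and nef, $\gamma$ pseudo-effective, and $B\cdot\gamma=0$; by uniqueness in Theorem \ref{thm decomposition mainthrm} this is the Zariski decomposition of $\alpha_1+\alpha_2$. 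The lemma then gives $\widehat{\vol}(\alpha_1+\alpha_2)=(B^n)=(1+\lambda^{n-1})^{n/(n-1)}\vol(B_1)$, so $f(\alpha_1+\alpha_2)=(1+\lambda^{n-1})\vol(B_1)^{(n-1)/n}=\vol(B_1)^{(n-1)/n}+\vol(\lambda B_1)^{(n-1)/n}=f(\alpha_1)+f(\alpha_2)$, as wanted.

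The main obstacle is the converse half of the structural lemma, namely characterizing exactly which big and nef classes compute $\widehat{\vol}(\alpha)$; this rests on the equality case of the Khovanskii--Teissier inequality $A\cdot B^{n-1}\geq(A^n)^{1/n}(B^n)^{(n-1)/n}$ for big and nef divisor classes, which is delicate and is precisely why the theorem is stated for smooth $X$. (The same input shows that proportionality of the positive parts as curve classes $B_i^{n-1}$ is equivalent to proportionality of the divisor classes $B_i$, reconciling the two possible readings of the statement.) Everything else is straightforward bookkeeping, requiring only mild care in degenerate situations such as $\gamma_i=0$.
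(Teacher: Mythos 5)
Your proof is correct, and it takes a genuinely different route from the paper's. The paper derives this theorem (stated as Theorem \ref{volcurvesconcavity}) as a consequence of the abstract convex-analysis framework developed in Sections \ref{legendresection} and \ref{formal zariski section}: the inequality and the forward implication of the equality case are obtained by combining the $+$-differentiability of $\vol$ on $\Nef^1(X)$ with the formal strong-Zariski-decomposition machinery of Theorem \ref{strong zariski equivalence}, and the reverse implication is then dispatched by the same one-line observation you make at the end. Your argument instead bypasses the polar-transform apparatus entirely: you take the existence and uniqueness of the Zariski decomposition (Theorem \ref{thm decomposition mainthrm}) as given and re-derive, from scratch, the key structural facts from Theorem \ref{thrm:existenceofzardecom} -- that the positive part $B$ computes $\widehat{\vol}(\alpha)$, that $\widehat{\vol}(\alpha)=B^n$, and that any big and nef class computing the infimum is proportional to $B$ -- using only nefness of $B$, pseudo-effectivity of $\gamma$, the Khovanskii--Teissier inequality, and Teissier proportionality. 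The forward equality implication then follows exactly as in the paper by observing that the positive part of the sum must compute both summands, and the converse is a direct verification using uniqueness of the decomposition. What the paper's route buys is generality (the same machine handles many other statements, including differentiability and the Morse inequality); what yours buys is a self-contained, elementary proof readable independently of Sections \ref{legendresection}--\ref{formal zariski section}. One small remark: you claim smoothness is used for Teissier proportionality, but the paper states (and uses) the proportionality theorem for arbitrary projective varieties, so the smoothness hypothesis in the theorem statement is not actually essential to your argument either.
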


As an important special case, the positive part of a curve class has the same volume as the original class, showing the similarity with the $\sigma$-decomposition.  Furthermore, just in Zariski's classical work, the ``projection'' onto the positive part elucidates the intersection-theoretic nature of the volume.

\begin{thrm}
% \label{thm decomposition mainthrm}
Let $X$ be a projective variety of dimension $n$ and let $\alpha \in \Eff_{1}(X)^{\circ}$ be a big curve class. Suppose that $\alpha = B^{n-1} + \gamma$ is the Zariski decomposition of $\alpha$.  Then
\begin{equation*}
\widehat{\vol}(\alpha) = \widehat{\vol}(B^{n-1}) = B^{n}
\end{equation*}
and $B$ is the unique big and nef divisor class with this property satisfying $B^{n-1} \preceq \alpha$.
\end{thrm}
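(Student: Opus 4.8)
The goal is to prove three things about the Zariski decomposition $\alpha = B^{n-1} + \gamma$: (1) $\widehat{\vol}(B^{n-1}) = B^n$; (2) $\widehat{\vol}(\alpha) = \widehat{\vol}(B^{n-1})$; (3) $B$ is the unique big and nef class with $\widehat{\vol}(B^{n-1}) = B^n$ and $B^{n-1} \preceq \alpha$. Let me think through each piece.

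For (1): I'd start from the definition $\widehat{\vol}(B^{n-1}) = \inf_A (A \cdot B^{n-1} / \vol(A)^{1/n})^{n/(n-1)}$. Taking $A = B$ gives $(B^n / (B^n)^{1/n})^{n/(n-1)} = (B^n)^{(n-1)/n \cdot n/(n-1)} = B^n$, so $\widehat{\vol}(B^{n-1}) \le B^n$. For the reverse inequality, I need $A \cdot B^{n-1} \ge \vol(A)^{1/n} (B^n)^{(n-1)/n}$ for every big and nef $A$. Since $A$ is nef, $\vol(A) = A^n$, so this is exactly the Khovanskii–Teissier (Hodge-type) inequality $A \cdot B^{n-1} \ge (A^n)^{1/n}(B^n)^{(n-1)/n}$ for nef classes. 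This is standard, so (1) follows, and moreover $A = B$ computes the volume.

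For (2): The inequality $\widehat{\vol}(\alpha) \le \widehat{\vol}(B^{n-1})$ is immediate from the two facts that $B^{n-1} \preceq \alpha$ (so $A \cdot B^{n-1} \le A \cdot \alpha$ for nef $A$) hence $\widehat{\vol}$ is monotone. Wait — I should double check monotonicity of $\widehat{\vol}$ under $\preceq$: if $\beta - \alpha$ is pseudo-effective then $A\cdot\alpha \le A\cdot\beta$ for all nef $A$, so each term in the infimum for $\alpha$ is $\le$ the corresponding one for $\beta$, giving $\widehat{\vol}(\alpha)\le\widehat{\vol}(\beta)$. So $\widehat{\vol}(B^{n-1}) \le \widehat{\vol}(\alpha)$ since $B^{n-1} \preceq \alpha$. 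For the reverse, I want $\widehat{\vol}(\alpha) \ge B^n$. The natural move: show that $B$ itself computes $\widehat{\vol}(\alpha)$, i.e. $A \cdot \alpha / \vol(A)^{1/n} \ge B \cdot \alpha / (B^n)^{1/n}$ for all big nef $A$, and then compute $B\cdot\alpha = B\cdot(B^{n-1}+\gamma) = B^n + B\cdot\gamma = B^n$ using the orthogonality $B\cdot\gamma = 0$. So really everything reduces to showing $B$ computes $\widehat{\vol}(\alpha)$. This is where I expect the main work and the main obstacle: I would try to characterize the divisor computing $\widehat{\vol}$ via a first-order/variational condition (differentiating the volume function of ample divisors, whose derivative is the "positive intersection product" $\langle A^{n-1}\rangle$ in the direction being $n$ times that), and show that the Zariski decomposition's orthogonality $B \cdot \gamma = 0$ together with $\alpha = B^{n-1}+\gamma$ is exactly the stationarity condition. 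Concretely, at a minimizer $A$ one expects $\alpha$ to be proportional to the derivative of $\vol$ at $A$, which for nef $A$ is $n A^{n-1}$; matching this against $\alpha = B^{n-1} + \gamma$ with $\gamma \ne 0$ in general forces care — likely one uses that the infimum defining $\widehat{\vol}$ can be restricted to the nef boundary behavior and that $B$ satisfies the Euler–Lagrange condition because $\gamma$ is orthogonal to $B$ and pseudo-effective (so it only helps the inequality). I'd also invoke the concavity/homogeneity already available from Theorem~\ref{thm intro strict concavity M hatvol} and the equality case there.

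For (3): uniqueness. Suppose $B'$ is big and nef with $\widehat{\vol}((B')^{n-1}) = (B')^n$ and $(B')^{n-1} \preceq \alpha$. By (1) applied to $B'$, the class $B'$ computes $\widehat{\vol}((B')^{n-1})$, and by monotonicity and (2), $\widehat{\vol}((B')^{n-1}) \le \widehat{\vol}(\alpha) = B^n$. Now I'd use the equality case of Theorem~\ref{thm intro strict concavity M hatvol}: write $\alpha = (B')^{n-1} + \gamma'$ with $\gamma' := \alpha - (B')^{n-1}$ pseudo-effective. If I can show $B' \cdot \gamma' = 0$, then this is a Zariski decomposition of $\alpha$, and uniqueness in Theorem~\ref{thm decomposition mainthrm} gives $B' = B$. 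To get orthogonality: $B' \cdot \alpha = B' \cdot (B')^{n-1} + B'\cdot\gamma' = (B')^n + B'\cdot\gamma' \ge (B')^n$ with equality iff $B'\cdot\gamma'=0$ (as $B'$ nef, $\gamma'$ psef). On the other hand the Khovanskii–Teissier inequality with the hypothesis $\widehat{\vol}((B')^{n-1}) = (B')^n$ being computed by $B'$ should pin down $B'\cdot\alpha = (B')^n$ — I'd extract this by running the argument of (1)/(2) in reverse, using that if $B'$ did not compute $\widehat\vol(\alpha)$ then strict concavity would be violated. The main obstacle throughout is making the variational characterization of the volume-computing divisor rigorous; once that is in hand, all three parts are short. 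I would organize the proof as: Step 1, Khovanskii–Teissier gives $\widehat\vol(B^{n-1}) = B^n$ with $B$ a computing divisor; Step 2, monotonicity of $\widehat\vol$ plus $B^{n-1}\preceq\alpha$ gives one inequality; Step 3, the variational/orthogonality argument shows $B$ computes $\widehat\vol(\alpha)$ and $B\cdot\alpha = B^n$, giving the reverse inequality and (2); Step 4, the equality case of Theorem~\ref{thm intro strict concavity M hatvol} plus Theorem~\ref{thm decomposition mainthrm} gives uniqueness.
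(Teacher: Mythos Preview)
Your (1) is correct. Your (2) is correct but you have tied yourself in knots: once monotonicity gives $\widehat{\vol}(B^{n-1})\le\widehat{\vol}(\alpha)$, the reverse is a one-liner---plug $A=B$ into the infimum and use $B\cdot\alpha=B^n+B\cdot\gamma=B^n$ from the orthogonality in the definition of Zariski decomposition. No variational or Euler--Lagrange argument is required; the ``main obstacle'' you identify does not exist for this step.

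Your (3) has a genuine gap. You want $B'\cdot\gamma'=0$ so that $(B')^{n-1}+\gamma'$ is a second Zariski decomposition, but from the hypotheses you only get $B'\cdot\alpha\ge(B')^n$ (via the infimum), never the reverse; you cannot force $B'$ to compute $\widehat{\vol}(\alpha)$ this way, and your appeal to ``strict concavity would be violated'' is not a proof. The clean fix is to test against $B$, which you already know computes: from $(B')^{n-1}\preceq\alpha$ and $B\cdot\alpha=B^n$ one gets
\[
B\cdot(B')^{n-1}\le B\cdot\alpha=B^n,
\]
while Khovanskii--Teissier gives
\[
B\cdot(B')^{n-1}\ge (B^n)^{1/n}((B')^n)^{(n-1)/n}=B^n.
\]
Equality in Khovanskii--Teissier forces $B'$ proportional to $B$ by Teissier's proportionality theorem, and then $(B')^n=B^n$ gives $B'=B$.

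This direct approach is different in spirit from the paper's. The paper builds the abstract polar-transform machinery of Sections~\ref{legendresection}--\ref{formal zariski section}: the volume on $\Nef^1(X)$ is $+$-differentiable with $D(A)=A^{n-1}$, and Theorem~\ref{strong zariski equivalence} then delivers existence, the equality $\widehat{\vol}(\alpha)=\widehat{\vol}(B^{n-1})=B^n$, and the fact that $B$ computes $\widehat{\vol}(\alpha)$ all at once. For the uniqueness part, however, the paper's argument in Theorem~\ref{thrm:existenceofzardecom} is essentially the Teissier-proportionality sandwich above. Your route avoids the abstract framework but arrives at the same endgame for uniqueness; the paper's route has the advantage of applying to any $+$-differentiable log-concave function, not just the divisor volume.
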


\begin{exmple}
An important feature of Zariski decompositions and $\widehat{\vol}$ for curves is that they can be calculated via intersection theory directly on $X$ once one has identified the nef cone of divisors.  (In contrast, the analogous divisor constructions may require passing to birational models of $X$ to admit an interpretation via intersection theory.)  This is illustrated by Example \ref{example proj bundle} where we calculate the Zariski decomposition of any curve class on the projective bundle over $\mathbb{P}^{1}$ defined by $\mathcal{O} \oplus \mathcal{O} \oplus \mathcal{O}(-1)$.
\end{exmple}

\subsection{Formal Zariski decompositions}
The Zariski decomposition for curves can be deduced from a general theory of duality for log concave homogeneous functions defined on cones.  We define a ``formal'' Zariski decomposition capturing the failure of strict log concavity of a certain class of homogeneous functions on finite-dimensional cones.

Let $\mathcal{C}$ be a full dimensional closed proper convex cone in a finite dimensional vector space.  For any $s>1$, let $\HConc_{s}(\mathcal{C})$ denote the collection of functions $f: \mathcal{C} \to \mathbb{R}$ that are upper-semicontinuous, homogeneous of weight $s>1$, strictly positive on the interior of $\mathcal{C}$, and which are $s$-concave in the sense that
\begin{equation*}
f(v)^{1/s} + f(x)^{1/s} \leq f(x+v)^{1/s}
\end{equation*}
for any $v,x \in \mathcal{C}$.  In this context, the correct analogue of the Legendre-Fenchel transform is the (concave homogeneous) polar transform.  For any $f \in \HConc_{s}(\mathcal{C})$, the polar $\mathcal{H}f$ is an element of $\HConc_{s/s-1}(\mathcal{C}^{*})$ for the dual cone $\mathcal{C}^{*}$ defined as
\begin{equation*}
\mathcal{H}f(w^{*}) = \inf_{v \in \mathcal{C}^{\circ}} \left(  \frac{w^{*} \cdot v}{f(v)^{1/s}} \right)^{s/s-1}  \qquad \qquad \forall w^{*} \in \mathcal{C}^{*}.
\end{equation*}
We define what it means for $f \in \HConc_{s}(\mathcal{C})$ to have a Zariski decomposition structure and show that it follows from a differentiability condition for $\mathcal{H}f$, and vice versa (see Section \ref{formal zariski section}).  Just as in the classical definition of Zariski, one can view this structure as a decomposition of the elements of $\mathcal{C}^{\circ}$ into ``positive parts'' retaining the value of $f$ and ``negative parts'' along which the strict log concavity of $f$ fails.

\begin{exmple}
Let $q$ be a bilinear form on a vector space $V$ of signature $(1,\dim V - 1)$ and set $f(v) = q(v,v)$.  Suppose $\mathcal{C}$ is a closed full-dimensional convex cone on which $f$ is non-negative.  Identifying $V$ with $V^*$ under $q$, we see that $\mathcal{C} \subset \mathcal{C}^{*}$ and that $\mathcal{H}f|_{\mathcal{C}} = f$ by the Hodge inequality.  Then $\mathcal{H}f$ on the entire cone $\mathcal{C}^*$ is controlled by a Zariski decomposition with positive parts lying in $\mathcal{C}$.

This is of course the familiar picture for surfaces, where $f$ is the self-intersection on the nef cone and $\mathcal{H}f$ is the volume on the pseudo-effective cone.  Thus we see that the conclusion of Example \ref{example zariski surface} -- that $\vol$ and $\widehat{\vol}$ coincide on surfaces -- is a direct consequence of the Hodge Index Theorem for surfaces.  Furthermore, we obtain a theoretical perspective motivating the linear algebra calculations of \cite{zariski62}.
\end{exmple}

Many of the basic geometric inequalities in algebraic geometry -- and hence for polytopes or convex bodies via toric varieties (as in \cite{teissier82} and \cite{khovanskii88} and the references therein) -- can be understood using this abstract framework.  A posteriori this theory motivates many well-known theorems about the volume of divisors (which can itself be interpreted as a polar transform). In particular, the $\sigma$-decomposition for divisor classes can be also interpreted by our general theory.  See Remark \ref{aposteriori} for more details.

\subsection{Positivity of curves}
The volume function for curves shares many of the important properties of the volume function for divisors.  This is no accident -- as explained above, polar duality behaves compatibly with many topological properties and with geometric inequalities.  Clearly the volume function is homogeneous and it is not hard to show that it is positive precisely on the big cone of curves.  Perhaps the most important property is the following description of the derivative, which mirrors the results of \cite{bfj09} and \cite{lm09} for divisors.

\begin{thrm}
\label{thm C1}
Let $X$ be a projective variety of dimension $n$. Then the function $\widehat{\vol}$ is $\mathcal{C}^{1}$ on the big cone of curves.  More precisely, let $\alpha$ be a big curve class on $X$ and write $\alpha = B^{n-1} + \gamma$ for its Zariski decomposition. For any curve class $\beta$, we have
\begin{align*}
\left. \frac{d}{dt} \right|_{t = 0} \widehat{\vol}(\alpha + t \beta)  = \frac{n}{n-1} B \cdot \beta.
\end{align*}
\end{thrm}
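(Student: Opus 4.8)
\medskip
\noindent\textbf{Proof proposal.} Write $g := \widehat{\vol}^{\,(n-1)/n}$, so that $g = \inf_A \ell_A$ where $\ell_A(\alpha) := (A\cdot\alpha)/\vol(A)^{1/n}$ ranges over big and nef divisor classes $A$; note $\ell_A$ is linear in $\alpha$ and invariant under rescaling $A$. The plan is to prove that $g$ is $\mathcal{C}^1$ on the big cone of curves with gradient $\beta\mapsto \ell_B(\beta)=(B\cdot\beta)/\vol(B)^{1/n}$ at $\alpha$, and then transfer the conclusion to $\widehat{\vol}=g^{n/(n-1)}$ via the chain rule. As a preliminary step I would record that $g$, being an infimum of linear functionals, is concave, and that it is finite and strictly positive on the big cone; moreover, from the Zariski decomposition $\alpha=B^{n-1}+\gamma$ together with $A\cdot\gamma\ge 0$ and the Khovanskii--Teissier inequality $(A\cdot B^{n-1})^n\ge (A^n)(B^n)^{n-1}$ one gets $\ell_A(\alpha)\ge \vol(B)^{(n-1)/n}$ for every nef $A$, with equality for $A=B$. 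Thus the infimum defining $g(\alpha)$ is attained at $B$ and $g(\alpha)=\vol(B)^{(n-1)/n}$ (this is essentially the theorem identifying $\widehat{\vol}(\alpha)=B^n$).

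The two facts that do the real work are compactness and uniqueness of the minimizer. Since $\alpha$ is big, $D\mapsto D\cdot\alpha$ is strictly positive on $\Nef^1(X)\setminus\{0\}$, so for a constant $C>2\vol(B)^{(n-1)/n}$ the slice $S:=\{D\in\Nef^1(X):\vol(D)=1,\ D\cdot\alpha\le C\}$ is compact; a short estimate shows that $S$ contains every $\vol$-normalized class computing $g(\alpha')$ and that $g(\alpha')=\min_{D\in S}\ell_D(\alpha')$ for all $\alpha'$ in a fixed neighbourhood of $\alpha$. For uniqueness: if a big nef class $A$ also computes $g(\alpha)$, then chasing the chain $g(\alpha)=\ell_A(\alpha)\ge (A\cdot B^{n-1})/\vol(A)^{1/n}\ge \vol(B)^{(n-1)/n}=g(\alpha)$ forces $A\cdot\gamma=0$ and equality in Khovanskii--Teissier, whence $A$ is proportional to $B$; so the minimizer is unique. (This is the content of the equality clause of Theorem~\ref{thm intro strict concavity M hatvol}, equivalently of uniqueness in Theorem~\ref{thm decomposition mainthrm}.) Now I would invoke the standard envelope (Danskin-type) theorem: the minimum of a family of affine functions over a fixed compact parameter set has one-sided directional derivative $D^+g(\alpha;\beta)=\min_{D\in M(\alpha)}\ell_D(\beta)$, where $M(\alpha)$ is the set of minimizers. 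Since $M(\alpha)=\{B/\vol(B)^{1/n}\}$, this derivative equals $(B\cdot\beta)/\vol(B)^{1/n}$, which is linear in $\beta$; hence $g$ is differentiable at $\alpha$ with gradient $\ell_B$.

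Because $g$ is concave and differentiable at every point of the open big cone, it is automatically $\mathcal{C}^1$ there (a finite concave function differentiable on an open set has continuous gradient). Consequently $\widehat{\vol}=g^{n/(n-1)}$ is $\mathcal{C}^1$ on the big cone, being the composition of $g$ with the smooth map $x\mapsto x^{n/(n-1)}$ on $(0,\infty)$, and the chain rule together with $g(\alpha)=\vol(B)^{(n-1)/n}$ gives
\begin{align*}
\left.\frac{d}{dt}\right|_{t=0}\widehat{\vol}(\alpha+t\beta)
&=\frac{n}{n-1}\,g(\alpha)^{1/(n-1)}\cdot\frac{B\cdot\beta}{\vol(B)^{1/n}}\\
&=\frac{n}{n-1}\,\vol(B)^{1/n}\cdot\frac{B\cdot\beta}{\vol(B)^{1/n}}
=\frac{n}{n-1}\,B\cdot\beta.
\end{align*}
The main obstacle I anticipate is packaging the compactness/uniqueness input cleanly enough to apply the envelope theorem uniformly near $\alpha$; the uniqueness of the minimizer is the crucial ingredient and ultimately rests on the equality case of the Khovanskii--Teissier inequality, already available through Theorem~\ref{thm intro strict concavity M hatvol}. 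Once that is secured, both the differentiability and the derivative formula are formal, and the passage from $g$ to $\widehat{\vol}$ is routine.
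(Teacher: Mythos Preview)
Your argument is correct and is, at bottom, the same as the paper's: both hinge on the fact that the big nef class computing $\widehat{\vol}(\alpha)$ is unique up to scaling (via the equality case of Khovanskii--Teissier), and then invoke the standard convex-analysis principle that an infimum of affine functionals with a unique minimizer is differentiable with gradient given by that minimizer. The paper packages this principle abstractly as Proposition~\ref{diffuniqueness} (the ``$G_{\alpha}\cup\{0\}$ is a single ray $\Leftrightarrow$ differentiable'' criterion, citing Rockafellar's Theorem~25.1) inside its polar-transform framework, whereas you unwind it concretely by working with $g=\widehat{\vol}^{(n-1)/n}$ and appealing to Danskin's theorem directly; the compactness step you sketch is precisely Theorem~\ref{thm abstract zariski}(2)--(3), and the passage from pointwise differentiability to $\mathcal{C}^{1}$ via concavity is the same in both. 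Your presentation is more self-contained for a reader who has not absorbed Sections~\ref{legendresection}--\ref{formal zariski section}, while the paper's one-line proof cashes in on that earlier investment.
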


Another key property of the $\sigma$-decomposition for divisors is that the negative part is effective.  While the negative part of the Zariski decomposition for curves need not be effective, the correct analogue is given by the following proposition.

\begin{prop}
Let $X$ be a projective variety of dimension $n$.  Let $\alpha$ be a big curve class and write $\alpha = B^{n-1} + \gamma$ for its Zariski decomposition.  There is a proper subscheme $i: V \subsetneq X$ and a pseudo-effective class $\gamma' \in N_{1}(V)$ such that $i_{*}\gamma' = \gamma$.
\end{prop}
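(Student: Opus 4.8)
The plan is to use only the three intersection-theoretic conditions defining the Zariski decomposition: that $B$ is big and nef, that $\gamma$ is pseudo-effective, and that $B\cdot\gamma = 0$. Indeed, I claim that \emph{any} pseudo-effective curve class orthogonal to a big and nef divisor is pushed forward from a proper subscheme. First I would invoke Kodaira's lemma for the big $\mathbb{R}$-Cartier class $B$: there exist an ample $\mathbb{Q}$-Cartier divisor $A$ and an effective $\mathbb{R}$-Cartier divisor $E$ with $B\equiv A+E$. If $E=0$ then $B$ is ample, so $B\cdot\gamma=0$ forces $\gamma=0$ and the statement holds trivially; thus assume $E\neq 0$, set $i\colon V=\Supp(E)\hookrightarrow X$ (a proper closed subscheme), and fix a positive integer $m$ with $mA$ Cartier, so that $A\cdot C\geq 1/m$ for every irreducible curve $C\subset X$. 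The goal becomes $\gamma\in i_{*}\Eff_{1}(V)$.

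The heart of the argument is a limiting computation. Write $\gamma=\lim_{k}\eta_{k}$, where each $\eta_{k}$ is the class of an effective $\mathbb{R}$-$1$-cycle $Z_{k}$, and split $Z_{k}=Y_{k}+W_{k}$, with $Y_{k}$ the part supported on $V$ and $W_{k}$ the part whose components are not contained in $V=\Supp(E)$. Since no component of $W_{k}$ lies in $\Supp(E)$, the effective divisor $E$ meets $[W_{k}]$ non-negatively, so $A\cdot[W_{k}]\leq (A+E)\cdot[W_{k}]=B\cdot[W_{k}]$; and since $B$ is nef, $B\cdot[W_{k}]\leq B\cdot[W_{k}]+B\cdot[Y_{k}]=B\cdot\eta_{k}\to B\cdot\gamma=0$. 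Hence $A\cdot[W_{k}]\to 0$ while $[W_{k}]\in\Eff_{1}(X)$. Because $\Eff_{1}(X)$ is a closed salient cone and the ample class $A$ is strictly positive on $\Eff_{1}(X)\setminus\{0\}$, a routine compactness argument on the unit sphere of $N_{1}(X)$ shows $[W_{k}]\to 0$. Therefore $i_{*}[Y_{k}]=[Y_{k}]=\eta_{k}-[W_{k}]\to\gamma$, and since each $[Y_{k}]$ is the pushforward of an effective cycle on $V$, this gives $\gamma\in\overline{i_{*}\Eff_{1}(V)}$.

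It remains to promote this to $\gamma\in i_{*}\Eff_{1}(V)$, for which it is enough that $i_{*}\Eff_{1}(V)$ is already closed. This follows once $\ker(i_{*}\colon N_{1}(V)\to N_{1}(X))$ meets $\Eff_{1}(V)$ only at the origin: if $\zeta\in\Eff_{1}(V)$ has $i_{*}\zeta=0$, then $(i^{*}A)\cdot\zeta=A\cdot i_{*}\zeta=0$ with $i^{*}A$ ample on $V$, forcing $\zeta=0$; a closed cone meeting the kernel of a linear map only at the origin has closed image. Then $\gamma=i_{*}\gamma'$ for a pseudo-effective class $\gamma'\in N_{1}(V)$, as desired. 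I expect the main obstacle to be the limiting step in the second paragraph: a priori the approximating cycles $Z_{k}$ might carry a large amount of mass on components not lying in $V$, and the crux is exactly the uniform lower bound $B\cdot C\geq A\cdot C\geq 1/m$ for such components, which, combined with $B\cdot\eta_{k}\to 0$, forces that mass to dissipate in the limit. This is also why the proof uses $\Supp(E)$ rather than, say, the augmented base locus $\mathbb{B}_{+}(B)$, for which no such elementary uniform bound is evident; the $\mathbb{R}$-Cartier bookkeeping is routine.
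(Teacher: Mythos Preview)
Your argument is correct and takes a genuinely simpler route than the paper's own proof. Both proofs begin by writing (a representative of) $B$ as an ample piece plus an effective piece, approximate $\gamma$ by effective $1$-cycles, and split each cycle into a part supported on a fixed proper subscheme and a remainder. The difference lies in how the remainder is killed. The paper first passes to a resolution $\phi: Y \to X$ on which $\phi^{*}B$ becomes the pullback $\psi^{*}A$ of an ample divisor from a further birational contraction $\psi: Y \to Z$, plus an effective divisor; it then only obtains $\psi_{*}(\lim[T_{i}]) = 0$, which does not force $\lim[T_{i}]=0$, and so must invoke \cite[Theorem~4.1]{djv13} to push that limit into the $\psi$-exceptional locus. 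Your use of Kodaira's lemma directly on $X$ yields an honestly ample $A$ on $X$ itself, so $A\cdot[W_{k}]\to 0$ together with the saliency of $\Eff_{1}(X)$ immediately gives $[W_{k}]\to 0$, and no external input is needed. Your closing step---that $i_{*}\Eff_{1}(V)$ is closed because $\ker i_{*}$ meets $\Eff_{1}(V)$ only at the origin---cleanly replaces the paper's pushforward gymnastics.

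One small technical point worth tightening on a possibly singular $X$: you need $E\cdot C \geq 0$ for every irreducible curve $C\not\subset\Supp(E)$, which is immediate when $E$ is effective $\mathbb{Q}$-Cartier (some multiple is an effective Cartier divisor) but requires a word for an arbitrary effective $\mathbb{R}$-Cartier divisor whose prime components need not be individually $\mathbb{Q}$-Cartier. This is easily arranged: in the Kodaira decomposition, first approximate $B$ by a nearby big $\mathbb{Q}$-Cartier class $B'$, apply Kodaira there to get $B'\equiv A'+E'$ with $E'$ effective $\mathbb{Q}$-Cartier, and absorb the small difference $B-B'$ into the ample part. Then $E:=E'$ is $\mathbb{Q}$-Cartier and the inequality $E\cdot[W_{k}]\geq 0$ is unproblematic. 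With that adjustment your proof goes through verbatim and is more elementary than the paper's; the price is only that your $V=\Supp(E)$ carries no canonical meaning, whereas the paper's construction is tied (implicitly) to a base-locus resolution of $B$.
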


By analogy with the algebraic Morse inequality for nef divisors, we prove a Morse-type inequality for curves.

\begin{thrm}
\label{thm morseinequality}
Let $X$ be a smooth projective variety of dimension $n$.  Let $\alpha$ be a big curve class and let $\beta$ be a nef curve class.  Write $\alpha = B^{n-1} + \gamma$ for the Zariski decomposition of $\alpha$.  If
\begin{equation*}
\widehat{\vol}(\alpha) - n B \cdot \beta > 0
\end{equation*}
then $\alpha - \beta$ is big.
\end{thrm}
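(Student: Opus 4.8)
The plan is to exploit the concavity of $\widehat{\vol}^{(n-1)/n}$ together with the $\mathcal{C}^1$-description of its derivative from Theorem \ref{thm C1}. Set $g(\xi) = \widehat{\vol}(\xi)^{(n-1)/n}$, which by Theorem \ref{thm intro strict concavity M hatvol} is concave on the big cone of curves and positive precisely on the big cone, and which is $\mathcal{C}^1$ there by Theorem \ref{thm C1}, with $\frac{d}{dt}\big|_{t=0} g(\alpha + t\beta) = B\cdot\beta$ (the factor $\tfrac{n}{n-1}$ is absorbed by the chain rule, using $g = \widehat{\vol}^{(n-1)/n}$ and $\widehat{\vol}(\alpha) = B^n$). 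The key inequality is concavity along the segment from $\alpha$ to $\alpha - \beta$: for the function $t \mapsto g(\alpha - t\beta)$ on $[0,1]$, concavity gives $g(\alpha - \beta) \geq g(\alpha) - \frac{d}{dt}\big|_{t=0}\, g(\alpha - t\beta) = g(\alpha) - B\cdot\beta$ — wait, concavity gives the reverse; one has $g(\alpha - \beta) \le g(\alpha) - (\text{slope at }0)$ only for the secant. Let me instead use that for a concave function $h$ on $[0,1]$, $h(1) \geq h(0) + h'(1^-)$ is false; rather I will argue as follows.

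First I would reduce to showing $\widehat{\vol}(\alpha - \beta) > 0$, since $\widehat{\vol}$ is positive exactly on the big cone (a fact stated in the Positivity of curves subsection). Consider $\phi(t) = g(\alpha - t\beta)$ for $t \in [0,1]$; a priori this is defined and concave only as long as $\alpha - t\beta$ remains big, so I would first establish that $\alpha - t\beta$ is big for all $t$ in $[0,1]$ — or rather argue by a limiting/openness argument. The clean approach: by concavity of $g$ on the big cone, for small $t > 0$ we have $g(\alpha - t\beta) \geq g(\alpha) + t \cdot \frac{d}{dt}\big|_{t=0} g(\alpha - t\beta) + o(t)$ is not what concavity gives either; concavity gives $g(\alpha - t\beta) \geq (1-t) g(\alpha) + t\, g(\alpha - \beta)$ once we know $\alpha-\beta$ is big, which is circular. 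So instead I use the tangent-line bound from the other side: since $g$ is concave and $\mathcal{C}^1$, its graph lies below every tangent plane, hence for $\beta$ nef, $g(\alpha - \beta) \geq g(\alpha) - B\cdot\beta$ would require the tangent bound $g(\alpha) \leq g(\alpha - \beta) + B \cdot \beta$, i.e. $g((\alpha-\beta) + \beta) \leq g(\alpha - \beta) + \nabla g(\alpha-\beta)\cdot \beta$, which is exactly concavity applied at the point $\alpha - \beta$ — again needs $\alpha - \beta$ big.

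The resolution, and the main obstacle, is the circularity: concavity inequalities presuppose big\-ness. I would handle this by a continuity/deformation argument. Let $T = \sup\{t \in [0,1] : \alpha - t\beta \text{ is big}\}$; then $T > 0$ since the big cone is open and $\alpha$ is big. On $[0,T)$, $\phi(t) = g(\alpha - t\beta)$ is concave and $\mathcal{C}^1$ with $\phi'(t) = -B_t \cdot \beta$ where $B_t$ is the positive part of $\alpha - t\beta$. Since $\beta$ is nef and $B_t$ is nef we have $\phi'(t) \le 0$, but more importantly I claim $\phi'(t) \ge \phi'(0) = -B\cdot\beta$: this would follow if $B_t \cdot \beta \le B \cdot \beta$, i.e. the positive part's intersection with the fixed nef class $\beta$ is non-increasing — plausibly from concavity of $g$ (monotonicity of the derivative along the ray), which is exactly $\phi'' \le 0$ in the distributional sense, giving $\phi'(t) \ge \phi'(0)$ is false too ($\phi$ concave means $\phi'$ decreasing, so $\phi'(t) \le \phi'(0)$). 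Thus $\phi(T^-) = \phi(0) + \int_0^T \phi'(s)\,ds \ge \phi(0) + T\,\phi'(0^-)$... no. Concavity gives $\phi(T^-) \ge \phi(0) + T \phi'(T^-)$ and also $\phi(T^-) \le \phi(0) + T\phi'(0)$. I want a lower bound on $\phi(T^-)$, so I use the secant/subgradient bound at the endpoint differently: for concave $\phi$, $\phi(t) \ge \phi(0) + t \cdot \frac{\phi(t)-\phi(0)}{t}$ is trivial. The genuinely useful bound is: $\phi$ concave $\Rightarrow \phi(t) \ge \phi(0)\,(1 - t/T) + \phi(T^-)\,(t/T)$, and letting $t \to T$, $\liminf \phi \ge \phi(T^-)$, and as $\alpha - t\beta$ approaches the boundary of the big cone, $g(\alpha - t\beta) \to 0$ by continuity of $\widehat\vol$ up to the boundary, so $\phi(T^-) = 0$ if $T < 1$. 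But $\phi$ concave on $[0,T]$ with $\phi(0) = g(\alpha) = \widehat{\vol}(\alpha)^{(n-1)/n}$ and $\phi'(0) = -B\cdot\beta$ forces $\phi(t) \le \phi(0) + t\phi'(0) = \widehat{\vol}(\alpha)^{(n-1)/n} - t B\cdot\beta$ and $\phi(t) \ge 0$... those don't contradict $T<1$ directly. Use instead: $\phi$ concave and $\phi(0) > 0$; if $\phi$ vanishes at $T \le 1$ then by concavity $\phi(t) \ge \phi(0)(1 - t/T)$ for $t \in [0,T]$, whence $\phi'(0^+) \ge -\phi(0)/T$, i.e. $-B\cdot\beta \ge -\widehat{\vol}(\alpha)^{(n-1)/n}/T$, giving $T \ge \widehat{\vol}(\alpha)^{(n-1)/n}/(B\cdot\beta)$. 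Raising the hypothesis $\widehat{\vol}(\alpha) > nB\cdot\beta$ to the power $(n-1)/n$ and comparing — here I must relate $\widehat{\vol}(\alpha)^{(n-1)/n}$ and $n B\cdot\beta$; since $\widehat{\vol}(\alpha) = B^n$ and $B$ is big and nef, the inequality $B^n > nB\cdot\beta$ should via $B^n = B\cdot B^{n-1}$ and some convexity yield $T > 1$, contradicting $T \le 1$. I expect the clean statement to be: $T \ge \widehat{\vol}(\alpha)/(nB\cdot\beta) > 1$; deriving the correct exponent bookkeeping so that the hypothesis $\widehat{\vol}(\alpha) - nB\cdot\beta > 0$ lands exactly as $T > 1$ is the step I expect to require the most care, and it will use $\widehat{\vol}(\alpha - t\beta) \ge (\widehat{\vol}(\alpha)^{(n-1)/n} - tB\cdot\beta)^{n/(n-1)}$ together with an application of Theorem \ref{thm intro strict concavity M hatvol} to write $\widehat{\vol}((\alpha - t\beta) + t\beta)$ in terms of $\widehat{\vol}(\alpha-t\beta)$ and a lower bound for $\widehat{\vol}(t\beta)$ when $\beta$ is nef, closing the loop.
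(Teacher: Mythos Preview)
Your approach has a genuine gap, and it is not merely bookkeeping. The chord argument contains a sign error: from $\phi'(0^+) \geq -\phi(0)/T$ you get $-B\cdot\beta \geq -\widehat{\vol}(\alpha)^{(n-1)/n}/T$, which after multiplying by $-1$ gives $B\cdot\beta \leq \widehat{\vol}(\alpha)^{(n-1)/n}/T$, hence $T \leq \widehat{\vol}(\alpha)^{(n-1)/n}/(B\cdot\beta)$, an \emph{upper} bound on $T$. This is the opposite of what you need, and it reflects a structural obstruction rather than a slip: for a concave function, the tangent line at $\alpha$ lies \emph{above} the graph, so concavity of $\widehat{\vol}^{(n-1)/n}$ together with the derivative at $\alpha$ can only ever produce the upper bound
\[
\widehat{\vol}(\alpha-\beta)^{(n-1)/n} \leq \widehat{\vol}(\alpha)^{(n-1)/n} - B\cdot\beta,
\]
never a lower bound. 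No amount of rearranging the concavity inequality or invoking $\widehat{\vol}(t\beta)$ (which is $0$ once $\beta$ fails to be big) will close the loop: concavity of $\widehat{\vol}$ on curves simply does not encode the Morse inequality.

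The missing input is a \emph{dual} positivity statement. The paper's proof imports the classical algebraic Morse inequality for nef divisors, $\vol(A-B) \geq A^n - nA^{n-1}\cdot B$, and uses it (via the abstract Proposition~\ref{prop abstract reverse KT inequality}) to derive a reverse Khovanskii--Teissier inequality: for every nef divisor class $A$,
\[
n\,(B\cdot\beta)\,(B^{n-1}\cdot A) \;\geq\; B^{n}\,(\beta\cdot A).
\]
With this in hand one checks directly that
\[
(B^{n-1}-\beta)\cdot A \;\geq\; \Bigl(1 - \tfrac{n\,B\cdot\beta}{B^{n}}\Bigr)\,B^{n-1}\cdot A \;>\; 0
\]
for every nonzero nef $A$, so $B^{n-1}-\beta$ lies in the interior of $\Eff_1(X)$; adding back $\gamma\succeq 0$ gives that $\alpha-\beta$ is big (this is Theorem~\ref{thm curve morseinequality} and Corollary~\ref{cor curve morseinequality}, specializing Theorem~\ref{general morse}). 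In short: the Morse inequality for curves is not a consequence of concavity on the curve side, but is transported from the Morse inequality on the divisor side through the polar duality.
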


\subsection{Examples}
The Zariski decomposition is particularly striking for varieties with a rich geometric structure.  We discuss several examples: toric varieties, Mori dream spaces and hyperk\"ahler manifolds.

The complete intersection cone $\CI_{1}(X)$ is defined to be the closure of the set of classes of the form $A^{n-1}$ for an ample divisor $A$ on $X$.  Note that the positive part of the Zariski decomposition takes values in $\CI_{1}(X)$.  We should emphasize that $\CI_{1}(X)$ need not be convex -- the appendix gives an explicit example.

\subsubsection{Toric varieties}
Let $X$ be a simplicial projective toric variety of dimension $n$ defined by a fan $\Sigma$.  Suppose that the curve class $\alpha$ lies in the interior of the movable cone of curves, or equivalently, $\alpha$ is defined by a positive Minkowski weight on the rays of $\Sigma$.  A classical theorem of Minkowski attaches to such a weight a polytope $P_{\alpha}$ whose facet normals are the rays of $\Sigma$ and whose facet volumes are determined by the weights.

In this setting, the volume is calculated by an mixed volume problem: fixing $P_{\alpha}$, amongst all polytopes whose normal fan refines $\Sigma$ there is a unique $Q$ (up to homothety) minimizing the mixed volume calculation
\begin{equation*}
\left( \frac{V(P_{\alpha}^{n-1},Q)}{\vol(Q)^{1/n}} \right)^{n/n-1}.
\end{equation*}
Then the volume is $n!$ times this minimum value, and the positive part of $\alpha$ is proportional to the $(n-1)$-product of the big and nef divisor defined by $Q$.  Note that if we let $Q$ vary over all polytopes then the Brunn-Minkowski inequality shows that the minimum is given by $Q=cP_{\alpha}$, but the normal fan condition on $Q$ yields a new version of this classical problem.  %We explain this viewpoint in more detail in \cite{lehmannxiao2015b}, since a complete proof relies on the techniques developed there.

We give a procedure for computing the volume of any big curve class $\alpha$ using Zariski decompositions.  From the viewpoint of convex analysis, the compatibility with the Zariski decomposition corresponds to the fact that the solution of an mixed volume problem should be given by a condition on the derivative.

The procedure is as follows.  Note that every big and nef divisor on $X$ is semi-ample (that is, the pullback of an ample divisor on a toric birational model).  Thus, the Zariski decomposition for curves is characterized by the existence of a birational toric morphism $\pi: X \to X'$ such that:
\begin{itemize}
\item the class $\pi_{*}\alpha \in N_{1}(X')$ coincides with $A^{n-1}$ for some ample divisor $A$, and
\item $\alpha - (\pi^*A)^{n-1}$ is pseudo-effective.
\end{itemize}
Thus one can compute the Zariski decomposition and volume for $\alpha$ by the following procedure.
\begin{enumerate}
\item For each toric birational morphism $\pi: X \to X'$, check whether $\pi_{*}\alpha$ is in the complete intersection cone.  If so, there is a unique big and nef divisor $A_{X'}$ such that $A_{X'}^{n-1} = \pi_{*}\alpha$.
\item Check if $\alpha - (\pi^{*}A_{X'})^{n-1}$ is pseudo-effective.
\end{enumerate}
We analyze these in a simple toric variety; see Example \ref{example toric zariski}.

\subsubsection{Hyperk\"ahler manifolds}
For a hyperk\"ahler manifold $X$, the results of \cite[Section 4]{Bou04} show that the volume and $\sigma$-decomposition of divisors satisfy a natural compatibility with the Beauville-Bogomolov form.  We prove the analogous properties for curve classes.  The following theorem is phrased in the K\"ahler setting, although the analogous statements in the projective setting are also true.

\begin{thrm}
Let $X$ be a hyperk\"ahler manifold of dimension $n$ and let $q$ denote the bilinear form on $H^{n-1,n-1}(X)$ induced via duality from the Beauville-Bogomolov form on $H^{1,1}(X)$.
\begin{enumerate}
\item The cone of complete intersection $(n-1,n-1)$-classes is $q$-dual to the cone of pseudo-effective $(n-1,n-1)$-classes.
\item If $\alpha$ is a complete intersection $(n-1,n-1)$-class then $\widehat{\vol}(\alpha) = q(\alpha,\alpha)^{n/2(n-1)}$.
\item Suppose $\alpha$ lies in the interior of the cone of pseudo-effective $(n-1,n-1)$-classes and write $\alpha = B^{n-1} + \gamma$ for its Zariski decomposition.  Then $q(B^{n-1},\gamma) = 0$ and if $\gamma$ is non-zero then $q(\gamma,\gamma)<0$.
\end{enumerate}
%Furthermore, if $\psi: H^{1,1}(X) \to H^{n-1,n-1}(X)$ is the isomorphism induced by $q$, then for any movable big $(1,1)$-class $D$ we have
%\begin{equation*}
%\psi(D) = \frac{\langle D^{n-1} \rangle}{\vol(D)^{n-2/n}}
%\end{equation*}
%where $\langle - \rangle$ denotes the positive product.
\end{thrm}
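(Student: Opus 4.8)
The engine of the proof is Fujiki's relation: for a hyperk\"ahler manifold $X$ of dimension $n$ there is a constant $c_X>0$ with $\int_X \delta^n = c_X\, q(\delta)^{n/2}$ for all $\delta\in H^{1,1}(X)$, where we write $q(\delta)=q(\delta,\delta)$. Polarizing in an arbitrary direction $\beta\in H^{1,1}(X)$ gives
\[
\int_X \beta\cdot\delta^{n-1} = c_X\, q(\beta,\delta)\, q(\delta)^{\frac{n}{2}-1},
\]
which is essentially the only input needed beyond the results already quoted. The form $q$ on $H^{n-1,n-1}(X)$ is, up to a fixed positive scalar, the transport of the Beauville--Bogomolov form under the isomorphism $\phi\colon H^{1,1}(X)\to H^{n-1,n-1}(X)$ determined by $\int_X\beta\cdot\phi(\delta)=q(\beta,\delta)$; hence it has signature $(1,h^{1,1}(X)-1)$, and the polarized relation says precisely that $\phi^{-1}(\delta^{n-1}) = c_X\, q(\delta)^{n/2-1}\,\delta$ whenever $q(\delta)>0$. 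The plan is to push all three assertions to $H^{1,1}(X)$ through $\phi$, reducing them to two facts: that $\CI_1$-classes pull back to nef divisor classes, and the duality between $\Nef^1(X)$ and the pseudo-effective cone of $(n-1,n-1)$-classes recalled in the introduction.

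For (1), first note that a big and nef class $B$ has $q(B)>0$: being a limit of K\"ahler classes it has $q(B)\ge 0$, and $q(B)=0$ would force $\int_X B^n=0$, contradicting bigness. Hence for ample $\delta$ the class $c_X q(\delta)^{n/2-1}\delta$ is a positive multiple of $\delta$, and the map $\delta\mapsto c_X q(\delta)^{n/2-1}\delta$ carries the K\"ahler cone bijectively onto itself; since $\phi$ is linear this gives $\phi^{-1}(\CI_1(X)) = \overline{\mathcal K} = \Nef^1(X)$, so in this case $\CI_1(X)=\phi(\Nef^1(X))$ is a closed convex cone (unlike the general situation, cf.\ the appendix). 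Now $q(u,\delta^{n-1}) = c_X q(\delta)^{n/2-1}\int_X\delta\cdot u$ is a positive multiple of $\int_X\delta\cdot u$, so a class $u$ is $q$-nonnegative on $\CI_1(X)$ if and only if $\int_X\delta\cdot u\ge 0$ for all nef $\delta$, i.e.\ if and only if $u$ is pseudo-effective; since $\CI_1(X)$ is closed and convex, taking $q$-duals once more yields the reverse inclusion, proving (1).

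For (2), both sides are continuous, so it suffices to take $\alpha=A^{n-1}$ with $A$ ample; then the Zariski decomposition is trivial, so $\widehat{\vol}(A^{n-1})=A^n=c_X q(A)^{n/2}$ by the theorem identifying the positive part, while $q(A^{n-1},A^{n-1})$ equals a positive scalar times $q(A)^{n-1}$ by the formula for $\phi^{-1}(A^{n-1})$; the exponents then force the claimed identity once the normalizing scalar of $q$ on $H^{n-1,n-1}(X)$ is fixed accordingly (equivalently, after rescaling $\phi$), which is the only place normalization matters. For (3), write $\alpha=B^{n-1}+\gamma$; since $B\cdot\gamma=0$ and $q(B)>0$, the polarized relation gives $q(B^{n-1},\gamma)=c_X q(B)^{n/2-1}(B\cdot\gamma)=0$. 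Moreover $q(B^{n-1})$ is a positive scalar times $q(B)^{n-1}>0$, so $B^{n-1}$ has positive $q$-square; as $q$ on $H^{n-1,n-1}(X)$ has signature $(1,h^{1,1}(X)-1)$, the $q$-orthogonal complement of $B^{n-1}$ is negative definite, and $\gamma$ lies in it, whence $q(\gamma)\le 0$ with equality if and only if $\gamma=0$.

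The main obstacle is (1): proving that $\CI_1(X)$ pulls back under $\phi$ to the \emph{entire} nef cone --- and is therefore closed and convex here --- rests on the linearization of the $(n-1)$st-power map furnished by Fujiki's relation together with the positivity $q(B)>0$ for big and nef $B$, and one must also transport the divisor/curve duality of the introduction into the $(n-1,n-1)$-cohomology of a possibly non-projective $X$. The normalization bookkeeping in (2) is the only other delicate point; (3) is then a formal consequence of the signature of $q$.
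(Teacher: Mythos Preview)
Your proof is correct, and for parts (1) and (2) it follows essentially the same outline as the paper: identify $\CI_1(X)$ with $\phi(\Nef^1(X))$ via the isomorphism induced by $q$, then read off duality and the volume formula. The paper derives the key identity $\phi^{-1}(A^{n-1}) \propto A$ through volume inequalities for movable divisors (Proposition~\ref{hyperkahlerposprod}, which cites \cite{lehmannxiao2015b}), whereas you obtain it directly by polarizing Fujiki's relation; your route is shorter and more self-contained, at the cost of being specific to the nef case (the paper's proposition covers big movable divisors as well). Your handling of the normalization in (2) is slightly vague but works once one fixes $c_X=1$, which is exactly the paper's convention $q(D)^{n/2}=D^n$.

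The genuine divergence is in the second half of (3). You argue via signature: $q$ on $H^{n-1,n-1}(X)$ has signature $(1,h^{1,1}-1)$, $q(B^{n-1})>0$, so the $q$-orthogonal complement of $B^{n-1}$ is negative definite and hence $q(\gamma)<0$ for $\gamma\neq 0$. This is clean and entirely correct. The paper instead sets $D=\psi^{-1}\alpha$, takes its $\sigma$-decomposition $D=P_\sigma(D)+N_\sigma(D)$, and uses Boucksom's structural results \cite[Section 4]{Bou04} on hyperk\"ahler manifolds to show $q(\alpha,\alpha)<q(B^{n-1},B^{n-1})$ directly. The paper even acknowledges your approach in the remark preceding Proposition~\ref{hyperkahlerposprod}, noting that the Hodge-inequality/signature argument of Example~\ref{bilinearexample} would work, but opts for the geometric argument ``to emphasize the ties with the divisor theory.'' So your argument is the elementary linear-algebra route the authors deliberately set aside; theirs buys a tighter analogy between the curve and divisor pictures on hyperk\"ahler manifolds, while yours avoids the dependence on \cite{Bou04} and \cite{lehmannxiao2015b} altogether.
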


\subsubsection{Mori dream spaces}
If $X$ is a Mori dream space, then the movable cone of divisors admits a chamber structure defined via the ample cones on small $\mathbb{Q}$-factorial modifications.  This chamber structure behaves compatibly with the $\sigma$-decomposition and the volume function for divisors.

For curves we obtain a complementary picture.  The movable cone of curves admits a ``chamber structure'' defined via the complete intersection cones on small $\mathbb{Q}$-factorial modifications.  However, the Zariski decomposition and volume of curves are no longer invariant under small $\mathbb{Q}$-factorial modifications but instead exactly reflect the changing structure of the pseudo-effective cone of curves.  Thus the Zariski decomposition is the right tool to understand the birational geometry of movable curves on $X$.  This example is analyzed in \cite{lehmannxiao2015b}, since it relies on the techniques developed there.

\subsection{Connections with birational geometry}
Finally, we briefly discuss the relationship between the volume function for curves and several other topics in birational geometry.  A basic technique in birational geometry is to bound the positivity of a divisor using its intersections against specified curves.  These results can profitably be reinterpreted using the volume function of curves.

\begin{prop} \label{multbound}
Let $X$ be a smooth projective variety of dimension $n$.  Choose positive integers $\{k_{i} \}_{i=1}^{r}$.  Suppose that $\alpha \in \Mov_{1}(X)$ is represented by a family of irreducible curves such that for any collection of general points $x_{1},x_{2},\ldots,x_{r},y$ of $X$, there is a curve in our family which contains $y$ and contains each $x_{i}$ with multiplicity $\geq k_{i}$.  Then
\begin{equation*}
\widehat{\vol}(\alpha)^{{n-1}/{n}} \geq \frac{\sum_{i} k_{i}}{r^{1/n}}.
\end{equation*}
\end{prop}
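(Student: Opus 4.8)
The plan is to unwind the definition of $\widehat{\vol}$ and reduce to an intersection estimate for a single ample divisor. Since $\widehat{\vol}(\alpha)^{(n-1)/n}=\inf_{A}\,(A\cdot\alpha)/\vol(A)^{1/n}$, the infimum being over all big and nef divisor classes, and since this quotient is continuous and invariant under rescaling $A$ on the big and nef cone, where the ample $\mathbb{Q}$-classes are dense, it suffices to prove
\[
A\cdot\alpha\;\ge\;\Big(\tfrac{\vol(A)}{r}\Big)^{1/n}\sum_{i=1}^{r}k_i
\]
for every ample integral divisor $A$. The geometry behind this is classical: a large multiple of $A$ has enough sections that one can be forced to be very singular at $r$ general points, and a curve of class $\alpha$ meeting those points with the prescribed multiplicities — yet not contained in that singular divisor — is then forced to have large $A$-degree.

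So fix an ample integral $A$ and a small $\delta\in(0,1)$. For $m\gg 0$ set $t=\lfloor m(1-\delta)(\vol(A)/r)^{1/n}\rfloor$ and pick general points $x_1,\dots,x_r\in X$. The crude bound
\[
h^{0}\!\big(X,\,mA\otimes{\textstyle\prod_{i=1}^{r}}\mathfrak{m}_{x_i}^{t}\big)\;\ge\;h^{0}(X,mA)-r\binom{t+n-1}{n},
\]
which holds because imposing ``vanishing to order $t$ at $x_i$'' lowers $h^{0}$ by at most $\dim\mathcal{O}_{X,x_i}/\mathfrak{m}_{x_i}^{t}=\binom{t+n-1}{n}$, combines with $h^{0}(X,mA)=\tfrac{m^{n}}{n!}\vol(A)+O(m^{n-1})$ and $\binom{t+n-1}{n}=\tfrac{t^{n}}{n!}+O(t^{n-1})$ to give a right-hand side of order $\tfrac{m^{n}}{n!}\vol(A)\big(1-(1-\delta)^{n}\big)+O(m^{n-1})$, which tends to $+\infty$. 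Hence for $m$ large there is a nonzero $s$; put $D=\operatorname{div}(s)\in|mA|$, so $\mult_{x_i}(D)\ge t$ for every $i$. This step needs no independence of the fat-point conditions — the slack from $\delta$ makes the crude count enough — and the factor $r^{1/n}$ is exactly what comes out of splitting the $h^{0}$-budget $\tfrac{m^{n}}{n!}\vol(A)$ among $r$ fat points each costing $\tfrac{t^{n}}{n!}$.

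Now I would invoke the family hypothesis. As $x_1,\dots,x_r$ are general, for a general point $y$ there is a member $C$ of the family — so $[C]=\alpha$ — through $y$ with $\mult_{x_i}(C)\ge k_i$ for all $i$. Letting $y$ vary over a dense open subset of $X$, these curves cover a dense subset, hence cannot all lie in the proper closed set $\Supp(D)$; choose $y$ with $C\not\subset\Supp(D)$. Using $(D\cdot C)_{x_i}\ge\mult_{x_i}(D)\,\mult_{x_i}(C)$ we obtain
\[
m\,(A\cdot\alpha)\;=\;D\cdot C\;\ge\;\sum_{i=1}^{r}(D\cdot C)_{x_i}\;\ge\;\sum_{i=1}^{r}\mult_{x_i}(D)\,\mult_{x_i}(C)\;\ge\;t\sum_{i=1}^{r}k_i.
\]
Dividing by $m$ and letting $m\to\infty$, then $\delta\to0$, gives $A\cdot\alpha\ge(\vol(A)/r)^{1/n}\sum_i k_i$; taking the infimum over $A$ finishes the proof.

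The one point needing care is the order in which generality is invoked: the singular divisor $D$ is constructed for a fixed general tuple $(x_1,\dots,x_r)$, and only afterwards is the family hypothesis used — which is precisely where the auxiliary point $y$ is essential, as it forces the curves realizing the multiplicity conditions at the $x_i$ to sweep out a dense subset of $X$ and hence to escape $\Supp(D)$. One should also interpret ``represented by a family of irreducible curves'' so that the curves produced really carry the numerical class $\alpha$ (restrict to the component of the family whose general member has class $\alpha$); the remaining verifications are routine.
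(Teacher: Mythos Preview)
Your argument is correct and follows essentially the same route as the paper's proof: reduce via the definition of $\widehat{\vol}$ to an inequality $A\cdot C\ge (\vol(A)/r)^{1/n}\sum_i k_i$ for ample (or big and nef) $A$, construct a divisor in $|mA|$ with high multiplicity at the $x_i$ by a section count, and then intersect with a curve from the family not lying in that divisor. The paper's version is terser (it cites a reference for the singular-divisor construction and leaves the role of the auxiliary point $y$ implicit), whereas you spell out the $h^0$ estimate and make the use of $y$ explicit, but the underlying ideas are the same.
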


We can thus apply volumes of curves to study Seshadri constants, bounds on volume of divisors, and other related topics.  We defer a more in-depth discussion to Section \ref{applications sec}, contenting ourselves with a fascinating example.

\begin{exmple}
If $X$ is rationally connected, it is interesting to analyze the possible volumes for classes of special rational curves on $X$.  When $X$ is a Fano variety of Picard rank $1$, these invariants will be closely related to classical invariants such as the length and degree.

For example, we say that $\alpha \in N_{1}(X)$ is a rationally connecting class if for any two general points of $X$ there is a chain of rational curves of class $\alpha$ connecting the two points.  Is there a uniform upper bound (depending only on the dimension) for the minimal volume of a rationally connecting class on a rationally connected $X$?  \cite{KMM92} and \cite{campana92} show that this is true for smooth Fano varieties.  We discuss this  question briefly in Section \ref{rcexample}.
\end{exmple}

\subsection{Outline of paper} \label{outlinesec}
In this paper we will work with projective varieties over $\mathbb{C}$ for simplicity of arguments and for compatibility with cited references.  However, all the results will extend to smooth varieties over arbitrary algebraically closed fields on the one hand and arbitrary compact K\"ahler manifolds on the other.  We give a general framework for this extension in Sections \ref{charp background sec} and \ref{kahler background sec} and then explain the details as we go.

In Section \ref{section preliminaries} we review the necessary background, and make several notes explaining how the proofs can be adjusted to arbitrary algebraically closed fields and compact K\"ahler manifolds. Sections \ref{legendresection} and \ref{formal zariski section} discuss polar transforms and formal Zariski decompositions for log concave functions.  In Section \ref{section zariski} we construct the Zariski decomposition of curves and study its basic properties and its relationship with $\widehat{\vol}$.
Section \ref{toric section} discusses toric varieties, and
Section \ref{hyperkahler section} is devoted to the study of hyperk\"ahler manifolds.  Section \ref{applications sec} discusses connections with other areas of birational geometry.  Finally, the appendix collects some ``reverse" Khovanskii-Teissier type results in the analytic setting and a result related to the transcendental holomorphic Morse inequality.  The appendix also gives a toric example where the complete intersection cone of curves is not convex.

\subsection*{Acknowledgements}
We thank M.~Jonsson for his many helpful comments.  Some of the material on toric varieties was worked out in a conversation with J.~Huh, and we are very grateful for his help.  Lehmann would like to thank C.~Araujo, M.~Fulger, D.~Greb, S.~Payne, D.~Treumann, and D.~Yang for helpful conversations.  Xiao would like to thank his supervisor J.-P.~Demailly for suggesting an intersection-theoretic approach to study volume function, and thank W.~Ou for helpful conversations, and thank the China Scholarship Council for the support.

\section{Preliminaries}
\label{section preliminaries}
In this section, we first fix some notations over a projective variety $X$:
\begin{itemize}
\item $N^1(X)$: the real vector space of numerical classes of divisors;
\item $N_1(X)$: the real vector space of numerical classes of curves;
\item $\Eff^1(X)$: the cone of pseudo-effective divisor classes;
\item $\Nef^1(X)$: the cone of nef divisor classes;
\item $\Mov^1(X)$: the cone of movable divisor classes;
\item $\Eff_1(X)$: the cone of pseudo-effective curve classes;
\item $\Mov_1(X)$: the cone of movable curve classes, equivalently by \cite{BDPP13} the dual of $\Eff^{1}(X)$;
\item $\CI_1(X)$: the closure of the set of all curve classes of the form $A^{n-1} $ for an ample divisor $A$.
\end{itemize}
With only a few exceptions, capital letters $A,B,D,L$ will denote $\mathbb{R}$-Cartier divisor classes and greek letters $\alpha,\beta,\gamma$ will denote curve classes.  For two curve classes $\alpha, \beta$, we write $\alpha\succeq \beta$ (resp. $\alpha\preceq \beta$) to denote that $\alpha-\beta$ (resp. $\beta-\alpha$) belongs to $\Eff_{1}(X)$.  We will do similarly for divisor classes, or two elements of a cone $\mathcal{C}$ if the cone is understood.

We will use the notation $\langle - \rangle$ for the positive product on smooth varieties as in \cite{BDPP13}, \cite{bfj09} and \cite{Bou02}. %We make a few remarks on this construction for singular projective varieties.  Suppose that $X$ has dimension $n$.  Then $N_{n-1}(X)$ denotes the vector space of $\mathbb{R}$-classes of Weil divisors up to numerical equivalence as in \cite[Chapter 19]{fulton84}.  In this setting, the $1$st and $(n-1)$st positive product should be interpreted respectively as maps $\Eff^{1}(X) \to N_{n-1}(X)$ and $\Eff^{1}(X)^{\times n-1} \to \Mov_{1}(X)$.  We will also let $P_{\sigma}(L)$ denote the positive part in this sense -- that is, pullback $L$ to closer and closer Fujita approximations, take its positive part, and push the numerical class forward to $X$ as a numerical Weil divisor class.  With these conventions, we still have the crucial result of \cite{bfj09} and \cite{lm09} that the derivative of the volume is controlled by intersecting against the positive part.

%We define the movable cone of divisors $\Mov^{1}(X)$ to be the subset of $\Eff^{1}(X)$ consisting of divisor classes $L$ such that $N_{\sigma}(L) = 0$ and $P_{\sigma}(L) = L \cap [X]$.    On any projective variety, by \cite[Example 19.3.3]{fulton84} capping with $X$ defines an injective linear map $N^{1}(X) \to N_{n- 1}(X)$.
%Thus if $D,L \in \Mov^{1}(X)$ have the same positive part in $N_{n-1}(X)$, then by the injectivity of the capping map we must have $D=L$.

To extend our results to arbitrary compact K\"ahler manifolds, we need to deal with transcendental objects which are not given by divisors or curves. Let $X$ be a compact K\"ahler manifold of dimension $n$.  By analogue with the projective situation, we need to deal with the following spaces and positive cones:
\begin{itemize}
\item $H^{1,1}_{\BC}(X, \mathbb{R})$: the real Bott-Chern cohomology group of bidegree $(1,1)$;
\item $H^{n-1,n-1}_{\BC}(X, \mathbb{R})$: the real Bott-Chern cohomology group of bidegree $(n-1,n-1)$;
\item $\mathcal{N}(X)$: the cone of pseudo-effective $(n-1,n-1)$-classes;
\item $\mathcal{M}(X)$: the cone of movable $(n-1,n-1)$-classes;
\item $\overline{\mathcal{K}}(X)$: the cone of nef $(1,1)$-classes, equivalently the closure of the K\"ahler cone;
\item $\mathcal{E}(X)$:  the cone of pseudo-effective $(1,1)$-classes.
\end{itemize}
Recall that we call a Bott-Chern class pseudo-effective if it contains a $d$-closed positive current, and call an $(n-1,n-1)$-class movable if it is contained in the closure of the cone generated by the classes of the form $\mu_{*}(\widetilde{\omega}_1 \wedge...\wedge \widetilde{\omega}_{n-1})$ where $\mu: \widetilde{X}\rightarrow X$ is a modification and $\widetilde{\omega}_1,...,\widetilde{\omega}_{n-1}$ are K\"ahler metrics on $\widetilde{X}$. For the basic theory of positive currents, we refer the reader to \cite{Dem}.

If $X$ is a smooth projective variety over $\mathbb{C}$, then we have the following relations (see e.g. \cite{BDPP13}) $$\Nef^1(X)=\overline{\mathcal{K}}(X)\cap N^1(X),\ \Eff^1(X)=\mathcal{E}(X)\cap N^1(X)$$ and $$\Eff_1(X)=\mathcal{N}(X)\cap N_1(X),\ \Mov_1(X)=\mathcal{M}(X)\cap N_1(X).$$

\subsection{Khovanskii-Teissier inequalities}
We collect several results which we will frequently use in our paper. In every case, the statement for arbitrary projective varieties follows from the familiar smooth versions via a pullback argument.  Recall
the well-known Khovanskii-Teissier inequalities for a pair of nef divisors over projective varieties (see e.g. \cite{Tei79}).
\begin{itemize}
  \item Let $X$ be a projective variety and let $A,B$ be two nef divisor classes on $X$. Then we have
      \begin{equation*}
       A^{n-1} \cdot B \geq (A^{n})^{n-1/n} (B^{n})^{1/n}
      \end{equation*}
\end{itemize}

We also need the characterization of the equality case in the above inequality as in \cite[Theorem D]{bfj09} -- see also \cite{FX14} for the analytic proof for transcendental classes in the K\"ahler setting. (We call this characterization Teissier's proportionality theorem as it was first proposed and studied by B.~Teissier.)
\begin{itemize}
  \item Let $X$ be a projective variety and let $A,B$ be two big and nef divisor classes on $X$.  Then
      \begin{equation*}
      A^{n-1} \cdot B = (A^{n})^{n-1/n} (B^{n})^{1/n}
      \end{equation*}
if and only if $A$ and $B$ are proportional.
\end{itemize}

%For the reader's convenience and make our work self contained,
We next prove a more general version of Teissier's proportionality theorem for $n$ big and nef $(1,1)$-classes over compact K\"ahler manifolds (thus including projective varieties defined over $\mathbb{C}$) which follows easily from the result of \cite{FX14}. This result should be useful in the study of the structure of complete intersection cone $\CI_1 (X)$.

\begin{thrm}
\label{thm KT inequality}
Let $X$ be a compact K\"ahler manifold of dimension $n$, and let $B_1,...,B_n$ be $n$ big and nef $(1,1)$-classes over $X$. Then we have
\begin{align*}
B_1 \cdot B_2 \cdot\cdot\cdot B_n \geq (B_1 ^n)^{1/n} \cdot (B_2 ^n)^{1/n} \cdot\cdot\cdot (B_n ^n)^{1/n},
\end{align*}
where the equality is obtained if and only if $B_1,...,B_n$ are proportional.
\end{thrm}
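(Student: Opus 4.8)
The plan is to deduce the $n$-class inequality from the two-class case (the mixed Khovanskii--Teissier inequality $A^{n-1}\cdot B \geq (A^n)^{(n-1)/n}(B^n)^{1/n}$) by an inductive argument, and then to extract the equality characterization from Teissier's proportionality theorem as stated in the excerpt. For the inequality itself, the cleanest route is to use the general Khovanskii--Teissier inequalities for mixed intersection numbers of nef classes: writing $d_i = (B_i^n)^{1/n}$, one has the log-concavity statement that the sequence of mixed numbers is governed by an inequality of the form
\begin{equation*}
(B_1 \cdots B_n)^n \geq \prod_{i=1}^{n} (B_i^n),
\end{equation*}
which can be proved by repeatedly applying the two-term inequality, one class at a time, together with the Hodge-type inequality $B_i^{k}\cdot B_j^{n-k}\cdot(\text{fixed nef classes}) \ge \ldots$. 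Concretely, I would first establish the general mixed inequality $(B_1\cdots B_n) \geq (B_1^n)^{1/n}\cdots(B_n^n)^{1/n}$ by induction on the number of distinct classes: the base case is Teissier's classical inequality, and the inductive step replaces one occurrence of a class by a geometric-mean bound via the two-variable Khovanskii--Teissier inequality applied with the product of the remaining classes playing the role of a fixed nef class (this is legitimate since intersecting with a fixed nef class preserves nefness and the two-class inequality holds in that relative form).

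For the equality case, suppose $B_1\cdots B_n = \prod (B_i^n)^{1/n}$. Since every intermediate inequality used in the induction must then be an equality, I would track back through the chain and apply Teissier's proportionality theorem (the second bullet in the excerpt, valid for big and nef classes, including transcendental ones by \cite{FX14}) at each stage. The key point is that an equality $A^{n-1}\cdot B = (A^n)^{(n-1)/n}(B^n)^{1/n}$ forces $A$ and $B$ proportional; applying this with various groupings of the $B_i$ (e.g. $A = B_1$ and $B = B_2$ after absorbing $B_3\cdots B_n$, then $A=B_1$, $B=B_3$, etc.) should yield pairwise proportionality $B_i \parallel B_j$ for all $i,j$, hence proportionality of the whole collection. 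Some care is needed because the two-class equality theorem is stated for a pair, not for a pair in the presence of auxiliary fixed classes; so I would instead arrange the induction so that at the final step all auxiliary classes have been ``used up'' and one genuinely compares two big and nef classes on $X$ directly.

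The main obstacle I anticipate is precisely making the reduction to the bare two-class proportionality theorem rigorous: one must ensure that at some stage of the argument one is comparing two honest big and nef $(1,1)$-classes (no leftover fixed intersection factor), so that \cite{FX14} / \cite[Theorem D]{bfj09} applies verbatim. One way to handle this cleanly is: by the inequality just proved, $\widehat{\vol}$-type reasoning or a direct substitution shows $B_1^{n-1}\cdot B_2 \ge (B_1^n)^{(n-1)/n}(B_2^n)^{1/n}$ and, combined with the equality hypothesis and all the other two-term inequalities, each such two-term inequality is forced to be an equality; then Teissier's theorem gives $B_1 \parallel B_2$, and by symmetry all $B_i$ are proportional. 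Conversely, if the $B_i$ are all proportional the equality is immediate by homogeneity. The routine verifications — that intersecting with a fixed nef class preserves the needed inequalities, and bookkeeping of exponents — I would leave as straightforward computations.
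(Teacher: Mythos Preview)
Your approach is genuinely different from the paper's, and the main gap lies exactly where you suspected: the passage from equality in the $n$-class inequality to equality in the bare two-class inequalities $B_i^{n-1}\cdot B_j=(B_i^n)^{(n-1)/n}(B_j^n)^{1/n}$. Your proposed fix --- ``combined with the equality hypothesis and all the other two-term inequalities, each such two-term inequality is forced to be an equality'' --- is not justified. The hypothesis $B_1\cdots B_n=\prod d_i$ and the separate inequality $B_1^{n-1}\cdot B_2\geq d_1^{n-1}d_2$ are logically independent unless you exhibit a chain of inequalities in which the latter appears as an intermediate step below the former. Your inductive chain, built from Alexandrov--Fenchel type inequalities with fixed background classes, never produces $B_1^{n-1}\cdot B_2$ as a term; instead it produces mixed numbers like $B_1^2\cdot B_3\cdots B_n$, and equality there is the Alexandrov--Fenchel equality problem, which does \emph{not} reduce to Teissier's pairwise theorem. (One can, with enough care, push such a chain all the way down to pure two-class quantities via log-concavity of the sequence $k\mapsto B_i^k\cdot B_j^{n-k}$, but this requires a substantially more detailed argument than you have sketched, and is not a ``direct substitution''.)

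The paper avoids this difficulty by a completely different, analytic route. It normalizes $B_i^n=1$, fixes a smooth volume form $\Phi$, and solves the degenerate Monge--Amp\`ere equations $\langle T_i^n\rangle=\Phi$ with $T_i\in B_i$. On the common ample locus $\Omega$ the $T_i$ are genuine K\"ahler metrics, and the pointwise Brunn--Minkowski inequality for positive hermitian forms gives $T_1\wedge\cdots\wedge T_n\geq\Phi$ pointwise; integrating yields the inequality. Equality then forces the $T_i$ to be \emph{pointwise} proportional on $\Omega$, and this pointwise information immediately gives $T_i^{n-1}\wedge T_j=\Phi$ pointwise for every pair, hence $B_i^{n-1}\cdot B_j=1$. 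Only now does one invoke the pairwise Teissier proportionality theorem of \cite{FX14}. The key point you are missing is that the analytic method delivers pointwise (not merely integrated) equality, which is exactly what is needed to read off all the pairwise equalities simultaneously without any combinatorial chain argument.
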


We include a proof, since we are not aware of any reference in the literature. The proof reduces the global inequalities to the pointwise Brunn-Minkowski inequalities by solving Monge-Amp\`{e}re equations \cite{FX14} (see also \cite{Dem93} for a related result), and then applies the result of \cite{FX14} -- where the key technique and estimates go back to \cite{fuxiao14kcone} -- for a pair of big and nef classes (see also \cite[Theorem D]{bfj09} for divisor classes).

Recall that the ample locus $\Amp(D)$ of a big $(1,1)$-class $D$ is the set of points $x\in X$ such that there is a strictly positive current $T_x \in D$ with analytic singularities which is smooth near $x$.   When $L$ is a big $\mathbb{R}$-divisor class on a smooth projective variety $X$, then the ample locus $\Amp(L)$ is equal to the complement of the augmented base locus $\mathbb{B}_{+}(L)$ (see \cite{Bou04}).

\begin{proof}
Without loss of generality, we can assume all the $B_i ^n=1$. Then we need to prove
\begin{align*}
B_1 \cdot B_2 \cdot\cdot\cdot B_n \geq 1,
\end{align*}
with the equality obtained if and only if $B_1,...,B_n$ are equal.

To this end, we fix a smooth volume form $\Phi$ with $\vol(\Phi) =1$. We choose a smooth $(1,1)$-form $b_j$ in the class $B_j$. Then by \cite[Theorem C]{BEGZ10MAbig}, for every class $B_j$ we can solve the following singular Monge-Amp\`{e}re equation
\begin{align*}
\langle (b_j + i\partial \bar \partial \psi_j)^n \rangle = \Phi,
\end{align*}
where $\langle - \rangle$ denotes the non-pluripolar products of positive currents (see \cite[Definition 1.1 and Proposition 1.6]{BEGZ10MAbig}).

Denote $T_j = b_j + i\partial \bar \partial \psi_j$, then \cite[Theorem B]{BEGZ10MAbig} implies $T_j$ is a positive current with minimal singularities in the class $B_j$. Moreover, $T_j$ is a K\"ahler metric over the ample locus $\Amp(B_j)$ of the big class $B_j$ by \cite[Theorem C]{BEGZ10MAbig}.

Note that $\Amp(B_j)$ is a Zariski open set of $X$. Denote $\Omega=\Amp(B_1)\cap ... \cap \Amp(B_n)$, which is also a Zariski open set. By \cite[Definition 1.17]{BEGZ10MAbig}, we then have
\begin{align*}
B_1 \cdot B_2 \cdot\cdot\cdot B_n &= \int_X \langle T_1 \wedge ... \wedge T_n\rangle\\
&= \int_\Omega T_1 \wedge ... \wedge T_n,
\end{align*}
where the second line follows because the non-pluripolar product $\langle T_1 \wedge ... \wedge T_n\rangle$ puts no mass on the subvariety $X\setminus \Omega$ and all the $T_j$ are K\"ahler metrics over $\Omega$.

For any point $x\in \Omega$, we have the following pointwise Brunn-Minkowski inequality
\begin{align*}
T_1 \wedge ... \wedge T_n \geq \left(\frac{T_1 ^n}{\Phi}\right)^{1/n}\cdot\cdot\cdot \left(\frac{T_n ^n}{\Phi}\right)^{1/n} \Phi= \Phi
\end{align*}
with equality if and only if the K\"ahler metrics $T_j$ are proportional at $x$. Here the second equality follows because we have $T_j ^n =\Phi$ on $\Omega$. In particular, we get the Khovanskii-Teissier inequality
\begin{align*}
B_1 \cdot B_2 \cdot\cdot\cdot B_n \geq 1.
\end{align*}
And we know the equality $B_1 \cdot B_2 \cdot\cdot\cdot B_n =1$ holds if and only if the K\"ahler metrics $T_j$ are pointwise proportional. At this step, we can not conclude that the K\"ahler metrics $T_j$ are equal over $\Omega$ since we can not control the proportionality constants from the pointwise Brunn-Minkowski inequalities. However, for any pair of $T_i$ and $T_j$, we have the following pointwise equality over $\Omega$:
\begin{align*}
T_i ^{n-1} \wedge T_j = \left(\frac{T_i ^n}{\Phi}\right)^{n-1/n}\cdot \left(\frac{T_j ^n}{\Phi}\right)^{1/n} \Phi,
\end{align*}
since $T_i$ and $T_j$ are pointwise proportional over $\Omega$. This implies the equality
\begin{align*}
B_i ^{n-1} \cdot B_j = 1.
\end{align*}
Then by the pointwise estimates of \cite{FX14}, we know the currents $T_i$ and $T_j$ must be equal over $X$, which implies $B_i = B_j$.

In conclusion, we get that
$B_1 \cdot B_2 \cdot\cdot\cdot B_n = 1$
if and only if the $B_j$ are equal.
\end{proof}

\subsection{Complete intersection cone}
Since the complete intersection cone plays an important role in the paper, we quickly outline its basic properties.  Recall that $\CI_{1}(X)$ is the closure of the set of all curve classes of the form $A^{n-1}$ for an ample divisor $A$.  It naturally has the structure of a closed pointed cone.

\begin{prop}
\label{prop boundary CI}
Let $X$ be a projective variety of dimension $n$.  Suppose that $\alpha \in \CI_{1}(X)$ lies on the boundary of the cone. Then either
\begin{enumerate}
\item $\alpha = B^{n-1}$ for some big and nef divisor class $B$, or
\item $\alpha$ lies on the boundary of $\Eff_{1}(X)$.
\end{enumerate}
\end{prop}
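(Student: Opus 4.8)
The plan is to split into two cases according to whether $\alpha$ lies in the interior of $\Eff_1(X)$. Since $\CI_1(X)\subseteq\Eff_1(X)$, a boundary point $\alpha$ of $\CI_1(X)$ either already lies on $\partial\Eff_1(X)$, in which case conclusion (2) holds and we are done, or it lies in $\Eff_1(X)^{\circ}$, i.e. $\alpha$ is a big curve class. So from now on I assume $\alpha$ is big, and the task is to produce a big and nef divisor class $B$ with $\alpha=B^{n-1}$.

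Since $\alpha\in\CI_1(X)$, choose ample classes $A_k$ with $A_k^{n-1}\to\alpha$. First note each $A_k^{n-1}$ is itself big: if $A_k^{n-1}\cdot D=0$ for some nonzero nef divisor $D$, then $A_k^{n-1}\cdot(A_k+D)=A_k^n$, and since $(A_k+D)^n\geq A_k^n$ this forces the Khovanskii--Teissier inequality for the big and nef pair $(A_k,A_k+D)$ to be an equality; Teissier's proportionality theorem then makes $D$ proportional to $A_k$, so $A_k^{n-1}\cdot D>0$, a contradiction. Thus $A_k^{n-1}\cdot D>0$ for every nonzero nef $D$, i.e. $A_k^{n-1}\in\Eff_1(X)^{\circ}$. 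Next, since $A_k$ is big and nef, the decomposition $A_k^{n-1}=A_k^{n-1}+0$ satisfies every condition of Definition \ref{def zariski decomposition}, so by the uniqueness in Theorem \ref{thm decomposition mainthrm} it is the Zariski decomposition of $A_k^{n-1}$, with positive part $A_k$. Finally, let $\alpha=B^{n-1}+\gamma$ be the Zariski decomposition of the big class $\alpha$, with $B$ big and nef.

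Now I apply Theorem \ref{thm C1}: $\widehat{\vol}$ is $\mathcal{C}^1$ on the big cone of curves, and at a big class its gradient, viewed as an element of $N^1(X)$ under the intersection pairing, is $\tfrac{n}{n-1}$ times the positive part. Evaluating at $A_k^{n-1}$ and at $\alpha$ gives gradients $\tfrac{n}{n-1}A_k$ and $\tfrac{n}{n-1}B$. Since $A_k^{n-1}\to\alpha$ inside the open big cone and the gradient is continuous there, we conclude $A_k\to B$ in $N^1(X)$; passing to the limit in $A_k^{n-1}\to\alpha$ yields $\alpha=B^{n-1}$, which is conclusion (1).

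The substantive input is Theorem \ref{thm C1}; everything else is bookkeeping, and I expect the only place needing a little care in a full write-up is the verification that $A_k^{n-1}$ is big, so that Theorem \ref{thm C1} genuinely applies there. It is worth emphasizing that one should not attempt to extract $B$ by a direct compactness argument on $\{A_k\}$: there is no a priori bound on $\|A_k\|$ --- a subsequence could in principle escape to infinity with $A_k/\|A_k\|\to N$ for some nonzero nef $N$ satisfying $N^{n-1}=0$, which is not itself a contradiction --- and it is precisely the continuity of the gradient of $\widehat{\vol}$ that pins down the limit and, a posteriori, forces $\{A_k\}$ to be bounded.
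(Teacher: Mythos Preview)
Your argument is correct, and there is no circularity: the Zariski decomposition theorem and Theorem~\ref{thm C1} are developed in Sections~\ref{legendresection}--\ref{section zariski} from the abstract polar-transform machinery and do not invoke Proposition~\ref{prop boundary CI}. One small simplification: once you assume $\alpha$ is big, the separate verification that each $A_k^{n-1}$ is big is unnecessary, since bigness is an open condition and $A_k^{n-1}\to\alpha$ forces $A_k^{n-1}$ into the big cone for $k$ large, which is all you need for the $\mathcal{C}^1$ argument.

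That said, your route is quite different from the paper's. You invoke the main structural results of the paper (existence, uniqueness, and $\mathcal{C}^1$-dependence of the Zariski decomposition) to deduce a statement placed in the preliminaries; the paper instead gives a short self-contained argument using only the Khovanskii--Teissier inequality. In fact the paper does precisely what you caution against: it runs a direct compactness argument on $\{A_i\}$, splitting according to whether $A_i\cdot K^{n-1}$ is bounded. When bounded, the $A_i$ accumulate at some nef $B$ with $\alpha=B^{n-1}$, and one checks $B$ is big (else $\alpha$ lands on $\partial\Eff_1(X)$). When unbounded, one normalizes $\widehat A_i=A_i/(A_i\cdot K^{n-1})$, passes to a nef limit $B$, and uses Khovanskii--Teissier to see $\vol(A_i)$ stays bounded; this forces $B\cdot\alpha=0$, so $\alpha\in\partial\Eff_1(X)$. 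So the unbounded scenario you flag as problematic is exactly the case that produces conclusion~(2), handled by elementary means. Your approach buys you a cleaner dichotomy (big vs.\ boundary of $\Eff_1$) at the cost of importing the main theorems; the paper's approach is more elementary and logically prior, which is why it sits in the preliminaries.
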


\begin{proof}
We fix an ample divisor class $K$. Since $\alpha \in \CI_{1}(X)$ is a boundary point of the cone, we can write $\alpha$ as the limit of classes $A_{i}^{n-1}$ for some sequence of ample divisor classes $A_{i}$.

First suppose that the values of $A_{i} \cdot K^{n-1}$ are bounded above as $i$ varies. Then the classes of the divisor $A_{i}$ vary in a compact set, so they have some nef accumulation point $B$. Clearly $\alpha = B^{n-1}$. Furthermore, if $B$ is not big then $\alpha$ will lie on the boundary of $\Eff_{1}(X)$ since in this case $B^{n-1} \cdot B = 0$.  If $B$ is big, then it is not ample, since the map $A \mapsto A^{n-1}$ from the ample cone of divisors to $N_{1}(X)$ is locally surjective.  Thus in this case $B$ is big and nef.

Now suppose that the values of $A_{i} \cdot K^{n-1}$ do not have any upper bound. Since the $A_{i}^{n-1}$ limit to $\alpha$, for $i$ sufficiently large we have
\begin{equation*}
2(\alpha \cdot K) > A_{i}^{n-1} \cdot K \geq \vol(A_{i})^{n-1/n} \vol(K)^{1/n}
\end{equation*}
by the Khovanskii-Teissier inequality. In particular this shows that $\vol(A_{i})$ admits an upper bound as $i$ varies.
Note that the classes $A_{i}/(K^{n-1} \cdot A_{i})$ vary in a compact slice of the nef cone of divisors. Without loss of generality, we can assume they limit to a nef divisor class $B$. Then we have
\begin{align*}
B \cdot \alpha & = \lim_{i \to \infty} \frac{A_{i}}{K^{n-1} \cdot A_{i}} \cdot A_{i}^{n-1} \\
& =  \lim_{i \to \infty} \frac{\vol(A_{i})}{K^{n-1} \cdot A_{i}} \\
& = 0.
\end{align*}
The last equality holds because $\vol(A_{i})$ is bounded above but $A_{i} \cdot K^{n-1}$ is not. So in this case $\alpha$ must be on the boundary of the pseudo-effective cone $\Eff_1$.
\end{proof}

The complete intersection cone differs from most cones considered in birational geometry in that it is \emph{not} convex.  Since we are not aware of any such example in the literature, we give a toric example from \cite{fs09} in the appendix.  The same example shows that the cone that is the closure of all products of $(n-1)$ ample divisors is also not convex.

\begin{rmk}
\label{rmk CI}
It is still true that $\CI_{1}(X)$ is ``locally convex". Let $A, B$ be two ample divisor classes. If $\epsilon$ is sufficiently small, then $$A^{n-1}+\epsilon B^{n-1}=A_\epsilon ^{n-1}$$
for a unique ample divisor $A_\epsilon$. The existence of $A_\epsilon$ follows from the Hard Lefschetz theorem. Consider the following smooth map
\begin{align*}
\Phi: N^1(X) \rightarrow N_1(X)
\end{align*}
sending $D$ to $D^{n-1}$. By the Hard Lefschetz theorem, the derivative $d\Phi$ is an isomorphism at the point $A$. Thus $\Phi$ is local diffeomorphism near $A$, yielding the existence of $A_\epsilon$. The uniqueness follows from Teissier's proportionality theorem.  (See \cite{GT13} for a more in-depth discussion.)
\end{rmk}

Another natural question is:

\begin{ques}
Suppose that $X$ is a projective variety of dimension $n$ and that $\{ A_{i} \}_{i=1}^{n-1}$ are ample divisor classes on $X$.  Then is $A_{1} \cdot \ldots \cdot A_{n-1} \in \CI_{1}(X)$?
\end{ques}

One can imagine that such a statement may be studied using an ``averaging'' method. We hope Theorem \ref{thm KT inequality} would be helpful in the study of this problem.

\subsection{Fields of characteristic $p$} \label{charp background sec}
Almost all the results in the paper will hold for smooth varieties over an arbitrary algebraically closed field.  The necessary technical generalizations are verified in the following references:
\begin{itemize}
\item \cite[Remark 1.6.5]{lazarsfeld04} checks that the Khovanskii-Teissier inequalities hold over an arbitrary algebraically closed field.
\item The existence of Fujita approximations over an arbitrary algebraically closed field is proved in \cite{takagi07}.
\item The basic properties of the $\sigma$-decomposition in positive characteristic are considered in \cite{mustata11}.
\item The results of \cite{Cut13} lay the foundations of the theory of positive products and volumes over an arbitrary field.
\item \cite{fl14} describes how the above results can be used to extend \cite{BDPP13} and most of the results of \cite{bfj09} over an arbitrary algebraically closed field.  In particular the description of the derivative of the volume function in \cite[Theorem A]{bfj09} holds for smooth varieties in any characteristic.
\end{itemize}

\subsection{Compact K\"ahler manifolds}
\label{kahler background sec}
The following results enable us to extend most of our results to arbitrary compact K\"ahler manifolds.
\begin{itemize}
\item The Khovanskii-Teissier inequalities for classes in the nef cone $\overline{\mathcal{K}}$ can be proved by the mixed Hodge-Riemann bilinear relations \cite{DN06},
    or by solving complex Monge-Amp\`{e}re equations \cite{Dem93}; see also Theorem \ref{thm KT inequality}.
\item Teissier's proportionality theorem for transcendental big and nef classes has recently been proved by \cite{FX14}; see also Theorem \ref{thm KT inequality}.
\item The theory of positive intersection products for pseudo-effective $(1,1)$-classes has been developed by \cite{Bou02, BDPP13, BEGZ10MAbig}.
\item The cone duality $\overline{\mathcal{K}}^* =\mathcal{N}$ follows from the numerical characterization of the K\"ahler cone of \cite{DP04}.
\end{itemize}
We remark that we need the cone duality $\overline{\mathcal{K}}^* =\mathcal{N}$ to extend the Zariski decompositions and Morse-type inequality for curves to positive currents of bidimension $(1,1)$.

Comparing with the projective situation, the main ingredient missing is
Demailly's conjecture on the transcendental holomorphic Morse inequality, which is in turn implied by the expected identification of the derivative of the volume function on pseudo-effective $(1,1)$-classes as in \cite{bfj09}. Indeed, it is not hard to see these two expected results are equivalent (see e.g. \cite[Proposition 1.1]{xiao2014movable} -- which is essentially \cite[Section 3.2]{bfj09}). And they would imply the duality of the cones $\mathcal{M}(X)$ and $\mathcal{E}(X)$.  Thus, any of our results which relies on either
the transcendental holomorphic Morse inequality, or the results of \cite{bfj09}, is still conjectural in the K\"ahler setting.
However, these conjectures are known if $X$ is a compact hyperk\"ahler manifold (see \cite[Theorem 10.12]{BDPP13}), so all of our results extend to compact hyperk\"ahler manifolds.

\section{Polar transforms}  \label{legendresection}

As explained in the introduction, Zariski decompositions capture the failure of the volume function to be strictly log concave.  In this section and the next, we use some basic convex analysis to define a formal Zariski decomposition which makes sense for any non-negative homogeneous log concave function on a cone.  The main tool is a Legendre-Fenchel type transform for such functions.

\subsection{Duality transforms}

Let $V$ be a finite-dimensional $\mathbb{R}$-vector space of dimension $n$,  and let $V^{*}$ be its dual. We denote the pairing of $w^* \in V^*$ and $v \in V$ by $w^* \cdot v$.  Let $\Cvx(V)$ denote the class of lower-semicontinuous convex functions on $V$.  Then \cite[Theorem 1]{milman09legendre} shows that, up to composition with an additive linear function and a symmetric linear transformation, the Legendre-Fenchel transform is the unique order-reversing involution $\mathcal{L}: \Cvx(V) \to \Cvx(V^{*})$.  Motivated by this result, the authors define a duality transform to be an order-reversing involution of this type and characterize the duality transforms in many other contexts (see e.g. \cite{milman11hiddenduality}, \cite{milman08sconcave}).

In this section we study a duality transform for the set of non-negative homogeneous functions on a cone.
This transform is the concave homogeneous version of the well-known polar transform; see \cite[Chapter 15]{rockafellar70convexBOOK} for the basic properties of this transform in a related context.
This transform is also a special case of the generalized Legendre-Fenchel transform studied by \cite[Section 14]{Moreau1966-1967}, which is the usual Legendre-Fenchel transform with a ``coupling function'' -- we would like to thank M. Jonsson for pointing this out to us.  See also \cite[Section 0.6]{singerconvexBOOK} and \cite[Chapter 1]{rubinov00} for a brief introduction to this perspective.
Finally, it is essentially the same as the transform $\mathcal{A}$ from \cite{milman11hiddenduality} when applied to homogeneous functions, and is closely related to other constructions of \cite{milman08sconcave}. \cite[Chapter 2]{rubinov00} and \cite{dr02} work in a different setting which nonetheless has some nice parallels with our situation.

Let $\mathcal{C}\subset V$ be a proper closed convex cone of full dimension and let $\mathcal{C}^* \subset V^*$ denote the dual cone of $\mathcal{C}$, that is,
\begin{align*}
\mathcal{C}^* =\{w^* \in V^*|\ w^* \cdot v \geq 0 \ \textrm{for any}\ v\in \mathcal{C}\}.
\end{align*}
We let $\HConc_s (\mathcal{C})$ denote the collection of functions $f: \mathcal{C} \to \mathbb{R}$ satisfying:
\begin{itemize}
  \item $f$ is upper-semicontinuous and homogeneous of weight $s>1$;
  \item $f$ is strictly positive in the interior of $\mathcal{C}$ (and hence non-negative on $\mathcal{C}$);
  \item $f$ is $s$-concave: for any $v,x \in \mathcal{C}$ we have $f(v)^{1/s} + f(x)^{1/s} \leq f(v+x)^{1/s}$.
\end{itemize}
Note that since $f^{1/s}$ is homogeneous of degree $1$, the definition of concavity for $f^{1/s}$ above coheres with the usual one. For any $f\in \HConc_s (\mathcal{C})$, the function $f^{1/s}$ can extend to a proper upper-semicontinuous concave function over $V$ by letting $f^{1/s}(v)=-\infty$ whenever $v\notin \mathcal{C}$.  Thus many tools developed for arbitrary concave functions on $V$ also apply in our case.

%\begin{defn} \label{defn generalized log concave}
%Let $\mathcal{C}' \subset \mathcal{C}$ be a (possibly non-convex) subcone.  We say that $f$ is strictly log concave on $\mathcal{C}'$ if
%\begin{align*}
%f(v)^{1/s} + f(x)^{1/s}< f(v+x)^{1/s}
%\end{align*}
%holds whenever $v, x\in \mathcal{C}'$ are not proportional.  Note that this definition makes sense even when $\mathcal{C}'$ is not itself convex.
%\end{defn}

Since an upper-semicontinuous function is continuous along decreasing sequences, the following continuity property of $f$ follows immediately from the non-negativity and concavity of $f^{1/s}$.

\begin{lem} \label{uppersemilimit}
Let $f \in \HConc_{s}(\mathcal{C})$ and $v \in \mathcal{C}$.  For any element $x \in \mathcal{C}$ we have $$f(v) = \lim_{t \to 0^{+}} f(v + tx).$$
\end{lem}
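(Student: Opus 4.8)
The plan is to sandwich $f(v+tx)$ between $f(v)$ (from below) and the $\limsup$ coming from upper-semicontinuity (from above), so that the limit is forced to equal $f(v)$.

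For the lower estimate, note that $v$ and $tx$ both lie in $\mathcal{C}$ for every $t \ge 0$, since $\mathcal{C}$ is a convex cone. Applying $s$-concavity to the pair $(v, tx)$ and using homogeneity (so that $f(tx)^{1/s} = t\,f(x)^{1/s}$) gives
\[
f(v+tx)^{1/s} \;\ge\; f(v)^{1/s} + f(tx)^{1/s} \;=\; f(v)^{1/s} + t\,f(x)^{1/s} \;\ge\; f(v)^{1/s},
\]
where the final inequality uses that $f$ is non-negative on $\mathcal{C}$. Raising to the $s$-th power, $f(v+tx) \ge f(v)$ for all $t \ge 0$; in particular $\liminf_{t\to 0^+} f(v+tx) \ge f(v)$. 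Equivalently, $f^{1/s}$ is superadditive and non-negative, hence nondecreasing along the cone order, and $\{v+tx\}$ decreases to $v$ as $t \downarrow 0$, so $f(v+tx)$ is itself nonincreasing in $t$ and bounded below by $f(v)$.

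For the upper estimate, $v+tx \to v$ in $\mathcal{C}$ as $t \to 0^+$, so upper-semicontinuity of $f$ gives $\limsup_{t\to 0^+} f(v+tx) \le f(v)$. Combining the two bounds yields $\lim_{t\to 0^+} f(v+tx) = f(v)$. The argument is elementary and I do not foresee a genuine obstacle; the one point worth emphasizing is that the lower bound uses \emph{both} $s$-concavity and the non-negativity of $f$ on $\mathcal{C}$ — without non-negativity the monotonicity $f(v+tx) \ge f(v)$ could fail, and the conclusion with it.
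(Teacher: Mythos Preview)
Your proof is correct and follows exactly the approach the paper indicates in its one-sentence justification: the lower bound comes from $s$-concavity together with non-negativity of $f$, and the upper bound from upper-semicontinuity. There is nothing to add.
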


In particular, any $f \in \HConc_{s}(\mathcal{C})$ must vanish at the origin. \\

In this section we outline the basic properties of  the polar transform $\mathcal{H}$ (following a suggestion of M.~Jonsson).  In contrast to abstract convex transforms, $\mathcal{H}$ retains all of the properties of the classical Lengendre-Fenchel transform.  Since the proofs are essentially the same as in the theory of classical convex analysis, we omit most of the proofs in this section.

Recall that the polar transform $\mathcal{H}$ associates to a function $f \in \HConc_{s}(\mathcal{C})$ the function $\mathcal{H}f: \mathcal{C}^{*} \to \mathbb{R}$ defined as
\begin{align*}
\mathcal{H} f (w^*):= \inf_{v\in \mathcal{C}^{\circ}} \left(\frac{w^* \cdot v}{f(v)^{1/s}}\right)^{s/s-1}.
\end{align*}
By Lemma \ref{uppersemilimit} the definition is unchanged if we instead vary $v$ over all elements of $\mathcal{C}$ where $f$ is positive.  The following proposition shows that $\mathcal{H}$ defines an order-reversing involution from $\HConc_{s}(\mathcal{C})$ to $\HConc_{s/s-1}(\mathcal{C}^{*})$. Its proof is similar to the classical result in convex analysis,
see e.g. \cite[Theorem 15.1]{rockafellar70convexBOOK}.

\begin{prop}
\label{prop H involution}
Let $f,g \in \HConc_s(\mathcal{C})$.  Then we have
\begin{enumerate}
\item $\mathcal{H} f \in \HConc_{s/s-1}(\mathcal{C}^{*})$.
\item If $f \leq g$ then $\mathcal{H} f \geq \mathcal{H} g$.
\item $\mathcal{H} ^2 f  = f$.
\end{enumerate}
\end{prop}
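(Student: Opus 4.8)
\textbf{Proof proposal for Proposition \ref{prop H involution}.}

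The plan is to transfer the classical Legendre-Fenchel machinery to our homogeneous setting by a change of variables, namely by replacing $f$ with the concave function $g = f^{1/s}$ extended to all of $V$ by $-\infty$ outside $\mathcal{C}$, and tracking how the polar transform $\mathcal{H}$ interacts with the usual support-function duality. Concretely, I would first record the elementary observation that $\mathcal{H}f$ depends only on $g = f^{1/s}$, and that the defining formula can be rewritten in ``support function'' form: for $w^* \in \mathcal{C}^*$,
\begin{equation*}
(\mathcal{H}f)(w^*)^{(s-1)/s} = \inf_{v \in \mathcal{C}^\circ, \, g(v) > 0} \frac{w^* \cdot v}{g(v)} = \inf\{ \lambda \geq 0 : w^* \cdot v \geq \lambda\, g(v) \text{ for all } v \in \mathcal{C} \}.
\end{equation*}
This last expression exhibits $(\mathcal{H}f)^{(s-1)/s}$ as the ``largest $\lambda$'' comparison between the linear functional $w^*$ and the concave function $g$, which is the homogeneous analogue of the epigraph/hypograph duality underlying the classical transform. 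I would use this reformulation as the common engine for all three parts.

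For part (1), I would check each defining property of $\HConc_{s/s-1}(\mathcal{C}^*)$ in turn. Homogeneity of weight $s/(s-1)$ is immediate from the formula since $w^*$ appears to the first power inside and the outer exponent is $s/(s-1)$. Upper semicontinuity follows because $\mathcal{H}f$ is an infimum of a family of continuous functions of $w^*$ (one for each fixed $v$), hence upper-semicontinuous; homogeneity then lets one pass from usc to the required regularity. For the $(s/(s-1))$-concavity of $\mathcal{H}f$, I would use the support-function description: if $\lambda_1 = (\mathcal{H}f(w_1^*))^{(s-1)/s}$ and $\lambda_2 = (\mathcal{H}f(w_2^*))^{(s-1)/s}$ satisfy $w_i^* \cdot v \geq \lambda_i g(v)$ for all $v \in \mathcal{C}$, then adding gives $(w_1^* + w_2^*) \cdot v \geq (\lambda_1 + \lambda_2) g(v)$, so $(\mathcal{H}f(w_1^* + w_2^*))^{(s-1)/s} \geq \lambda_1 + \lambda_2$, which is exactly superadditivity of $(\mathcal{H}f)^{(s-1)/s}$ — i.e. the required concavity. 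Strict positivity of $\mathcal{H}f$ on $(\mathcal{C}^*)^\circ$ requires an argument: for $w^*$ in the interior of $\mathcal{C}^*$, one needs a uniform lower bound $w^* \cdot v \geq c\, g(v)$ on $\mathcal{C}$; this follows by compactness, restricting to a slice $\{v \in \mathcal{C} : \ell \cdot v = 1\}$ for some $\ell \in (\mathcal{C}^*)^\circ$, on which $g$ is bounded above (by usc and homogeneity) while $w^* \cdot v$ is bounded below by a positive constant (since $w^*$ is interior). Finiteness of $\mathcal{H}f$ everywhere on $\mathcal{C}^*$ needs the reverse: that the infimum is not $-\infty$, which is automatic since the quantity is a power of a ratio of nonnegative numbers, and not $+\infty$ away from where $g$ vanishes — here one picks any single $v_0 \in \mathcal{C}^\circ$ with $g(v_0) > 0$ to bound the inf from above.

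Part (2) is immediate from the formula: if $f \leq g$ then $f^{1/s} \leq g^{1/s}$ pointwise, so each ratio $w^* \cdot v / f(v)^{1/s}$ is $\geq w^* \cdot v/g(v)^{1/s}$, and the infimum inherits the inequality (the outer exponent $s/(s-1)$ is increasing). Part (3), the involutivity $\mathcal{H}^2 f = f$, is the substantive claim and the main obstacle; I expect it to be the one genuinely nontrivial point. The inequality $\mathcal{H}^2 f \geq f$ is the easy direction and follows directly by plugging the definitions into each other and using that for each fixed $v \in \mathcal{C}^\circ$ and each $w^* \in (\mathcal{C}^*)^\circ$ one has $\mathcal{H}f(w^*)^{(s-1)/s} \leq w^* \cdot v / f(v)^{1/s}$, i.e. $w^* \cdot v \geq f(v)^{1/s} \mathcal{H}f(w^*)^{(s-1)/s}$, then taking the infimum over $w^*$. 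The reverse inequality $\mathcal{H}^2 f \leq f$ is where one must invoke a separation/supporting-hyperplane theorem: for fixed $v_0 \in \mathcal{C}^\circ$, one needs a $w^* \in \mathcal{C}^*$ achieving (or approaching) equality, i.e. a supporting functional to the hypograph of the concave homogeneous function $f^{1/s}$ at $v_0$. This is exactly the homogeneous/conic version of the biconjugation theorem (Fenchel–Moreau), and the cleanest route is probably to cite it: extend $f^{1/s}$ to the proper closed concave function on $V$ as the paper already notes is possible, apply the classical fact that a closed concave function equals its biconjugate, and then translate the biconjugate back through the support-function reformulation above to identify it with $\mathcal{H}^2 f$. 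The delicate bookkeeping is making sure the conic homogeneity is preserved under this translation and that the boundary behavior (where $f$ may vanish, or $w^*$ may lie on $\partial \mathcal{C}^*$) is handled by the usc/limit Lemma \ref{uppersemilimit} rather than causing the supporting functional to escape to infinity; restricting attention to $v_0$ in the open cone, where a finite supporting functional exists, and then using Lemma \ref{uppersemilimit} to extend the identity to the boundary, should suffice. Since, as the authors remark, this is formally parallel to \cite[Theorem 15.1]{rockafellar70convexBOOK}, I would present the argument compactly, doing the separation step carefully and referring to the classical source for the routine parts.
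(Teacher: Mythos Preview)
Your proposal is correct and follows exactly the classical convex-analysis route the paper has in mind: the authors omit the proof entirely, saying only that it ``is similar to the classical result in convex analysis, see e.g.~\cite[Theorem 15.1]{rockafellar70convexBOOK}.'' Your reformulation of $(\mathcal{H}f)^{(s-1)/s}$ as a support-function-type quantity, the direct verification of the $\HConc_{s/(s-1)}$ axioms in part (1), the trivial order-reversal in (2), and the Fenchel--Moreau biconjugation argument for (3) are precisely the standard ingredients that reference encodes, so there is nothing to compare.
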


It will be crucial to understand which points obtain the infimum in the definition of $\mathcal{H}f$.

\begin{defn}
Let $f \in \HConc_{s}(\mathcal{C})$.  For any $w^* \in \mathcal{C}^{*}$, we define $G_{w^*}$ to be the set of all $v \in \mathcal{C}$ which satisfy $f(v)>0$ and which achieve the infimum in the definition of $\mathcal{H}f(w^*)$, so that
\begin{align*}
\mathcal{H} f (w^*) = \left( \frac{w^* \cdot v}{f(v)^{1/s}} \right)^{s/s-1}.
\end{align*}
\end{defn}

\begin{rmk}
The set $G_{w^*}$ is the analogue of supergradients of concave functions. In particular, in the following sections we will see that the differential of $\mathcal{H}f$ at $w^*$ lies in $G_{w^*}$ if $\mathcal{H}f$ is differentiable.
\end{rmk}

It is easy to see that $G_{w^*} \cup \{ 0 \}$ is a convex subcone of $\mathcal{C}$.  Note the symmetry in the definition: if $v \in G_{w^{*}}$ and $\mathcal{H}f(w^{*})>0$ then $w^* \in G_{v}$.  Thus if $v \in \mathcal{C}$ and  $w^*\in \mathcal{C}^*$ satisfy $f(v)>0$ and $\mathcal{H}f(w^{*})>0$ then the conditions $v \in G_{w^{*}}$ and $w^{*} \in G_{v}$ are equivalent.

The analogue of the Young-Fenchel inequality in our situation is:

\begin{prop} \label{younginequality}
Let $f \in \HConc_{s}(\mathcal{C})$.  Then for any $v \in \mathcal{C}$ and $w^* \in \mathcal{C}^*$ we have
\begin{equation*}
\mathcal{H}f(w^*)^{s-1/s} f(v)^{1/s} \leq v \cdot w^*.
\end{equation*}
Furthermore, equality is obtained only if either $v \in G_{w^*}$ and $w^{*} \in G_v$, or at least one of $\mathcal{H}f(w^{*})$ and $f(v)$ vanishes.
\end{prop}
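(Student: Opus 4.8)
The plan is to read the inequality straight off the definition of $\mathcal{H}f$ and then deduce the equality clause from the symmetry $v \in G_{w^*} \iff w^* \in G_v$ noted just above the statement.

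First I would put the defining infimum into a linear form. Since $t \mapsto t^{s-1/s}$ is increasing on $[0,\infty)$ for $s>1$, it commutes with the infimum, giving
\[
\mathcal{H}f(w^*)^{s-1/s} = \inf_{u \in \mathcal{C}^{\circ}} \frac{w^{*}\cdot u}{f(u)^{1/s}}.
\]
By Lemma \ref{uppersemilimit} (approximate $v$ by $v + tx$ with $x \in \mathcal{C}^{\circ}$ and $t \to 0^{+}$, using continuity of $w^{*}\cdot(-)$ and of $f$ along such decreasing sequences), this infimum is unchanged if $u$ runs over all of $\mathcal{C}$ with $f(u)>0$. Hence for every $v \in \mathcal{C}$ with $f(v)>0$ we get $\mathcal{H}f(w^*)^{s-1/s} \leq (w^{*}\cdot v)/f(v)^{1/s}$, and multiplying through by $f(v)^{1/s} \geq 0$ yields the claimed inequality. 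In the remaining case $f(v)=0$ the left-hand side is $0$ while $w^{*}\cdot v \geq 0$ because $w^{*} \in \mathcal{C}^{*}$ and $v \in \mathcal{C}$, so the inequality holds trivially there.

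For the equality clause, suppose $\mathcal{H}f(w^*)^{s-1/s} f(v)^{1/s} = v \cdot w^{*}$ with neither $\mathcal{H}f(w^*)$ nor $f(v)$ equal to $0$. Then $f(v)>0$, so $v$ is an admissible competitor in the (extended) infimum computing $\mathcal{H}f(w^*)$; dividing the equality by $f(v)^{1/s}>0$ shows that $v$ attains it, i.e.\ $v \in G_{w^*}$. Since moreover $f(v)>0$ and $\mathcal{H}f(w^*)>0$, the equivalence $v \in G_{w^*} \iff w^{*} \in G_{v}$ recorded above upgrades this to $w^{*} \in G_{v}$, as desired.

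I do not expect any genuine obstacle: the statement is essentially a repackaging of the definition of $\mathcal{H}$ together with the Young--Fenchel-type bookkeeping. The only points that need a word of care are the use of Lemma \ref{uppersemilimit} to enlarge the domain of the infimum from $\mathcal{C}^{\circ}$ to $\{u \in \mathcal{C} : f(u)>0\}$, the degenerate case $f(v)=0$ disposed of by $w^{*} \in \mathcal{C}^{*}$, and keeping track that the monotone power $t \mapsto t^{s-1/s}$ is passed across the infimum in the correct direction.
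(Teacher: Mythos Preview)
Your argument is correct. The paper omits the proof of this proposition, remarking that such results are essentially the same as in classical convex analysis; your write-up is exactly the standard argument one would supply, and in particular the paper already records (immediately after the definition of $\mathcal{H}f$) the key observation you use, namely that by Lemma~\ref{uppersemilimit} the infimum may be taken over all $u\in\mathcal{C}$ with $f(u)>0$.
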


%\begin{proof}
%The statement is obvious if either $\mathcal{H}f(w^{*}) = 0$ or $f(v)=0$.  Otherwise by Lemma \ref{uppersemilimit} there is a sequence of $v_{k} \in \mathcal{C}^{\circ}$ such that $\lim_{k \to \infty} f(v_{k}) = f(v)$ and for every $k$
%\begin{equation*}
%\mathcal{H}f(w^*)^{s-1/s}  \leq \frac{v_{k} \cdot w^*}{f(v_{k})^{1/s}}.
%\end{equation*}
%We obtain the desired inequality by taking limits.  The last statement follows from the definition and the symmetry in the definition of $G$ noted above.
%\end{proof}
The next theorem describes the basic properties of $G_{v}$:

\begin{thrm}
\label{thm abstract zariski}
Let $f \in \HConc_{s}(\mathcal{C})$.
\begin{enumerate}
\item Fix $v \in \mathcal{C}$.  Let $\{w_{i}^{*}\}$ be a sequence of elements of $\mathcal{C}^{*}$ with $\mathcal{H}f(w_{i}^{*}) = 1$ such that $$f(v) = \lim_{i} (v \cdot w_{i}^{*})^{s} >0.$$  Suppose that the sequence admits an accumulation point $w^*$.  Then $f(v) = (v \cdot w^*)^s$ and $\mathcal{H}f(w^*)=1$.
\item For every $v \in \mathcal{C}^{\circ}$ we have that $G_{v}$ is non-empty.
\item Fix $v \in \mathcal{C}^{\circ}$.  Let $\{ v_{i} \}$ be a sequence of elements of $\mathcal{C}^{\circ}$ whose limit is $v$ and for each $v_{i}$ choose $w_{i}^{*} \in G_{v_{i}}$ with $\mathcal{H}f(w_{i}^{*}) = 1$.  Then the $w_{i}^{*}$ admit an accumulation point $w^*$, and any accumulation point lies in $G_{v}$ and satisfies $\mathcal{H}f(w^*)=1$.
\end{enumerate}
\end{thrm}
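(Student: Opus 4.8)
All three parts concern the set $G_v$ of minimizers, and the natural strategy is to exploit the homogeneity to reduce to a compact slice of $\mathcal{C}^*$ and then use upper-semicontinuity of $f$ together with the Young--Fenchel inequality (Proposition~\ref{younginequality}). I would normalize throughout by the slice $\{w^* \in \mathcal{C}^* : \mathcal{H}f(w^*)=1\}$; since $\mathcal{H}f \in \HConc_{s/s-1}(\mathcal{C}^*)$ by Proposition~\ref{prop H involution}, this slice is closed, and I would first check it meets every ray in $(\mathcal{C}^*)^\circ$ and is ``bounded in the directions that matter'' -- precisely, for fixed $v \in \mathcal{C}^\circ$ the function $w^* \mapsto v \cdot w^*$ is bounded below on it by $\mathcal{H}f(w^*)^{s-1/s} f(v)^{1/s} = f(v)^{1/s} > 0$, and I claim it is also proper (tends to $\infty$) there, which is where the full-dimensionality of $\mathcal{C}$ and pointedness of $\mathcal{C}^*$ enter.

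\textbf{Part (1).} Given the sequence $w_i^*$ with $\mathcal{H}f(w_i^*)=1$ and $f(v) = \lim_i (v\cdot w_i^*)^s > 0$, and an accumulation point $w^*$, pass to a subsequence converging to $w^*$. By continuity of the pairing, $v \cdot w^* = \lim_i v \cdot w_i^* = f(v)^{1/s}$, so $(v \cdot w^*)^s = f(v)$. It remains to see $\mathcal{H}f(w^*) = 1$: by upper-semicontinuity of $\mathcal{H}f$ we get $\mathcal{H}f(w^*) \geq \limsup_i \mathcal{H}f(w_i^*) = 1$; wait -- upper-semicontinuity gives $\mathcal{H}f(w^*) \ge \limsup \mathcal{H}f(w_i^*)$ only after noting u.s.c. means $\limsup_{w\to w^*}\mathcal{H}f(w) \le \mathcal{H}f(w^*)$, which indeed yields $\mathcal{H}f(w^*)\ge 1$. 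For the reverse inequality, apply Young--Fenchel: for this particular $v$, $\mathcal{H}f(w^*)^{s-1/s} f(v)^{1/s} \le v \cdot w^* = f(v)^{1/s}$, and since $f(v) > 0$ this forces $\mathcal{H}f(w^*)^{s-1/s} \le 1$, i.e. $\mathcal{H}f(w^*) \le 1$. Hence $\mathcal{H}f(w^*) = 1$ (and the equality case of Young--Fenchel then also records $v \in G_{w^*}$, $w^* \in G_v$).

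\textbf{Parts (2) and (3).} For (2), fix $v \in \mathcal{C}^\circ$. The definition of $\mathcal{H}f(v)$ -- here viewing $v$ as an element of $\mathcal{C}^{**}=\mathcal{C}$ paired against $\mathcal{C}^*$, or symmetrically just using the definition of $G_v$ directly -- gives a minimizing sequence $w_i^* \in \mathcal{C}^*$, which after rescaling satisfies $\mathcal{H}f(w_i^*) = 1$ and $(v\cdot w_i^*)^s \to \mathcal{H}^2 f(v) = f(v)$ using $\mathcal{H}^2 f = f$ from Proposition~\ref{prop H involution}. The properness claim above shows $\{w_i^*\}$ is bounded, hence has an accumulation point, and Part (1) applies to show that accumulation point lies in $G_v$; so $G_v \ne \emptyset$. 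For (3), given $v_i \to v$ in $\mathcal{C}^\circ$ with $w_i^* \in G_{v_i}$, $\mathcal{H}f(w_i^*)=1$: the boundedness of the slice-values $v_j\cdot w^*$ from below combined with properness, applied now uniformly for $v_i$ in a compact neighborhood of $v$, bounds $\{w_i^*\}$; take any accumulation point $w^*$, pass to a subsequence, and run the Young--Fenchel argument of Part~(1) with $v$ replaced by the limit -- using $f(v_i)^{1/s} = v_i \cdot w_i^* \to v\cdot w^*$ (continuity) and $f(v_i) \to f(v)$ (Lemma~\ref{uppersemilimit} gives only one-sided continuity of $f$, so here I would instead note $f(v) \ge \limsup f(v_i)$ by u.s.c. of $f$ and $f(v) \le \liminf f(v_i)^{?}$... actually the cleanest route: $v\cdot w^* = \lim v_i \cdot w_i^* = \lim f(v_i)^{1/s}$, and separately $\mathcal{H}f(v)^{(s-1)/s}\le v\cdot w^*$ while Part (1)/(2) gives $\mathcal{H}f(v) = \mathcal{H}^2 f(v)=f(v)$ achieved, pinning down that $w^* \in G_v$).

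\textbf{Main obstacle.} The genuinely non-formal point is the properness/boundedness claim: that on the slice $\{\mathcal{H}f = 1\}$ the linear functional $w^* \mapsto v\cdot w^*$ is proper for $v \in \mathcal{C}^\circ$, uniformly as $v$ ranges over a compact subset of $\mathcal{C}^\circ$. This is what guarantees accumulation points exist in Parts (2) and (3), and it is exactly where one must use that $\mathcal{C}$ is full-dimensional (so $\mathcal{C}^*$ is pointed and $v \in \mathcal{C}^\circ$ is strictly positive on $\mathcal{C}^* \setminus \{0\}$) together with the fact that $\mathcal{H}f$ is finite and positive on $(\mathcal{C}^*)^\circ$ -- so the slice cannot ``run off to infinity'' along $\partial\mathcal{C}^*$ without $\mathcal{H}f$ blowing up or $v\cdot w^*$ blowing up. I would prove it by contradiction: a sequence $w_i^*$ on the slice with $v\cdot w_i^*$ bounded but $\|w_i^*\|\to\infty$ would, after rescaling, limit to a nonzero $u^* \in \mathcal{C}^*$ with $v \cdot u^* = 0$, contradicting $v \in \mathcal{C}^\circ$. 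Everything else is a direct transcription of the classical Legendre--Fenchel arguments, as the authors have signaled.
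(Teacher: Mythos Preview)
Your approach is essentially the same as the paper's: Part (1) via upper-semicontinuity of $\mathcal{H}f$ plus the Young--Fenchel inequality, Part (2) via a normalized minimizing sequence and compactness, Part (3) via boundedness plus Part (1). The arguments for (1) and (2) match the paper almost verbatim.

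Two places where you make things harder than necessary. First, in Part (3) you hesitate over whether $f(v_i)\to f(v)$, noting that Lemma~\ref{uppersemilimit} only gives one direction. But $f^{1/s}$ is concave on the full-dimensional cone $\mathcal{C}$, and concave functions are automatically continuous on the interior of their domain; since $v\in\mathcal{C}^\circ$, continuity of $f$ at $v$ is immediate. The paper simply invokes this. Second, your ``uniform properness'' for the boundedness in (3) is the right idea but stated vaguely; to make it precise you need to bound $v\cdot w_i^*$ in terms of $v_i\cdot w_i^*=f(v_i)^{1/s}$, and the clean way (which the paper uses) is to observe that for $i$ large, $2v_i-v\in\mathcal{C}$, whence $(2v_i-v)\cdot w_i^*\ge 0$ gives $v\cdot w_i^*\le 2v_i\cdot w_i^*=2f(v_i)^{1/s}$, which is bounded by continuity of $f$ at $v$. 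Your contradiction argument at the end (rescale an unbounded sequence to get $u^*\neq 0$ with $v\cdot u^*=0$) would also work once you have a bound on $v\cdot w_i^*$, but obtaining that bound is exactly the point.
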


\begin{proof}
(1) The limiting statement for $f(v)$ is clear.  We have $\mathcal{H}f(w^*) \geq 1$ by upper semicontinuity, so that
\begin{equation*}
f(v)^{1/s} = \lim_{i \to \infty} v \cdot w_{i}^* \geq \frac{v \cdot w^*}{\mathcal{H}f(w^*)^{s-1/s}} \geq f(v)^{1/s}.
\end{equation*}
Thus we have equality everywhere.  If $\mathcal{H}f(w^{*})^{s-1/s} > 1$ then we obtain a strict inequality in the middle, a contradiction.

(2) Let $w_i^*$ be a sequence of points in $\mathcal{C}^{* \circ}$ with $\mathcal{H}f(w_i^*)=1$ such that $f(v) = \lim_{i \to \infty} (w_i^* \cdot v)^{s}$.  By (1) it suffices to see that the $w_{i}^{*}$ vary in a compact set.  But since $v$ is an interior point, the set of points which have intersection with $v$ less than $2f(v)^{1/s}$ is bounded.

(3) By (1) it suffices to show that the $w_{i}^*$ vary in a compact set.  For sufficiently large $i$ we have that $2v_{i} - v \in \mathcal{C}$.  By the log concavity of $f$ on $\mathcal{C}$ we see that $f$ must be continuous at $v$.  Thus for any fixed $\epsilon > 0$, we have for sufficiently large $i$
\begin{equation*}
w_{i}^{*} \cdot v \leq 2 w_{i}^{*} \cdot v_{i} \leq 2(1+\epsilon) f(v)^{1/s}.
\end{equation*}
Since $v$ lies in the interior of $\mathcal{C}$, this implies that the $w_{i}^{*}$ must lie in a bounded set.
\end{proof}

We next identify the collection of points where $f$ is controlled by $\mathcal{H}$.

\begin{defn}
Let $f \in \HConc_{s}(\mathcal{C})$.  We define $\mathcal{C}_{f}$ to be the set of all $v \in \mathcal{C}$ such that $v \in G_{w^*}$ for some $w^{*} \in \mathcal{C}$ satisfying $\mathcal{H}f(w^*)>0$.
\end{defn}

Since $v\in G_{w^*}$ and $\mathcal{H}f(w^*)>0$, Proposition \ref{younginequality} and the symmetry of $G$ show that $w^{*} \in G_{v}$.  Furthermore, we have $\mathcal{C}^{\circ} \subset \mathcal{C}_{f}$ by Theorem \ref{thm abstract zariski} and the symmetry of $G$.

\subsection{Differentiability}

\begin{defn}
We say that $f \in \HConc_{s}(\mathcal{C})$ is differentiable if it is $\mathcal{C}^{1}$ on $\mathcal{C}^{\circ}$.  In this case we define the function
\begin{align*}
D: \mathcal{C}^{\circ}  \to V^{*} \qquad \qquad \textrm{by} \qquad \qquad
v  \mapsto \frac{Df(v)}{s}.
\end{align*}
\end{defn}

The main properties of the derivative are:

\begin{thrm} \label{derivativeandbm}
Suppose that $f \in \HConc_{s}(\mathcal{C})$ is differentiable.  Then
\begin{enumerate}
\item $D$ defines an $(s-1)$-homogeneous function from $\mathcal{C}^{\circ}$ to $\mathcal{C}^{*}_{\mathcal{H}f}$.
\item $D$ satisfies a Brunn-Minkowski inequality with respect to $f$: for any $v \in \mathcal{C}^{\circ}$ and $x \in \mathcal{C}$
\begin{equation*}
D(v) \cdot x \geq f(v)^{s-1/s} f(x)^{1/s}.
\end{equation*}
Moreover, we have $D(v) \cdot v = f(v) = \mathcal{H}f(D(v))$.
\end{enumerate}
\end{thrm}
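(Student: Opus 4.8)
The plan is to derive everything from two classical facts: Euler's identity for homogeneous functions, and the supergradient inequality for the concave function $f^{1/s}$. \textbf{Step 1 (homogeneity and Euler).} Since $f$ is homogeneous of weight $s$, differentiating $f(\lambda v)=\lambda^s f(v)$ in $v$ shows $Df$ is homogeneous of weight $s-1$, so $D = Df/s$ is $(s-1)$-homogeneous; and Euler's relation gives $Df(v)\cdot v = s\,f(v)$, i.e. $D(v)\cdot v = f(v)$ for all $v\in\mathcal{C}^{\circ}$. This already establishes the $(s-1)$-homogeneity claim in (1) and one of the two equalities $D(v)\cdot v = f(v) = \mathcal{H}f(D(v))$ in (2).

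\textbf{Step 2 (the Brunn--Minkowski inequality).} Fix $v\in\mathcal{C}^{\circ}$ and $x\in\mathcal{C}$. The one-variable function $t\mapsto f^{1/s}\big((1-t)v+tx\big)$ is concave on $[0,1]$, since the segment lies in $\mathcal{C}$ and $f^{1/s}$ is concave there; and because $f$ is $\mathcal{C}^{1}$ and positive at the interior point $v$, this function has a right derivative at $t=0$ equal to $D(f^{1/s})(v)\cdot(x-v) = f(v)^{(1-s)/s}\,D(v)\cdot(x-v)$. Monotonicity of the difference quotients of a concave function then gives $f^{1/s}(x) - f(v)^{1/s} \leq f(v)^{(1-s)/s}\,D(v)\cdot(x-v)$; substituting $D(v)\cdot v = f(v)$ from Step 1 and simplifying yields $D(v)\cdot x \geq f(v)^{(s-1)/s}\,f(x)^{1/s}$. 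Since $f\geq 0$ on $\mathcal{C}$, this shows in particular $D(v)\cdot x\geq 0$ for every $x\in\mathcal{C}$, so $D(v)\in\mathcal{C}^{*}$.

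\textbf{Step 3 (identification of $\mathcal{H}f(D(v))$ and membership in $\mathcal{C}^{*}_{\mathcal{H}f}$).} By Step 2, $\frac{D(v)\cdot x}{f(x)^{1/s}}\geq f(v)^{(s-1)/s}$ for all $x\in\mathcal{C}^{\circ}$, with equality at $x=v$ by Euler's identity. Taking the infimum and raising to the power $s/(s-1)$ gives $\mathcal{H}f(D(v)) = f(v)$, and moreover $v$ achieves this infimum, i.e. $v\in G_{D(v)}$. Since $f(v)>0$ and $\mathcal{H}f(D(v)) = f(v)>0$, the symmetry of the $G$ construction gives $D(v)\in G_{v}$; as $\mathcal{H}(\mathcal{H}f)(v) = f(v)>0$ by Proposition \ref{prop H involution}(3), this is exactly the defining condition for $D(v)\in\mathcal{C}^{*}_{\mathcal{H}f}$. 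Together with Steps 1 and 2 this completes both (1) and (2).

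\textbf{Expected main obstacle.} The only genuinely delicate point is justifying the supergradient inequality for $x$ on the boundary of $\mathcal{C}$, where $f^{1/s}$ need not be differentiable: the one-variable restriction in Step 2 sidesteps this by using only concavity along the segment and the existence of the directional derivative at the \emph{interior} point $v$. A secondary nuisance is keeping the exponents $s$, $s-1$, $s/(s-1)$ straight and matching the definition of $\mathcal{C}^{*}_{\mathcal{H}f}$ (the set $\mathcal{C}_{\mathcal{H}f}$ attached to the transform $\mathcal{H}f\in\HConc_{s/(s-1)}(\mathcal{C}^{*})$) with the symmetry statement for $G$; both are bookkeeping rather than substance.
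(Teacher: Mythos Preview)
Your proof is correct and follows exactly the standard convex-analysis route that the paper has in mind. In fact the paper omits the proof of this theorem entirely, remarking earlier in the section that ``the proofs are essentially the same as in the theory of classical convex analysis''; your Euler identity plus supergradient-of-$f^{1/s}$ argument is precisely that classical proof, and your bookkeeping for $\mathcal{C}^{*}_{\mathcal{H}f}$ via the symmetry of $G$ is the right way to close the loop.
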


%\begin{proof}
%For (1), the homogeneity is clear.  Note that for any $v \in \mathcal{C}^{\circ}$ and $x \in C$ we have $f(v+x) \geq f(v)$ by the non-negativity of $f$ and the concavity of $f^{1/s}$.  Thus $D$ takes values in $\mathcal{C}^{*}$.  The fact that it takes values in $\mathcal{C}^{*}_{\mathcal{H}f}$ is a consequence of (2) which shows that $D(v) \in G_{v}$.

%For (2), we start with the inequality $f(v+\epsilon x)^{1/s} \geq f(v)^{1/s} + f(\epsilon x)^{1/s}$.  Since we have equality when $\epsilon = 0$, by taking derivatives with respect to $\epsilon$ at $0$, we obtain
%\begin{equation*}
%\frac{Df(v)}{s} \cdot x \geq f(v)^{s-1/s} f(x)^{1/s}.
%\end{equation*}
%The equality $\mathcal{H}f(D(v))=f(v)$ is a consequence of the Brunn-Minkowski inequality, and the equality $D(v) \cdot v = f(v)$ is a consequence of the homogeneity of $f$.
%\end{proof}

We will need the following familiar criterion for the differentiability of $f$, which is  an analogue of related results in convex analysis connecting the differentiability with the uniqueness of supergradient (see e.g. \cite[Theorem 25.1]{rockafellar70convexBOOK}).

\begin{prop} \label{diffuniqueness}
Let $f \in \HConc_{s}(\mathcal{C})$.  Let $U\subset \mathcal{C}^{\circ}$ be an open set.  Then $f|_{U}$ is differentiable if and only if for every $v \in U$ the set $G_{v} \cup \{0\}$ consists of a single ray.  In this case $D(v)$ is defined by intersecting against the unique element $w^{*} \in G_{v}$ satisfying $\mathcal{H}f(w^{*})=f(v)$.
\end{prop}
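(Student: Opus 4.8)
The plan is to mimic the classical argument (as in \cite[Theorem 25.1]{rockafellar70convexBOOK}) relating the uniqueness of the supergradient to differentiability, working with the extension of $f^{1/s}$ to a proper upper-semicontinuous concave function on $V$ described after the definition of $\HConc_s(\mathcal{C})$. First I would make the dictionary between $G_v$ and supergradients precise: using the Young--Fenchel inequality of Proposition \ref{younginequality} and the symmetry of $G$, for $v \in \mathcal{C}^\circ$ the rays in $G_v \cup \{0\}$ correspond exactly (after the normalization $\mathcal{H}f(w^*) = f(v)$, which is possible by Theorem \ref{thm abstract zariski}(2) together with homogeneity) to the supergradients of the concave function $f^{1/s}$ at $v$ — concretely, $w^* \in G_v$ with $\mathcal{H}f(w^*) = f(v)$ iff $\tfrac{1}{s} f(v)^{1/s - 1} w^*$ is a supergradient of $f^{1/s}$ at $v$. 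This reduces the statement to the standard fact that a finite concave function on an open convex set is differentiable at a point iff its superdifferential there is a single vector, plus the homogeneity bookkeeping needed to pass between $f$ and $f^{1/s}$.

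For the forward direction ($f|_U$ differentiable $\Rightarrow$ $G_v \cup \{0\}$ a single ray), I would argue that if $f$ is $\mathcal{C}^1$ at $v$ then $f^{1/s}$ is $\mathcal{C}^1$ at $v$ (since $f(v) > 0$ on $\mathcal{C}^\circ$, the map $t \mapsto t^{1/s}$ is smooth there), hence has a unique supergradient $\nabla f^{1/s}(v)$; translating back, every element of $G_v$ must be a positive multiple of the single vector $w^* := s\, f(v)^{1 - 1/s}\, \tfrac{1}{s} \nabla (f^{1/s})(v)$, so $G_v \cup \{0\}$ is one ray. I also need to check $D(v)$ is given by intersecting against the unique $w^* \in G_v$ with $\mathcal{H}f(w^*) = f(v)$: this is exactly the content of Theorem \ref{derivativeandbm}(2), which gives $D(v) \cdot v = f(v) = \mathcal{H}f(D(v))$ and $D(v) \in \mathcal{C}^*_{\mathcal{H}f}$, forcing $D(v) \in G_v$ with the stated normalization.

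For the converse ($G_v \cup \{0\}$ a single ray for all $v \in U$ $\Rightarrow$ $f|_U$ differentiable), I would invoke the classical theorem: a proper concave function is differentiable on the interior of its domain precisely at the points where the superdifferential is a singleton, and on the set of such points the gradient map is continuous. By the dictionary above, single-ray-ness of $G_v$ for every $v \in U$ says the superdifferential of $f^{1/s}$ is a singleton throughout $U$, so $f^{1/s}$ is $\mathcal{C}^1$ on $U$, and then $f = (f^{1/s})^s$ is $\mathcal{C}^1$ on $U$ as well since $f^{1/s} > 0$ there. The main obstacle, and the step deserving the most care, is the normalization/homogeneity bookkeeping: the set $G_v$ is a full ray rather than a single point (because $\mathcal{H}f$ is only scale-constrained, not fixed), so one must consistently pin down the representative $w^*$ with $\mathcal{H}f(w^*) = f(v)$ and verify that this is the one recovered by the derivative — this is where one actually uses Proposition \ref{younginequality}, its equality case, and Theorem \ref{derivativeandbm}(2) rather than pure convex analysis. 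The rest is a routine transcription of \cite[Theorem 25.1]{rockafellar70convexBOOK}, which is why I would keep it brief.
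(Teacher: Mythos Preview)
Your approach is correct and is exactly the one the paper has in mind: the paper does not actually write out a proof but simply states the result as ``an analogue of \cite[Theorem 25.1]{rockafellar70convexBOOK}'', and your proposal is precisely the reduction to that theorem via the dictionary between rays in $G_v$ and supergradients of $f^{1/s}$. One cosmetic slip: the supergradient of $f^{1/s}$ at $v$ corresponding to the normalized $w^* \in G_v$ with $\mathcal{H}f(w^*)=f(v)$ is $f(v)^{1/s-1}w^*$, without the extra factor of $\tfrac{1}{s}$ (your second formula for $w^*$ is correct, but the first is off by $s$); this does not affect the argument.
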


We next discuss the behaviour of the derivative along the boundary.

\begin{defn}
We say that $f \in \HConc_{s}(\mathcal{C})$ is $+$-differentiable if $f$ is $\mathcal{C}^{1}$ on $\mathcal{C}^{\circ}$
and the derivative on $\mathcal{C}^{\circ}$ extends to a continuous function on all of $\mathcal{C}_{f}$.
\end{defn}

It is easy to see that the $+$-differentiability implies continuity.
\begin{lem}\label{lem +c1 implies continuity}
If $f \in \HConc_{s}(\mathcal{C})$ is $+$-differentiable then $f$ is continuous on $\mathcal{C}_f$.
\end{lem}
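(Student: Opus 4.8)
The plan is to show that the identity $f(v) = D(v)\cdot v$, which Theorem~\ref{derivativeandbm}(2) establishes on the interior $\mathcal{C}^{\circ}$, persists on all of $\mathcal{C}_{f}$ once $D$ is understood as its continuous extension; continuity of $f$ on $\mathcal{C}_{f}$ then drops out immediately. First I would record two facts already available in the text: since $f$ is $s$-concave, $f^{1/s}$ is a finite concave function on $\mathcal{C}$, so $f$ is automatically continuous on $\mathcal{C}^{\circ}$; and $\mathcal{C}^{\circ}\subset\mathcal{C}_{f}$. Hence the only genuinely new content lies at boundary points of $\mathcal{C}$ that happen to lie in $\mathcal{C}_{f}$.

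The key step is the following. Fix $v\in\mathcal{C}_{f}$ and choose any interior point $x\in\mathcal{C}^{\circ}$. For every $t>0$ the point $v+tx$ lies in $\mathcal{C}^{\circ}\subset\mathcal{C}_{f}$, so Theorem~\ref{derivativeandbm}(2) gives $f(v+tx) = D(v+tx)\cdot(v+tx)$. Now let $t\to 0^{+}$. The left-hand side tends to $f(v)$ by Lemma~\ref{uppersemilimit}. On the right-hand side, the path $v+tx$ converges to $v$ while staying inside $\mathcal{C}_{f}$, so $D(v+tx)\to D(v)$ by the hypothesis that the derivative extends to a continuous function on $\mathcal{C}_{f}$ ($+$-differentiability); combined with $v+tx\to v$ and the continuity of the pairing, this yields $D(v+tx)\cdot(v+tx)\to D(v)\cdot v$. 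Therefore $f(v) = D(v)\cdot v$ for every $v\in\mathcal{C}_{f}$.

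To conclude continuity of $f$ on $\mathcal{C}_{f}$, let $\{v_{i}\}\subset\mathcal{C}_{f}$ be any sequence converging to some $v\in\mathcal{C}_{f}$. Applying the identity just proved to each $v_{i}$ and to $v$ gives $f(v_{i}) = D(v_{i})\cdot v_{i}$ and $f(v) = D(v)\cdot v$; since $D$ is continuous on $\mathcal{C}_{f}$ we have $D(v_{i})\to D(v)$, and hence $f(v_{i})\to f(v)$. Thus $f|_{\mathcal{C}_{f}}$ is continuous.

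I do not anticipate a serious obstacle: the proof is essentially an assembly of results already in place. The only point needing care is the propagation of the formula $f = D(\cdot)\cdot(\cdot)$ from $\mathcal{C}^{\circ}$ to boundary points of $\mathcal{C}$ inside $\mathcal{C}_{f}$, and this is precisely where one must approach $v$ along an interior direction $x$ — so that Theorem~\ref{derivativeandbm} applies to the approximants $v+tx$ — and then simultaneously invoke Lemma~\ref{uppersemilimit} to control the left-hand side and the definition of $+$-differentiability to control the right-hand side.
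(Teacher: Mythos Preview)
Your proof is correct and is essentially the argument the paper has in mind: the lemma itself is stated without proof, but the subsequent Remark~\ref{extensiontoboundaryrmk} sketches exactly the same idea, namely propagating the identity $f(v)=D(v)\cdot v$ from $\mathcal{C}^{\circ}$ to all of $\mathcal{C}_{f}$ by approaching along interior directions and combining Lemma~\ref{uppersemilimit} with the continuity of $D$. Your write-up makes this explicit and then deduces continuity of $f$ from continuity of $D$, which is precisely the intended route.
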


\begin{rmk} \label{extensiontoboundaryrmk}
For $+$-differentiable functions $f$, we define the function $D: \mathcal{C}_{f} \to V^{*}$ by extending continuously from $\mathcal{C}^{\circ}$.
Many of the properties in Theorem \ref{derivativeandbm} hold for $D$ on all of $\mathcal{C}_f$.  By taking limits and applying Lemma \ref{uppersemilimit} we obtain the Brunn-Minkowski inequality. In particular, for any $x\in \mathcal{C}_f$ we still have
$$D(x) \cdot x = f(x)=\mathcal{H}f(D(x)).$$
Thus it is clear that $D(x)\in \mathcal{C}_{\mathcal{H}f}^{*}$ for any $x\in \mathcal{C}_f$.
\end{rmk}

\begin{lem}\label{lemma extention derivative}
Assume $f \in \HConc_{s}(\mathcal{C})$ is $+$-differentiable. For any $x\in \mathcal{C}_f$ and $y\in \mathcal{C}^\circ$, we have
\begin{align*}
\left. \frac{d}{dt}\right|_{t=0^+} f(x+ty)^{1/s}=(D(x)\cdot y) f(x)^{1-s/s}.
\end{align*}
\end{lem}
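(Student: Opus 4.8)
The plan is to reduce the statement to the chain rule on the interior of $\mathcal{C}$ together with a one-sided limiting argument powered by $+$-differentiability. Two preliminary remarks are in order. First, since $x \in \mathcal{C}_f$ there is $w^{*} \in \mathcal{C}^{*}$ with $\mathcal{H}f(w^{*}) > 0$ and $x \in G_{w^{*}}$, and by the very definition of $G_{w^{*}}$ this forces $f(x) > 0$, so that the quantity $f(x)^{1-s/s}$ appearing on the right-hand side is a well-defined positive number. Second, since $y \in \mathcal{C}^{\circ}$ and $\mathcal{C}$ is a convex cone, $x + ty = t(t^{-1}x + y) \in \mathcal{C}^{\circ}$ for every $t > 0$; hence $g(t) := f(x+ty)^{1/s}$ is $\mathcal{C}^{1}$ on $(0,\infty)$ because $f$ is differentiable on $\mathcal{C}^{\circ}$, and the chain rule together with the identity $D = Df/s$ gives
\[
g'(t) = f(x+ty)^{1-s/s}\,\bigl(D(x+ty)\cdot y\bigr), \qquad t > 0.
\]

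Next I would analyze the behaviour of $g'(t)$ as $t \to 0^{+}$. By Lemma \ref{uppersemilimit}, applied with the point $x$ and the element $y$, we have $f(x+ty) \to f(x) > 0$, so $f(x+ty)^{1-s/s} \to f(x)^{1-s/s}$; and since $x+ty \in \mathcal{C}^{\circ} \subset \mathcal{C}_f$ while $x \in \mathcal{C}_f$, the hypothesis that $f$ is $+$-differentiable says exactly that $D$ extends to a continuous function on $\mathcal{C}_f$, hence $D(x+ty)\cdot y \to D(x)\cdot y$. Therefore $g'(t) \to \ell := f(x)^{1-s/s}(D(x)\cdot y)$, a finite limit. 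Moreover $g$ extends continuously to $t = 0$ with $g(0) = f(x)^{1/s}$, again by Lemma \ref{uppersemilimit} (or by Lemma \ref{lem +c1 implies continuity}).

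I would then conclude by the standard one-sided derivative criterion: for $t > 0$, the mean value theorem applied to $g$ on $[0,t]$ (legitimate since $g$ is continuous on $[0,t]$ and differentiable on $(0,t)$) produces $\xi_t \in (0,t)$ with $(g(t) - g(0))/t = g'(\xi_t)$; letting $t \to 0^{+}$ forces $\xi_t \to 0^{+}$, whence $(g(t)-g(0))/t \to \ell$. This is precisely the claimed identity $\left.\frac{d}{dt}\right|_{t=0^{+}} f(x+ty)^{1/s} = (D(x)\cdot y)\,f(x)^{1-s/s}$.

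The computation is essentially routine, and for $x \in \mathcal{C}^{\circ}$ it is nothing more than the chain rule. The only points that need care are the observation $f(x) > 0$ for $x \in \mathcal{C}_f$, which is what keeps the powers of $f$ continuous near $x$ and makes the right-hand side meaningful, and the invocation of $+$-differentiability in exactly the right place: $D$, a priori only defined and $\mathcal{C}^{1}$ on $\mathcal{C}^{\circ}$, must extend continuously across the boundary point $x$ so that $D(x+ty) \to D(x)$. That continuity, built into the definition of $+$-differentiable via the set $\mathcal{C}_f$, is what carries the lemma over the boundary; I expect this to be the only substantive step.
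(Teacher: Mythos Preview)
Your proof is correct. The paper in fact states this lemma without proof, so there is no argument to compare against; your approach---chain rule on the interior, continuity of $D$ from $+$-differentiability, and a mean-value-theorem passage to the boundary---is the natural one and exactly what the omitted proof should contain.
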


%\begin{proof}
%Consider the concave function $f(x+ty)^{1/s}$ of $t$. By \cite[Theorem 24.1]{rockafellar70convexBOOK} we have
%\begin{align*}
%\left. \frac{d}{dt}\right|_{t=0^+} f(x+ty)^{1/s} &=\lim_{\epsilon\downarrow 0}\left. \frac{d}{dt}\right|_{t=\epsilon^+} f(x+ty)^{1/s}\\
%&=\lim_{\epsilon\downarrow 0} (D(x+\epsilon y)\cdot y) f(x+\epsilon y)^{1-s/s}\\
%&= (D(x)\cdot y) f(x)^{1-s/s},
%\end{align*}
%where the second line follows from the differentiability of $f$, and the third line follows from the $+$-differentiability of $f$.
%\end{proof}

We next analyze what we can deduce about $f$ in a neighborhood of $v \in \mathcal{C}_{f}$ from the fact that $G_{v} \cup \{ 0 \}$ is a unique ray.

\begin{lem}
\label{lem compactness}
Let $f \in \HConc_{s}(\mathcal{C})$.  Let $v \in \mathcal{C}_{f}$  and assume that $G_{v} \cup \{0\}$ consists of a single ray.  Suppose $\{v_i\}$ is a sequence of elements of $\mathcal{C}_{f}$ converging to $v$. Let
$w^*_i \in G_{v_i}$ be any point satisfying $\mathcal{H}f(w^*_i)=1$.  Then the $w^*_{i}$ vary in a compact set.  Any accumulation point $w^*$ must be the unique point in $G_{v}$ satisfying $\mathcal{H}f(w^*)=1$.
\end{lem}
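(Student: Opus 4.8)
The plan is to split the statement into a compactness claim and an identification of accumulation points, and in both steps to replace the argument ``$2v_i-v\in\mathcal{C}$'' used for interior points in Theorem \ref{thm abstract zariski}(3) — which can fail when $v\in\partial\mathcal{C}$ — by one that genuinely invokes the single-ray hypothesis. First I would record the setup: since $v\in\mathcal{C}_f$ we have $f(v)>0$, and the single-ray assumption gives a \emph{unique} element $w^*\in G_v$ with $\mathcal{H}f(w^*)=1$; by the equality case of Proposition \ref{younginequality} this $w^*$ satisfies $w^*\cdot x\geq f(x)^{1/s}$ for all $x\in\mathcal{C}$, with $w^*\cdot v=f(v)^{1/s}$.

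For compactness, suppose toward a contradiction that some subsequence has $\|w_i^*\|\to\infty$. Passing to a further subsequence, $u_i^*:=w_i^*/\|w_i^*\|$ converges to some $u^*\in\mathcal{C}^*$ with $\|u^*\|=1$. Upper-semicontinuity of $f$ together with $v_i\to v$ gives $\limsup_i f(v_i)\leq f(v)<\infty$, and since $w_i^*\in G_{v_i}$ with $\mathcal{H}f(w_i^*)=1$ we have $w_i^*\cdot v_i=f(v_i)^{1/s}$; dividing by $\|w_i^*\|$ and letting $i\to\infty$ yields $u^*\cdot v=0$. In particular $u^*$ is not a multiple of $w^*$, since $w^*\cdot v=f(v)^{1/s}>0$. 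Now for every $t\geq 0$ the class $w^*+tu^*\in\mathcal{C}^*$ satisfies $(w^*+tu^*)\cdot x\geq w^*\cdot x\geq f(x)^{1/s}$ for all $x\in\mathcal{C}$, so $\mathcal{H}f(w^*+tu^*)\geq 1$; while $(w^*+tu^*)\cdot v=w^*\cdot v=f(v)^{1/s}$ with $f(v)>0$ forces $\mathcal{H}f(w^*+tu^*)\leq 1$, hence $\mathcal{H}f(w^*+tu^*)=1$ and $w^*+tu^*\in G_v$ by the equality case of Proposition \ref{younginequality}. Thus $G_v$ contains the non-parallel classes $w^*$ and $w^*+u^*$, contradicting the hypothesis that $G_v\cup\{0\}$ is a single ray. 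Therefore $\{w_i^*\}$ lies in a compact set.

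For the accumulation points, let $w^*_\infty=\lim_k w_{i_k}^*$. Writing $f(v_{i_k})^{1/s}=w_{i_k}^*\cdot v_{i_k}=w_{i_k}^*\cdot v+w_{i_k}^*\cdot(v_{i_k}-v)$ and using $w_{i_k}^*\cdot v\geq f(v)^{1/s}$ (Proposition \ref{younginequality}), boundedness of $\{w_{i_k}^*\}$, and $\limsup_k f(v_{i_k})\leq f(v)$, one deduces $f(v_{i_k})\to f(v)$ and hence $w^*_\infty\cdot v=f(v)^{1/s}$. Upper-semicontinuity of $\mathcal{H}f$ gives $\mathcal{H}f(w^*_\infty)\geq\limsup_k\mathcal{H}f(w_{i_k}^*)=1$, while evaluating the defining infimum at $v$ (permissible since $f(v)>0$, by Lemma \ref{uppersemilimit}) gives $\mathcal{H}f(w^*_\infty)\leq(w^*_\infty\cdot v/f(v)^{1/s})^{s/s-1}=1$. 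So $\mathcal{H}f(w^*_\infty)=1$ and, by the equality case of Proposition \ref{younginequality}, $w^*_\infty\in G_v$; the single-ray hypothesis then forces $w^*_\infty=w^*$. (Since $\{w_i^*\}$ is relatively compact with $w^*$ as its only accumulation point, in fact $w_i^*\to w^*$.)

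The step I expect to be the main obstacle is the compactness argument: the naive approach of copying the proof of Theorem \ref{thm abstract zariski}(3) breaks down precisely because $v$ may lie on $\partial\mathcal{C}$, and the right move is to observe that an escaping sequence would manufacture a new supporting functional $u^*$ with $u^*\cdot v=0$ that can be added to $w^*$ inside $G_v$ — which is exactly what the single-ray hypothesis rules out. Everything else is a routine squeeze using the Young--Fenchel inequality and upper-semicontinuity.
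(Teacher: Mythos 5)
Your proof is correct, and it follows the same basic strategy as the paper's: assume an escaping subsequence, normalize, and use the single-ray hypothesis on $G_v$ to derive a contradiction. The essential idea --- that a limiting direction $u^*$ with $u^*\cdot v=0$ can be added to $w^*$ while staying in $G_v$ --- is precisely the contradiction the paper also manufactures. The expositional difference is that you observe directly from $w_i^*\cdot v_i=f(v_i)^{1/s}$ (bounded by upper-semicontinuity) that the normalized limit satisfies $u^*\cdot v=0$, and you then conclude immediately from the single-ray hypothesis. The paper normalizes by $w_i^*\cdot m$ for interior $m$, proves $v\cdot\widehat{w}_0^*>0$ by ruling out $v\cdot\widehat{w}_0^*=0$ with the single-ray argument, and then derives a \emph{second} contradiction from $\mathcal{H}f(\widehat{w}_i^*)\to 0$. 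Both are sound, but yours is the more economical route, since the case $v\cdot\widehat{w}_0^*>0$ never in fact occurs. For identifying the accumulation point, the paper appeals to Theorem \ref{thm abstract zariski} without noting that one must first establish $f(v_i)\to f(v)$, since there $w_i^*\in G_{v_i}$ rather than $G_v$; your $\liminf$/$\limsup$ squeeze using Proposition \ref{younginequality} supplies exactly the missing step and is the correct way to close that gap.
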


\begin{proof}
By Theorem \ref{thm abstract zariski} it suffices to prove that the $w^*_i$ vary in a compact set.  Otherwise, we must have that $w^*_i \cdot m$ is unbounded for some interior point $m \in \mathcal{C}^\circ$.  By passing to a subsequence we may suppose that $w^*_{i} \cdot m \to \infty$.  Consider the normalization
$$\widehat{w}_i^* := \frac{w^*_i}{w^*_i \cdot m};$$
note that $\widehat{w}_i^*$ vary in a compact set.
Take some convergent subsequence, which we still denote
by $\widehat{w}_i^*$, and write $\widehat{w}_i^* \rightarrow \widehat{w}_0 ^*$.  Since $\widehat{w}_0 ^*\cdot m =1$ we see that $\widehat{w}_0 ^* \neq 0$.

We first prove $v \cdot \widehat{w}_0 ^* >0$. Otherwise, $v \cdot \widehat{w}_0 ^* =0$ implies
\begin{align*}
\frac{v \cdot (w^*+ \widehat{w}_0 ^*)}{\mathcal{H}f(w^*+ \widehat{w}_0 ^*)^{s-1/s}} \leq
\frac{v \cdot w^*}{\mathcal{H}f(w^*)^{s-1/s}}
=f(v)^{1/s}.
\end{align*}
By our assumption on $G_{v}$, we get $w^*+ \widehat{w}_0 ^*$ and $w^*$ are proportional, which implies $\widehat{w}_0 ^*$ lies in the ray spanned by $w^*$. Since $\widehat{w}_0 ^* \neq 0$ and $v \cdot w^* >0$, we get that  $v \cdot \widehat{w}_0 ^* >0$.  So our assumption $v \cdot \widehat{w}_0 ^* =0$ does not hold.
On the other hand, $\mathcal{H}f(w_i ^*)=1$ implies
$$\mathcal{H}f(\widehat{w}_i ^*)^{s-1/s}
=\frac{1}{m \cdot w_i^*}\rightarrow 0 .$$
By the upper-semicontinuity of $f$ and the fact that $\lim v_i \cdot \widehat{w}_i ^* =v \cdot  \widehat{w}_0 ^* >0$, we get
\begin{align*}
f(v)^{1/s}&\geq \limsup_{i \to \infty} f(v_i)^{1/s}\\
& =\limsup_{i \to \infty} \frac{v_i \cdot \widehat{w}_i^*}{\mathcal{H}f(\widehat{w}_i ^*)^{s-1/s}}=\infty.
\end{align*}
This is a contradiction, thus the sequence $w^* _i$ must vary in a compact set.
\end{proof}

\begin{thrm} \label{seconddiffuniqueness}
Let $f \in \HConc_{s}(\mathcal{C})$.  Suppose that $U\subset \mathcal{C}_f$ is a relatively open set and $G_{v} \cup \{0\}$ consists of a single ray for any $v \in U$.  If $f$ is continuous on $U$ then $f$ is $+$-differentiable on $U$.  In this case $D(v)$ is defined by intersecting against the unique element $w^{*} \in G_{v}$ satisfying $\mathcal{H}f(w^{*})=f(v)$.
\end{thrm}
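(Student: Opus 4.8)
The plan is to write down the candidate extension $D$ explicitly from the single-ray hypothesis, identify it with the genuine derivative on the interior via Proposition \ref{diffuniqueness}, and then deduce continuity on $U$ from the compactness Lemma \ref{lem compactness} together with the hypothesis that $f$ is continuous on $U$.

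First I would check that $D$ is well-defined on all of $U$. For $v \in U \subseteq \mathcal{C}_{f}$ we have $f(v)>0$ (membership in some $G_{w^{*}}$ forces $f(v)>0$), and by hypothesis $G_{v}\cup\{0\}$ is a single ray; since $\mathcal{H}f$ is homogeneous of weight $s/(s-1)>1$ and strictly positive on $G_{v}$, it takes every positive value exactly once along this ray. Hence there is a unique $w^{*}_{v}\in G_{v}$ with $\mathcal{H}f(w^{*}_{v})=f(v)$, and we set $D(v):=w^{*}_{v}$. Restricting to the set $U\cap\mathcal{C}^{\circ}$, which is open in $\mathcal{C}^{\circ}$ and on which the single-ray hypothesis still holds, Proposition \ref{diffuniqueness} shows that $f$ is $\mathcal{C}^{1}$ there and that its derivative (divided by $s$) is precisely the function just described; so $D$ agrees with the derivative on $U\cap\mathcal{C}^{\circ}$. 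It then remains only to prove that $D$ is continuous on $U$, for then $f$ is $+$-differentiable on $U$ with $D$ given by the stated formula.

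For continuity, take a sequence $v_{i}\to v$ in $U$ and normalize by setting $u^{*}_{i}:=f(v_{i})^{-(s-1)/s}\,D(v_{i})$. Since $G_{v_{i}}\cup\{0\}$ is a subcone, $u^{*}_{i}\in G_{v_{i}}$, and by homogeneity of $\mathcal{H}f$ one computes $\mathcal{H}f(u^{*}_{i})=f(v_{i})^{-1}\mathcal{H}f(D(v_{i}))=1$. Because $v\in\mathcal{C}_{f}$ with $G_{v}\cup\{0\}$ a single ray and $v_{i}\to v$ with $v_{i}\in\mathcal{C}_{f}$, Lemma \ref{lem compactness} applies: the $u^{*}_{i}$ lie in a compact set and every accumulation point equals the unique $u^{*}\in G_{v}$ with $\mathcal{H}f(u^{*})=1$; hence $u^{*}_{i}\to u^{*}$. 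Using the hypothesis that $f$ is continuous on $U$ we get $f(v_{i})\to f(v)>0$, so $D(v_{i})=f(v_{i})^{(s-1)/s}u^{*}_{i}\to f(v)^{(s-1)/s}u^{*}$. Finally $f(v)^{(s-1)/s}u^{*}\in G_{v}$ and $\mathcal{H}f\big(f(v)^{(s-1)/s}u^{*}\big)=f(v)\,\mathcal{H}f(u^{*})=f(v)$, so by the uniqueness used to define $D(v)$ this limit is $D(v)$. Thus $D(v_{i})\to D(v)$, establishing continuity and completing the proof.

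I expect the only real subtlety to be the homogeneity bookkeeping in converting between the normalization $\mathcal{H}f(\,\cdot\,)=1$ demanded by Lemma \ref{lem compactness} and the normalization $\mathcal{H}f(\,\cdot\,)=f(v)$ that is natural for the derivative, together with upgrading the statement ``every accumulation point equals $u^{*}$'' (plus boundedness) to an honest limit of $D(v_{i})$ --- it is precisely at this last step that continuity of $f$ on $U$ is used, since it fixes the scaling factor $f(v_{i})^{(s-1)/s}$.
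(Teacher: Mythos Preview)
Your proposal is correct and follows essentially the same route as the paper's own proof: apply Proposition \ref{diffuniqueness} on $U\cap\mathcal{C}^{\circ}$ to identify the derivative there, then use Lemma \ref{lem compactness} together with the continuity of $f$ to extend $D$ continuously to all of $U$. The paper's proof is a two-sentence sketch of exactly this argument, and your version simply fills in the homogeneity bookkeeping (the passage between the normalizations $\mathcal{H}f=1$ and $\mathcal{H}f=f(v)$) that the paper leaves implicit.
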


Even if $f$ is not continuous, we at least have a similar statement along the directions in which $f$ is continuous (for example, any directional derivative toward the interior of the cone).

\begin{proof}
Theorem \ref{diffuniqueness} shows that $f$ is differentiable on $U \cap \mathcal{C}^{\circ}$ and is determined by intersections.  By combining Lemma \ref{lem compactness} with the continuity of $f$, we see that the derivative extends continuously to any point in $U$.
\end{proof}

\begin{rmk}\label{rmk not a single ray}
Assume $f\in \HConc_s (\mathcal{C})$ is $+$-differentiable. In general, we can not conclude that $G_{v}\cup \{0\}$ contains a single ray if $x\in \mathcal{C}_f$ is not an interior point. An explicit example is in Section \ref{section zariski}. Let $X$ be a smooth projective variety of dimension $n$, let $\mathcal{C}=\Nef^1 (X)$ be the cone of nef divisor classes and let $f=\vol$ be the volume function of divisors. Let $B$ be a big and nef divisor class which is not ample.  Then $G_B$ contains the cone generated by all $B^{n-1}+ \gamma$ with $\gamma$ pseudo-effective and $B\cdot \gamma=0$, which in general is more than a ray.
\end{rmk}

\section{Formal Zariski decompositions}
\label{formal zariski section}

The Legendre-Fenchel transform relates the strict concavity of a function to the differentiability of its transform.  The transform $\mathcal{H}$ will play the same role in our situation; however, one needs to interpret the strict concavity slightly differently.  We will encapsulate this property using the notion of a Zariski decomposition.

\begin{defn}\label{definition formal zariski}
Let $f \in \HConc_{s}(\mathcal{C})$ and let $U \subset \mathcal{C}$ be a non-empty subcone.  We say that $f$ admits a strong Zariski decomposition with respect to $U$ if:
\begin{enumerate}
\item For every $v \in \mathcal{C}_{f}$ there are unique elements $p_{v} \in U$ and $n_{v} \in \mathcal{C}$ satisfying
\begin{equation*}
v = p_{v} + n_{v} \qquad \qquad \textrm{and} \qquad \qquad f(v) = f(p_{v}).
\end{equation*}
We call the expression $v = p_v + n_v$ the Zariski decomposition of $v$, and call $p_v$ the positive part and $n_v$ the negative part of $v$.
\item For any $v,w \in \mathcal{C}_f$ satisfying $v+w \in \mathcal{C}_f$ we have
\begin{equation*}
f(v)^{1/s} + f(w)^{1/s} \leq f(v+w)^{1/s}
\end{equation*}
with equality only if $p_{v}$ and $p_{w}$ are proportional.
\end{enumerate}
\end{defn}

\begin{rmk}
Note that the vector $n_{v}$ must satisfy $f(n_{v})=0$  by the non-negativity and log-concavity of $f$.  In particular $n_{v}$ lies on the boundary of $\mathcal{C}$.  Furthermore, any $w^{*} \in G_{v}$ is also in $G_{p_{v}}$ and must satisfy $w^{*} \cdot n_{v} = 0$.

Note also that the proportionality of $p_{v}$ and $p_{w}$ may not be enough to conclude that $f(v)^{1/s} + f(w)^{1/s} = f(v+w)^{1/s}$.  This additional property turns out to rely on the strict log concavity of $\mathcal{H}f$.
\end{rmk}

The main principle of the section is that when $f$ satisfies a differentiability property, $\mathcal{H}f$ admits some kind of Zariski decomposition.  Usually the converse is false, due to the asymmetry of $G$ when $f$ or $\mathcal{H}f$ vanishes.  However, the existence of a Zariski decomposition is usually strong enough to determine the differentiability of $f$ along some subcone.  We will give a version that takes into account the behavior of $f$ along the boundary of $\mathcal{C}$.

\begin{thrm} \label{strong zariski equivalence}
Let $f \in \HConc_{s}(\mathcal{C})$.
%$U = D(\mathcal{C}_f)\cup \{0\}$.
Then we have the following results:
\begin{itemize}
\item If $f$ is $+$-differentiable, then $\mathcal{H}f$ admits a strong Zariski decomposition  with respect to the cone $D(\mathcal{C}_{f}) \cup \{0\}$.
\item If $\mathcal{H}f$ admits a strong Zariski decomposition with respect to a cone $U$, then $f$ is differentiable.
\end{itemize}
\end{thrm}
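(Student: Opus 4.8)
The plan is to prove the two implications separately, writing throughout $g=\mathcal Hf\in\HConc_{s'}(\mathcal C^{*})$ with $s'=s/(s-1)$ (so $1/s'=(s-1)/s$), and noting that $D(\mathcal C_{f})\cup\{0\}$ is a non-empty subcone of $\mathcal C^{*}$ by Theorem~\ref{derivativeandbm} and Remark~\ref{extensiontoboundaryrmk}. For the first implication I would begin with existence of the decomposition for $\mathcal Hf$. Fix $w^{*}\in\mathcal C^{*}_{g}$; then $g(w^{*})>0$ and, by definition of $\mathcal C^{*}_{g}$, the supergradient set $G_{w^{*}}$ is non-empty. Choose $v\in G_{w^{*}}\subset\mathcal C_{f}$ and rescale so that $f(v)=g(w^{*})$; the equality clause of Proposition~\ref{younginequality} then gives $w^{*}\cdot v=f(v)=g(w^{*})$. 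Put $p_{w^{*}}:=D(v)$ and $n_{w^{*}}:=w^{*}-D(v)$; by Remark~\ref{extensiontoboundaryrmk} we have $g(p_{w^{*}})=\mathcal Hf(D(v))=f(v)=g(w^{*})$, so the only thing to verify is $n_{w^{*}}\in\mathcal C^{*}$. For $x\in\mathcal C^{\circ}$ the Young--Fenchel inequality shows $\phi(t):=w^{*}\cdot(v+tx)-g(w^{*})^{1/s'}f(v+tx)^{1/s}\ge 0$ for all $t\ge 0$, while the normalization gives $\phi(0)=0$; hence $\phi'(0^{+})\ge 0$, and Lemma~\ref{lemma extention derivative} rewrites this as $(w^{*}-D(v))\cdot x\ge 0$. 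Letting $x$ range over $\mathcal C^{\circ}$ and passing to the closure gives $n_{w^{*}}\in\mathcal C^{*}$, so $w^{*}=p_{w^{*}}+n_{w^{*}}$ is a Zariski decomposition.

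For the remaining parts of Definition~\ref{definition formal zariski}: the inequality $g(w^{*})^{1/s'}+g(z^{*})^{1/s'}\le g(w^{*}+z^{*})^{1/s'}$ is simply the $s'$-concavity of $g$, and for its equality case I would pick $v\in G_{w^{*}+z^{*}}$ normalized by $f(v)=g(w^{*}+z^{*})$, add the Young--Fenchel inequalities for $(v,w^{*})$ and $(v,z^{*})$, and compare the sum with $(w^{*}+z^{*})\cdot v=f(v)^{1/s}g(w^{*}+z^{*})^{1/s'}$; equality is forced in each, so $v\in G_{w^{*}}\cap G_{z^{*}}$, and by the construction above $p_{w^{*}}$ and $p_{z^{*}}$ are positive multiples of $D(v)$, hence proportional. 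The delicate point is uniqueness: I must show $p_{w^{*}}$ is independent of the chosen supergradient. If $v_{1},v_{2}\in G_{w^{*}}$ are normalized choices, the equality cases of Proposition~\ref{younginequality} and of the Brunn--Minkowski inequality of Theorem~\ref{derivativeandbm} force $D(v_{i})\cdot v_{j}=f(v_{i})^{(s-1)/s}f(v_{j})^{1/s}$, and one needs the rigidity conclusion $D(v_{1})=D(v_{2})$ (the abstract analogue of Teissier's proportionality theorem for $D$). When $v_{1}$ or $v_{2}$ lies in $\mathcal C^{\circ}$ this reduces to Proposition~\ref{diffuniqueness}, which makes the relevant $G$-set a single ray and hence forces $D(v_{1})\parallel D(v_{2})$, equal after normalization; the case $G_{w^{*}}\subset\partial\mathcal C$ must be handled by approximating the $v_{i}$ by interior points and invoking the continuity of $D$ built into $+$-differentiability. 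Granting this, any decomposition $w^{*}=p+n$ with $p=D(u)\in D(\mathcal C_{f})$ and $g(p)=g(w^{*})$ satisfies (after rescaling $u$ so $f(u)=g(w^{*})$, then pairing $w^{*}$ against $u$ via Brunn--Minkowski) that $u\in G_{w^{*}}$ and $u\cdot n=0$, whence $p=D(u)=p_{w^{*}}$. I expect this boundary rigidity to be the main obstacle.

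For the converse, assume $g=\mathcal Hf$ admits a strong Zariski decomposition with respect to some cone $U$. By Proposition~\ref{diffuniqueness} it suffices to show that $G_{v}\cup\{0\}$ is a single ray for every $v\in\mathcal C^{\circ}$, where $G_{v}\subset\mathcal C^{*}$ is the set appearing in that proposition. Let $w^{*}\in G_{v}$; by symmetry of $G$ one has $v\in G_{w^{*}}$ and $w^{*}\in\mathcal C^{*}_{g}$, so $w^{*}$ has a Zariski decomposition $w^{*}=p_{w^{*}}+n_{w^{*}}$, and the Remark after Definition~\ref{definition formal zariski} gives $v\cdot n_{w^{*}}=0$; since $v\in\mathcal C^{\circ}$ and $n_{w^{*}}\in\mathcal C^{*}$ this forces $n_{w^{*}}=0$, so $w^{*}=p_{w^{*}}\in U$. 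Now take $w_{1}^{*},w_{2}^{*}\in G_{v}$ rescaled so that $g(w_{1}^{*})=g(w_{2}^{*})$; then $v\cdot w_{1}^{*}=v\cdot w_{2}^{*}$, and a short Young--Fenchel computation shows $w_{1}^{*}+w_{2}^{*}\in G_{v}\subset\mathcal C^{*}_{g}$ and $g(w_{1}^{*}+w_{2}^{*})^{1/s'}=g(w_{1}^{*})^{1/s'}+g(w_{2}^{*})^{1/s'}$, i.e. the $s'$-concavity of $g$ is an equality. Part~(2) of the strong Zariski decomposition then forces $p_{w_{1}^{*}}=w_{1}^{*}$ and $p_{w_{2}^{*}}=w_{2}^{*}$ to be proportional, hence equal since their $g$-values agree. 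Thus $G_{v}\cup\{0\}$ is a single ray for every $v\in\mathcal C^{\circ}$, and $f$ is differentiable by Proposition~\ref{diffuniqueness}.
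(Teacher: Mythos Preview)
Your converse (the second bullet) and the existence and equality-case portions of the first bullet are correct and coincide with the paper's argument. The genuine gap is the uniqueness of the positive part.

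Your plan there has two separate problems. First, the reduction step---showing that any competing decomposition $w^{*}=D(u)+n$ with $n\in\mathcal C^{*}$ and $g(D(u))=g(w^{*})$ forces $u\in G_{w^{*}}$---is not justified: after normalizing $f(u)=g(w^{*})$, pairing against $u$ gives $w^{*}\cdot u=f(u)+n\cdot u\ge f(u)$, while Young--Fenchel gives only the \emph{same} lower bound $w^{*}\cdot u\ge g(w^{*})^{1/s'}f(u)^{1/s}=f(u)$. You never obtain $n\cdot u=0$, so $u\in G_{w^{*}}$ does not follow. Second, even granting that reduction, the ``boundary rigidity'' you flag as an obstacle is real and your suggested fix (approximate from $\mathcal C^{\circ}$ and use continuity of $D$) cannot work: Remark~\ref{rmk not a single ray} records that for boundary $v\in\mathcal C_{f}$ the set $G_{v}\cup\{0\}$ need not be a single ray, so there is no interior rigidity to transport.

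The paper bypasses both issues with a direct two-sided sandwich that never asks whether $u$ or $z$ lie in $G_{w^{*}}$. With $v\in G_{w^{*}}$ normalized so $f(v)=g(w^{*})$, and any competitor $z\in\mathcal C_{f}$ with $g(D(z))=g(w^{*})$ and $w^{*}-D(z)\in\mathcal C^{*}$, one uses $D(z)\preceq w^{*}$ together with Young--Fenchel to pin
\[
f(v)=g(D(z))^{1/s'}f(v)^{1/s}\le D(z)\cdot v\le w^{*}\cdot v=f(v),
\]
so the Brunn--Minkowski inequality $D(z)\cdot x\ge f(z)^{(s-1)/s}f(x)^{1/s}$ is an equality at $x=v$; differentiating along $x=v+\epsilon q$ (Lemma~\ref{lemma extention derivative}) gives $D(z)\cdot q\ge D(v)\cdot q$ for all $q\in\mathcal C^{\circ}$, i.e.\ $D(z)\succeq D(v)$. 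This ordering now gives $D(v)\cdot z\le D(z)\cdot z=f(z)$, and the same Young--Fenchel/derivative step with the roles of $v$ and $z$ reversed yields $D(v)\succeq D(z)$, hence $D(z)=D(v)$. No proportionality hypothesis or interior assumption on $v,z$ is required.
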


\begin{proof}
First suppose $f$ is $+$-differentiable; we must prove the function $\mathcal{H}f$ satisfies properties $(1), (2)$ in Definition \ref{definition formal zariski}.   %Note that in this case $U$ as defined above coincides with $D(\mathcal{C}_f)\cup \{0\}$ by Theorem \ref{diffuniqueness}.

We first show the existence of the Zariski decomposition in property (1).  If $w^* \in \mathcal{C}^{*}_{\mathcal{H}f}$ then by definition there is some $v \in \mathcal{C}$ satisfying $f(v)>0$ such that $w^{*} \in G_{v}$. In particular, by the symmetry of $G$ we also have $v\in G_{w^*}$, thus $v\in \mathcal{C}_f$.
Since $f(v)>0$ we can define
\begin{equation*}
p_{w^*} := \left( \frac{\mathcal{H}f(w^*)}{f(v)} \right)^{s-1/s} \cdot D(v), \qquad \qquad n_{w^*} = w^* - p_{w^*}.
\end{equation*}
Then $p_{w^*} \in D(\mathcal{C}_f)$ and
\begin{align*}
\mathcal{H}f(p_{w^*}) & = \mathcal{H} \left(\left( \frac{\mathcal{H}f(w^*)}{f(v)} \right)^{s-1/s} \cdot D(v)  \right) \\
& = \frac{\mathcal{H}f(w^*)}{f(v)}  \cdot \mathcal{H}f \left (D(v) \right) = \mathcal{H}f(w^*)
\end{align*}
where the final equality follows from Theorem \ref{derivativeandbm} and Remark \ref{extensiontoboundaryrmk}.
We next show that $n_{w^*} \in \mathcal{C}^*$.  Choose any $x \in \mathcal{C}^\circ$ and note that for any $t>0$ we have the inequality
\begin{equation*}
\frac{v + tx}{f(v+tx)^{1/s}} \cdot w^* \geq \frac{v}{f(v)^{1/s}} \cdot w^*
\end{equation*}
with equality when $t=0$.
By Lemma \ref{lemma extention derivative},
taking derivatives at $t=0$ we obtain
\begin{equation*}
\frac{x \cdot w^*}{f(v)^{1/s}} - \frac{(v \cdot w^*)(D(v) \cdot x)}{f(v)^{(s+1)/s}} \geq 0,
\end{equation*}
or equivalently, identifying $v \cdot w^*/f(v)^{1/s} = \mathcal{H}f(w^*)^{s-1/s}$,
\begin{equation*}
x \cdot \left(w^* -D(v) \cdot \frac{\mathcal{H}f(w^*)^{s-1/s}}{f(v)^{s-1/s}} \right) \geq 0.
\end{equation*}
Since this is true for any $x \in \mathcal{C}^\circ$, we see that $n_{w^*} \in \mathcal{C}^*$ as claimed.

We next show that $p_{w^*}$ constructed above is the unique element of $D(\mathcal{C}_{f})$ %minimum with respect to the relation $\preceq$ amongst all vectors in $U$
satisfying the two given properties. First,  after some rescaling we can assume $\mathcal{H}f(w^*)=f(v)$, which then implies $w^*\cdot v=f(v)$.
Suppose that $z \in \mathcal{C}_{f}$ and $D(z)$ is another vector satisfying $\mathcal{H}f(D(z)) = \mathcal{H}f(w^*)$ and $w^* - D(z) \in \mathcal{C}$.  Note that by Remark \ref{extensiontoboundaryrmk} $f(z) = \mathcal{H}f(D(z))= f(v)$.  By Proposition \ref{younginequality} we have
\begin{equation*}
\mathcal{H}f(D(z))^{s-1/s} f(v)^{1/s} \leq D(z) \cdot v \leq w^{*} \cdot v = f(v)
\end{equation*}
so we obtain equality everywhere.  In particular, we have $D(z)\cdot v=f(v)$.  By Theorem \ref{derivativeandbm}, for any $x \in \mathcal{C}$ we have
\begin{align*}
D(z) \cdot x  \geq f(z)^{s-1/s} f(x)^{1/s}.
\end{align*}
Set $x = v +\epsilon q$ where $\epsilon > 0$ and $q \in \mathcal{C}^{\circ}$.  With this substitution, the two sides of the equation above are equal at $\epsilon = 0$, so taking an $\epsilon$-derivative of the above equation and arguing as before, we see that $D(z) - D(v) \in \mathcal{C}^{*}$.

We claim that $D(z)=D(v)$. First we note that $D(v)\cdot z= f(z)$. Indeed, since $f(z)=f(v)$ and $D(v)\preceq D(z)$ we have
\begin{align*}
f(v)^{s-1/s} f(z)^{1/s}\leq D(v)\cdot z \leq D(z)\cdot z =f(z).
\end{align*}
Thus we have equality everywhere, proving the equality $D(v)\cdot z= f(z)$.  Then we can apply the same argument as before with the roles of $v$ and $z$ switched.  This shows $D(v)\succeq D(z)$, so we must have $D(z)=D(v)$.

We next turn to (2).  The inequality is clear, so we only need to characterize the equality. Suppose $w^*, y^* \in \mathcal{C}_{\mathcal{H}f} ^{*}$ satisfy
\begin{equation*}
\mathcal{H}f(w^*)^{s-1/s} + \mathcal{H}f(y^*)^{s-1/s} = \mathcal{H}f(w^* + y^*)^{s-1/s}
\end{equation*}
and $w^* + y^* \in \mathcal{C}_{\mathcal{H}f} ^{*}$.  We need to show they have proportional positive parts.
By assumption $G_{w^*+y^*}$ is non-empty, so we may choose some $v \in G_{w^*+y^*}$.  Then also $v \in G_{w^*}$ and $v \in G_{y^*}$.
Note that by homogeneity $v$ is also in $G_{aw^{*}}$ and $G_{by^{*}}$ for any positive real numbers $a$ and $b$.  Thus by rescaling $w^*$ and $y^*$, we may suppose that both have intersection $f(v)$ against $v$, so that $\mathcal{H}f(w^{*}) = \mathcal{H}f(y^{*}) = f(v)$. Then we need to verify the positive parts of $w^*$ and $y^*$ are equal.  But they both coincide with $D(v)$ by the argument in the proof of (1).\\

Conversely, suppose that $\mathcal{H}f$ admits a strong Zariski decomposition with respect to the cone
$U$.  We claim that $f$ is differentiable.  By Proposition \ref{diffuniqueness} it suffices to show that $G_{v} \cup \{0\}$ is a single ray for any $v \in \mathcal{C}^{\circ}$.

For any two elements $w^{*}, y^{*}$ in $G_{v}$ we have
\begin{align*}
\mathcal{H}f(w^*)^{1/s} + \mathcal{H}f(y^{*})^{1/s} = \frac{w^{*} \cdot v}{f(v)^{1/s}} + \frac{y^{*} \cdot v}{f(v)^{1/s}} \geq \mathcal{H}f(w^{*}+y^{*})^{1/s}.
\end{align*}
Since $w^{*}$, $y^{*}$ and their sum are all in $\mathcal{C}^{*}_{\mathcal{H}f}$, we conclude by the strong Zariski decomposition condition that $w^{*}$ and $y^{*}$ have proportional positive parts.  After rescaling so that $\mathcal{H}f(w^{*}) = f(v) = \mathcal{H}f(y^{*})$ we have $p_{w^{*}} = p_{y^{*}}$. Thus it suffices to prove $w^* =p_{w^*}$. Note that $\mathcal{H}f(w^{*})=\mathcal{H}f(p_{w^{*}})$ as $p_{w^*}$ is the positive part. If $w^* \neq p_{w^*}$, then $v\cdot w^* > v\cdot p_{w^*}$ since $v$ is an interior point. This implies
\begin{align*}
f(v)=\inf_{y^* \in \mathcal{C}^{*\circ}}\left(\frac{v\cdot y^*}{\mathcal{H}f(y^*)^{s-1/s}}\right)^s<\left(\frac{v\cdot w^*}{\mathcal{H}f(w^*)^{s-1/s}}\right)^s,
\end{align*}
contradicting with $w^* \in G_v$.
Thus $w^*=p_{w^*}$ and $G_{v} \cup \{0\}$ must be a single ray.
\end{proof}

\begin{rmk}
It is worth emphasizing that if $f$ is $+$-differentiable and $w^{*} \in \mathcal{C}^{*}_{\mathcal{H}f}$, we can construct a positive part for $w^{*}$ by choosing \emph{any} $v \in G_{w^*}$ with $f(v) > 0$ and taking an appropriate rescaling of $D(v)$.
\end{rmk}

\begin{rmk}
It would also be interesting to study some kind of weak Zariski decomposition. For example, one can define a weak Zariski decomposition as a decomposition $v=p_v + n_v$ only demanding $f(v)=f(p_v)$ and the strict log concavity of $f$ over the set of positive parts.  Appropriately interpreted, the existence of a weak decomposition for $\mathcal{H}f$ should be a consequence of the differentiability of $f$.
\end{rmk}

Under some additional conditions, we can get the continuity of the Zariski decompositions.

\begin{thrm}\label{thrm positive part continuity}
Let $f \in \HConc_{s}(\mathcal{C})$ be $+$-differentiable.  Then the function taking an element $w^{*} \in \mathcal{C}^{* \circ}$ to its positive part $p_{w^*}$ is continuous.

If furthermore $G_{v} \cup \{ 0 \}$ is a unique ray for every $v \in \mathcal{C}_{f}$ and $\mathcal{H}f$ is continuous on all of $\mathcal{C}^{*}_{\mathcal{H}f}$, then the Zariski decomposition is continuous on all of $\mathcal{C}^{*}_{\mathcal{H}f}$.
\end{thrm}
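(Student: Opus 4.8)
The plan is to prove in both cases that the map $w^{*} \mapsto p_{w^{*}}$ is continuous; since $n_{w^{*}} = w^{*} - p_{w^{*}}$, continuity of the full Zariski decomposition follows immediately. So I fix a base point $w_{0}^{*}$ in the relevant cone and a sequence $w_{i}^{*} \to w_{0}^{*}$ in that cone, and I want $p_{w_{i}^{*}} \to p_{w_{0}^{*}}$. The key soft observation is a compactness statement for the positive parts themselves: since $\mathcal{C}$ is full-dimensional, $\mathcal{C}^{*}$ is pointed, and because $n_{w_{i}^{*}} \in \mathcal{C}^{*}$ we have $0 \preceq p_{w_{i}^{*}} \preceq w_{i}^{*}$, so the $p_{w_{i}^{*}}$ stay in a bounded set. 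Thus it suffices to show that any accumulation point $p$ of $(p_{w_{i}^{*}})$ equals $p_{w_{0}^{*}}$; passing to a subsequence assume $p_{w_{i}^{*}} \to p$, so $w_{0}^{*} - p \in \mathcal{C}^{*}$ by closedness. Finally, since $w_{0}^{*} \in \mathcal{C}^{*}_{\mathcal{H}f}$ I fix a witness $v_{0} \in G_{w_{0}^{*}}$ with $f(v_{0}) > 0$, rescaled so that $f(v_{0}) = \mathcal{H}f(w_{0}^{*}) =: c > 0$; then the construction in the proof of Theorem \ref{strong zariski equivalence} gives $p_{w_{0}^{*}} = D(v_{0})$, and the equality case of Proposition \ref{younginequality} gives $w_{0}^{*} \cdot v_{0} = c$.

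For the first statement, where $w_{0}^{*} \in \mathcal{C}^{* \circ}$, I argue through the primal witnesses. By Theorem \ref{thm abstract zariski} applied to $\mathcal{H}f$ (using $\mathcal{H}^{2}f = f$) the sets $G_{w_{i}^{*}}$ are non-empty; choose $v_{i} \in G_{w_{i}^{*}}$ rescaled so that $f(v_{i}) = \mathcal{H}f(w_{i}^{*})$, so that $p_{w_{i}^{*}} = D(v_{i})$ and $w_{i}^{*} \cdot v_{i} = \mathcal{H}f(w_{i}^{*})$. Because $w_{0}^{*}$ is interior and $w_{i}^{*} \to w_{0}^{*}$, there is a uniform bound $w_{i}^{*} \cdot v \geq \delta \|v\|$ on $\mathcal{C}$ for large $i$; together with $\mathcal{H}f(w_{i}^{*}) \to c$ (continuity of $\mathcal{H}f$ on the interior) this bounds the $v_{i}$. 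Passing to a convergent subsequence $v_{i_{k}} \to v$, upper semicontinuity of $f$ and Proposition \ref{younginequality} force $f(v) = c$ and $v \in G_{w_{0}^{*}}$, hence $v \in \mathcal{C}_{f}$ and $p_{w_{0}^{*}} = D(v)$. Since $f$ is $+$-differentiable, $D$ is continuous on $\mathcal{C}_{f}$, so $p_{w_{i_{k}}^{*}} = D(v_{i_{k}}) \to D(v) = p_{w_{0}^{*}}$; boundedness of $(p_{w_{i}^{*}})$ and uniqueness of the subsequential limit give the full convergence.

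For the second statement $w_{0}^{*}$ may lie on $\partial \mathcal{C}^{*}_{\mathcal{H}f}$, so the $v_{i}$ need not be bounded and I work with the (already bounded) $p_{w_{i}^{*}}$ directly. Write $p_{w_{i}^{*}} = D(v_{i})$ with $v_{i} \in G_{w_{i}^{*}}$ and $f(v_{i}) = \mathcal{H}f(w_{i}^{*}) \to c$, where the convergence uses the hypothesis that $\mathcal{H}f$ is continuous on $\mathcal{C}^{*}_{\mathcal{H}f}$. Upper semicontinuity of $\mathcal{H}f$ and $\mathcal{H}f(p_{w_{i}^{*}}) = \mathcal{H}f(w_{i}^{*}) \to c$ give $\mathcal{H}f(p) \geq c$. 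The Brunn--Minkowski inequality for $D$ (Theorem \ref{derivativeandbm} and Remark \ref{extensiontoboundaryrmk}) applied to $v_{i}$ against $v_{0}$ gives $p_{w_{i}^{*}} \cdot v_{0} \geq f(v_{i})^{(s-1)/s} f(v_{0})^{1/s} \to c$, while $w_{0}^{*} - p \in \mathcal{C}^{*}$ and $w_{0}^{*} \cdot v_{0} = c$ give $p \cdot v_{0} \leq c$; hence $p \cdot v_{0} = c$. Feeding $p \cdot v_{0} = c$ and $f(v_{0}) = c$ into Proposition \ref{younginequality} yields $\mathcal{H}f(p) \leq c$, so $\mathcal{H}f(p) = c > 0$, and the equality case of Proposition \ref{younginequality} then gives $p \in G_{v_{0}}$. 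Since $v_{0} \in \mathcal{C}_{f}$, the hypothesis that $G_{v_{0}} \cup \{0\}$ is a single ray --- which also contains $D(v_{0})$ --- together with $\mathcal{H}f(p) = \mathcal{H}f(D(v_{0})) = c$ and homogeneity forces $p = D(v_{0}) = p_{w_{0}^{*}}$. Again boundedness plus uniqueness of the accumulation point finishes the argument, and continuity of $w^{*} \mapsto p_{w^{*}}$, hence of $w^{*} \mapsto n_{w^{*}}$, on all of $\mathcal{C}^{*}_{\mathcal{H}f}$ follows.

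The step I expect to be the main obstacle is this passage to the boundary in the second statement: the primal minimizers $v_{i}$ realizing the infimum defining $\mathcal{H}f(w_{i}^{*})$ may escape to infinity, so one cannot simply pass to their limit and invoke continuity of $D$; moreover $G_{w^{*}}$ itself can fail to be a single ray (Remark \ref{rmk not a single ray}), so the positive part is not determined by $w^{*}$ alone. The resolution is to keep the positive parts compact via the order relation $p_{w_{i}^{*}} \preceq w_{i}^{*}$ in the pointed cone $\mathcal{C}^{*}$, and then to recover in the limit the three defining properties of $p_{w_{0}^{*}}$ (membership in $D(\mathcal{C}_{f})$, the relation $w_{0}^{*} - p \in \mathcal{C}^{*}$, and $\mathcal{H}f(p) = \mathcal{H}f(w_{0}^{*})$) using only the Young--Fenchel and Brunn--Minkowski inequalities together with the continuity of $\mathcal{H}f$ --- which is exactly where the two extra hypotheses are consumed. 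A minor point to watch throughout is the overloading of the notation $G$ and the asymmetry in the equality case of Proposition \ref{younginequality} when $f(v)$ or $\mathcal{H}f(w^{*})$ vanishes, but since $c > 0$ this causes no difficulty.
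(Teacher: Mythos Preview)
Your argument is correct. For the first statement your route coincides with the paper's: choose primal witnesses $v_i \in G_{w_i^*}$, use that $w_0^*$ is interior to get compactness of the $v_i$ (the paper quotes Theorem~\ref{thm abstract zariski}, you argue it directly), and then invoke continuity of $D$ on $\mathcal{C}_f$; the only cosmetic difference is your normalization $f(v_i)=\mathcal{H}f(w_i^*)$ versus the paper's $f(v_i)=1$.

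For the second statement your approach genuinely diverges from the paper's. The paper disposes of the boundary case in one line, saying it ``follows by a similar argument using Lemma~\ref{lem compactness}'', i.e.\ it again aims to control the primal witnesses $v_i$ and pass the limit through $D$. You instead abandon compactness of the $v_i$ entirely: you get compactness of the positive parts directly from the order relation $0 \preceq p_{w_i^*} \preceq w_i^*$ in the pointed cone $\mathcal{C}^*$, and then pin down the accumulation point $p$ via a squeeze --- Brunn--Minkowski bounds $p\cdot v_0$ from below, the decomposition bounds it from above, and the Young--Fenchel equality case together with the single-ray hypothesis on $G_{v_0}$ forces $p=D(v_0)=p_{w_0^*}$. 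This alternative has the virtue of making the role of each extra hypothesis completely explicit (continuity of $\mathcal{H}f$ for the value $c$, single ray of $G_{v_0}$ for the identification) and of avoiding the dualization of Lemma~\ref{lem compactness}, whose hypothesis is stated asymmetrically ($G_v$ single ray rather than $G_{w^*}$ single ray) so that the ``similar argument'' takes a moment to unpack. The paper's route, by contrast, stays uniform with the interior case and re-uses the existing compactness machinery rather than introducing a new squeeze.
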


\begin{proof}
Fix any $w^{*} \in \mathcal{C}^{* \circ}$ and suppose that $w^*_i$ is a sequence whose limit is $w^*$.  For each choose some $v_{i} \in G_{w^*_i}$ with $f(v_i) = 1$.  By Theorem \ref{thm abstract zariski}, the $v_{i}$ admit an accumulation point $v \in G_{w^*}$ with $f(v) = 1$.  By the symmetry of $G$, each $v_{i}$ and also $v$ lies in $\mathcal{C}_{f}$.
The $D(v_i)$ limit to $D(v)$ by the continuity of $D$. Recall that by the argument in the proof of Theorem \ref{strong zariski equivalence} we have $p_{w^*_i} = \mathcal{H}f(w_{i}^{*})^{s-1/s}D(v_{i})$ and similarly for $w^{*}$.  Since $\mathcal{H}f$ is continuous at interior points, we see that the positive parts vary continuously as well.

The last statement follows by a similar argument using Lemma \ref{lem compactness}.
\end{proof}

\begin{exmple} \label{bilinearexample}
Suppose that $q$ is a bilinear form on $V$ and $f(v) = q(v,v)$.  Let $\mathcal{P}$ denote one-half of the positive cone of vectors satisfying $f(v) \geq 0$.  It is easy to see that $f$ is $2$-concave and non-trivial on $\mathcal{P}$ if and only if $q$ has signature $(1,\dim V-1)$.  Identifying $V$ with $V^{*}$ under $q$, we have $\mathcal{P} = \mathcal{P}^{*}$ and $\mathcal{H}f = f$ by the usual Hodge inequality argument.

Now suppose $\mathcal{C} \subset \mathcal{P}$.  Then $\mathcal{C}^{*}$ contains $\mathcal{C}$.  As discussed above, by the Hodge inequality $\mathcal{H}f|_{\mathcal{C}} = f$.  Note that $f$ is everywhere differentiable and $D(v) = v$ for classes in $\mathcal{C}$.  Thus on $\mathcal{C}$ the polar transform $\mathcal{H}f$ agrees with $f$, but outside of $\mathcal{C}$ the function $\mathcal{H}f$ is controlled by a Zariski decomposition involving a projection to $\mathcal{C}$.

This is of course just the familiar picture for curves on a surface identifying $f$ with the self-intersection on the nef cone and $\mathcal{H}f$ with the volume on the pseudo-effective cone.  More precisely, for big curve classes the decomposition constructed in this way is the numerical version of Zariski's original construction.  Along the boundary of $\mathcal{C}^{*}$, the function $\mathcal{H}f$ vanishes identically so that Theorem \ref{strong zariski equivalence} does not apply.  The linear algebra arguments of \cite{zariski62}, \cite{bauer09} give a way of explicitly constructing the vector computing the minimal intersection as above.
\end{exmple}

\begin{exmple}
Fix a spanning set of unit vectors $\mathcal{Q}$ in $\mathbb{R}^n$.  Recall that the polytopes whose unit facet normals are a subset of $\mathcal{Q}$ naturally define a cone $\mathcal{C}$ in a finite dimensional vector space $V$ which parametrizes the constant terms of the bounding hyperplanes.  One can also consider the cone $\mathcal{C}_{\Sigma}$ which is the closure of those polytopes whose normal fan is $\Sigma$.  The volume function $\vol$ defines a weight-$n$ homogeneous function on $\mathcal{C}$ and (via restriction) $\vol_{\Sigma}$ on $\mathcal{C}_{\Sigma}$, and it is interesting to ask for the behavior of the polar transforms.  (Note that this is somewhat different from the link between polar sets and polar functions, which is described for example in \cite{milman11hiddenduality}.)

The dual space $V^{*}$ consists of the Minkowski weights on $\mathcal{Q}$.  We will focus on the subcone $\mathcal{M}$ of strictly positive Minkowski weights, which is contained in the dual of both cones.  By Minkowski's theorem, a strictly positive Minkowski weight determines naturally a polytope in $\mathcal{C}$, so we can identify $\mathcal{M}$ with the interior of $\mathcal{C}$.  As explained in Section \ref{toric section}, the Brunn-Minkowski inequality shows that $\mathcal{H}\vol|_{\mathcal{M}}$ coincides with the volume function on $\mathcal{M}$.  However, calculating $\mathcal{H}\vol_{\Sigma}|_{\mathcal{M}}$ is more subtle.

It would be very interesting to extend this duality to all convex sets, perhaps by working on an infinite dimensional space.
\end{exmple}

\subsection{Teissier proportionality}
In this section, we give some conditions which are equivalent to the strict log concavity. The prototype is the volume function of divisors over the cone of big and movable divisor classes.

\begin{defn}
Let $f \in \HConc_{s}(\mathcal{C})$ be $+$-differentiable and let $\mathcal{C}_{T}$ be a non-empty subcone of $\mathcal{C}_{f}$.  We say that $f$ satisfies Teissier proportionality with respect to $\mathcal{C}_{T}$ if for any $v,x \in \mathcal{C}_{T}$ satisfying
\begin{equation*}
D(v) \cdot x = f(v)^{s-1/s} f(x)^{1/s}
\end{equation*}
we have that $v$ and $x$ are proportional.
\end{defn}

Note that we do not assume that $\mathcal{C}_{T}$ is convex -- indeed, in examples it is important to avoid this condition.  However, since $f$ is defined on the convex hull of $\mathcal{C}_{T}$, we can (somewhat abusively) discuss the strict log concavity of $f|_{\mathcal{C}_{T}}$:

\begin{defn} \label{defn generalized log concave}
Let $\mathcal{C}' \subset \mathcal{C}$ be a (possibly non-convex) subcone.  We say that $f$ is strictly log concave on $\mathcal{C}'$ if
\begin{align*}
f(v)^{1/s} + f(x)^{1/s}< f(v+x)^{1/s}
\end{align*}
holds whenever $v, x\in \mathcal{C}'$ are not proportional.  Note that this definition makes sense even when $\mathcal{C}'$ is not itself convex.
\end{defn}

\begin{thrm}
\label{thrm teissier}
Let $f \in \HConc_{s}(\mathcal{C})$ be $+$-differentiable.  For any non-empty subcone $\mathcal{C}_{T}$ of $\mathcal{C}_{f}$, consider the following conditions:
\begin{enumerate}
\item The restriction $f|_{\mathcal{C}_{T}}$ is strictly log concave (in the sense defined above).
\item $f$ satisfies Teissier proportionality with respect to $\mathcal{C}_{T}$.
\item The restriction of $D$ to $\mathcal{C}_{T}$ is injective.
\end{enumerate}
Then we have (1) $\implies$ (2) $\implies$ (3).  If $\mathcal{C}_{T}$ is convex, then we have (2) $\implies$ (1).  If $\mathcal{C}_{T}$ is an open subcone, then we have (3) $\implies$ (1).
\end{thrm}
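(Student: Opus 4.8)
The plan is to reduce all four implications to one auxiliary fact describing when the Brunn--Minkowski inequality of Theorem~\ref{derivativeandbm} (extended to all of $\mathcal{C}_{f}$ by Remark~\ref{extensiontoboundaryrmk}) is an equality. Throughout I use that every $v\in\mathcal{C}_{f}$ satisfies $f(v)>0$ (by the definition of $G_{w^{*}}$) and $D(v)\cdot v=f(v)=\mathcal{H}f(D(v))$. The auxiliary lemma: for $v\in\mathcal{C}_{f}$ and $x\in\mathcal{C}$ with $f(x)>0$, if $D(v)\cdot x=f(v)^{s-1/s}f(x)^{1/s}$ then $f(v+tx)^{1/s}=f(v)^{1/s}+t\,f(x)^{1/s}$ for all $t\geq 0$. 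I would prove this by studying the nonnegative concave function $h(t):=f(v+tx)^{1/s}$ on $[0,\infty)$, with $h(0)=f(v)^{1/s}>0$. By homogeneity $h(t)/t=f(x+v/t)^{1/s}$, which is nonincreasing in $t$ and tends to $f(x)^{1/s}$ as $t\to\infty$ by Lemma~\ref{uppersemilimit}; hence the asymptotic slope of $h$ is $f(x)^{1/s}$, so $h'(0^{+})\geq f(x)^{1/s}$. On the other hand Young's inequality (Proposition~\ref{younginequality}) with $w^{*}=D(v)$ gives $f(v)^{s-1/s}h(t)\leq(v+tx)\cdot D(v)=f(v)+t\,(x\cdot D(v))$, i.e. $h(t)\leq h(0)+t\,(x\cdot D(v))/f(v)^{s-1/s}$, so $h'(0^{+})\leq(x\cdot D(v))/f(v)^{s-1/s}$. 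Under the stated equality these two bounds both equal $f(x)^{1/s}$; since the right derivative of a concave function is nonincreasing, it must be constant and $h$ affine, which is the claim.

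Granting the auxiliary lemma, three of the implications are short. For (1)$\Rightarrow$(2): if $v,x\in\mathcal{C}_{T}$ satisfy $D(v)\cdot x=f(v)^{s-1/s}f(x)^{1/s}$, the lemma at $t=1$ gives $f(v+x)^{1/s}=f(v)^{1/s}+f(x)^{1/s}$, which by strict log concavity of $f|_{\mathcal{C}_{T}}$ forces $v$ and $x$ to be proportional. For (2)$\Rightarrow$(3): if $D(v)=D(x)$ with $v,x\in\mathcal{C}_{T}$, then $f(v)=\mathcal{H}f(D(v))=\mathcal{H}f(D(x))=f(x)$ and $D(v)\cdot x=D(x)\cdot x=f(x)=f(v)^{s-1/s}f(x)^{1/s}$; Teissier proportionality gives $x=\lambda v$, and then $D(x)=\lambda^{s-1}D(v)$ together with $D(v)\neq 0$ and $s>1$ forces $\lambda=1$, so $D|_{\mathcal{C}_{T}}$ is injective. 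For (2)$\Rightarrow$(1) when $\mathcal{C}_{T}$ is convex: suppose $v,x\in\mathcal{C}_{T}$ are non-proportional with $f(v+x)^{1/s}=f(v)^{1/s}+f(x)^{1/s}$; applying Brunn--Minkowski to $D(v+x)$ against $v$ and against $x$ and adding, the sum of the right-hand sides equals $D(v+x)\cdot(v+x)=f(v+x)$, so both inequalities are equalities; Teissier proportionality applied to the pair $(v+x,v)\in\mathcal{C}_{T}$ shows $v+x$ and $v$ are proportional, hence so are $v$ and $x$ --- a contradiction.

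For (3)$\Rightarrow$(1) with $\mathcal{C}_{T}$ open we have $\mathcal{C}_{T}\subseteq\mathcal{C}^{\circ}$, so $g:=f^{1/s}$ is $\mathcal{C}^{1}$ and concave on the open convex set $\mathcal{C}^{\circ}\supseteq\mathcal{C}_{T}$, with $D=f^{s-1/s}\nabla g$ and $\nabla g$ $0$-homogeneous. First I would reformulate (3): using this relation, the $0$-homogeneity of $\nabla g$, and rescaling along rays (as in (2)$\Rightarrow$(3)), one checks that $D|_{\mathcal{C}_{T}}$ is injective if and only if $\nabla g(v)=\nabla g(x)$ with $v,x\in\mathcal{C}_{T}$ forces $v$ and $x$ to be proportional. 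Now suppose $f$ is not strictly log concave on $\mathcal{C}_{T}$: there are non-proportional $v,x\in\mathcal{C}_{T}$ with $g(v)+g(x)=g(v+x)$, and a short argument using $1$-homogeneity and super-additivity of $g$ (write $v+x$ as the sum of $(1-t)v+tx$ and $tv+(1-t)x$) shows $g$ is then affine on the segment $[v,x]\subseteq\mathcal{C}^{\circ}$. Consider the affine function $\ell(y):=g(v)+\nabla g(v)\cdot(y-v)$: by the supergradient inequality $\ell\geq g$ on $\mathcal{C}^{\circ}$, and $\ell$ agrees with $g$ along $[v,x]$ (same value and directional derivative at $v$), so $\ell-g\geq 0$ attains its minimum value $0$ at the interior point $x$; hence $\nabla g(x)=\nabla\ell(x)=\nabla g(v)$, contradicting the reformulation of (3). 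Therefore $f$ is strictly log concave on $\mathcal{C}_{T}$.

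I expect this last implication to be the main obstacle: one must translate injectivity of the $(s-1)$-homogeneous map $D$ on a possibly non-convex cone $\mathcal{C}_{T}$ into a ``strict concavity modulo rays'' statement for $g$, and one must stay inside $\mathcal{C}^{\circ}$ --- where $g$ is genuinely $\mathcal{C}^{1}$ and the tangent-hyperplane comparison is valid --- which is exactly what the openness hypothesis provides. A secondary point of care is the asymptotic-slope computation in the auxiliary lemma, which invokes the boundary continuity of Lemma~\ref{uppersemilimit} and so remains valid even when $x$ lies on $\partial\mathcal{C}$.
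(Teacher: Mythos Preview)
Your proof is correct. The argument for $(2)\Rightarrow(3)$ is essentially the paper's, and your auxiliary lemma makes explicit the key step the paper uses implicitly in $(1)\Rightarrow(2)$ (namely, that $\nabla g(v)\cdot x=g(x)$ forces $g(v+x)=g(v)+g(x)$; the paper phrases this as the strict tangent inequality). The other two implications are handled by genuinely different routes.

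For $(2)\Rightarrow(1)$ when $\mathcal{C}_{T}$ is convex, the paper proves an integral identity
\[
f(v+x)^{1/s}-f(v)^{1/s}=\int_{0}^{1}\bigl(D(v+tx)\cdot x\bigr)\,f(v+tx)^{(1-s)/s}\,dt
\]
(after a perturbation by an interior class and a dominated convergence argument to justify Lemma~\ref{lemma extention derivative}), then bounds the integrand below by $f(x)^{1/s}$ and invokes Teissier proportionality at a.e.~$t$. Your argument --- apply Brunn--Minkowski to $D(v+x)$ against $v$ and against $x$, observe that the two inequalities must sum to an equality, and then apply Teissier proportionality to the pair $(v+x,v)\in\mathcal{C}_{T}$ --- is shorter and avoids the perturbation/limit step entirely.

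For $(3)\Rightarrow(1)$ when $\mathcal{C}_{T}$ is open, the paper cites \cite[Corollary~26.3.1]{rockafellar70convexBOOK} to get strict concavity of $g=f^{1/s}$ on small convex open subsets of $\mathcal{C}_{T}$, and then runs a local-to-global chain of inequalities. Your argument is self-contained: you translate injectivity of $D$ into the statement that $\nabla g$ separates rays, show that equality in Brunn--Minkowski forces $g$ to be affine on $[v,x]\subset\mathcal{C}^{\circ}$, and deduce $\nabla g(v)=\nabla g(x)$ by the first-order condition at an interior minimum of $\ell-g$. This replaces the external citation by a direct tangent-hyperplane argument, at the cost of a slightly longer write-up. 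Both approaches use the openness hypothesis in the same essential way --- to ensure the segment $[v,x]$ lies in $\mathcal{C}^{\circ}$ where $g$ is $\mathcal{C}^{1}$.
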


\begin{proof}
We first prove (1) $\implies$ (2). Let $v, x\in \mathcal{C}_T$ satisfy $D(v)\cdot x =f(v)^{s-1/s}f(x)^{1/s}$ and $f(v)=f(x)$.   Assume for a contradiction that $v\neq x$. Since $f|_{\mathcal{C}_T}$ is strictly log concave, for any two $v, x\in \mathcal{C}_T$ which are not proportional we have
\begin{align*}
f(x)^{1/s} < f(v)^{1/s} + \frac{D(v) \cdot (x-v)}{f(v)^{s-1/s}}.
\end{align*}
Since we have assumed $D(v)\cdot x =f(v)^{s-1/s}f(x)^{1/s}$ and $f(v)=f(x)$, we must have
\begin{align*}
f(x)^{1/s} = f(v)^{1/s} + \frac{D(v) \cdot (x-v)}{f(v)^{s-1/s}}
\end{align*}
since $D(v)\cdot v= f(v)$.  This is a contradiction, so we must have $v=x$. This then implies $f$ satisfies Teissier proportionality.

We next show (2) $\implies$ (3).  Let $v_1, v_2 \in \mathcal{C}_{T}$ with $D(v_1)=D(v_2)$. Then we have
\begin{align*}
f(v_1) &= D(v_1)\cdot v_1 =D(v_2)\cdot v_1\\
&\geq f(v_2)^{s-1/s}f(v_1)^{1/s},
\end{align*}
which implies $f(v_1)\geq f(v_2)$. By symmetry, we get $f(v_1)=f(v_2)$. So we must have
$$D(v_1)\cdot v_2= f(v_1)^{s-1/s}f(v_2)^{1/s}.$$
By the Teissier proportionality we see that $v_1, v_2$ are proportional, and since $f(v_1)=f(v_2)$ they must be equal.

We next show that if $\mathcal{C}_{T}$ is convex then (2) $\implies$ (1).  Fix $y$ in the interior of $\mathcal{C}_{T}$ and fix $\epsilon >0$.  Then
\begin{align*}
f(v+x+\epsilon y)^{1/s}-f(v)^{1/s}&=\int_0 ^1 (D(v+t(x+\epsilon y))\cdot x) f(v+t(x+\epsilon y))^{1-s/s}dt.
\end{align*}
The integrand is bounded by a positive constant independent of $\epsilon$ as we let $\epsilon$ go to $0$ due to the $+$-differentiability of $f$ (which also implies the continuity of $f$).  Using Lemma \ref{uppersemilimit}, the dominanted convergence theorem shows that
\begin{align*}
f(v+x)^{1/s}-f(v)^{1/s}&=\int_0 ^1 (D(v+tx)\cdot x) f(v+tx)^{1-s/s}dt.
\end{align*}
This immediately shows the strict log concavity.

Finally, we show that if $\mathcal{C}_{T}$ is open then (3) $\implies$ (1).  By \cite[Corollary 26.3.1]{rockafellar70convexBOOK}, it is clear that for any convex open set $U \subset \mathcal{C}_{T}$ the injectivity of $D$ over $U$ is equivalent to the strict log concavity of  $f|_{U}$.  Using the global log concavity of $f$, we obtain the conclusion. More precisely, assume $x, y\in \mathcal{C}_T$ are not proportional, then by the strict log concavity of $f$ near  $x$ and the global log concavity on $\mathcal{C}$, for $t>0$ sufficiently small we have
\begin{align*}
f^{1/s}(x+y)&\geq f^{1/s}(x+ty) +(1-t)f^{1/s}(y)\\
&> (f^{1/s}(x)+f^{1/s}(x+2ty))/2 + (1-t)f^{1/s}(y)\\
&\geq f^{1/s}(x)+f^{1/s}(y).
\end{align*}
\end{proof}

Another useful observation is:

\begin{prop} \label{derivativeofdual}
Let $f \in \HConc_{s}(\mathcal{C})$ be differentiable and suppose that $f$ is strictly log concave on an open subcone $\mathcal{C}_{T}\subset \mathcal{C}^{\circ}$.  Then $\mathcal{H}f$ is differentiable on $D(\mathcal{C}_{T})$ and the derivative is determined by the prescription
\begin{equation*}
D(D(v)) = v.
\end{equation*}
\end{prop}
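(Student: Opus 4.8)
The plan is to deduce both assertions from Proposition \ref{diffuniqueness} applied to $\mathcal{H}f \in \HConc_{s/s-1}(\mathcal{C}^{*})$ on the set $U := D(\mathcal{C}_{T})$. Concretely, I would show that $U$ is an open subset of $\mathcal{C}^{*\circ}$ and that, for each $v \in \mathcal{C}_{T}$, the set $G_{D(v)}$ of points $u \in \mathcal{C}$ with $f(u)>0$ computing the infimum in $\mathcal{H}f(D(v))$ satisfies $G_{D(v)} \cup \{0\} = \mathbb{R}_{\geq 0}v$. Granting these, Proposition \ref{diffuniqueness} gives that $\mathcal{H}f$ is differentiable on $D(\mathcal{C}_{T})$ and that its derivative at $D(v)$ is obtained by pairing against the unique element of $G_{D(v)}$ whose $\mathcal{H}f$-value equals $\mathcal{H}f(D(v))$. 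Since $\mathcal{H}f(D(v)) = f(v)$ by Theorem \ref{derivativeandbm}(2) and the only point of $\mathbb{R}_{\geq 0}v$ with $f$-value $f(v)$ is $v$ itself, this forces $D(D(v)) = v$.

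For openness: $D$ is continuous on $\mathcal{C}^{\circ}$ because $f$ is $\mathcal{C}^{1}$, and $D|_{\mathcal{C}_{T}}$ is injective by Theorem \ref{thrm teissier} (the implication (1) $\Rightarrow$ (3), whose proof uses only the differentiability of $f$). As $\mathcal{C}_{T}$ is open in the $n$-dimensional space $V$ and $V^{*}$ is again $n$-dimensional, invariance of domain shows $D(\mathcal{C}_{T})$ is open in $V^{*}$; since $D(\mathcal{C}_{T}) \subset \mathcal{C}^{*}$ by Theorem \ref{derivativeandbm}(1), an open subset of $\mathcal{C}^{*}$ is automatically contained in $\mathcal{C}^{*\circ}$.

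The core of the argument is the description of $G_{D(v)}$. The inclusion $\mathbb{R}_{\geq 0}v \subset G_{D(v)} \cup \{0\}$ follows at once from Theorem \ref{derivativeandbm}(2), which gives $D(v)\cdot v = f(v) = \mathcal{H}f(D(v))$, together with homogeneity. For the reverse inclusion, let $u \in G_{D(v)}$, so $f(u)>0$ and $D(v)\cdot u = f(v)^{s-1/s}f(u)^{1/s}$ --- that is, the Brunn--Minkowski inequality of Theorem \ref{derivativeandbm}(2) is an equality for $(v,u)$. I would then study $h(t) := f((1-t)v+tu)^{1/s}$ on $[0,1]$; the segment lies in $\mathcal{C}$, so $h$ is concave, and $h$ is continuous at $t=1$ by upper semicontinuity of $f$ together with concavity. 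Differentiating at $t=0$ (legitimate since $f$ is differentiable at the interior point $v$) and substituting the equality case yields
\begin{equation*}
h'(0) = f(v)^{1-s/s}\bigl(D(v)\cdot u - D(v)\cdot v\bigr) = f(u)^{1/s} - f(v)^{1/s} = h(1) - h(0).
\end{equation*}
A concave function on $[0,1]$ whose one-sided derivative at the left endpoint equals the slope of the chord joining the endpoints is affine on $[0,1]$, so $f^{1/s}$ is affine along $[v,u]$. If $u$ is not proportional to $v$, choose $\epsilon>0$ small enough that $v_{\epsilon} := (1-\epsilon)v+\epsilon u$ lies in $\mathcal{C}_{T}$ (possible as $\mathcal{C}_{T}$ is open and contains $v$); evaluating $h$ at the midpoint of $[v,v_{\epsilon}]$ and using $1$-homogeneity of $f^{1/s}$ gives $f(v+v_{\epsilon})^{1/s} = f(v)^{1/s}+f(v_{\epsilon})^{1/s}$, contradicting the strict log concavity of $f$ on $\mathcal{C}_{T}$ since $v$ and $v_{\epsilon}$ are non-proportional elements of $\mathcal{C}_{T}$. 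Hence $u \in \mathbb{R}_{\geq 0}v$.

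The main obstacle is exactly this last point. Since $\mathcal{C}_{T}$ is assumed only open and not convex, an element $u \in G_{D(v)}$ may lie on $\partial\mathcal{C}$, outside $\mathcal{C}_{T}$, so one cannot apply the strict log concavity (or Teissier proportionality) hypothesis directly to the pair $(v,u)$; the device of first showing $f^{1/s}$ is affine along the entire segment $[v,u]$ and then transferring the resulting equality to the nearby pair $(v,v_{\epsilon})$, which genuinely lies in $\mathcal{C}_{T}$, is what circumvents this. A minor secondary point requiring care is the interior claim $D(\mathcal{C}_{T}) \subset \mathcal{C}^{*\circ}$, for which I rely on invariance of domain.
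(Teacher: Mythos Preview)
Your proof is correct and shares the paper's strategy of reducing to Proposition~\ref{diffuniqueness} by showing that $G_{D(v)}\cup\{0\}$ is the ray $\mathbb{R}_{\geq 0}v$ for each $v\in\mathcal{C}_T$. The execution differs in two places.

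For the interior claim $D(\mathcal{C}_T)\subset\mathcal{C}^{*\circ}$, the paper argues directly: if $D(v)$ lay on $\partial\mathcal{C}^*$, pick a nonzero $x\in\mathcal{C}$ with $D(v)\cdot x=0$; one checks $D(v)\in G_{v+tx}$, so by Proposition~\ref{diffuniqueness} $D(v+tx)$ is proportional to $D(v)$ for small $t>0$, contradicting the injectivity of $D$ on $\mathcal{C}_T$ (from Theorem~\ref{thrm teissier}) since $v+tx\in\mathcal{C}_T$ by openness. Your appeal to invariance of domain is heavier machinery but has the bonus of establishing that $D(\mathcal{C}_T)$ is open, so that Proposition~\ref{diffuniqueness} applies exactly as stated rather than pointwise.

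For the single-ray claim, the paper dispatches this in one sentence (``by the strict log concavity of $f$ on $\mathcal{C}_T$ and the global log concavity''). The implicit argument is essentially yours, though it can be shortened by using that $G_{D(v)}\cup\{0\}$ is a convex subcone: if $u\in G_{D(v)}$ then $v+tu\in G_{D(v)}$ for all $t>0$, which gives $f(v+tu)^{1/s}=f(v)^{1/s}+tf(u)^{1/s}$ directly without the derivative computation for $h$, and one then concludes exactly as you do via a nearby $v_\epsilon\in\mathcal{C}_T$. Your explicit treatment of the segment $[v,u]$ and the continuity at the endpoint is a fuller justification of what the paper leaves to the reader.
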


\begin{proof}
We first show that $D(\mathcal{C}_{T}) \subset \mathcal{C}^{* \circ}$.  Suppose that there were some $v \in \mathcal{C}_{T}$ such that $D(v)$ lay on the boundary of $\mathcal{C}^{*}$.  Choose $x \in \mathcal{C}$ satisfying $x \cdot D(v) = 0$.  By openness we have $v + tx \in \mathcal{C}_{T}$ for sufficiently small $t$.  Since $D(v) \in G_{v + tx}$, we must have that $D(v)$ and $D(v+tx)$ are proportional by Proposition \ref{diffuniqueness}. This is a contradiction by Theorem \ref{thrm teissier}.

Now suppose $w^{*} = D(v) \in D(\mathcal{C}_{T})$.  By the strict log concavity of $f$ on $\mathcal{C}_{T}$ (and the global log concavity), we must have that $G_{w^*} \cup \{0 \}$ consists only of the ray spanned by $v$.  Applying Proposition \ref{diffuniqueness}, we obtain the statement.
\end{proof}

Combining all the results above, we obtain a very clean property of $D$ under the strongest possible assumptions.

\begin{thrm} \label{strongesttransform}
Assume $f \in \HConc_{s}(\mathcal{C})$ and its polar transform $\mathcal{H}f\in \HConc_{s/s-1}(\mathcal{C}^*)$ are $+$-differentiable. Let $U=D(\mathcal{C}_{\mathcal{H}f} ^*)\cup \{0\}$ and $U^*=D(\mathcal{C}_{f} )\cup \{0\}$. Then we have:
\begin{itemize}
\item $f$ and $\mathcal{H}f$ admit a strong Zariski decomposition with respect to the cone $U$ and the cone $U^*$ respectively;

\item For any $v\in \mathcal{C}_{f}$ we have $D(v)=D(p_v)$ (and similarly for $w \in \mathcal{C}_{\mathcal{H}f}^{*}$);

\item $D$ defines a bijection $D: U^{\circ} \to U^{* \circ}$ with inverse also given by $D$.  In particular, $f$ and $\mathcal{H}f$ satisfy Teissier proportionality with respect to the open cone $U^{\circ}$ and $U^{* \circ}$ respectively.

\end{itemize}
\end{thrm}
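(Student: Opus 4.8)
The plan is to deduce all three bullet points from the results already established for $+$-differentiable functions, using systematically the symmetry between $f$ and $\mathcal{H}f$ furnished by $\mathcal{H}^{2}f=f$ (Proposition \ref{prop H involution}); throughout I write $D_{\mathcal{H}f}$ for the derivative operator attached to $\mathcal{H}f$, so that $U=D_{\mathcal{H}f}(\mathcal{C}^{*}_{\mathcal{H}f})\cup\{0\}$ and $U^{*}=D(\mathcal{C}_{f})\cup\{0\}$. For the first bullet I would apply Theorem \ref{strong zariski equivalence} twice: applied to $f$ it gives that $\mathcal{H}f$ admits a strong Zariski decomposition with respect to $D(\mathcal{C}_{f})\cup\{0\}=U^{*}$, and applied to $\mathcal{H}f$ (which is $+$-differentiable by hypothesis), together with $\mathcal{H}(\mathcal{H}f)=f$, it gives that $f$ admits a strong Zariski decomposition with respect to $D_{\mathcal{H}f}(\mathcal{C}^{*}_{\mathcal{H}f})\cup\{0\}=U$. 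I would record here the observation, used repeatedly below, that $U\setminus\{0\}\subseteq\mathcal{C}_{f}$ (Theorem \ref{derivativeandbm} and Remark \ref{extensiontoboundaryrmk} applied to $\mathcal{H}f$), so that for $v\in U$ the equality $v=v+0$ is a valid Zariski decomposition and hence, by uniqueness, $p_{v}=v$; symmetrically $p_{w^{*}}=w^{*}$ for $w^{*}\in U^{*}$.

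For the second bullet, fix $v\in\mathcal{C}_{f}$ with Zariski decomposition $v=p_{v}+n_{v}$ for $f$ with respect to $U$. By Remark \ref{extensiontoboundaryrmk} we have $D(v)\in\mathcal{C}^{*}_{\mathcal{H}f}$ and $D(v)\cdot v=f(v)=\mathcal{H}f(D(v))$, so $D(v)\in G_{v}$; by the remark following Definition \ref{definition formal zariski} this forces $D(v)\in G_{p_{v}}$ and $D(v)\cdot n_{v}=0$, whence $D(v)\cdot p_{v}=f(v)=f(p_{v})$ and $\mathcal{H}f(D(v))=f(p_{v})$. By the symmetry of $G$ we get $p_{v}\in G_{D(v)}$ with $f(p_{v})>0$, so the positive–part formula from the proof of Theorem \ref{strong zariski equivalence}, applied to $\mathcal{H}f$ with $p_{v}$ as the chosen supergradient, computes the positive part of $D(v)$ in the Zariski decomposition of $\mathcal{H}f$ with respect to $U^{*}$ as $(\mathcal{H}f(D(v))/f(p_{v}))^{(s-1)/s}D(p_{v})=D(p_{v})$, the constant being $1$. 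On the other hand $D(v)\in D(\mathcal{C}_{f})\subseteq U^{*}$, so $D(v)$ is its own positive part; therefore $D(v)=D(p_{v})$. The statement for $w^{*}\in\mathcal{C}^{*}_{\mathcal{H}f}$ is the mirror image.

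For the third bullet, note first that $U^{\circ}\subseteq\mathcal{C}^{\circ}$, since a neighbourhood of an interior point of $U$ cannot escape $\mathcal{C}\supseteq U$. The key point is that $f$ is strictly log concave on $U^{\circ}$: if $v,x\in U^{\circ}$ are not proportional, then, since $p_{v}=v$ and $p_{x}=x$ are not proportional, property (2) of the strong Zariski decomposition of $f$ with respect to $U$ forbids equality in the $s$-concavity inequality. Theorem \ref{thrm teissier}, with $\mathcal{C}_{T}=U^{\circ}$ (an open subcone of $\mathcal{C}_{f}$), then gives that $f$ satisfies Teissier proportionality with respect to $U^{\circ}$ and that $D|_{U^{\circ}}$ is injective, and Proposition \ref{derivativeofdual}, with $\mathcal{C}_{T}=U^{\circ}$, gives $D(U^{\circ})\subseteq\mathcal{C}^{*\circ}$, differentiability of $\mathcal{H}f$ on $D(U^{\circ})$, and $D_{\mathcal{H}f}\circ D=\mathrm{id}$ on $U^{\circ}$. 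Since $D|_{U^{\circ}}$ is a continuous injection between open subsets of vector spaces of the same dimension, invariance of domain makes it an open map, so $D(U^{\circ})$ is open; combined with $D(U^{\circ})\subseteq D(\mathcal{C}_{f})\subseteq U^{*}$ this forces $D(U^{\circ})\subseteq U^{*\circ}$. Running the same argument with $\mathcal{H}f$ in place of $f$ yields the mirror inclusion $D_{\mathcal{H}f}(U^{*\circ})\subseteq U^{\circ}$ together with $D\circ D_{\mathcal{H}f}=\mathrm{id}$ on $U^{*\circ}$; the two identities then give $D(U^{\circ})=U^{*\circ}$ and exhibit $D|_{U^{\circ}}$ and $D_{\mathcal{H}f}|_{U^{*\circ}}$ as mutually inverse bijections, while Teissier proportionality for $\mathcal{H}f$ on $U^{*\circ}$ comes out of Theorem \ref{thrm teissier} as before.

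I expect the third bullet to be the main obstacle. Conceptually its content is that $U$ is precisely the cone on whose interior $f$ becomes strictly log concave — which, pleasantly, reduces to property (2) of the Zariski decomposition together with the identity $p_{v}=v$ for $v\in U$ — and technically the only non-formal input is an openness statement (invariance of domain, equivalently the classical fact that the gradient of a $\mathcal{C}^{1}$ strictly concave function is a homeomorphism onto an open set), which is needed to upgrade $D(U^{\circ})\subseteq U^{*}\cap\mathcal{C}^{*\circ}$ to the exact equality $D(U^{\circ})=U^{*\circ}$. By contrast the second bullet is routine once one checks that the relevant normalisation constants all equal $1$, and the first is an immediate application of Theorem \ref{strong zariski equivalence}.
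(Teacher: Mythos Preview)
Your proof is correct and, for the first two bullets, follows essentially the same route as the paper: the first bullet is an immediate double application of Theorem \ref{strong zariski equivalence}, and for the second bullet both you and the paper observe that $G_{v}\subset G_{p_{v}}$ and then use the positive-part construction from the proof of Theorem \ref{strong zariski equivalence} to identify $D(v)$ and $D(p_{v})$ with the same element of $U^{*}$; your version is simply more explicit about the normalisation constants.

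The genuine divergence is in the third bullet, specifically in the step $D(U^{\circ})\subseteq U^{*\circ}$. You invoke invariance of domain: since $D|_{U^{\circ}}$ is a continuous injection between open subsets of finite-dimensional spaces of the same dimension, its image is open, and an open subset of $U^{*}$ must lie in $U^{*\circ}$. The paper instead argues by contradiction without appealing to any topological theorem: if $D(v)$ were a boundary point of $U^{*}$ lying in $\mathcal{C}^{*\circ}$, one could approximate it by interior points $w_{i}^{*}\in\mathcal{C}^{*\circ}\setminus U^{*}$; the second bullet gives $D_{\mathcal{H}f}(w_{i}^{*})=D_{\mathcal{H}f}(w_{i}^{*}+tn_{w_{i}^{*}})$, and Proposition \ref{diffuniqueness} then forces each $D_{\mathcal{H}f}(w_{i}^{*})$ to lie on $\partial\mathcal{C}$, hence on $\partial U$; continuity of $D_{\mathcal{H}f}$ drags $v=D_{\mathcal{H}f}(D(v))$ onto $\partial U$, contradicting $v\in U^{\circ}$. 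Your route is shorter and uses only standard topology, at the price of importing a nontrivial theorem (Brouwer's invariance of domain); the paper's route is more self-contained and has the incidental virtue of exhibiting a concrete geometric reason, tied to the Zariski decomposition itself via the second bullet, why $D$ cannot hit the boundary of $U^{*}$.
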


\begin{proof}
Note that $U^* \subset \mathcal{C}^{*}_{\mathcal{H}f}$ (and $U \subset \mathcal{C}_{f}$) since for any $v \in \mathcal{C}_{f}$ we have $D(v) \in G_{v}$ and $f(v) > 0$.

The first statement is immediate from Theorem \ref{strong zariski equivalence}.

We next show the second statement.  By the definition of positive parts, we have $G_v \subset G_{p_v}$.  Since both $v, p_{v} \in \mathcal{C}_{f}$, we know by the argument of Theorem \ref{strong zariski equivalence} that $D(v)$ and $D(p_{v})$ are both proportional to the (unique) positive part of any $w^{*} \in G_{v}$ with positive $\mathcal{H}f$.

Finally we show the third statement.  We start by proving the Teissier proportionality on $U^{\circ}$.  By part (2) of the Zariski decomposition condition $f$ is strictly log concave on $U^\circ$, and Teissier proportionality follows by Theorem \ref{thrm teissier}.  Furthermore, the argument of Proposition \ref{derivativeofdual} then shows that $D(U^{\circ})\subset \mathcal{C}^{* \circ}$ and $D(D(U^{\circ})) = U^{\circ}$.

We must show that $D(U^{\circ}) \subset U^{* \circ}$.  Suppose that $v \in U^{\circ}$ had that $D(v)$ was on the boundary of $U^{*}$.  Since $D(v) \in \mathcal{C}^{* \circ}$, there must be some sequence $w_{i}^{*} \in C^{* \circ} - U^{*}$ whose limit is $D(v)$.  We note that each $D(w_{i}^{*})$ lies on the boundary of $\mathcal{C}$,
thus must lie on the boundary of $U$.  Indeed, by the second statement we have $D(w_{i}^{*}) = D(w_{i}^{*} + tn_{w_{i}^{*}})$ for any $t>0$, which would violate the uniqueness of $G_{D(w_{i}^*)}$ as in Proposition \ref{diffuniqueness} if it were an interior point.  Using the continuity of $D$ we see that $v = D(D(v))$ lies on the boundary of $U$, a contradiction.

In all, we have shown that $D: U^{\circ} \to U^{* \circ}$ is an isomorphism onto its image with inverse $D$.  By symmetry we also have $D(U^{* \circ}) \subset U^{\circ}$, and we conclude after taking $D$ the reverse inclusion $U^{* \circ} \subset D(U^{\circ})$.
\end{proof}

\subsection{Morse-type inequality}

The polar transform $\mathcal{H}$ also gives a natural way of translating cone positivity conditions from $\mathcal{C}$ to $\mathcal{C}^{*}$.

\begin{defn}
Let $f \in \HConc_{s}(\mathcal{C})$ be $+$-differentiable.  We say that $f$ satisfies a Morse-type inequality if for any
$v\in \mathcal{C}_f$ and $x\in \mathcal{C}$ satisfying the inequality
$$
f(v)-sD(v)\cdot x >0
$$
we have that $v-x \in \mathcal{C}^\circ$.
\end{defn}

Note that the prototype of the Morse-type inequality is the well known algebraic Morse inequality for nef divisors.

In order to translate the positivity in $\mathcal{C}$ to $\mathcal{C}^*$, we need the following ``reverse" Khovanskii-Teissier inequality.

\begin{prop}
\label{prop abstract reverse KT inequality}
Let $f \in \HConc_{s}(\mathcal{C})$ be $+$-differentiable and satisfy a Morse-type inequality. Then we have
\begin{align*}
s(y^*\cdot v)(D(v)\cdot x)\geq f(v) (y^*\cdot x),
\end{align*}
for any $y^* \in \mathcal{C}^*$, $v\in \mathcal{C}_f$ and $x\in \mathcal{C}$.
\end{prop}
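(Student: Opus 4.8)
\emph{Proof proposal.} The plan is to read the reverse Khovanskii--Teissier inequality off the Morse-type inequality by a homogeneity-and-limit argument, using nothing about $y^*$ beyond the fact that it pairs non-negatively with every element of $\mathcal{C}$.

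First I would record the elementary positivity facts. Since $v\in\mathcal{C}_f$ we have $f(v)>0$, and by Remark \ref{extensiontoboundaryrmk} the vector $D(v)$ lies in $\mathcal{C}^{*}_{\mathcal{H}f}\subset\mathcal{C}^{*}$; hence $D(v)\cdot x\ge 0$, while $y^{*}\cdot x\ge 0$ and $y^{*}\cdot v\ge 0$ because $x,v\in\mathcal{C}$ and $y^{*}\in\mathcal{C}^{*}$. Thus both sides of the asserted inequality are non-negative, and if $y^{*}\cdot x=0$ there is nothing to prove.

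The key step is to feed scaled copies of $x$ into the Morse-type inequality. For a real number $t\ge 0$ the vector $tx$ lies in $\mathcal{C}$ and $f(v)-sD(v)\cdot(tx)=f(v)-ts\,(D(v)\cdot x)$, so whenever $ts\,(D(v)\cdot x)<f(v)$ this quantity is strictly positive and the Morse-type inequality gives $v-tx\in\mathcal{C}^{\circ}\subset\mathcal{C}$. Pairing with $y^{*}\in\mathcal{C}^{*}$ then yields $y^{*}\cdot v\ge t\,(y^{*}\cdot x)$ for every such $t$. It remains to choose $t$ optimally. If $D(v)\cdot x=0$, the inequality $y^{*}\cdot v\ge t\,(y^{*}\cdot x)$ holds for all $t\ge 0$; since $y^{*}\cdot v$ is finite and $y^{*}\cdot x\ge 0$, this forces $y^{*}\cdot x=0$, and the claim becomes the trivial identity $0\ge 0$. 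If $D(v)\cdot x>0$, set $t_{0}=f(v)/\bigl(s\,(D(v)\cdot x)\bigr)$; the inequality holds for every $t<t_{0}$, and letting $t\to t_{0}^{-}$ (legitimate because $y^{*}\cdot x\ge 0$) gives $y^{*}\cdot v\ge t_{0}\,(y^{*}\cdot x)$. Multiplying through by $s\,(D(v)\cdot x)>0$ produces exactly $s\,(y^{*}\cdot v)(D(v)\cdot x)\ge f(v)\,(y^{*}\cdot x)$.

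I do not anticipate a genuine obstacle. The only points requiring care are that the Morse-type inequality demands the \emph{strict} inequality $f(v)-sD(v)\cdot x>0$, so one must approach $t_{0}$ from below and pass to the limit rather than substitute $t_{0}$ directly; and that the degenerate direction $D(v)\cdot x=0$ be handled separately, where the same argument in fact shows that $x$ is $y^{*}$-null.
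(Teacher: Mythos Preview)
Your argument is correct and is essentially the same as the paper's: both exploit the homogeneity in $x$ to reduce the inequality to a single application of the Morse-type hypothesis. The paper rescales $x$ so that $y^{*}\cdot v=y^{*}\cdot x$ and argues by contradiction (if $sD(v)\cdot x<f(v)$ then $v-x\in\mathcal{C}^{\circ}$, whence $y^{*}\cdot v>y^{*}\cdot x$), whereas you run the same idea as a one-parameter limit $t\to t_{0}^{-}$; the content is identical.
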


\begin{proof}
The inequality holds when $y^*=0$, so we need to deal with the case when $y^*\neq 0$.
Since both sides are homogeneous in all the arguments, we may rescale to assume that $y^{*} \cdot v = y^{*} \cdot x$.  Then we need to show that $s D(v) \cdot x \geq f(v)$.  If not, then
\begin{equation*}
f(v) - sD(v) \cdot x > 0,
\end{equation*}
so that $v-x \in \mathcal{C}^{\circ}$ by the Morse-type inequality.  But then we conclude that $y^{*} \cdot v > y^{*} \cdot x$, a contradiction.
\end{proof}

\begin{thrm} \label{general morse}
Let $f \in \HConc_{s}(\mathcal{C})$ be $+$-differentiable and satisfy a Morse-type inequality. Then for any $v \in \mathcal{C}_f$ and $y^*\in \mathcal{C}^{*}$ satisfying
\begin{align*}
\mathcal{H}f(D(v))-s v\cdot y^*>0,
\end{align*}
we have $D(v)-y^*\in \mathcal{C}^{*\circ}$. In particular, we have $D(v)-y^*\in \mathcal{C}^*_{\mathcal{H}f}$ and
\begin{align*}
\mathcal{H}f(D(v)-y^*)^{s-1/s}&\geq (\mathcal{H}f(D(v))-s v\cdot y^*)\mathcal{H}f(D(v))^{-1/s}\\
&=(f(v)-s v\cdot y^*)f(v)^{-1/s}.
\end{align*}
As a consequence, we get
\begin{align*}
\mathcal{H}f(D(v)-y^*)\geq f(v)-\frac{s^2}{s-1}v\cdot y^*.
\end{align*}
\end{thrm}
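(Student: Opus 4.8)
The plan is to first record the identity $\mathcal{H}f(D(v)) = f(v)$ (Theorem~\ref{derivativeandbm}, Remark~\ref{extensiontoboundaryrmk}), so that the hypothesis becomes $f(v) - s\,v\cdot y^* > 0$; in particular $f(v) > 0$, and all the tools attached to $D$ on $\mathcal{C}_f$ are available. The argument then breaks into three parts: (i) the membership $D(v) - y^* \in \mathcal{C}^{*\circ}$; (ii) the displayed lower bound on $\mathcal{H}f(D(v)-y^*)^{(s-1)/s}$; and (iii) the final inequality, deduced from (ii) by an elementary convexity estimate.

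For part (i), the first observation is that the Morse-type hypothesis already forces $D(v) \in \mathcal{C}^{*\circ}$: if $D(v)\cdot x = 0$ for some $x \in \mathcal{C}\setminus\{0\}$, then $f(v) - sD(v)\cdot(tx) = f(v) > 0$ for every $t>0$, so the Morse-type inequality gives $v - tx \in \mathcal{C}^\circ$ for all $t > 0$, contradicting that $\mathcal{C}$ is a pointed closed cone. Hence $D(v)\cdot x > 0$ for every nonzero $x \in \mathcal{C}$. Now fix $x \in \mathcal{C}\setminus\{0\}$ and estimate $(D(v)-y^*)\cdot x$: if $y^*\cdot x \le 0$ positivity is immediate; if $y^*\cdot x > 0$, I would feed $(y^*,v,x)$ into the reverse Khovanskii--Teissier inequality (Proposition~\ref{prop abstract reverse KT inequality}) to get $f(v)(y^*\cdot x) \le s(y^*\cdot v)(D(v)\cdot x)$, which forces $y^*\cdot v > 0$, and then the hypothesis $f(v) > s\,v\cdot y^*$ upgrades this to $D(v)\cdot x \ge \frac{f(v)(y^*\cdot x)}{s(y^*\cdot v)} > y^*\cdot x$. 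Thus $D(v) - y^* \in \mathcal{C}^{*\circ} \subset \mathcal{C}^*_{\mathcal{H}f}$ and $\mathcal{H}f(D(v)-y^*) > 0$.

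For part (ii), I would use that $\mathcal{H}f(D(v)-y^*)^{(s-1)/s} = \inf_{u\in\mathcal{C}^\circ} \frac{(D(v)-y^*)\cdot u}{f(u)^{1/s}}$ and bound the numerator for each fixed $u \in \mathcal{C}^\circ$. Set $A := D(v)\cdot u - f(v)^{(s-1)/s}f(u)^{1/s} \ge 0$ (Brunn--Minkowski, Theorem~\ref{derivativeandbm}). Applying the reverse Khovanskii--Teissier inequality to $(y^*,v,u)$ and substituting $D(v)\cdot u = A + f(v)^{(s-1)/s}f(u)^{1/s}$ yields
\begin{equation*}
y^*\cdot u \;\le\; \frac{s(y^*\cdot v)}{f(v)}\,A + s(y^*\cdot v)\,f(v)^{-1/s}f(u)^{1/s} \;\le\; A + s(y^*\cdot v)\,f(v)^{-1/s}f(u)^{1/s},
\end{equation*}
the last step using $0 \le \frac{s(y^*\cdot v)}{f(v)} < 1$ and $A \ge 0$. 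Subtracting this from $D(v)\cdot u = A + f(v)^{(s-1)/s}f(u)^{1/s}$, the $A$-terms cancel and one is left with $(D(v)-y^*)\cdot u \ge (f(v) - s\,v\cdot y^*)f(v)^{-1/s}f(u)^{1/s}$; dividing by $f(u)^{1/s}$ and taking the infimum over $u$ gives precisely the stated bound (rewriting $f(v) = \mathcal{H}f(D(v))$).

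Finally, for part (iii), I would raise the inequality of (ii) to the power $\frac{s}{s-1}$ — legitimate since both sides are positive — which gives $\mathcal{H}f(D(v)-y^*) \ge f(v)(1-t)^{s/(s-1)}$ with $t := \frac{s\,v\cdot y^*}{f(v)} \in [0,1)$, and then apply Bernoulli's inequality $(1-t)^{p} \ge 1 - pt$ (for $p = \frac{s}{s-1} \ge 1$; trivial when $1 - pt < 0$) to conclude $\mathcal{H}f(D(v)-y^*) \ge f(v)\big(1 - \frac{s}{s-1}t\big) = f(v) - \frac{s^2}{s-1}\,v\cdot y^*$. I expect the only genuinely delicate step to be part (i): landing in the \emph{interior} of $\mathcal{C}^*$, rather than merely in $\mathcal{C}^*$, is exactly where the Morse-type hypothesis is indispensable (through the claim that $D(v)$ pairs strictly positively with every nonzero element of $\mathcal{C}$). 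Parts (ii) and (iii) are bookkeeping, organized so that the Brunn--Minkowski ``defect'' $A$ cancels and so that the remaining scalar inequality reduces to Bernoulli.
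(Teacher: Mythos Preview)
Your proof is correct and follows essentially the same route as the paper: the reverse Khovanskii--Teissier inequality (Proposition~\ref{prop abstract reverse KT inequality}) drives both (i) and (ii), and Bernoulli handles (iii). Two minor tactical differences are worth noting. For (i), the paper perturbs $y^*$ by a small $y'^*\in\mathcal{C}^{*\circ}$ and applies reverse KT once to get $D(v)\cdot x \geq (1+\delta)(y^*+y'^*)\cdot x$; your alternative of first proving $D(v)\in\mathcal{C}^{*\circ}$ via the Morse hypothesis is equally clean. For (ii), the paper's bookkeeping is shorter: from reverse KT one has directly $(D(v)-y^*)\cdot x \geq \tfrac{f(v)-s\,v\cdot y^*}{f(v)}\,D(v)\cdot x$, and then the infimum over $x$ of $D(v)\cdot x/f(x)^{1/s}$ is recognized as $\mathcal{H}f(D(v))^{(s-1)/s}=f(v)^{(s-1)/s}$. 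Your introduction of the Brunn--Minkowski defect $A$ reaches the same endpoint, but the cancellation of $A$ is incidental rather than essential.
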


\begin{proof}
Note that $\mathcal{H}f(D(v))=f(v)$. First we claim that the inequality $f(v)-sv\cdot y^*>0$ implies $D(v)-y^* \in \mathcal{C}^{*\circ}$. To this end, fix some sufficiently small $y'^{*} \in \mathcal{C}^{*\circ}$ such that $y^*+y'^*$ still satisfies $f(v)-sv\cdot (y^*+y'^*)>0$.

Then by the ``reverse" Khovanskii-Teissier inequality, for some $\delta>0$ and any $x\in \mathcal{C}$ we have
\begin{align*}
D(v)\cdot x\geq \left(\frac{f(v)}{s(y^* + y'^*)\cdot v}\right) (y^*+y'^*) \cdot  x \geq (1+\delta)(y^*+y'^*)\cdot  x.
\end{align*}
This implies $D(v)-y^* \in \mathcal{C}^{*\circ}$.

By the definition of $\mathcal{H}f$ we have
\begin{align*}
 \mathcal{H}f(D(v)-y^*)&=\inf_{x\in \mathcal{C}^\circ}\left( \frac{(D(v)-y^*)\cdot x}{f(x)^{1/s}}\right)^{s/s-1}\\
 &\geq \left(\frac{f(v)-sy^*\cdot v}{f(v)}\right)^{s/s-1}\inf_{x\in \mathcal{C}^\circ}\left( \frac{D(v)\cdot x}{f(x)^{1/s}}\right)^{s/s-1}\\
 &=\mathcal{H}f(D(v))\left(\frac{f(v)-sy^*\cdot v}{f(v)}\right)^{s/s-1},
\end{align*}
where the second line follows from ``reverse" Khovanskii-Teissier inequality. To obtain the desired inequality, we only need to use the equality $\mathcal{H}f(D(v))=f(v)$ again.

To show the last inequality, we only need to note that the function $(1-x)^\alpha$ is convex for $x\in [0,1)$ if $\alpha \geq 1$. This implies $(1-x)^\alpha \geq 1-\alpha x$. Applying this inequality in our situation, we get
\begin{align*}
\mathcal{H}f(D(v)-y^*)&\geq \left( 1- \frac{s v\cdot y^*}{f(v)}\right)^{s/s-1} f(v)\\
& \geq f(v)-\frac{s^2}{s-1}v\cdot y^*.
\end{align*}
\end{proof}

\subsection{Boundary conditions}

Under certain conditions we can control the behaviour of
$\mathcal{H}f$ near the boundary, and thus obtain continuity.

\begin{defn}
Let $f \in \HConc_{s}(\mathcal{C})$ and let $\alpha \in (0,1)$.  We say that $f$ satisfies the sublinear boundary condition of order $\alpha$ if for any non-zero $v$ on the boundary of $\mathcal{C}$ and for any $x$ in the interior of $\mathcal{C}$, there exists a constant $C:=C(v, x)>0$ such that $f(v+\epsilon x)^{1/s}\geq C\epsilon^\alpha$.
\end{defn}

Note that the condition is always satisfied at $v$ if $f(v)>0$.  Furthermore, the condition is satisfied for any $v,x$ with $\alpha = 1$ by homogeneity and log-concavity, so the crucial question is whether we can decrease $\alpha$ slightly.

Using this sublinear condition, we get the vanishing of $\mathcal{H}f$ along the boundary.

\begin{prop} \label{sublinearcontinuity}
Let $f \in \HConc_{s}(\mathcal{C})$ satisfy the sublinear boundary condition of order $\alpha$. Then $\mathcal{H}f$ vanishes along the boundary. As a consequence, $\mathcal{H}f$ extends to a continuous function over $V^*$ by setting
$\mathcal{H}f=0$ outside $\mathcal{C}^*$.
\end{prop}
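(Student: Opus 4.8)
The plan is to show that if $w^{*}$ lies on the boundary of $\mathcal{C}^{*}$ then $\mathcal{H}f(w^{*}) = 0$, and then deduce continuity of the extension. Fix a nonzero boundary point $w^{*} \in \partial \mathcal{C}^{*}$. By definition there exists a nonzero $v_{0}$ on the boundary of $\mathcal{C}$ with $w^{*} \cdot v_{0} = 0$ (the supporting hyperplane to $\mathcal{C}^{*}$ at $w^{*}$ is given by a vector of $\mathcal{C}$). Fix any interior point $x \in \mathcal{C}^{\circ}$. For $\epsilon > 0$ consider the test vector $v_{0} + \epsilon x \in \mathcal{C}^{\circ}$, which is a legitimate competitor in the infimum defining $\mathcal{H}f(w^{*})$. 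Then
\begin{align*}
\mathcal{H}f(w^{*}) \leq \left( \frac{w^{*} \cdot (v_{0} + \epsilon x)}{f(v_{0} + \epsilon x)^{1/s}} \right)^{s/(s-1)} = \left( \frac{\epsilon\, (w^{*} \cdot x)}{f(v_{0} + \epsilon x)^{1/s}} \right)^{s/(s-1)},
\end{align*}
using $w^{*} \cdot v_{0} = 0$. By the sublinear boundary condition of order $\alpha$ there is a constant $C = C(v_{0}, x) > 0$ with $f(v_{0} + \epsilon x)^{1/s} \geq C \epsilon^{\alpha}$, so the bracket is at most $\epsilon^{1-\alpha} (w^{*} \cdot x)/C$. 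Since $\alpha \in (0,1)$ the exponent $1 - \alpha$ is strictly positive, so letting $\epsilon \to 0^{+}$ forces $\mathcal{H}f(w^{*}) = 0$.

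Having shown $\mathcal{H}f$ vanishes on $\partial \mathcal{C}^{*}$, I would prove continuity of the extension $\widehat{F}$ on $V^{*}$ defined by $\widehat{F} = \mathcal{H}f$ on $\mathcal{C}^{*}$ and $\widehat{F} = 0$ off $\mathcal{C}^{*}$. Since $\mathcal{H}f \in \HConc_{s/(s-1)}(\mathcal{C}^{*})$ by Proposition \ref{prop H involution}, it is upper-semicontinuous on $\mathcal{C}^{*}$; combined with vanishing on the boundary this gives upper-semicontinuity of $\widehat F$ across $\partial \mathcal{C}^{*}$, and lower-semicontinuity is automatic at boundary points since $\widehat F \geq 0$ everywhere and $\widehat F = 0$ there. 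On the interior $\mathcal{C}^{* \circ}$, continuity of $\mathcal{H}f$ follows from its concavity (a finite concave function on an open convex set is continuous), and off $\overline{\mathcal{C}^{*}}$ the function is locally constantly zero; so the only issue is approaching a boundary point from inside $\mathcal{C}^{* \circ}$, where one uses that $\widehat F$ is concave and nonnegative on $\mathcal{C}^{*}$ and vanishes at the boundary point, which pins the limit to $0$ by a standard convexity argument (values along a segment into the interior are bounded by an affine function vanishing at the endpoint).

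The main obstacle is the last point: deducing that $\mathcal{H}f$ is actually \emph{continuous} (not merely upper-semicontinuous) at boundary points when approached from the interior. Upper-semicontinuity gives $\limsup \leq 0$, but one needs $\liminf \geq 0$, which is immediate from nonnegativity — so in fact the real content is entirely in the vanishing statement, and the extension's continuity then follows formally. I would double-check that the sublinear condition is being applied with the correct quantifier order (the constant $C$ is allowed to depend on both $v_{0}$ and $x$, which is exactly what the definition grants and all that the argument needs), and that $v_{0}$ can genuinely be taken nonzero — this is where full-dimensionality and properness of $\mathcal{C}^{*}$ enter, guaranteeing a nonzero supporting functional.
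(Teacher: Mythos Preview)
Your argument is correct and follows essentially the same line as the paper's proof. The vanishing step is identical: pick a nonzero $v_{0} \in \partial\mathcal{C}$ with $w^{*}\cdot v_{0}=0$, test against $v_{0}+\epsilon x$, and use the sublinear bound $f(v_{0}+\epsilon x)^{1/s}\geq C\epsilon^{\alpha}$ to force $\mathcal{H}f(w^{*})=0$ as $\epsilon\to 0$. For continuity, the paper instead repeats the same explicit estimate on $\mathcal{H}f(w^{*}+\epsilon y^{*})$ with $y^{*}\in\mathcal{C}^{*\circ}$ and invokes Lemma~\ref{uppersemilimit}, whereas you deduce it more abstractly from upper-semicontinuity of $\mathcal{H}f$ on $\mathcal{C}^{*}$ together with nonnegativity and the already-established boundary vanishing; both routes are short and valid, and your observation that continuity is a formal consequence of the vanishing is exactly right.
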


\begin{proof}
Let $w^*$ be a boundary point of $\mathcal{C}^*$.  Then there exists some non-zero $v\in \mathcal{C}$ such that $w^*\cdot v=0$. Fix $x\in \mathcal{C}^\circ$. By the definition of $\mathcal{H}f$ we get
\begin{align*}
\mathcal{H}f(w^*)^{s-1/s}\leq \frac{w^*\cdot (v+\epsilon x)}{f^{1/s}(v+\epsilon x)}\leq \frac{\epsilon w^*\cdot x}{C\epsilon^\alpha}.
\end{align*}
Letting $\epsilon$ tend to zero, we see $\mathcal{H}f(w^*)=0$.

To show the continuity, by Lemma \ref{uppersemilimit} we only need to verify
\begin{align*}
\lim _{\epsilon \rightarrow 0}\mathcal{H}f(w^* + \epsilon y^*)=0
\end{align*}
for some $y^* \in \mathcal{C}^{*\circ}$ (as any other limiting sequence is dominated by such a sequence).  This follows easily from
\begin{align*}
\mathcal{H}f(w^* + \epsilon y^*)^{s-1/s}&\leq \frac{(w^*+\epsilon y^*)\cdot (v+\epsilon x)}{f^{1/s}(v+\epsilon x)}\\
&\leq \frac{\epsilon(y^*\cdot v + w^*\cdot x + \epsilon y^*\cdot x)}{C \epsilon^\alpha}.
\end{align*}
\end{proof}

\begin{rmk}
If $f$ satisfies the sublinear condition, then $\mathcal{C}_{\mathcal{H}f} ^* =\mathcal{C}^{*\circ}$. This makes the statements of the previous results very clean. In the following section, the function $\widehat{\vol}$ %and $\mathfrak{M}$ both have
has this nice property.
\end{rmk}

\section{Positivity for curves}
\label{section zariski}
We now study the basic properties of $\widehat{\vol}$ and of the Zariski decompositions for curves.  Some aspects of the theory will follow immediately from the formal theory of Section \ref{formal zariski section}; others will require a direct geometric argument.

We first outline how to apply the results of Section \ref{formal zariski section}.  Recall that $\widehat{\vol}$ is the polar transform of the volume function for divisors restricted to the nef cone. More precisely, we are now in the situation:
\begin{align*}
\mathcal{C}=\Nef^1 (X), \quad f=\vol,\quad \mathcal{C}^*=\Eff_1 (X),\quad \mathcal{H}f= \widehat{\vol}.
\end{align*}

Thus, to understand the properties of $\widehat{\vol}$ we need to recall the basic features of the volume function on the nef cone of divisors.
It is an elementary fact that the volume function on the nef cone of divisors is differentiable everywhere (with $D(A) = A^{n-1}$).  In the notation of Section \ref{legendresection} the cone $\Nef^{1}(X)_{\vol}$ coincides with the big and nef cone.  The Khovanskii-Teissier inequality (with Teissier proportionality) holds on the big and nef cone as recalled in Section \ref{section preliminaries}.  Finally, the volume for nef divisors satisfies the sublinear boundary condition of order $n-1/n$: this follows from an elementary intersection calculation using the fact that $N \cdot A^{n-1} \neq 0$ for any non-zero nef divisor $N$ and ample divisor $A$.

\begin{rmk}
\label{rmk decomposition kahler}
Due to the outline above, the proofs in this section depend only upon elementary facts about intersection theory, the Khovanskii-Teissier inequality and Teissier's proportionality theorem.  As discussed in the preliminaries, the arguments in this section thus extend immediately to smooth varieties over an arbitrary algebraically closed field and to the K\"ahler setting.
\end{rmk}

\subsection{Basic properties}

The following theorems collect the various analytic consequences for $\widehat{\vol}$.

\begin{thrm} \label{volcurvesbasicprops}
Let $X$ be a projective variety of dimension $n$.  Then:
\begin{enumerate}
\item $\widehat{\vol}$ is continuous and homogeneous of weight $n/n-1$ on $\Eff_{1}(X)$ and is positive precisely for the big classes.
\item For any big and nef divisor class $A$, we have $\widehat{\vol}(A^{n-1}) = \vol(A)$.
\item For any big curve class $\alpha$, there is a big and nef divisor class $B$ such that
\begin{equation*}
\widehat{\vol}(\alpha) = \left( \frac{B \cdot \alpha}{\vol(B)^{1/n}} \right)^{n/n-1}.
\end{equation*}
We say that the class $B$ computes $\widehat{\vol}(\alpha)$.
\end{enumerate}
\end{thrm}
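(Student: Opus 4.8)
The plan is to deduce all three parts from the formal theory of polar transforms in Sections~\ref{legendresection} and~\ref{formal zariski section}, specialized to the pair
\[
\mathcal{C}=\Nef^{1}(X),\qquad f=\vol,\qquad \mathcal{C}^{*}=\Eff_{1}(X),\qquad \mathcal{H}f=\widehat{\vol}.
\]
The only geometric input is the three facts recalled just before the theorem, together with the cone duality $\Nef^{1}(X)^{*}=\Eff_{1}(X)$: first, $\vol\in\HConc_{n}(\Nef^{1}(X))$, the $n$-concavity being the Khovanskii--Teissier inequality; second, $\vol$ is $+$-differentiable with $D(A)=A^{n-1}$ and with $\Nef^{1}(X)_{\vol}$ equal to the big and nef cone, because $A\mapsto A^{n-1}$ is a polynomial (hence continuous) map on $N^{1}(X)$, so its restriction to the ample cone extends continuously to all big and nef classes; third, $\vol$ satisfies the sublinear boundary condition of order $\frac{n-1}{n}$.

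For part (1): applying Proposition~\ref{prop H involution}(1) to $f=\vol$ gives at once $\widehat{\vol}=\mathcal{H}\vol\in\HConc_{n/n-1}(\Eff_{1}(X))$, so $\widehat{\vol}$ is homogeneous of weight $\frac{n}{n-1}$, upper-semicontinuous, and strictly positive on the interior of $\Eff_{1}(X)$. Then $\widehat{\vol}^{(n-1)/n}$ is a finite concave function on the open convex set $\Eff_{1}(X)^{\circ}$ and is therefore continuous there, and Proposition~\ref{sublinearcontinuity} (using the sublinear boundary condition) shows that $\widehat{\vol}$ vanishes on the boundary of $\Eff_{1}(X)$ and extends continuously to all of $N_{1}(X)$ by zero; together these give continuity of $\widehat{\vol}$ on $\Eff_{1}(X)$. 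Strict positivity on the interior together with vanishing on the boundary is exactly the statement that $\widehat{\vol}$ is positive precisely on the big cone, the interior of $\Eff_{1}(X)$.

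For part (2), the quickest route bypasses the machinery: by the Khovanskii--Teissier inequality with Teissier's proportionality (recalled in Section~\ref{section preliminaries}; see also Theorem~\ref{thm KT inequality}), $B\cdot A^{n-1}\ge \vol(B)^{1/n}\vol(A)^{(n-1)/n}$ for any big and nef $A,B$, so every term in the infimum defining $\widehat{\vol}(A^{n-1})$ is at least $\vol(A)$, and taking $B=A$ attains it; hence $\widehat{\vol}(A^{n-1})=\vol(A)$. (Equivalently, this is Theorem~\ref{derivativeandbm}(2) with Remark~\ref{extensiontoboundaryrmk}, since $D(A)=A^{n-1}$ and any big and nef $A$ lies in $\Nef^{1}(X)_{\vol}$.) For part (3), since $\alpha$ big means $\alpha\in\Eff_{1}(X)^{\circ}$, I apply Theorem~\ref{thm abstract zariski}(2) to the function $\widehat{\vol}\in\HConc_{n/n-1}(\Eff_{1}(X))$, whose dual cone is $\Nef^{1}(X)$; this yields a nef class $B\in G_{\alpha}$, and inspecting the proof one may rescale so that $\vol(B)=1$, hence $B$ is big and nef. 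The symmetry of $G$ and the equality case of Proposition~\ref{younginequality} then give $\widehat{\vol}(\alpha)^{(n-1)/n}\vol(B)^{1/n}=B\cdot\alpha$, which is precisely the assertion that $B$ computes $\widehat{\vol}(\alpha)$. One can instead argue directly: normalize competing divisor classes by $B\cdot K^{n-1}=1$ for a fixed ample $K$, observe that a minimizing sequence stays in a compact slice of $\Nef^{1}(X)$, and note that its limit $B_{0}$ must have $\vol(B_{0})>0$, since $B\cdot\alpha$ stays bounded below along the sequence (as $\alpha$ is big) while the ratio would blow up were $\vol(B)\to 0$; continuity of $\vol$ and of the pairing then show $B_{0}$ attains the infimum.

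The one step that genuinely uses geometry rather than the formal framework — and the place I expect the only real (if minor) obstacle — is verifying the sublinear boundary condition for $\vol$ invoked in part (1): for a nonzero nef class $N$ and ample $A$ one expands $(N+\epsilon A)^{n}=\sum_{k}\binom{n}{k}\epsilon^{k}N^{n-k}\cdot A^{k}$ and uses $N\cdot A^{n-1}>0$ (valid since $A^{n-1}$ is a big curve class) to see that some term of order at most $\epsilon^{n-1}$ is positive, whence $\vol(N+\epsilon A)^{1/n}\ge C\epsilon^{(n-1)/n}$ for small $\epsilon$; this is what pins down the exponent $\frac{n-1}{n}$, and everything else is a formal consequence of the results already established.
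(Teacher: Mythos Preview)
Your proof is correct and follows essentially the same route as the paper: part~(1) via Proposition~\ref{prop H involution} and Proposition~\ref{sublinearcontinuity}, part~(3) via Theorem~\ref{thm abstract zariski}, and part~(2) via the identity $\widehat{\vol}(D(A))=D(A)\cdot A=A^{n}$ (your parenthetical remark), though your primary argument for~(2) via the Khovanskii--Teissier inequality directly is an equally valid and slightly more elementary alternative. Your expanded verification of the sublinear boundary condition and your alternative compactness argument for~(3) are correct but not needed, since the paper has already recorded these as inputs before stating the theorem.
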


The first two were already proved in \cite[Theorem 3.1]{xiao15}.

\begin{proof}
(1) follows immediately from Propositions \ref{prop H involution} and \ref{sublinearcontinuity}.  Since $D(A) = A^{n-1}$, (2) follows from the computation $$\widehat{\vol}(A^{n-1}) = D(A) \cdot A = A^{n}.$$
The existence in (3) follows from Theorem \ref{thm abstract zariski}.
\end{proof}

We also note the following easy basic linearity property, which follows immediately from the Khovanskii-Teissier inequalities.

\begin{thrm} \label{zardecomlinearity}
Let $X$ be a projective variety of dimension $n$ and let $\alpha$ be a big curve class.  If $A$ computes $\widehat{\vol}(\alpha)$, it also computes $\widehat{\vol}(c_{1}\alpha + c_{2}A^{n-1})$ for any positive constants $c_{1}$ and $c_{2}$.
\end{thrm}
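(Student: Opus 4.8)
The plan is to unwind the phrase ``$A$ computes $\widehat{\vol}(\alpha)$'' into a family of numerical inequalities — one for each competing divisor class — and then to produce the corresponding family for $c_{1}\alpha + c_{2}A^{n-1}$ by adding two such inequalities together. By Definition \ref{defn:widehatvol} the infimum defining $\widehat{\vol}(\alpha)$ ranges over big and nef divisor classes, so the hypothesis that $A$ computes $\widehat{\vol}(\alpha)$ says precisely that
\begin{align*}
\frac{B\cdot\alpha}{\vol(B)^{1/n}}\ \geq\ \frac{A\cdot\alpha}{\vol(A)^{1/n}}
\end{align*}
for every big and nef divisor class $B$, with $A$ itself big and nef. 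Writing $\beta := c_{1}\alpha + c_{2}A^{n-1}$ — which is again big, since $c_{1}>0$ and $\alpha$ is big, so that $\widehat{\vol}(\beta)>0$ and it makes sense to speak of a computing class — I would aim to prove the analogous inequality
\begin{align*}
\frac{B\cdot\beta}{\vol(B)^{1/n}}\ \geq\ \frac{A\cdot\beta}{\vol(A)^{1/n}}
\end{align*}
for every big and nef $B$. Since $A$ lies in the family of competitors, this forces the infimum defining $\widehat{\vol}(\beta)$ to be attained at $A$.

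The key step is to split the pairing as $B\cdot\beta = c_{1}(B\cdot\alpha) + c_{2}(B\cdot A^{n-1})$ and to estimate the two summands separately after dividing by $\vol(B)^{1/n}$. The first summand is controlled by the hypothesis, giving $c_{1}(B\cdot\alpha)/\vol(B)^{1/n}\geq c_{1}(A\cdot\alpha)/\vol(A)^{1/n}$. For the second, the Khovanskii-Teissier inequality for the nef classes $A$ and $B$ recalled in Section \ref{section preliminaries} gives $B\cdot A^{n-1}\geq \vol(A)^{n-1/n}\vol(B)^{1/n}$, hence $c_{2}(B\cdot A^{n-1})/\vol(B)^{1/n}\geq c_{2}\vol(A)^{n-1/n}$. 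Adding the two bounds and using $A\cdot A^{n-1}=A^{n}=\vol(A)$, so that $c_{2}\vol(A)^{n-1/n}=c_{2}(A\cdot A^{n-1})/\vol(A)^{1/n}$, reassembles the right-hand side as $A\cdot\beta/\vol(A)^{1/n}$ and completes the comparison.

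I do not anticipate a genuine obstacle here: the only point needing care is that the infimum in Definition \ref{defn:widehatvol} is taken over \emph{big and nef} divisor classes, so the comparison need only be checked against such $B$, after which the Khovanskii-Teissier inequality applies directly. It is worth recording the conceptual content, in the language of Section \ref{legendresection}: ``$A$ computes $\widehat{\vol}(\alpha)$'' says $A\in G_{\alpha}$, hence $\alpha\in G_{A}$ by the symmetry of $G$ (valid since $\widehat{\vol}(\alpha)>0$), while $A^{n-1}=D(A)\in G_{A}$ directly from the Khovanskii-Teissier inequality (this is the $c_{1}=0$ case of the statement); as $G_{A}\cup\{0\}$ is a convex cone it contains $\beta=c_{1}\alpha+c_{2}A^{n-1}$, and the symmetry of $G$ again gives $A\in G_{\beta}$. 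The short intersection-theoretic computation above simply makes this observation explicit.
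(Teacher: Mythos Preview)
Your proof is correct and is precisely the argument the paper has in mind: the paper's entire proof is the remark that the result ``follows immediately from the Khovanskii-Teissier inequalities,'' and your computation spells this out, splitting $B\cdot\beta$ and applying the hypothesis to the $\alpha$-term and Khovanskii--Teissier to the $A^{n-1}$-term. The additional paragraph recasting the statement via the convexity of $G_{A}\cup\{0\}$ is a nice gloss that goes slightly beyond what the paper records.
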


After constructing Zariski decompositions below, we will see that in fact we can choose a possibly negative $c_{2}$ so long as $c_{1} \alpha + c_{2} A^{n-1}$ is a big class.

\subsection{Zariski decompositions for curves}

The following theorem is the basic result establishing the existence of Zariski decompositions for curve classes.

\begin{thrm} \label{thrm:existenceofzardecom}
Let $X$ be a projective variety of dimension $n$.  Any big curve class $\alpha$ admits a unique Zariski decomposition: there is a unique pair consisting of a big and nef divisor class $B_\alpha$ and a pseudo-effective curve class $\gamma$ satisfying $B_\alpha \cdot \gamma = 0$ and
\begin{equation*}
\alpha = B_\alpha^{n-1} + \gamma.
\end{equation*}
In fact $\widehat{\vol}(\alpha) = \widehat{\vol}(B_{\alpha}^{n-1}) = \vol(B_{\alpha})$.  In particular $B_{\alpha}$ computes $\widehat{\vol}(\alpha)$, and any big and nef divisor computing $\widehat{\vol}(\alpha)$ is proportional to $B_{\alpha}$.
\end{thrm}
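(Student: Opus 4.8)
The plan is to derive the theorem from the formal machinery of Section \ref{formal zariski section}, specialized to $\mathcal{C} = \Nef^1(X)$, $f = \vol$, $\mathcal{C}^* = \Eff_1(X)$, $\mathcal{H}f = \widehat{\vol}$. All the geometric input has already been isolated in the discussion preceding the theorem: $\vol$ is $+$-differentiable on $\Nef^1(X)$ with $D(A) = A^{n-1}$ and with $\mathcal{C}_{f}$ equal to the big and nef cone; $\vol$ satisfies Teissier proportionality on the big and nef cone; and $\vol$ satisfies the sublinear boundary condition of order $n/n-1$. By Proposition \ref{sublinearcontinuity} and the remark following it, this last point yields $\mathcal{C}^{*}_{\mathcal{H}f} = \Eff_1(X)^{\circ}$, so that every big curve class lies in the domain on which the formal Zariski decomposition of $\widehat{\vol}$ is defined.

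\textbf{Existence.} First I would apply Theorem \ref{strong zariski equivalence}: since $\vol$ is $+$-differentiable, $\widehat{\vol}$ admits a strong Zariski decomposition with respect to $U := D(\mathcal{C}_{f}) \cup \{0\}$, which here is the (a priori non-convex) cone of classes $A^{n-1}$ with $A$ big and nef, together with the origin. Applied to a big curve class $\alpha$, this gives a unique pair $p_{\alpha} \in U$ and $\gamma \in \Eff_1(X)$ with $\alpha = p_{\alpha} + \gamma$ and $\widehat{\vol}(\alpha) = \widehat{\vol}(p_{\alpha})$. Since $\alpha$ is big, $\widehat{\vol}(\alpha) > 0$ by Theorem \ref{volcurvesbasicprops}(1), hence $p_{\alpha} \neq 0$, hence $p_{\alpha} = B_{\alpha}^{n-1}$ for a big and nef class $B_{\alpha}$; given uniqueness of $p_{\alpha}$ in $U$, the uniqueness of $B_{\alpha}$ follows from the equality case of the Khovanskii--Teissier inequality, which makes $A \mapsto A^{n-1}$ injective on the big and nef cone. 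The remaining assertions are then read off the formal theory. By the construction in the proof of Theorem \ref{strong zariski equivalence}, $p_{\alpha}$ is a positive rescaling of $D(A) = A^{n-1}$ for \emph{any} big and nef $A$ computing $\widehat{\vol}(\alpha)$ (such an $A$ exists by Theorem \ref{volcurvesbasicprops}(3)); thus $B_{\alpha}$ is proportional to every such $A$, and in particular computes $\widehat{\vol}(\alpha)$ itself. The orthogonality $B_{\alpha} \cdot \gamma = 0$ is the relation ``$w^{*} \cdot n_{v} = 0$ for $w^{*} \in G_{v}$'' from the remark after Definition \ref{definition formal zariski}, applied with $B_{\alpha}$ in the role of (a rescaling of) an element of $G_{\alpha}$. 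Finally $\widehat{\vol}(\alpha) = \widehat{\vol}(B_{\alpha}^{n-1}) = \vol(B_{\alpha})$, combining $\widehat{\vol}(\alpha) = \widehat{\vol}(p_{\alpha})$ with Theorem \ref{volcurvesbasicprops}(2).

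\textbf{Uniqueness.} I would show that any decomposition $\alpha = B^{n-1} + \gamma$ with $B$ big and nef, $\gamma$ pseudo-effective, and $B \cdot \gamma = 0$ is automatically a formal Zariski decomposition of $\widehat{\vol}$ with positive part $B^{n-1} \in U$; the uniqueness of the latter then forces $B^{n-1} = B_{\alpha}^{n-1}$, hence $B = B_{\alpha}$ by the injectivity noted above. The only non-formal ingredient is the identity $\widehat{\vol}(\alpha) = \widehat{\vol}(B^{n-1})$. Monotonicity of $\widehat{\vol}$ (immediate from its defining infimum and the duality $\Nef^1(X) = \Eff_1(X)^{*}$: since $\gamma$ is pseudo-effective, $A \cdot \alpha \geq A \cdot B^{n-1}$ for every nef $A$) gives $\widehat{\vol}(\alpha) \geq \widehat{\vol}(B^{n-1})$; and evaluating the infimum defining $\widehat{\vol}(\alpha)$ at the divisor $B$ and using $B \cdot \gamma = 0$ to replace $B \cdot \alpha$ by $B^{n}$ gives $\widehat{\vol}(\alpha) \leq (B^{n}/\vol(B)^{1/n})^{n/n-1} = \vol(B) = \widehat{\vol}(B^{n-1})$.

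The argument is essentially bookkeeping on top of Section \ref{formal zariski section}; the one step carrying genuine content — and the place I expect the only real care is needed — is the equality $\widehat{\vol}(\alpha) = \widehat{\vol}(B^{n-1})$ in the uniqueness part, where one must recognize that the geometric orthogonality $B \cdot \gamma = 0$ is exactly the condition that keeps the decomposition from decreasing $\widehat{\vol}$, thereby matching the geometric notion of Zariski decomposition with the formal one. Everything else reduces to injectivity of the $(n-1)$-st power map on the big and nef cone, which is the equality case of the Khovanskii--Teissier inequality recalled in Section \ref{section preliminaries}.
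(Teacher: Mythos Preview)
Your proposal is correct and follows essentially the same route as the paper: invoke Theorem \ref{strong zariski equivalence} for existence and uniqueness of the positive part, then use Teissier proportionality (injectivity of $A \mapsto A^{n-1}$ on the big and nef cone) to pass from $B_{\alpha}^{n-1}$ to $B_{\alpha}$. Two minor remarks: (i) the sublinear boundary order is $(n-1)/n$, not $n/(n-1)$ as you wrote, though this does not affect the argument; (ii) your explicit verification that a \emph{geometric} Zariski decomposition (i.e.\ one satisfying $B \cdot \gamma = 0$) is automatically a formal one (i.e.\ $\widehat{\vol}(\alpha)=\widehat{\vol}(B^{n-1})$) is a step the paper's proof leaves implicit, and it is a genuine point worth spelling out since the formal uniqueness in Theorem \ref{strong zariski equivalence} is stated for the volume-preservation condition, not the orthogonality condition.
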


\begin{proof}
The existence of the Zariski decomposition and the uniqueness of the positive part $B_{\alpha}^{n-1}$ follow from Theorem \ref{strong zariski equivalence}.  The uniqueness of $B_\alpha$ follows from Teissier proportionality for big and nef divisor classes.  It is clear that $B_{\alpha}$ computes $\widehat{\vol}(\alpha)$ by Theorem \ref{strong zariski equivalence}.  The last claim follows from Teissier proportionality and the fact that $\alpha \succeq B_{\alpha}^{n-1}$.
\end{proof}

As discussed before, conceptually the Zariski decomposition $\alpha = B_\alpha^{n-1} + \gamma$ captures the failure of log concavity of $\widehat{\vol}$: the term $B_\alpha^{n-1}$ captures all the of the positivity encoded by $\widehat{\vol}$ and is positive in a very strong sense, while the negative part $\gamma$ lies on the boundary of the pseudo-effective cone.

\begin{exmple}
\label{example proj bundle}
Let $X$ be the projective bundle over $\mathbb{P}^{1}$ defined by $\mathcal{O} \oplus \mathcal{O} \oplus \mathcal{O}(-1)$.  There are two natural divisor classes on $X$: the class $f$ of the fibers of the projective bundle and the class $\xi$ of the sheaf $\mathcal{O}_{X/\mathbb{P}^{1}}(1)$.  Using for example \cite[Theorem 1.1]{fulger11} and \cite[Proposition 7.1]{fl14}, one sees that $f$ and $\xi$ generate the algebraic cohomology classes with the relations $f^{2} = 0$, $\xi^{2}f = -\xi^{3} = 1$ and
\begin{equation*}
\Eff^{1}(X) = \Mov^{1}(X) = \langle f, \xi \rangle \qquad \qquad \Nef^{1}(X) = \langle f, \xi + f \rangle
\end{equation*}
and
\begin{align*}
\Eff_{1}(X) = \langle \xi f, \xi^{2} \rangle \qquad & \qquad \Nef_{1}(X) = \langle \xi f, \xi^{2} + \xi f \rangle  \\
 \CI_{1}(X) = \langle \xi f, & \xi^{2} + 2 \xi f \rangle.
\end{align*}
Using this explicit computation of the nef cone of the divisors, we have
\begin{equation*}
\widehat{\vol}(x \xi f + y \xi^{2}) = \inf_{a,b \geq 0} \frac{a y + bx}{(3ab^{2}+2b^{3})^{1/3}}
\end{equation*}
This is essentially a one-variable minimization problem due to the homogeneity in $a,b$.  It is straightforward to compute directly that for non-negative values of $x,y$:
\begin{align*}
\widehat{\vol}(x \xi f + y \xi^{2}) & = \left( \frac{3}{2}x - y \right) y^{1/2} \qquad  \textrm{ if }x \geq 2y; \\
& = \frac{x^{3/2}}{2^{1/2}} \qquad \qquad \qquad  \textrm{ if }x < 2y.
\end{align*}
Note that when $x<2y$, the class $x \xi f + y \xi^{2}$ no longer lies in the complete intersection cone -- to obtain $\widehat{\vol}$, Theorem \ref{thrm:existenceofzardecom} indicates that we must project $\alpha$ onto the complete intersection cone in the $y$-direction.  This exactly coheres with the calculation above.
\end{exmple}

The Zariski decomposition for curves is continuous.

\begin{thrm} \label{zardecomcontinuous}
Let $X$ be a projective variety of dimension $n$.  The function sending a big curve class $\alpha$ to its positive part $B_{\alpha}^{n-1}$ or to the corresponding divisor $B_{\alpha}$ is continuous.
\end{thrm}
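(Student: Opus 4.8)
The plan is to handle the positive part $B_\alpha^{n-1}$ and the divisor $B_\alpha$ separately. For the positive part, the statement should be immediate from the formal theory: in the dictionary $\mathcal{C}=\Nef^1(X)$, $f=\vol$, $\mathcal{C}^*=\Eff_1(X)$, $\mathcal{H}f=\widehat{\vol}$, the volume function is $+$-differentiable on the nef cone (it is differentiable on the ample cone with $D(A)=A^{n-1}$, and this polynomial formula extends continuously to the big and nef cone), while $\mathcal{C}^{*\circ}$ is precisely the big cone of curves. Hence $\alpha\mapsto B_\alpha^{n-1}=p_\alpha$ is continuous by Theorem~\ref{thrm positive part continuity}. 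It then remains to lift this continuity through the polynomial map $\Phi\colon \Nef^1(X)\to N_1(X)$, $B\mapsto B^{n-1}$; this map is continuous and injective on the big and nef cone (Teissier proportionality), but it is \emph{not} proper on $\Nef^1(X)$ --- it collapses, for instance, rays spanned by non-big nef classes whose $(n-1)$-st power vanishes --- so I would argue directly, via a compactness argument that exploits the bigness of $\alpha$.

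So fix a sequence $\alpha_j\to\alpha$ of big curve classes. The first and main step is to show that $\{B_{\alpha_j}\}$ is precompact in $\Nef^1(X)$. From Theorem~\ref{thrm:existenceofzardecom} and $B_{\alpha_j}\cdot\gamma_j=0$ we have $B_{\alpha_j}\cdot\alpha_j = B_{\alpha_j}^{n} = \vol(B_{\alpha_j}) = \widehat{\vol}(\alpha_j)$, which converges to $\widehat{\vol}(\alpha)<\infty$ by continuity of $\widehat{\vol}$ (Theorem~\ref{volcurvesbasicprops}); in particular $B_{\alpha_j}\cdot\alpha_j$ is bounded. If $\|B_{\alpha_j}\|\to\infty$ along some subsequence, then, passing to a further subsequence, the normalized classes $B_{\alpha_j}/\|B_{\alpha_j}\|$ (which lie in the compact slice of $\Nef^1(X)$ cut out by the unit sphere) converge to a nonzero nef class $N$, and by continuity of the pairing $N\cdot\alpha = \lim (B_{\alpha_j}\cdot\alpha_j)/\|B_{\alpha_j}\| = 0$. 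But $\alpha$ is big, i.e.\ it lies in the interior of $\Eff_1(X)=\Nef^1(X)^{*}$, hence pairs strictly positively with every nonzero nef class; this contradiction shows $\{B_{\alpha_j}\}$ is bounded. I expect this to be the one genuinely non-formal step: the continuity of the positive part (a curve class) does not by itself bound the divisor $B_\alpha$, which could in principle escape to infinity within the nef cone in directions killed by $(n-1)$-st powering.

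Finally I would identify the accumulation points. If $B_{\alpha_{j_k}}\to B'$ with $B'$ nef, then $B'\cdot\alpha = \lim B_{\alpha_{j_k}}\cdot\alpha_{j_k} = \widehat{\vol}(\alpha)$ and $\vol(B')=\lim\vol(B_{\alpha_{j_k}})=\lim\widehat{\vol}(\alpha_{j_k})=\widehat{\vol}(\alpha)>0$, using continuity of $\vol$ and bigness of $\alpha$. Thus $B'$ is big and nef and satisfies $\bigl(B'\cdot\alpha\bigr)/\vol(B')^{1/n}=\widehat{\vol}(\alpha)^{(n-1)/n}$, i.e.\ $B'$ computes $\widehat{\vol}(\alpha)$; by the uniqueness clause of Theorem~\ref{thrm:existenceofzardecom} it is proportional to $B_\alpha$, and comparing volumes forces $B'=B_\alpha$. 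Since every accumulation point of the bounded sequence $\{B_{\alpha_j}\}$ equals $B_\alpha$, we get $B_{\alpha_j}\to B_\alpha$; applying the continuous map $\Phi$ recovers the continuity of $\alpha\mapsto B_\alpha^{n-1}$ as well. (Note that this last route proves both assertions without invoking Theorem~\ref{thrm positive part continuity}, should one prefer a self-contained argument.)
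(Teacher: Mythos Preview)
Your proposal is correct. For the continuity of $\alpha\mapsto B_\alpha^{n-1}$ you and the paper both invoke Theorem~\ref{thrm positive part continuity}, so that part coincides. For the divisor $B_\alpha$, however, the paper simply appeals to ``the continuity of the inverse map to the $(n-1)$-power map'' without further justification. Your direct compactness argument --- bounding $\{B_{\alpha_j}\}$ via $B_{\alpha_j}\cdot\alpha_j=\widehat{\vol}(\alpha_j)$ and the bigness of $\alpha$, then identifying the unique accumulation point through the uniqueness clause in Theorem~\ref{thrm:existenceofzardecom} --- is more explicit and addresses precisely the concern you raise: the map $B\mapsto B^{n-1}$ is not proper on all of $\Nef^1(X)$, so continuity of its inverse on the image of the big and nef cone does require an argument of this type (alternatively, one using the bigness of $B_\alpha^{n-1}$ as a curve class). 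Your route is thus a self-contained elaboration of what the paper leaves implicit, and as you note, it actually recovers the continuity of the positive part without a separate appeal to the abstract theory.
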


\begin{proof}
The first statement follows from Theorem \ref{thrm positive part continuity}. The second then follows from the continuity of the inverse map to the $n-1$-power map.
\end{proof}

It is interesting to study whether the Zariski projection taking $\alpha$ to its positive part is $\mathcal{C}^1$. This is true on the ample cone -- the map $\Phi$ sending an ample divisor class $A$ to $A^{n-1}$ is a $\mathcal{C}^1$ diffeomorphism by the argument in Remark \ref{rmk CI}.

\begin{rmk}
The continuity of the Zariski decomposition does not extend to the entire pseudo-effective cone, even for surfaces.  For example, suppose that a surface $S$ admits a nef class $N$ which is a limit of (rescalings of) irreducible curve classes which each have negative self-intersection.  (A well-known example of such a surface is $\mathbb{P}^{2}$ blown up at 9 general points.)  For any $c \in [0,1]$ one can find a sequence of big divisors $\{ L_{i} \}$ whose limit is $N$ but whose positive parts have limit $c N$.
\end{rmk}

An important feature of the $\sigma$-decomposition for divisors is its concavity: given two big divisors $L_{1}, L_{2}$ we have
$$P_{\sigma}(L_{1}+L_{2}) \succeq P_{\sigma}(L_{1}) + P_{\sigma}(L_{2}).$$
However, the analogous property fails for curves:

\begin{exmple}
Let $X$ be a smooth projective variety such that $\CI_{1}(X)$ is not convex.  (An explicit example is given in Appendix.)  Then there are complete intersection classes $\alpha = B_{\alpha}^{n-1}$ and $\beta = B_{\beta}^{n-1}$ such that $\alpha+\beta$ is not a complete intersection class.  Let $B_{\alpha + \beta}^{n-1}$ denote the positive part of the Zariski decomposition for $\alpha+\beta$.  Then
\begin{equation*}
B_{\alpha+\beta}^{n-1} \preceq B_{\alpha}^{n-1} + B_{\beta}^{n-1}.
\end{equation*}
Furthermore, we can not have equality since the sum is not a complete intersection class.  Thus
\begin{equation*}
B_{\alpha+\beta}^{n-1} \precneqq B_{\alpha}^{n-1} + B_{\beta}^{n-1}.
\end{equation*}
\end{exmple}

However, one can still ask:

\begin{ques}
Fix $\alpha \in \Eff_{1}(X)$.  Is there a fixed class $\xi \in \CI_{1}(X)$ such that for any $\epsilon > 0$ there is a $\delta > 0$ satisfying
\begin{equation*}
B_{\alpha+\delta \beta}^{n-1} \preceq B_{\alpha + \epsilon \xi}^{n-1}
\end{equation*}
for every $\beta \in N_{1}(X)$ of bounded norm?
\end{ques}

This question is crucial for making sense of the Zariski decomposition of a curve class on the boundary of $\Eff_{1}(X)$ via taking a limit.

\subsection{Strict log concavity}

The following theorem is an immediate consequence of Theorem \ref{strong zariski equivalence}, which gives the strict log concavity of $\widehat{\vol}$.

\begin{thrm} \label{volcurvesconcavity}
Let $X$ be a projective variety of dimension $n$.  For any two pseudo-effective curve classes $\alpha, \beta$ we have
\begin{equation*}
\widehat{\vol}(\alpha + \beta)^{\frac{n-1}{n}} \geq \widehat{\vol}(\alpha)^{\frac{n-1}{n}} + \widehat{\vol}(\beta)^{\frac{n-1}{n}}.
\end{equation*}
Furthermore, if $\alpha$ and $\beta$ are big, then we obtain an equality if and only if the positive parts of $\alpha$ and $\beta$ are proportional.
\end{thrm}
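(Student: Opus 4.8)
The inequality needs nothing beyond the abstract setup. By Proposition \ref{prop H involution}(1) the function $\widehat{\vol} = \mathcal{H}\vol$ lies in $\HConc_{n/n-1}(\Eff_{1}(X))$, and membership in this class already encodes the $n/n-1$-concavity statement
\begin{equation*}
\widehat{\vol}(\alpha)^{\frac{n-1}{n}} + \widehat{\vol}(\beta)^{\frac{n-1}{n}} \leq \widehat{\vol}(\alpha+\beta)^{\frac{n-1}{n}}
\end{equation*}
for all $\alpha,\beta \in \Eff_{1}(X)$, with no bigness hypothesis required.

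For the equality characterization the plan is to invoke the formal machinery of Section \ref{formal zariski section}. As recorded at the start of this section, $\vol$ is $+$-differentiable on $\Nef^{1}(X)$ with $D(A)=A^{n-1}$, and it satisfies the sublinear boundary condition of order $n-1/n$; by the remark following Proposition \ref{sublinearcontinuity} the latter gives $\Eff_{1}(X)_{\widehat{\vol}} = \Eff_{1}(X)^{\circ}$, the big cone of curves. Since $\alpha,\beta$ big implies $\alpha+\beta$ big, all three classes lie in $\Eff_{1}(X)_{\widehat{\vol}}$, so Theorem \ref{strong zariski equivalence} applies: $\widehat{\vol}$ admits a strong Zariski decomposition with respect to $D(\Nef^{1}(X)_{\vol})\cup\{0\}$, and by Theorem \ref{thrm:existenceofzardecom} the positive part of a big class $\alpha$ in this formal decomposition is exactly $B_{\alpha}^{n-1}$. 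Property (2) of Definition \ref{definition formal zariski} then yields the ``only if'' direction: equality in the displayed inequality forces $B_{\alpha}^{n-1}$ and $B_{\beta}^{n-1}$ to be proportional.

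For the converse, I would argue directly. If $B_{\alpha}^{n-1}$ and $B_{\beta}^{n-1}$ are proportional then so are $B_{\alpha}$ and $B_{\beta}$ (take $(n-1)$-st roots and use Teissier proportionality for big and nef divisor classes), so write $B_{\alpha}=\lambda B$, $B_{\beta}=\mu B$ for a fixed big and nef $B$ and reals $\lambda,\mu>0$. Adding the Zariski decompositions $\alpha=B_{\alpha}^{n-1}+\gamma_{\alpha}$ and $\beta=B_{\beta}^{n-1}+\gamma_{\beta}$ gives
\begin{equation*}
\alpha+\beta = (\lambda^{n-1}+\mu^{n-1})B^{n-1} + (\gamma_{\alpha}+\gamma_{\beta}).
\end{equation*}
Since $(\lambda^{n-1}+\mu^{n-1})^{1/(n-1)}B$ is big and nef, $\gamma_{\alpha}+\gamma_{\beta}$ is pseudo-effective, and $B\cdot(\gamma_{\alpha}+\gamma_{\beta})=0$, the uniqueness in Theorem \ref{thrm:existenceofzardecom} identifies this as the Zariski decomposition of $\alpha+\beta$. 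Then $\widehat{\vol}(\alpha+\beta)=\vol((\lambda^{n-1}+\mu^{n-1})^{1/(n-1)}B) = (\lambda^{n-1}+\mu^{n-1})^{n/(n-1)}\vol(B)$, so $\widehat{\vol}(\alpha+\beta)^{(n-1)/n} = (\lambda^{n-1}+\mu^{n-1})\vol(B)^{(n-1)/n}$; since likewise $\widehat{\vol}(\alpha)^{(n-1)/n}+\widehat{\vol}(\beta)^{(n-1)/n}=(\lambda^{n-1}+\mu^{n-1})\vol(B)^{(n-1)/n}$, equality holds.

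The only step that is genuinely geometric, rather than a transcription of the abstract results, is this converse: as the remark after Definition \ref{definition formal zariski} warns, proportional positive parts need not give equality in the formal framework, and one must instead use the uniqueness of Zariski decompositions for curves together with the identity $\widehat{\vol}=\vol(B_{(\cdot)})$ of Theorem \ref{thrm:existenceofzardecom}. I do not anticipate any serious obstacle; the main points to verify carefully are the two hypotheses on $\vol$ ($+$-differentiability and the sublinear boundary condition), both already in place at the start of this section, and the compatibility between the formal positive part of $\widehat{\vol}$ and the divisorial positive part $B_\alpha^{n-1}$.
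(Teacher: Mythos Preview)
Your proposal is correct and follows essentially the same approach as the paper: the inequality from membership in $\HConc_{n/n-1}$, the forward implication via $+$-differentiability of $\vol$ together with Theorem \ref{strong zariski equivalence}, and the converse by directly checking that the sum of proportional positive parts is again the positive part of $\alpha+\beta$. Your write-up is considerably more detailed than the paper's terse version, but the ideas line up exactly, including the observation that the ``if'' direction needs a separate direct argument rather than the abstract framework.
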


\begin{proof}
The inequality is clear.  Combining the $+$-differentiability of $\vol$ with Theorem \ref{strong zariski equivalence}, we see the forward implication in the last sentence.  Conversely, if $\alpha$ and $\beta$ have proportional positive parts, then working directly from the definition it is clear that the sum of the positive parts is the (unique) positive part of $\alpha + \beta$, and the conclusion follows.
\end{proof}

\subsection{Differentiability}
\label{section derivative}

In \cite{bfj09} the derivative of the volume function was calculated using the positive product: given a big divisor class $L$ and any divisor class $E$, we have
\begin{equation*}
\left. \frac{d}{dt} \right|_{t=0} \vol(L + t E) = n \langle L^{n-1} \rangle \cdot E.
\end{equation*}
In this section we prove an analogous statement for curve classes.  For curves, the big and nef divisor class $B$ occurring in the Zariski decomposition plays the role of the positive product, and the homogeneity constant $n/n-1$ plays the role of $n$.

\begin{thrm} \label{thrm:derivative}
Let $X$ be a projective variety of dimension $n$, and let $\alpha$ be a big curve class with Zariski decomposition $\alpha = B^{n-1} + \gamma$.  Let $\beta$ be any curve class.  Then $\widehat{\vol}(\alpha + t \beta)$ is differentiable at $0$ and
\begin{align*}
\left. \frac{d}{dt} \right|_{t=0} \widehat{\vol}(\alpha + t \beta) & = \frac{n}{n-1} B \cdot \beta.
\end{align*}
In particular, the function $\widehat{\vol}$ is $\mathcal{C}^{1}$ on the big cone of curves.  If $C$ is an irreducible curve on $X$, then we can instead write
\begin{align*}
\left. \frac{d}{dt} \right|_{t=0} \widehat{\vol}(\alpha + t C)= \frac{n}{n-1} \vol(B|_{C}).
\end{align*}
\end{thrm}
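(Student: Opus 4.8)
The plan is to deduce the theorem from the formal machinery of Section \ref{legendresection}, applied in the setting $\mathcal{C} = \Nef^{1}(X)$, $f = \vol$ (homogeneous of weight $n$), $\mathcal{C}^{*} = \Eff_{1}(X)$, $\mathcal{H}f = \widehat{\vol}$ (homogeneous of weight $n/n-1$). Recall from the outline preceding this subsection that $\vol$ is $+$-differentiable with $D(A) = A^{n-1}$, satisfies the Khovanskii--Teissier inequality together with Teissier proportionality on the big and nef cone, and satisfies the sublinear boundary condition of order $n-1/n$, so that $\mathcal{C}^{*}_{\mathcal{H}f} = \Eff_{1}(X)^{\circ}$ is the whole big cone of curves. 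Since $\Eff_{1}(X)$ is a full-dimensional proper closed convex cone with dual $\Nef^{1}(X)$, Proposition \ref{prop H involution} lets us treat $\widehat{\vol}$ itself as an element of $\HConc_{n/n-1}(\Eff_{1}(X))$ and feed it into the abstract results with the roles of $\mathcal{C}$ and $\mathcal{C}^{*}$ exchanged.

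First I would prove that $\widehat{\vol}$ is differentiable on the big cone of curves. By Proposition \ref{diffuniqueness} applied to the function $\widehat{\vol}$, it suffices to check that for every big curve class $\alpha$ the cone $G_{\alpha} \cup \{0\}$ -- here the set of nef divisor classes $A$ with $\vol(A)>0$ that compute $\widehat{\vol}(\alpha)$ -- is a single ray. But this is exactly the content of Theorem \ref{thrm:existenceofzardecom}: the positive part $B = B_{\alpha}$ computes $\widehat{\vol}(\alpha)$, and any big and nef divisor computing $\widehat{\vol}(\alpha)$ is proportional to $B$ by Teissier proportionality.

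Next I would identify the derivative. Proposition \ref{diffuniqueness} says that the transform derivative $D(\alpha)$ of Section \ref{legendresection} -- which is $\frac{n-1}{n}$ times the ordinary gradient of $\widehat{\vol}$ at $\alpha$ -- is computed by intersecting against the unique element of $G_{\alpha}$ whose $\vol$-value equals $\widehat{\vol}(\alpha)$. Since $G_{\alpha}$ is the ray through $B_{\alpha}$ and $\widehat{\vol}(\alpha) = \vol(B_{\alpha})$ by Theorem \ref{thrm:existenceofzardecom}, that element is exactly $B$, so $D(\alpha)\cdot\beta = B\cdot\beta$ and hence
$$\left.\frac{d}{dt}\right|_{t=0}\widehat{\vol}(\alpha + t\beta) = \frac{n}{n-1}\,B\cdot\beta$$
for every curve class $\beta$. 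To upgrade "differentiable" to "$\mathcal{C}^{1}$" I would note that the gradient $\frac{n}{n-1}B_{\alpha}$ depends continuously on $\alpha$ by Theorem \ref{zardecomcontinuous}, and a map that is differentiable on an open set with continuous gradient is $\mathcal{C}^{1}$. Finally, for an irreducible curve $C$ the last formula of the theorem follows from the general one together with the remark that $B\cdot[C] = \deg(B|_{C}) \geq 0$ because $B$ is nef; since the volume of an $\mathbb{R}$-divisor class on an irreducible projective curve is just the nonnegative part of its degree, $\vol(B|_{C}) = B\cdot[C]$, giving $\frac{n}{n-1}B\cdot C = \frac{n}{n-1}\vol(B|_{C})$.

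I do not expect a serious obstacle here, since the two substantive inputs -- uniqueness of $B_{\alpha}$ via Teissier proportionality (Theorem \ref{thrm:existenceofzardecom}) and the abstract differentiability criterion (Proposition \ref{diffuniqueness}) -- are already available. The only care required is bookkeeping of conventions: keeping straight the weight $n$ of $\vol$ versus the weight $n/n-1$ of $\widehat{\vol}$ and the corresponding normalization constant in the definition of the transform derivative $D$, and verifying that $\widehat{\vol}$ genuinely lies in the class $\HConc_{n/n-1}(\Eff_{1}(X))$ so that Section \ref{legendresection} applies verbatim with $\mathcal{C}$ and $\mathcal{C}^{*}$ interchanged.
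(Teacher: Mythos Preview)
Your proposal is correct and follows essentially the same route as the paper. The paper's one-line proof cites Propositions \ref{diffuniqueness} and \ref{derivativeofdual} together with the single-ray statement in Theorem \ref{thrm:existenceofzardecom}; you invoke Proposition \ref{diffuniqueness} directly (after explicitly swapping the roles of $\mathcal{C}$ and $\mathcal{C}^{*}$ via the involution $\mathcal{H}^{2}=\mathrm{id}$) and then appeal to Theorem \ref{zardecomcontinuous} for the $\mathcal{C}^{1}$ upgrade, which the paper leaves implicit. The only substantive input in both arguments is that $G_{\alpha}\cup\{0\}$ is a single ray, and you correctly source this to Theorem \ref{thrm:existenceofzardecom}.
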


\begin{proof}
This follows immediately from Propositions \ref{diffuniqueness} and \ref{derivativeofdual} since $G_{\alpha}\cup \{0\}$ consists of a single ray by the last statement of Theorem \ref{thrm:existenceofzardecom}.
\end{proof}

\begin{exmple}
\label{example derivative}
We return to the setting of Example \ref{example proj bundle}: let $X$ be the projective bundle over $\mathbb{P}^{1}$ defined by $\mathcal{O} \oplus \mathcal{O} \oplus \mathcal{O}(-1)$.  Using our earlier notation we have
\begin{align*}
\Eff_{1}(X) = \langle \xi f, \xi^{2} \rangle \qquad
\end{align*}
and
\begin{align*}
\widehat{\vol}(x \xi f + y \xi^{2}) & = \left( \frac{3}{2}x - y \right) y^{1/2} \qquad  \textrm{ if }x \geq 2y; \\
& = \frac{x^{3/2}}{2^{1/2}} \qquad \qquad \qquad  \textrm{ if }x < 2y.
\end{align*}
We focus on the complete intersection region where $x \geq 2y$.  Then we have
\begin{equation*}
x \xi f + y \xi^{2} = \left(  \frac{x - 2y}{2 y^{1/2}} f + y^{1/2} (\xi + f) \right)^{2}.
\end{equation*}
The divisor in the parentheses on the right hand side is exactly the $B$ appearing in the Zariski decomposition expression for $x \xi f + y \xi^{2}$.  Thus, we can calculate the directional derivative of $\widehat{\vol}$ along a curve class $\beta$ by intersecting against this divisor.

For a very concrete example, set $\alpha = 3 \xi f + \xi^{2}$, and consider the behavior of $\widehat{\vol}$ for
\begin{equation*}
\alpha_{t} := 3 \xi f + \xi^{2} - t(2\xi f+ \xi^{2}).
\end{equation*}
Note that $\alpha_{t}$ is pseudo-effective precisely for $t \leq 1$.  In this range, the explicit expression for the volume above yields
\begin{align*}
\widehat{\vol}(\alpha_{t}) & = \left( \frac{7}{2} - 2t \right) (1-t)^{1/2}, \\
\frac{d}{dt} \widehat{\vol}(\alpha_{t}) %& = -2(1-t)^{1/2} - \left( \frac{7}{4} - t \right) (1-t)^{-1/2} \\
& = -3(1-t)^{1/2} - \frac{3}{4}(1-t)^{-1/2}.
\end{align*}
Note that this calculation agrees with the prediction of Theorem \ref{thrm:derivative}, which states that if $B_{t}$ is the divisor defining the positive part of $\alpha_{t}$ then
\begin{align*}
\frac{d}{dt} \widehat{\vol}(\alpha_{t}) & = \frac{3}{2} B_{t} \cdot (2 \xi f + \xi^{2}) \\
& = \frac{-3}{2}   \left(  \frac{(3-2t) - 2(1-t)}{2 (1-t)^{1/2}} + 2(1-t)^{1/2} \right).
\end{align*}
In particular, the derivative decreases to $-\infty$ as $t$ approaches $1$ (and the coefficients of the divisor $B$ also increase without bound).  This is a surprising contrast to the situation for divisors.  Note also that $\widehat{\vol}$ is not convex on this line segment, while $\vol$ is convex in any pseudo-effective direction in the nef cone of divisors by the Morse inequality.
\end{exmple}

\subsection{Negative parts}

We next analyze the structure of the negative part of the Zariski decomposition.  First we have:

\begin{lem} \label{negnotmovable}
Let $X$ be a projective variety.  Suppose $\alpha$ is a big curve class and write $\alpha = B^{n-1} + \gamma$ for its Zariski decomposition.  If $\gamma \neq 0$ then $\gamma \not \in \Mov_{1}(X)$.
\end{lem}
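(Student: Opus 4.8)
The plan is to argue by contradiction using the duality between $\Mov_1(X)$ and $\Eff^1(X)$ together with the defining property $B \cdot \gamma = 0$ of the Zariski decomposition. Suppose $\gamma \neq 0$ but $\gamma \in \Mov_1(X)$. By \cite{BDPP13}, $\Mov_1(X)$ is dual to $\Eff^1(X)$, so $\gamma$ pairs non-negatively against every pseudo-effective divisor class. Since $\gamma$ is a non-zero movable class, it should in fact pair \emph{strictly} positively against every big divisor class; in particular $B \cdot \gamma > 0$ because $B$ is big (and nef). This contradicts the equality $B \cdot \gamma = 0$ from Theorem \ref{thrm:existenceofzardecom}, so $\gamma$ cannot be movable.

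The step requiring care is the claim that a non-zero movable curve class has strictly positive intersection with every big divisor class. First I would reduce to showing $B \cdot \gamma > 0$ for $B$ big and nef, which is exactly our situation. Write $B$ as a sum of an ample class and an effective (hence pseudo-effective) class; the ample part contributes strictly positively against any non-zero pseudo-effective curve class, so it suffices to know $\gamma \neq 0$ pairs positively with something ample — and an ample class pairs strictly positively with every non-zero class in $\Eff_1(X) \supseteq \Mov_1(X)$, since the ample cone is the interior of $\Nef^1(X)$ which is dual to $\Eff_1(X)$. Thus $A \cdot \gamma > 0$ for ample $A$, and writing $B = A' + E$ with $A'$ ample and $E$ effective, $B \cdot \gamma = A' \cdot \gamma + E \cdot \gamma \geq A' \cdot \gamma > 0$ (using that $E \cdot \gamma \geq 0$ since $\gamma$ is movable and $E$ is effective, or more simply that $\gamma$ is pseudo-effective and $B$ is nef so $B \cdot \gamma \geq 0$, combined with the strict inequality for the ample part after perturbing).

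The main obstacle is just bookkeeping around which duality statement to invoke and ensuring the strict inequality: the cleanest route is to observe that $B$ big implies $B - \epsilon A$ is pseudo-effective for small $\epsilon > 0$ and ample $A$, hence $B \cdot \gamma = \epsilon A \cdot \gamma + (B - \epsilon A) \cdot \gamma \geq \epsilon A \cdot \gamma > 0$, where $(B-\epsilon A)\cdot \gamma \geq 0$ by the $\Mov_1$–$\Eff^1$ duality and $A \cdot \gamma > 0$ by the $\Nef^1$–$\Eff_1$ duality applied to the non-zero pseudo-effective class $\gamma$. This contradicts $B \cdot \gamma = 0$ and completes the proof.
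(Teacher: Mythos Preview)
Your proposal is correct and is essentially the same argument as the paper's own proof, just with the duality details spelled out explicitly. The paper's proof is a single sentence: since $B$ is big and $B \cdot \gamma = 0$, $\gamma$ cannot be movable if it is non-zero --- which is exactly your argument that a big divisor class intersects any non-zero movable curve class strictly positively via the $\Mov_{1}$--$\Eff^{1}$ duality of \cite{BDPP13}.
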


\begin{proof}
Since $B$ is big and $B \cdot \gamma = 0$, $\gamma$ cannot be movable if it is non-zero.
\end{proof}

For the Zariski decomposition under $\widehat{\vol}$, we can not guarantee the negative part is the class of an effective curve.  As in \cite{fl14}, it is more reasonable to ask if the negative part is the pushforward of a pseudo-effective class from a proper subvariety.  Note that this property is automatic when the negative part is represented by an effective class, and for surfaces it is actually equivalent to asking that the negative part be effective.  In general this subtle property of pseudo-effective classes is crucial for inductive arguments on dimension.

\begin{prop} \label{negpartrigid}
Let $X$ be a projective variety of dimension $n$.  Let $\alpha$ be a big curve class and write $\alpha = B^{n-1} + \gamma$ for its Zariski decomposition.  There is a proper subscheme $i: V \subsetneq X$ and a pseudo-effective class $\gamma' \in N_{1}(V)$ such that $i_{*}\gamma' = \gamma$.
\end{prop}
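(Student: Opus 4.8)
The plan is to show that the negative part $\gamma$ is the pushforward of a pseudo-effective curve class from the augmented base locus of $B$. First I would dispose of the case $\gamma=0$ (take $V$ to be a point), so assume $\gamma\neq 0$. Then $B$ cannot be ample: an ample class has strictly positive intersection with every nonzero pseudo-effective curve class, by Kleiman's criterion, contradicting $B\cdot\gamma=0$. Set $V:=\mathbb{B}_{+}(B)$, the augmented base locus of $B$ equipped with its reduced structure, and let $i\colon V\hookrightarrow X$ be the inclusion; since $B$ is big but not ample, $V$ is a nonempty proper closed subscheme of $X$. It then suffices to prove that $\gamma\in i_{*}\Eff_{1}(V)$, since any $\gamma'\in\Eff_{1}(V)$ with $i_{*}\gamma'=\gamma$ completes the proof.

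I would establish $\gamma\in i_{*}\Eff_{1}(V)$ by a separation argument. First note that $i_{*}\Eff_{1}(V)$ is a closed convex cone in $N_{1}(X)$: the map $i_{*}$ is injective on $\Eff_{1}(V)$, because a pseudo-effective class on $V$ killed by $i_{*}$ has zero intersection with the restriction to $V$ of an ample class on $X$ (which is ample on $V$), hence vanishes; thus $i_{*}$ carries the pointed closed cone $\Eff_{1}(V)$ homeomorphically onto a closed cone. Now suppose $\gamma\notin i_{*}\Eff_{1}(V)$. Separating $\gamma$ from this cone yields a class $D\in N^{1}(X)$ with $D\cdot\gamma<0$ and $D\cdot i_{*}\gamma'\geq 0$ for all $\gamma'\in\Eff_{1}(V)$; the latter says precisely that $i^{*}D$ is nef on $V$ (the nef cone of $V$ being dual to $\Eff_{1}(V)$). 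Replacing $D$ by $D+\epsilon H$ for a fixed ample class $H$ on $X$ and $\epsilon>0$ small keeps $D\cdot\gamma<0$ (since $H\cdot\gamma>0$ as $\gamma\neq 0$) and upgrades $i^{*}D$ to an ample class on $V$. So I may assume $D|_{\mathbb{B}_{+}(B)}$ is ample.

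The key input is the following standard fact about augmented base loci: if $B$ is a big and nef $\mathbb{R}$-divisor class and $D|_{\mathbb{B}_{+}(B)}$ is ample, then $mB+D$ is nef for all $m\gg 0$. This is essentially a reformulation of Nakamaye's theorem identifying $\mathbb{B}_{+}(B)$ with the null locus of $B$; see \cite{fl14} and the references therein, which also record the versions over an arbitrary algebraically closed field (and over which the K\"ahler analogue uses the non-K\"ahler locus in place of $\mathbb{B}_{+}(B)$). Granting it, fix such an $m$; then
\begin{equation*}
(mB+D)\cdot\gamma = m\,(B\cdot\gamma)+D\cdot\gamma = D\cdot\gamma < 0,
\end{equation*}
which contradicts $\gamma$ being pseudo-effective and $mB+D$ being nef. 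Hence $\gamma\in i_{*}\Eff_{1}(V)$, as desired.

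The main obstacle is the nefness statement in the previous paragraph, which is where the real geometric content sits. The naive attempt — writing $\gamma=\lim\gamma_{k}$ with $\gamma_{k}$ effective $1$-cycles and discarding the components of each $\gamma_{k}$ not contained in a chosen effective divisor $\equiv_{\mathbb{R}} B-(\text{small ample})$ — fails because those discarded components, while having $B$-degree tending to $0$, can have unbounded $H$-degree, so their classes need not tend to $0$. It is exactly Nakamaye's theorem (equivalently, the displayed nefness statement, which encodes the ampleness of $B$ away from $\mathbb{B}_{+}(B)$) that rules out this pathology and forces all pseudo-effective curve classes orthogonal to $B$ to be supported on $\mathbb{B}_{+}(B)$.
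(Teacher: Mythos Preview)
Your argument is correct and follows a genuinely different route from the paper's. The paper passes to a resolution $\phi\colon Y\to X$ on which $\phi^{*}B$ dominates the pullback of an ample divisor from a further model $Z$, splits approximating effective $1$-cycles for a lift of $\gamma$ into components lying in or out of a fixed divisor, shows the ``out'' part has pushforward zero to $Z$, and then invokes \cite[Theorem 4.1]{djv13} (pseudo-effective classes killed by a birational morphism come from the exceptional locus). You instead work entirely on $X$ via a cone separation: if $\gamma\notin i_{*}\Eff_{1}(\mathbb{B}_{+}(B))$, produce $D$ with $D\cdot\gamma<0$ and $D|_{\mathbb{B}_{+}(B)}$ ample, then use that $mB+D$ is nef for $m\gg 0$ to contradict $B\cdot\gamma=0$.

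Two remarks. First, your key nefness input is actually more elementary than Nakamaye's theorem: by Noetherianity choose finitely many decompositions $B\equiv A_{i}+N_{i}$ with $A_{i}$ ample and $N_{i}$ effective so that $\bigcap_{i}\Supp N_{i}=\mathbb{B}_{+}(B)$; for an irreducible curve $C\not\subset\mathbb{B}_{+}(B)$ pick $i$ with $C\not\subset\Supp N_{i}$, so $B\cdot C\geq A_{i}\cdot C$, and hence $(mB+D)\cdot C\geq (mA_{i}+D)\cdot C\geq 0$ once $m$ exceeds $\max_{i}\{m_{i}: m_{i}A_{i}+D\text{ ample}\}$; for $C\subset\mathbb{B}_{+}(B)$ use $D|_{V}$ nef. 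So the ``real geometric content'' here is just the definition of the augmented base locus together with Noetherianity. Second, your approach pins down the optimal $V=\mathbb{B}_{+}(B)$ and avoids both the birational model and the appeal to \cite{djv13}; the paper's approach, by contrast, gives an explicit geometric construction of $V$ from a base-locus resolution and transports more readily to the analytic setting, where one has Fujita-type approximations but the cone/separation translation of $\mathbb{B}_{+}$ is less direct.
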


\begin{proof}
We may choose an effective nef $\mathbb{R}$-Cartier divisor $D$ whose class is $B$.  By resolving the base locus of a sufficiently high multiple of $D$ we obtain a blow-up $\phi: Y \to X$, a birational morphism $\psi: Y \to Z$ and an effective ample divisor $A$ on $Z$ such that after replacing $D$ by some numerically equivalent divisor we have $\phi^{*}D \geq \psi^{*}A$.  Write $E$ for the difference of these two divisors and set $V_{Y}$ to be the union of $\Supp(E)$ with the $\psi$-exceptional locus.

There is a pseudo-effective curve class $\gamma_{Y}$ on $Y$ which pushes forward to $\gamma$ and thus satisfies $\phi^{*}D \cdot \gamma_{Y} = 0$.  There is an infinite sequence of effective $1$-cycles $C_{i}$ such that $\lim_{i \to \infty} [C_{i}] = \gamma_{Y}$.  Each effective cycle $C_{i}$ can be decomposed as a sum $C_{i} = T_{i} + T_{i}'$ where $T_{i}'$ consists of the components contained in $V_{Y}$ and $T_{i}$ consists of the rest.

Note that
\begin{equation*}
\lim_{i \to \infty} A \cdot \psi_{*}T_{i} \leq \lim_{i \to \infty} \phi^{*}D \cdot T_{i} = 0.
\end{equation*}
This shows that $\lim_{i \to \infty} [T_{i}]$ converges to a pseudo-effective curve class $\beta \in N_{1}(Y)$ satisfying $\psi_{*}\beta = 0$.

Clearly $\lim_{i \to \infty}[T_{i}']$ is the pushforward of a pseudo-effective curve class from $V_{Y}$.  \cite[Theorem 4.1]{djv13} (which holds in the singular case by the same argument) shows that $\beta$ is also the pushforward of a pseudo-effective curve class on $V_{Y}$.  Thus $\gamma_{Y}$ is the pushforward of a pseudo-effective curve class on $V_{Y}$.  Pushing forward to $X$, we see that $\gamma$ is the pushforward of a pseudo-effective curve class on $V := \phi(V_{Y})$.  Note that $V$ is a proper subset of $X$ since $\phi$ is birational.
\end{proof}

\begin{rmk}
In contrast, for the Zariski decomposition of curves in the sense of Boucksom
(see \cite[Theorem 3.3 and Lemma 3.5]{xiao15}) the negative part can always be represented by an effective curve
which is very rigidly embedded in $X$.  This has a similar feel as the $\sigma$-decomposition of \cite{Nak04} for curve classes.
\end{rmk}

\subsection{Birational behavior}
\label{birational subsection}

We next use the Zariski decomposition to analyze the behavior of positivity of curves under birational maps $\phi: Y \to X$.  Note that (in contrast to divisors) the birational pullback can only decrease the positivity for curve classes: we have
\begin{equation*}
\widehat{\vol}(\alpha) \geq \widehat{\vol}(\phi^{*}\alpha).
\end{equation*}
In fact pulling back does not preserve pseudo-effectiveness, and even for a movable class we can have a strict inequality of $\widehat{\vol}$ (for example, a big movable class can pull back to a movable class on the pseudo-effective boundary).  Again guided by \cite{fl14}, the right approach is to consider all $\phi_{*}$-preimages of $\alpha$ at once.

\begin{prop} \label{birvolstatement}
Let $\phi: Y \to X$ be a birational morphism of projective varieties of dimension $n$.  Let $\alpha$ be a big curve class on $X$ with Zariski decomposition $B^{n-1} + \gamma$.  Let $\mathcal{A}$  be the set of all pseudo-effective curve classes $\alpha'$ on $Y$ satisfying $\phi_{*}\alpha' = \alpha$.  Then
\begin{equation*}
\sup_{\alpha' \in \mathcal{A}} \widehat{\vol}(\alpha') = \widehat{\vol}(\alpha).
\end{equation*}
This supremum is achieved by an element $\alpha_{Y} \in \mathcal{A}$.
\end{prop}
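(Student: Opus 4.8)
The plan is to establish the two inequalities separately. The bound $\sup_{\alpha'\in\mathcal{A}}\widehat{\vol}(\alpha')\leq\widehat{\vol}(\alpha)$ is the easy direction: for a fixed $\alpha'\in\mathcal{A}$ one restricts the infimum defining $\widehat{\vol}(\alpha')$ to divisor classes of the form $\phi^{*}A$ with $A$ a big and nef divisor class on $X$. Each such $\phi^{*}A$ is big and nef on $Y$, satisfies $\vol(\phi^{*}A)=(\phi^{*}A)^{n}=A^{n}=\vol(A)$, and by the projection formula $\phi^{*}A\cdot\alpha'=A\cdot\phi_{*}\alpha'=A\cdot\alpha$; taking the infimum over all such $A$ gives $\widehat{\vol}(\alpha')\leq\widehat{\vol}(\alpha)$.

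For the reverse inequality I would construct a specific $\alpha_{Y}\in\mathcal{A}$ whose Zariski decomposition has positive part $(\phi^{*}B)^{n-1}$; Theorem \ref{thrm:existenceofzardecom} then forces $\widehat{\vol}(\alpha_{Y})=\vol(\phi^{*}B)=\vol(B)=\widehat{\vol}(\alpha)$, which together with the previous paragraph finishes the proof. The one nontrivial ingredient is a pseudo-effective lift of the negative part: I claim there is a pseudo-effective class $\gamma_{Y}$ on $Y$ with $\phi_{*}\gamma_{Y}=\gamma$. To see this, write $\gamma=\lim_{i}[C_{i}]$ for effective $1$-cycles $C_{i}$ on $X$; fixing an ample class $A$ on $X$, the degrees $A\cdot C_{i}$ converge to $A\cdot\gamma$ and hence are bounded, so the cycles $C_{i}$ move in a bounded family. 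Standard boundedness of the relevant Chow varieties then produces effective $\mathbb{R}$-$1$-cycles $C_{i}^{\sharp}$ on $Y$ (strict transforms of the components, with suitable rational coefficients) satisfying $\phi_{*}[C_{i}^{\sharp}]=[C_{i}]$ and of bounded degree against a fixed ample class on $Y$. Thus $\{[C_{i}^{\sharp}]\}$ is bounded in $N_{1}(Y)$; any accumulation point $\gamma_{Y}$ is pseudo-effective and, by continuity of $\phi_{*}$, satisfies $\phi_{*}\gamma_{Y}=\gamma$.

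With $\gamma_{Y}$ in hand, set $\alpha_{Y}:=(\phi^{*}B)^{n-1}+\gamma_{Y}$. It is pseudo-effective, and $\phi_{*}\alpha_{Y}=\phi_{*}\bigl((\phi^{*}B)^{n-1}\bigr)+\gamma=B^{n-1}+\gamma=\alpha$, where the identity $\phi_{*}\bigl((\phi^{*}B)^{n-1}\bigr)=B^{n-1}$ follows by pairing against divisor classes and the projection formula. Moreover $\phi^{*}B$ is big and nef, $\phi^{*}B\cdot\gamma_{Y}=B\cdot\phi_{*}\gamma_{Y}=B\cdot\gamma=0$, and $\alpha_{Y}$ is big because already $\widehat{\vol}\bigl((\phi^{*}B)^{n-1}\bigr)=\vol(\phi^{*}B)>0$ by Theorem \ref{volcurvesbasicprops}. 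Hence $\alpha_{Y}=(\phi^{*}B)^{n-1}+\gamma_{Y}$ is the unique Zariski decomposition of $\alpha_{Y}$, so $\widehat{\vol}(\alpha_{Y})=\vol(\phi^{*}B)=\widehat{\vol}(\alpha)$ and $\alpha_{Y}\in\mathcal{A}$ attains the supremum.

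I expect the main obstacle to be the lifting step, i.e.\ the boundedness argument showing that a pseudo-effective curve class on $X$ has a pseudo-effective preimage on $Y$ (equivalently, that $\phi_{*}$ carries $\Eff_{1}(Y)$ onto $\Eff_{1}(X)$). One could instead try to deduce this from Proposition \ref{negpartrigid}, lifting the pseudo-effective class from the proper subscheme $V$ along a subvariety of $Y$ dominating $V$, but this requires some care when $V$ meets the exceptional locus, so the limiting argument above seems the cleaner route. Everything else — the upper bound and the verification that $(\phi^{*}B)^{n-1}+\gamma_{Y}$ is a genuine Zariski decomposition — is a routine consequence of the projection formula together with the uniqueness statement in Theorem \ref{thrm:existenceofzardecom}.
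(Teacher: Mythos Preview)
Your proposal is correct and follows essentially the same route as the paper: the upper bound via the projection formula applied to divisors of the form $\phi^{*}A$, and the lower bound by exhibiting $\alpha_{Y}=(\phi^{*}B)^{n-1}+\gamma_{Y}$ and checking directly that this is its Zariski decomposition (so $\widehat{\vol}(\alpha_{Y})=\vol(\phi^{*}B)=\vol(B)=\widehat{\vol}(\alpha)$). The only difference is that the paper simply asserts the existence of a pseudo-effective lift $\gamma_{Y}$ of $\gamma$ as a known fact, while you attempt to justify it via a boundedness argument; your sketch is in the right spirit, but the step ``strict transforms have bounded degree against a fixed ample class on $Y$'' is not automatic from Chow-variety boundedness on $X$ and would need a short extra argument (e.g.\ bounding the intersection of strict transforms with the exceptional divisors in terms of $\deg_{A}C_{i}$, or invoking the surjectivity of $\phi_{*}\colon\Eff_{1}(Y)\to\Eff_{1}(X)$ for proper surjective morphisms as in \cite{fl14}).
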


\begin{proof}
Suppose $\alpha' \in \mathcal{A}$.  Since $\phi_{*}\alpha' = \alpha$, it is clear from the projection formula that $\widehat{\vol}(\alpha') \leq \widehat{\vol}(\alpha)$.  Conversely, set $\gamma_{Y}$ to be any pseudo-effective curve class on $Y$ pushing forward to $\gamma$.  Define $\alpha_{Y} = \phi^{*}B^{n-1} + \gamma_{Y}$.  Since $\phi^{*}B \cdot \gamma_{Y} = 0$, by Theorem \ref{thrm:existenceofzardecom} this expression is the Zariski decomposition for $\alpha_{Y}$.  In particular $\widehat{\vol}(\alpha_{Y}) = \widehat{\vol}(\alpha)$.
\end{proof}

This proposition indicates the existence of some ``distinguished'' preimages of $\alpha$ with maximum $\widehat{\vol}$.  In fact, these distinguished preimages also have a very nice structure.

\begin{prop} \label{birpospartstructure}
Let $\phi: Y \to X$ be a birational morphism of projective varieties of dimension $n$.  Let $\alpha$ be a big curve class on $X$ with Zariski decomposition $B^{n-1} + \gamma$.  Set $\mathcal{A}'$ to be the set of all pseudo-effective curve class $\alpha'$ on $Y$ satisfying $\phi_{*}\alpha' = \alpha$ and $\widehat{\vol}(\alpha') = \widehat{\vol}(\alpha)$.  Then
\begin{enumerate}
\item Every $\alpha' \in \mathcal{A}'$ has a Zariski decomposition of the form
\begin{equation*}
\alpha' = \phi^{*}B^{n-1} + \gamma'.
\end{equation*}
Thus $\mathcal{A}' = \{ \phi^{*}B^{n-1} + \gamma' \, | \, \gamma' \in \Eff_{1}(Y), \phi_{*}\gamma' = \gamma \}$ is determined by the set of pseudo-effective preimages of $\gamma$.
\item These Zariski decompositions are stable under adding $\phi$-exceptional curves: if $\xi$ is a pseudo-effective curve class satisfying $\phi_{*}\xi = 0$, then for any $\alpha' \in \mathcal{A}'$ we have
\begin{equation*}
\alpha' + \xi = \phi^{*}B^{n-1} + (\gamma' + \xi)
\end{equation*} is the Zariski decomposition for $\alpha' + \xi$.
\end{enumerate}
\end{prop}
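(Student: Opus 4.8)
The plan is to deduce everything from the existence and uniqueness of the Zariski decomposition for big curve classes (Theorem \ref{thrm:existenceofzardecom}) together with the projection formula. The central observation is that the pullback $\phi^{*}B$ already computes $\widehat{\vol}$ of \emph{every} class $\alpha' \in \mathcal{A}'$; once this is established, both parts of the proposition fall out of the uniqueness clause of that theorem.

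First I would record the elementary facts that for a birational morphism $\phi$ one has $\phi_{*}(\phi^{*}B)^{n-1} = B^{n-1}$ and $\vol(\phi^{*}B) = \vol(B)$, so in particular $\phi^{*}B$ is again big and nef. Then, for $\alpha' \in \mathcal{A}'$, the projection formula gives $\phi^{*}B \cdot \alpha' = B \cdot \phi_{*}\alpha' = B \cdot \alpha$, whence
\[
\left( \frac{\phi^{*}B \cdot \alpha'}{\vol(\phi^{*}B)^{1/n}} \right)^{n/n-1} = \left( \frac{B \cdot \alpha}{\vol(B)^{1/n}} \right)^{n/n-1} = \widehat{\vol}(\alpha) = \widehat{\vol}(\alpha').
\]
Since $\widehat{\vol}(\alpha')$ is by definition an infimum over big and nef divisor classes, this says precisely that $\phi^{*}B$ computes $\widehat{\vol}(\alpha')$. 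By the uniqueness clause of Theorem \ref{thrm:existenceofzardecom}, the divisor $B_{\alpha'}$ in the Zariski decomposition of $\alpha'$ is, up to scaling, the only big and nef class with this property, so $\phi^{*}B = c\, B_{\alpha'}$ for some $c>0$; comparing volumes, $\vol(\phi^{*}B) = \vol(B) = \widehat{\vol}(\alpha') = \vol(B_{\alpha'})$ forces $c^{n} = 1$, hence $\phi^{*}B = B_{\alpha'}$ and $\alpha' = \phi^{*}B^{n-1} + \gamma'$ is the Zariski decomposition of $\alpha'$.

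For the remaining claims, setting $\gamma' = \alpha' - \phi^{*}B^{n-1}$ and applying $\phi_{*}$ gives $\phi_{*}\gamma' = \alpha - B^{n-1} = \gamma$, which yields one inclusion in the displayed set identity; the reverse inclusion is immediate, since for any pseudo-effective $\gamma'$ with $\phi_{*}\gamma' = \gamma$ the class $\phi^{*}B^{n-1} + \gamma'$ is big, pushes forward to $\alpha$, and satisfies $\phi^{*}B \cdot \gamma' = B \cdot \gamma = 0$ by the projection formula, so Theorem \ref{thrm:existenceofzardecom} identifies this as its Zariski decomposition and gives $\widehat{\vol}(\phi^{*}B^{n-1} + \gamma') = \vol(B) = \widehat{\vol}(\alpha)$, i.e.\ it lies in $\mathcal{A}'$. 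For part (2), given a pseudo-effective $\xi$ with $\phi_{*}\xi = 0$ and $\alpha' = \phi^{*}B^{n-1} + \gamma' \in \mathcal{A}'$, the class $\gamma' + \xi$ is pseudo-effective and $\phi^{*}B \cdot (\gamma' + \xi) = B \cdot \phi_{*}(\gamma' + \xi) = B \cdot \gamma = 0$, so $\alpha' + \xi = \phi^{*}B^{n-1} + (\gamma' + \xi)$ again meets the criteria of Theorem \ref{thrm:existenceofzardecom} and is therefore the Zariski decomposition of $\alpha' + \xi$.

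There is no serious obstacle here; the only points requiring mild care are checking that the pullback of a big and nef divisor along a birational morphism remains big and nef (volume-positivity and nefness are both preserved), and pinning down the proportionality constant $c = 1$ via the volume identity rather than merely up to a positive scalar. Everything else is a direct application of the projection formula and the uniqueness half of Theorem \ref{thrm:existenceofzardecom}.
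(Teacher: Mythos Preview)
Your proof is correct and follows essentially the same route as the paper: use the projection formula to see that $\phi^{*}B$ computes $\widehat{\vol}(\alpha')$, then invoke the uniqueness clause of Theorem \ref{thrm:existenceofzardecom}. The only minor difference is in part (2), where the paper instead squeezes $\widehat{\vol}(\alpha') \leq \widehat{\vol}(\alpha' + \xi) \leq \widehat{\vol}(\alpha)$ via Proposition \ref{birvolstatement} and then applies part (1); your direct verification that $\phi^{*}B \cdot (\gamma' + \xi) = 0$ is equally valid and arguably cleaner.
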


\begin{proof}
To see (1), note that
\begin{equation*}
\frac{\phi^{*}B}{\vol(B)^{1/n}} \cdot \alpha' = \frac{B}{\vol(B)^{1/n}} \cdot \alpha = \widehat{\vol}(\alpha).
\end{equation*}
Thus if $\widehat{\vol}(\alpha') = \widehat{\vol}(\alpha)$ then $\widehat{\vol}(\alpha')$ is computed by $\phi^{*}B$.  By Theorem \ref{thrm:existenceofzardecom} we obtain the statement.

(2) follows immediately from (1), since
\begin{equation*}
\widehat{\vol}(\alpha) = \widehat{\vol}(\alpha') \leq \widehat{\vol}(\alpha' + \xi) \leq \widehat{\vol}(\alpha)
\end{equation*}
by Proposition \ref{birvolstatement}.
\end{proof}

While there is not necessarily a \emph{uniquely} distinguished $\phi_{*}$-preimage of $\alpha$, there \emph{is} a uniquely distinguished complete intersection class on $Y$ whose $\phi$-pushforward lies beneath $\alpha$ -- namely, the positive part of any sufficiently large class pushing forward to $\alpha$.  This is the analogue in our setting of the ``movable transform'' of \cite{fl14}.

\subsection{Morse-type inequality for curves}
\label{section morse ineq}

In this section we prove a Morse-type inequality for curves under the volume function $\widehat{\vol}$.  First let us recall the algebraic Morse inequality for nef divisor classes over smooth projective varieties. If $A, B$ are nef divisor classes on a smooth projective variety $X$ of dimension $n$, then by \cite[Example 2.2.33]{lazarsfeld04} (see also \cite{Dem85morse}, \cite{Siu93matsusaka}, \cite{Tra95morse})
$$
\vol(A-B)\geq A^n - n A^{n-1}\cdot B.
$$
In particular, if $A^n - n A^{n-1}\cdot B>0$, then $A-B$ is big. This gives us a very useful bigness criterion for the difference of two nef divisors.

By analogy with the divisor case, we can ask:
\begin{itemize}
  \item Let $X$ be a projective variety of dimension $n$, and let $\alpha, \gamma\in \Eff_1(X)$ be two nef (movable)
       curve classes. Is there a criterion for the bigness of $\alpha-\gamma\in  \Eff_1(X)$ using only intersection numbers defined by $\alpha, \gamma$?
\end{itemize}

Inspired by \cite{Xia13}, we give such a criterion using the $\widehat{\vol}$ function. In \cite{lehmannxiao2015b}, we answer the above question by giving a slightly different criterion which needs the refined structure of the movable cone of curves.  The following results follow from Theorem \ref{general morse}.

\begin{thrm}
\label{thm curve morseinequality}
Let $X$ be a projective variety of dimension $n$.  Let $\alpha$ be a big curve class and let $\beta$ be a movable curve class.  Write $\alpha = B^{n-1} + \gamma$ for the Zariski decomposition of $\alpha$. Then
\begin{align*}
\widehat{\vol}(\alpha - \beta)^{n-1/n}
&\geq (\widehat{\vol}(\alpha)-n B \cdot \beta)\cdot \widehat{\vol}(\alpha)^{-1/n}\\
&=(B^{n} - n B \cdot \beta)\cdot (B^{n})^{-1/n}.
\end{align*}
In particular, we have
\begin{equation*}
\widehat{\vol}(\alpha - \beta) \geq B^{n} - \frac{n^2}{n-1} B \cdot \beta.
\end{equation*}
\end{thrm}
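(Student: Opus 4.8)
The plan is to apply the formal machinery of Sections \ref{legendresection}--\ref{formal zariski section} in the geometric situation $\mathcal{C}=\Nef^{1}(X)$, $f=\vol$, $s=n$, so that $\mathcal{C}^{*}=\Eff_{1}(X)$, $\mathcal{H}f=\widehat{\vol}$ and $D(A)=A^{n-1}$; as recalled at the beginning of Section \ref{section zariski}, $\vol$ is $+$-differentiable on $\Nef^{1}(X)$, satisfies the sublinear boundary condition and satisfies Teissier proportionality on the big and nef cone, and $\widehat{\vol}(\alpha)=\vol(B)=B^{n}$ by Theorem \ref{thrm:existenceofzardecom}. The one ingredient of Theorem \ref{general morse} that is not literally available is the abstract Morse-type inequality: the algebraic Morse inequality $\vol(A-A')\ge A^{n}-nA^{n-1}\cdot A'$ for nef $A,A'$ only guarantees that $A-A'$ is \emph{big}, not that it is ample. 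The point, and the reason the hypothesis ``$\beta$ movable'' is imposed, is that bigness of the divisor is exactly the substitute we need: since $\Mov_{1}(X)$ is the dual of $\Eff^{1}(X)$ (\cite{BDPP13}), a big divisor class pairs strictly positively against every nonzero movable curve class.

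First I would establish the ``reverse'' Khovanskii--Teissier inequality
\[
n\,(B\cdot\beta)(B^{n-1}\cdot A)\ \ge\ B^{n}\,(\beta\cdot A)
\]
for $B$ big and nef, $A$ nef, and $\beta$ movable, by transcribing the proof of Proposition \ref{prop abstract reverse KT inequality}: one may assume $\beta\ne 0$, hence $B\cdot\beta>0$ since $B$ is big, and by homogeneity rescale so that $B\cdot\beta=\beta\cdot A$; if $nB^{n-1}\cdot A<B^{n}$ then $B-A$ is big by the algebraic Morse inequality, so $\beta\cdot(B-A)>0$, contradicting the normalization.

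Next I would mimic the proof of Theorem \ref{general morse} to handle the positive part $B^{n-1}$. We may assume $B^{n}-nB\cdot\beta>0$, since otherwise both right-hand sides of the statement are $\le 0$ and there is nothing to prove. Pick a small $\beta'$ in the interior of $\Mov_{1}(X)$ (nonempty since $\Eff^{1}(X)$ is a proper cone) with $B^{n}-nB\cdot(\beta+\beta')>0$; applying the reverse Khovanskii--Teissier inequality to $\beta+\beta'$ and using that $\beta'$, being interior to $\Mov_{1}(X)$, already pairs positively with every nonzero nef class, one gets $(B^{n-1}-\beta)\cdot A>0$ for all nonzero nef $A$, i.e.\ $B^{n-1}-\beta\in\Eff_{1}(X)^{\circ}$. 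Inserting the reverse Khovanskii--Teissier inequality for $\beta$ into the infimum defining $\widehat{\vol}(B^{n-1}-\beta)$ yields $(B^{n-1}-\beta)\cdot x\ge \tfrac{B^{n}-nB\cdot\beta}{B^{n}}\,B^{n-1}\cdot x$ for all nef $x$, hence
\[
\widehat{\vol}(B^{n-1}-\beta)^{\frac{n-1}{n}}\ \ge\ \frac{B^{n}-nB\cdot\beta}{B^{n}}\,\widehat{\vol}(B^{n-1})^{\frac{n-1}{n}}\ =\ (B^{n}-nB\cdot\beta)(B^{n})^{-1/n},
\]
and the elementary estimate $(1-t)^{n/(n-1)}\ge 1-\tfrac{n}{n-1}t$ on $[0,1)$ improves this to $\widehat{\vol}(B^{n-1}-\beta)\ge B^{n}-\tfrac{n^{2}}{n-1}B\cdot\beta$.

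Finally, since $\alpha-\beta=(B^{n-1}-\beta)+\gamma$ with $\gamma\in\Eff_{1}(X)$, the class $\alpha-\beta$ is big and $\widehat{\vol}(\alpha-\beta)\ge\widehat{\vol}(B^{n-1}-\beta)$ by monotonicity of $\widehat{\vol}$, which follows from the superadditivity in Theorem \ref{volcurvesconcavity}; combined with $\widehat{\vol}(\alpha)=B^{n}$ this gives both displayed inequalities. I expect the main obstacle to be exactly the conceptual point flagged in the first paragraph --- bridging the gap between the abstract Morse hypothesis (stated with $\mathcal{C}^{\circ}$) and the geometric algebraic Morse inequality, which only delivers bigness --- and the resolution is precisely that $\beta$ is assumed movable so that the duality $\Mov_{1}(X)=\Eff^{1}(X)^{*}$ applies; alternatively one could formalize a ``Morse-type inequality relative to a subcone of test functionals'' and invoke Theorem \ref{general morse} directly.
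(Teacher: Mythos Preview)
Your proof is correct and follows the same route as the paper's one-line argument --- apply the formal Morse machinery of Theorem \ref{general morse} and then use $\alpha\succeq B^{n-1}$ to pass from $B^{n-1}-\beta$ to $\alpha-\beta$. You are right to flag, and to resolve, the point the paper glosses over: the abstract Morse-type hypothesis (requiring $v-x\in\mathcal{C}^{\circ}$, i.e.\ ampleness) is not literally satisfied by $\vol$ on $\Nef^{1}(X)$, but tracing the proofs of Proposition \ref{prop abstract reverse KT inequality} and Theorem \ref{general morse} shows that bigness of $B-A$ suffices once the test class $\beta$ is movable, via the duality $\Mov_{1}(X)=\Eff^{1}(X)^{*}$ --- which is precisely why the theorem is stated for movable $\beta$ rather than arbitrary pseudo-effective $\beta$.
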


\begin{proof}
The theorem follows immediately from Theorem \ref{general morse} and the fact that $\alpha \succeq B^{n-1}$.
\end{proof}

\begin{cor}
\label{cor curve morseinequality}
Let $X$ be a projective variety of dimension $n$.  Let $\alpha$ be a big curve class and let $\beta$ be a movable curve class.  Write $\alpha = B^{n-1} + \gamma$ for the Zariski decomposition of $\alpha$.  If
\begin{equation*}
\widehat{\vol}(\alpha) - nB \cdot \beta > 0
\end{equation*}
then $\alpha - \beta$ is big.
\end{cor}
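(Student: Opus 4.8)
The plan is to deduce the corollary directly from the Morse-type estimate of Theorem \ref{thm curve morseinequality}, so essentially no new argument is needed. First I would record that since $\alpha$ is big, its Zariski decomposition $\alpha = B^{n-1}+\gamma$ produced by Theorem \ref{thrm:existenceofzardecom} has $B$ big and nef with
\[
\widehat{\vol}(\alpha) = \vol(B) = B^{n} > 0 .
\]
Then I would apply the first inequality of Theorem \ref{thm curve morseinequality}, namely
\[
\widehat{\vol}(\alpha-\beta)^{\frac{n-1}{n}} \ \geq\ \bigl(\widehat{\vol}(\alpha) - n\,B\cdot\beta\bigr)\cdot \widehat{\vol}(\alpha)^{-\frac{1}{n}}.
\]
Under the hypothesis $\widehat{\vol}(\alpha) - nB\cdot\beta > 0$, and using $\widehat{\vol}(\alpha) > 0$ to make the right-hand side meaningful, the right-hand side is strictly positive, hence $\widehat{\vol}(\alpha-\beta) > 0$.

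The second and final step is to translate positivity of the volume back into bigness. By Theorem \ref{volcurvesbasicprops}(1), $\widehat{\vol}$ vanishes on curve classes that are not pseudo-effective and is strictly positive precisely on the big ones. Therefore $\widehat{\vol}(\alpha-\beta) > 0$ forces $\alpha-\beta$ to be big, which is exactly the assertion. (Note in particular that the positivity already implies pseudo-effectiveness of $\alpha-\beta$, so there is no separate case to check.)

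Since the corollary is immediate from Theorem \ref{thm curve morseinequality}, the real content has been handled upstream rather than here. Theorem \ref{thm curve morseinequality} itself follows from the formal Morse-type inequality (Theorem \ref{general morse}) together with $\alpha \succeq B^{n-1}$, and the only genuinely substantive input into \emph{that} is the fact that $\vol$ on the nef cone of divisors satisfies a Morse-type inequality in the formal sense of Section \ref{formal zariski section} --- which is precisely the classical algebraic Morse inequality $\vol(A-B) \geq A^{n} - nA^{n-1}\cdot B$ for nef divisors recalled at the start of Section \ref{section morse ineq}. Thus the only points requiring care in writing the corollary are citing the correct characterization of bigness via positivity of $\widehat{\vol}$ and observing that $\widehat{\vol}(\alpha) = B^{n} > 0$, both of which are already available.
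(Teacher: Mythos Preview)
Your proposal is correct and is exactly the argument the paper intends: the corollary is stated immediately after Theorem \ref{thm curve morseinequality} with no separate proof, and the one-line deduction you give --- positivity of the right-hand side forces $\widehat{\vol}(\alpha-\beta)>0$, hence bigness by Theorem \ref{volcurvesbasicprops}(1) together with the convention $\widehat{\vol}=0$ off $\Eff_1(X)$ --- is precisely what is meant. Your closing remarks correctly locate the actual content upstream in Theorem \ref{general morse} and the classical Morse inequality for nef divisors.
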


\begin{rmk}
Superficially, the above theorem appears to differ from the classical algebraic Morse inequality for nef divisors, since $\alpha$ can be any big curve class.  However, using the Zariski decomposition one sees that the statement for $\alpha$ is essentially equivalent to the statement for the positive part of $\alpha$, so that Theorem \ref{thm curve morseinequality} is really a claim about nef curve classes.
\end{rmk}

\begin{exmple}
\label{exmple optimal n}
The constant $n$ is optimal in Corollary \ref{cor curve morseinequality}. Indeed, for any $\epsilon>0$ there exists a projective variety $X$ such that
$$
\widehat{\vol}(\alpha)-(n-\epsilon)B_\alpha \cdot \gamma >0,
$$
for some $\alpha\in \Eff_1(X)$ and $\gamma\in \Mov_1(X)$ but $\alpha-\gamma$ is not a big curve class.

To find such a variety, let $E$ be an elliptic curve with complex multiplication and set $X = E^{\times n}$. The pseudo-effective cone of divisors $\Eff^1(X)$ is identified with the cone of constant positive $(1,1)$-forms, while the pseudo-effective cone of curves $\Eff_1(X)$ is identified with the cone of constant positive $(n-1,n-1)$-forms.  Furthermore, every strictly positive $(n-1, n-1)$-form is a $(n-1)$-self-product of a strictly positive $(1,1)$-form.
We set
\begin{align*}
B_\alpha=i\sum_{j=1} ^n dz^j\wedge d\bar z ^j, \qquad B_\gamma=i\sum_{j=1} ^n \lambda_j dz^j\wedge d\bar z ^j.
\end{align*}
Here the $\lambda_j >0$.
Let $\alpha=B_\alpha ^{n-1}$ and $\gamma=B_\gamma ^{n-1}$.  Then $\widehat{\vol}(\alpha)-(n-\epsilon)B_\alpha \cdot \gamma >0$ is equivalent to
\begin{align*}
%\label{eq ex1}
\sum_{j=1} ^n \lambda_1...\widehat{\lambda}_j...\lambda_n  <\frac{n}{n-\epsilon},
\end{align*}
and $\alpha-\gamma$ being big is equivalent to
\begin{align*}
%\label{eq ex2}
\lambda_1...\widehat{\lambda}_j...\lambda_n<1
\end{align*}
for every $j$. Now it is easy to see we can always choose $\lambda_1,..., \lambda_n$ such that the first inequality holds but the second does not hold.
\end{exmple}

\begin{rmk}
Using the cone duality $\overline{\mathcal{K}}^* =\mathcal{N}$ and Theorem \ref{thrm appendix reserve KT} in Appendix A, it is easy to extend the above Morse-type inequality for curves to positive currents of bidimension $(1,1)$ over compact K\"ahler manifolds.
\end{rmk}

One wonders if Theorem \ref{thm curve morseinequality} can be improved:

\begin{ques} \label{morseques}
Let $X$ be a projective variety of dimension $n$.  Let $\alpha$ be a big curve class and let $\beta$ be a movable curve class.  Write $\alpha = B^{n-1} + \gamma$ for the Zariski decomposition of $\alpha$.  Is
\begin{equation*}
\widehat{\vol}(\alpha - \beta) \geq \vol(\alpha) - n B \cdot \beta?
\end{equation*}

\end{ques}

\begin{rmk}
\label{rmk conj morse}
By Theorem \ref{thm curve morseinequality}, if $\widehat{\vol}(\alpha) - n B \cdot \beta>0$ then $\widehat{\vol}$ is $\mathcal{C}^1$ at the point $\alpha-s\beta$ for every $s\in [0,1]$. The derivative formula of $\widehat{\vol}$ implies
\begin{align*}
\widehat{\vol} (\alpha-\beta)-\widehat{\vol} (\alpha)=\int_0 ^1
-\frac{n}{n-1}B_{\alpha-s\beta} \cdot \beta \, \, ds,
\end{align*}
where $B_{\alpha-s\beta}$ is the big and nef divisor class defining the Zariski decomposition of $\alpha-s\beta$.  To give an affirmative answer to Question \ref{morseques}, we conjecture the following:
\begin{align*}
B_{\alpha-s\beta} \cdot \beta \leq (n-1)B_{\alpha} \cdot \beta\ \textrm{for every}\ s\in [0,1].
\end{align*}
Without loss of generality, we can assume $B_{\alpha} \cdot \beta >0$. Then by continuity of the decomposition, this inequality holds for $s$ in a neighbourhood of $0$. At this moment, we do not know how to see this neighbourhood covers $[0,1]$.
\end{rmk}

\begin{rmk} \label{aposteriori}
In this section we have seen how to use abstract convex analysis to understand the derivative and geometric inequalities for the volume function for curves.  Dually, one could in theory use the same approach to understand properties of the volume function of divisors.

Note that since the volume function for divisors is $n$-concave we have $\mathcal{H}^{2}\vol = \vol$, so that we can apply the results of Sections \ref{legendresection} and \ref{formal zariski section} to $\vol$.  Once we know a few key properties for $\vol$ or $\mathcal{H}\vol$, such as differentiability, then many well-known results for divisors (the Brunn-Minkowski inequality, the existence of $\sigma$-decompositions, the explicit expression for the derivative, etc.) follow immediately from the general set-up.  The polar transform of the volume function is analyzed in more detail in \cite{lehmannxiao2015b}.
\end{rmk}

\section{Toric varieties}
\label{toric section}
In this section $X$ will denote a simplicial projective toric variety of dimension $n$.  In terms of notation, $X$ will be defined by a fan $\Sigma$ in a lattice $N$ with dual lattice $M$.  We let $\{ v_{i} \}$ denote the primitive generators of the rays of $\Sigma$ and $\{ D_{i} \}$ denote the corresponding classes of $T$-divisors.

\subsection{Mixed volumes}

Suppose that $L$ is a big movable divisor class on the toric variety $X$.  Then $L$ naturally defines a (non-lattice) polytope $Q_{L}$: if we choose an expression $L = \sum a_{i}D_{i}$, then
\begin{equation*}
Q_{L} = \{ u \in M_{\mathbb{R}} |  \langle u,v_{i} \rangle + a_{i} \geq 0\}
\end{equation*}
and changing the choice of representative corresponds to a translation of $Q_{L}$.
Conversely, suppose that $Q$ is a full-dimensional polytope such that the unit normals to the facets of $Q$ form a subset of the rays of $\Sigma$.  Then $Q$ uniquely determines a big movable divisor class $L_{Q}$ on $X$.  The divisors in the interior of the movable cone correspond to those polytopes whose facet normals coincide with the rays of $\Sigma$.

Given polytopes $Q_{1},\ldots,Q_{n}$, let $V(Q_{1},\ldots,Q_{n})$ denote the mixed volume of the polytopes.  \cite{bfj09} explains that the positive product of big movable divisors $L_{1},\ldots,L_{n}$ can be interpreted via the mixed volume of the corresponding polytopes:
\begin{equation*}
\langle L_{1} \cdot \ldots \cdot L_{n} \rangle = n! V(Q_{1},\ldots,Q_{n}).
\end{equation*}

Now suppose that $\alpha$ lies in the interior of $\Mov_{1}(X)$.  Using \cite[Theorem 1.8]{lehmannxiao2015b}, we see that $\alpha = \langle L^{n-1} \rangle$ for some big movable divisor class $L$.  Let $P_{\alpha}$ denote the polytope corresponding to $L$.  Reinterpreting $\langle L^{n-1} \rangle \cdot A$ as a positive product for an ample divisor $A$, we see that the volume is
\begin{equation*}
\inf_{Q} \left( \frac{n! V(P_{\alpha}^{n-1},Q)}{ n!^{1/n}\vol(Q)^{1/n}} \right)^{n/n-1} = n! \inf_{Q} \left( \frac{V(P_{\alpha}^{n-1},Q)}{\vol(Q)^{1/n}} \right)^{n/n-1}
\end{equation*}
where $Q$ varies over all polytopes whose normal fan is refined by $\Sigma$.

\subsection{Computing the Zariski decomposition}

The nef cone of divisors and pseudo-effective cone of curves on $X$ can be computed algorithmically.  Thus, for any face $F$ of the nef cone, by considering the $(n-1)$-product and adding on any curve classes in the dual face, one can easily divide $\Eff_{1}(X)$ into regions where the positive product is determined by a class on $F$.  In practice this is a good way to compute the Zariski decomposition (and hence the volume) of curve classes on $X$.

In the other direction, suppose we start with a big curve class $\alpha$.  On a toric variety, every big and nef divisor is semi-ample (that is, the pullback of an ample divisor on a toric birational model).  Thus, the Zariski decomposition is characterized by the existence of a birational toric morphism $\pi: X \to X'$ such that:
\begin{itemize}
\item the class $\pi_{*}\alpha \in N_{1}(X')$ coincides with $A^{n-1}$ for some ample divisor $A$, and
\item $\alpha - (\pi^*A)^{n-1}$ is pseudo-effective.
\end{itemize}
Thus one can compute the Zariski decomposition and volume for $\alpha$ by the following procedure.
\begin{enumerate}
\item For each toric birational morphism $\pi: X \to X'$, check whether $\pi_{*}\alpha$ is in the complete intersection cone.  If so, there is a unique big and nef divisor $A_{X'}$ such that $A_{X'}^{n-1} = \pi_{*}\alpha$.
\item Check if $\alpha - (\pi^{*}A_{X'})^{n-1}$ is pseudo-effective.
\end{enumerate}
The first step involves solving polynomial equations to deduce the equality of coefficients of numerical classes, but otherwise this procedure is completely algorithmic.
(Note that there may be no natural pullback from $\Eff_{1}(X')$ to $\Eff_{1}(X)$, and in particular, the calculation of $(\pi^*A_{X'})^{n-1}$ is not linear in $A_{X'}^{n-1}$.)

\begin{exmple}\label{example toric zariski}
Let $X$ be the toric variety defined by a fan in $N = \mathbb{Z}^{3}$ on the rays
\begin{align*}
v_{1} & = (1,0,0)  \qquad \qquad \, \, \, \,  v_{2} = (0,1,0)  \qquad \qquad  \, \, \, \,\, \, v_{3} = (1,1,1) \\
v_{4} & = (-1,0,0) \qquad \qquad v_{5} = (0,-1,0) \qquad \qquad  v_{6}  = (0,0,-1)
\end{align*}
with maximal cones
\begin{align*}
\langle v_{1}, v_{2}, v_{3} \rangle, \, \langle v_{1}, v_{2}, v_{6} \rangle, \, \langle v_{1}, v_{3}, v_{5} \rangle, \, \langle v_{1}, v_{5}, v_{6} \rangle, \\
\langle v_{2}, v_{3}, v_{4} \rangle, \, \langle v_{2}, v_{4}, v_{6} \rangle, \, \langle v_{3}, v_{4}, v_{5} \rangle, \, \langle v_{4}, v_{5}, v_{6} \rangle.
\end{align*}
The Picard rank of $X$ is $3$.  Letting $D_{i}$ and $C_{ij}$ be the divisors and curves corresponding to $v_{i}$ and $\overline{v_{i}v_{j}}$ respectively, we have intersection product
\begin{equation*}
\begin{array}{c|c|c|c}
& D_{1} & D_{2} & D_{3} \\ \hline
C_{12} & -1 & -1 & 1 \\ \hline
C_{13} & 0 & 1 & 0 \\ \hline
C_{23} & 1 &  0 & 0
\end{array}
\end{equation*}
Standard toric computations show that:
\begin{align*}
\Eff^{1}(X) = \langle D_1, D_2, D_3 \rangle \qquad & \qquad \Nef^{1}(X) = \langle D_1+D_3, D_2+D_3, D_3 \rangle \\
\Mov^1(X) = \langle D_1 &+D_2, D_1+D_3,D_2+D_3,D_3 \rangle
\end{align*}
and
\begin{align*}
\Eff_{1}(X) = \langle C_{12}, C_{13}, C_{23} \rangle \qquad & \qquad \Nef_{1}(X) = \langle C_{12}+C_{13}+C_{23}, C_{13},C_{23} \rangle.
\end{align*}
$X$ admits a unique flip and has only one birational contraction corresponding to the face of $\Nef^{1}(X)$ generated by $D_{1}+D_{3}$ and $D_{2}+D_{3}$.  Set $B_{a,b} = aD_{1}+bD_{2}+(a+b)D_{3}$.  The complete intersection cone is given by taking the convex hull of the boundary classes
\begin{equation*}
B_{a,b}^2 = T_{a,b} = 2abC_{12} + (a^{2}+2ab)C_{13} + (b^{2}+2ab)C_{23}
\end{equation*}
and the face of $\Nef_{1}(X)$ spanned by $C_{13},C_{23}$.

For any big class $\alpha$ not in $\CI_{1}(X)$, the positive part can be computed on the unique toric birational contraction $\pi: X \to X'$ given by contracting $C_{12}$.  In practice, the procedure above amounts to solving $\alpha - t C_{12} = T_{a,b}$ for some $a,b,t$.  If $\alpha = xC_{12} + yC_{13} + zC_{23}$, this yields the quadratic equation $4(y-x+t)(z-x+t) = (x-t)^{2}$.  Solving this for $t$ tells us $\gamma = tC_{12}$, and the volume can then easily be computed.
\end{exmple}

\section{Hyperk\"ahler manifolds}
\label{hyperkahler section}

Throughout this section $X$ will denote a hyperk\"ahler variety of dimension $n$ (with $n=2m$).  We will continue to work in the projective setting.  However, as explained in Section \ref{kahler background sec}, Demailly's conjecture on transcendental Morse inequality is known for hyperk\"ahler manifolds.  Thus all the results in this section and related results in \cite{lehmannxiao2015b} can extended accordingly in the K\"ahler setting for hyperk\"ahler varieties with no qualifications.

Let $\sigma$ be a symplectic holomorphic form on $X$.  For a real divisor class $D \in N^{1}(X)$ the Beauville-Bogomolov quadratic form is defined as
\begin{align*}
q(D)= D^2 \cdot \{(\sigma \wedge \bar \sigma)\}^{n/2-1},
\end{align*}
where we normalize the symplectic form $\sigma$ such that
\begin{align*}
q(D)^{n/2}= D^n.
\end{align*}

As proved in \cite[Section 4]{Bou04}, the bilinear form $q$ is compatible with the volume function and $\sigma$-decomposition for divisors in the following way:
\begin{enumerate}
\item The cone of movable divisors is $q$-dual to the pseudo-effective cone.
\item If $D$ is a movable divisor then $\vol(D) = q(D,D)^{n/2} = D^{n}$.
\item For a pseudo-effective divisor $D$ write $D = P_{\sigma}(D) + N_{\sigma}(D)$ for its $\sigma$-decomposition.  Then $q(P_{\sigma}(D),N_{\sigma}(D)) = 0$, and if $N_{\sigma}(D) \neq 0$ then $q(N_{\sigma}(D),N_{\sigma}(D))<0$.
\end{enumerate}
The bilinear form $q$ induces an isomorphism $\psi: N^{1}(X) \to N_{1}(X)$ by sending a divisor class $D$ to the curve class defining the linear function $q(D,-)$.  We obtain an induced bilinear form $q$ on $N_{1}(X)$ via the isomorphism $\psi$, so that for curve classes $\alpha$, $\beta$
\begin{equation*}
q(\alpha,\beta) = q(\psi^{-1}\alpha,\psi^{-1}\beta) = \psi^{-1}\alpha \cdot \beta.
\end{equation*}
In particular, two cones $\mathcal{C}, \mathcal{C}'$ in $N^{1}(X)$ are $q$-dual if and only if $\psi(\mathcal{C})$ is dual to $\mathcal{C}'$ under the intersection pairing (and similarly for cones of curves).  In this section we verify that the bilinear form $q$ on $N_{1}(X)$ is compatible with the volume and Zariski decomposition for curve classes in the same way as for divisors.

\begin{rmk}
Since the signature of the Beauville-Bogomolov form is $(1,\dim N^{1}(X)-1)$, one can use the Hodge inequality to analyze the Zariski decomposition as in Example \ref{bilinearexample}.  We will instead give a direct geometric argument to emphasize the ties with the divisor theory.
\end{rmk}

We first need the following proposition.

\begin{prop} \label{hyperkahlerposprod}
Let $D$ be a big movable divisor class on $X$.  Then we have %$\mathfrak{M}(\psi(D)) = \vol(D)^{1/n-1}$ and
\begin{equation*}
\psi(D) = \frac{\langle D^{n-1} \rangle}{\vol(D)^{n-2/n}}.
\end{equation*}
In particular, the complete intersection cone coincides with the $\psi$-image of the nef cone of divisors and if $A$ is a big and nef divisor then $\widehat{\vol}(\psi(A)) = \vol(A)^{1/n-1}$.
\end{prop}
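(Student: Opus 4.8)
The plan is to first reduce the displayed identity to a statement about the positive product $\langle D^{n-1}\rangle$, then prove that statement by differentiating the volume function of divisors, and finally read off the two ``in particular'' assertions by formal manipulations. By the definition of $\psi$, for every divisor class $E$ one has $\psi(D)\cdot E = q(D,E)$, the polarization of the Beauville--Bogomolov form. Since $D$ is big and movable, property (2) of \cite{Bou04} recalled above gives $\vol(D) = q(D,D)^{n/2}$, hence $\vol(D)^{(n-2)/n} = q(D,D)^{(n-2)/2}$; so it is enough to prove the identity of curve classes
\[
\langle D^{n-1}\rangle = q(D,D)^{(n-2)/2}\,\psi(D),
\]
equivalently $\langle D^{n-1}\rangle \cdot E = q(D,D)^{(n-2)/2}\,q(D,E)$ for all $E \in N^1(X)$.

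I would first establish this for $D$ in the interior of $\Mov^1(X)$. As that cone is full-dimensional, for a fixed $E$ the class $D + tE$ stays movable (indeed interior, hence big) for all sufficiently small $|t|$, so property (2) of \cite{Bou04} shows $t\mapsto\vol(D+tE)$ agrees near $t=0$ with $t\mapsto q(D+tE,D+tE)^{n/2}$. Because $n = 2m$, the latter is a polynomial in $t$, whose derivative at $t=0$ is $n\,q(D,D)^{(n-2)/2}q(D,E)$. On the other hand the differentiability of $\vol$ on the big cone of divisors \cite{bfj09} (available in the generality we need, cf.\ Sections \ref{charp background sec} and \ref{kahler background sec}) gives $\left.\frac{d}{dt}\right|_{t=0}\vol(D+tE) = n\,\langle D^{n-1}\rangle\cdot E$. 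Equating the two yields the identity for interior $D$, and it extends to all big movable $D$ by the continuity of the positive product on the big cone together with the continuity of $q$ and $\psi$. This differentiation step is the crux; the alternative of deducing a polarized Fujiki relation for positive products of movable classes needs precisely the same input, namely property (2) of \cite{Bou04}.

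To finish I would specialize. If $A$ is ample then $A$ is big and movable and $\langle A^{n-1}\rangle = A^{n-1}$ (the positive product of nef classes is the ordinary intersection product), so the identity reads $A^{n-1} = \vol(A)^{(n-2)/n}\psi(A)$; thus the ray through $A^{n-1}$ is the ray through $\psi(A)$. Letting $A$ vary over the ample cone $\Nef^1(X)^\circ$, the classes $A^{n-1}$ sweep out exactly $\psi(\Nef^1(X)^\circ)$, and since $\psi$ is a linear isomorphism, taking closures gives $\CI_1(X) = \psi(\Nef^1(X))$. For $A$ big and nef the same identity gives $\psi(A) = A^{n-1}/\vol(A)^{(n-2)/n}$, so using that $\widehat{\vol}$ is homogeneous of weight $n/(n-1)$ and that $\widehat{\vol}(A^{n-1}) = \vol(A)$ (both from Theorem \ref{volcurvesbasicprops}) we obtain
\[
\widehat{\vol}(\psi(A)) = \vol(A)^{-\frac{n-2}{n}\cdot\frac{n}{n-1}}\,\widehat{\vol}(A^{n-1}) = \vol(A)^{\,1-\frac{n-2}{n-1}} = \vol(A)^{1/(n-1)},
\]
as claimed. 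The only genuine obstacle is the differentiation argument in the second paragraph; everything else is bookkeeping with homogeneity and closures.
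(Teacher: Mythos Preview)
Your proof is correct but takes a different route from the paper's. The paper first verifies that $\psi(D)\in\Mov_1(X)$, then computes $L\cdot\psi(D)=q(L,D)$ for big movable $L$ and uses the strict log-concavity inequality $\vol(L+D)^{1/n}\ge\vol(L)^{1/n}+\vol(D)^{1/n}$ (from \cite{lehmannxiao2015b}) to show that $\frac{L\cdot\psi(D)}{\vol(L)^{1/n}}\ge\vol(D)^{1/n}$ with equality exactly when $L$ is proportional to $D$; it then invokes a characterization of $\langle D^{n-1}\rangle$ from \cite{lehmannxiao2015b} (their Proposition 3.3 and Theorem 3.12) to identify $\psi(D)$ with $\langle D^{n-1}\rangle/\vol(D)^{(n-2)/n}$. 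In other words, the paper routes the identification through the polar-transform machinery developed in the sequel.

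Your argument bypasses all of this by differentiating the identity $\vol = q(-,-)^{n/2}$ on the interior of $\Mov^1(X)$ and invoking the derivative formula of \cite{bfj09} directly. This is more self-contained: it uses only property (2) of \cite{Bou04} and the BFJ derivative, both already available in the paper, rather than forward references to \cite{lehmannxiao2015b}. The paper's approach, on the other hand, yields as a byproduct the extremal characterization of $\psi(D)$ (it is the class computing $\vol(D)$ in the sense of the dual volume), which is itself of interest and is used implicitly elsewhere. Your deduction of the two ``in particular'' statements by homogeneity and closure is cleaner than what the paper writes; the paper leaves those as immediate consequences without spelling out the rescaling argument you give.
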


\begin{proof}
First note that $\psi(D)$ is contained in $\Mov_{1}(X)$.  Indeed, since the movable cone of divisors is $q$-dual to the pseudo-effective cone of divisors by \cite[Proposition 4.4]{Bou04}, the $\psi$-image of the movable cone of divisors is dual to the pseudo-effective cone of divisors.

For any big movable divisor $L$, the basic equality for bilinear forms shows that
\begin{equation*}
L \cdot \psi(D) = q(L,D) = \frac{1}{2} (\vol(L+D)^{2/n} - \vol(L)^{2/n} - \vol(D)^{2/n}).
\end{equation*}
%Proposition \ref{strictlyconvexvol}
In \cite[Theorem 1.7]{lehmannxiao2015b} we show that $\vol(L+D)^{1/n} \geq \vol(L)^{1/n} + \vol(D)^{1/n}$ with equality if and only if $L$ and $D$ are proportional.  Squaring and rearranging, we see that
\begin{equation*}
\frac{L \cdot \psi(D)}{\vol(L)^{1/n}} \geq \vol(D)^{1/n}
\end{equation*}
with equality if and only if $L$ is proportional to $D$.  %Thus $\mathfrak{M}(\psi(D)) = \vol(D)^{1/n-1}$ and this quantity is computed by the movable and big divisor $D$.
By \cite[Proposition 3.3 and Theorem 3.12]{lehmannxiao2015b} we immediately get that
\begin{equation*}
\psi(D) = \frac{\langle D^{n-1} \rangle}{\vol(D)^{n-2/n}}.
\end{equation*}
The final statements follow immediately.
\end{proof}

\begin{thrm} \label{secondhyperkahlertheorem}
Let $q$ denote the Beauville-Bogomolov form on $N_{1}(X)$.  Then:
\begin{enumerate}
\item The complete intersection cone of curves is $q$-dual to the pseudo-effective cone of curves.
\item If $\alpha$ is a complete intersection curve class then $\widehat{\vol}(\alpha) = q(\alpha,\alpha)^{n/2(n-1)}$.
\item For a big class $\alpha$ write $\alpha = B^{n-1} + \gamma$ for its Zariski decomposition.  Then $q(B^{n-1},\gamma) = 0$ and if $\gamma$ is non-zero then $q(\gamma,\gamma) < 0$.
\end{enumerate}
\end{thrm}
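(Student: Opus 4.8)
The plan is to reduce all three assertions to Proposition \ref{hyperkahlerposprod}, the standard duality between $\Nef^1(X)$ and $\Eff_1(X)$, and the signature $(1,\dim N^1(X)-1)$ of the Beauville--Bogomolov form, which is inherited by the induced form $q$ on $N_1(X)$ because $\psi$ is a linear isomorphism. Conceptually this is just the surface-type picture of Example \ref{bilinearexample} transported to $N_1(X)$ via $\psi$; I would nonetheless spell it out directly so as to keep the parallel with the divisor results of \cite{Bou04} visible, and because the normalization constants relating $A^{n-1}$, $\psi(A)$ and $q$ need to be tracked carefully.

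For (1), note that by the definition of $q$-duality for cones of curves the $q$-dual of $\Eff_1(X)$ consists of those $\alpha \in N_1(X)$ with $\psi^{-1}\alpha \cdot \beta \geq 0$ for all $\beta \in \Eff_1(X)$; since the intersection-dual of $\Eff_1(X)$ inside $N^1(X)$ is $\Nef^1(X)$, this cone is exactly $\psi(\Nef^1(X))$, which equals $\CI_1(X)$ by the last assertion of Proposition \ref{hyperkahlerposprod}. For (2), both $\widehat{\vol}$ and $\alpha \mapsto q(\alpha,\alpha)^{n/2(n-1)}$ are continuous on $\CI_1(X)$ (using Theorem \ref{volcurvesbasicprops} for the former) and homogeneous of weight $n/n-1$, so by the very definition of $\CI_1(X)$ it suffices to treat the dense set of classes $\alpha = A^{n-1}$ with $A$ ample. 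There $\widehat{\vol}(A^{n-1}) = A^n$ by Theorem \ref{volcurvesbasicprops}, while Proposition \ref{hyperkahlerposprod} gives $\psi(A) = A^{n-1}/(A^n)^{n-2/n}$, hence $\psi^{-1}(A^{n-1}) = (A^n)^{n-2/n}A$ and $q(A^{n-1},A^{n-1}) = (A^n)^{n-2/n}(A \cdot A^{n-1}) = (A^n)^{2(n-1)/n}$; raising to the power $n/2(n-1)$ recovers $A^n$.

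For (3), the same identity $\psi^{-1}(B^{n-1}) = (B^n)^{n-2/n}B$ yields $q(B^{n-1},\gamma) = (B^n)^{n-2/n}(B \cdot \gamma) = 0$, since $B \cdot \gamma = 0$, and $q(B^{n-1},B^{n-1}) = (B^n)^{2(n-1)/n} > 0$, since $B$ is big. Because $q$ has signature $(1,\dim N_1(X)-1)$, the $q$-orthogonal complement of the positive-square class $B^{n-1}$ is negative definite; as $\gamma$ is nonzero and lies in this complement, $q(\gamma,\gamma) < 0$. Given Proposition \ref{hyperkahlerposprod} there is no genuine obstacle here --- the argument is pure linear algebra together with the Hodge-index signature --- so if anything is delicate it is only the two bookkeeping points already flagged: the continuity of $\widehat{\vol}$ on $\CI_1(X)$ used in the reduction in (2), and the consistent tracking of the normalization constants linking $A^{n-1}$, $\psi(A)$ and $q$.
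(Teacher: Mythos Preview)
Your proof is correct. Parts (1) and (2) are essentially the same as the paper's, up to cosmetic differences (you pass to ample classes by density and continuity, while the paper works directly with big and nef $A$ via $\psi(A)$; the underlying computation is identical).

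Part (3) is where the two approaches genuinely diverge. Your argument for $q(\gamma,\gamma)<0$ is the clean Hodge-index one: since $q$ on $N^{1}(X)$ has signature $(1,\dim N^{1}(X)-1)$, the induced form on $N_{1}(X)$ does as well, $q(B^{n-1},B^{n-1})>0$ because $B$ is big, and hence the $q$-orthogonal complement of $B^{n-1}$ is negative definite. The paper explicitly notes this route in the remark preceding Proposition \ref{hyperkahlerposprod} but deliberately avoids it, instead giving a longer geometric argument: it sets $D=\psi^{-1}\alpha$, takes the $\sigma$-decomposition $D=P_{\sigma}(D)+N_{\sigma}(D)$, and uses Boucksom's results \cite[Theorem 4.5, Proposition 4.2]{Bou04} together with the log-concavity of $\vol$ on the movable cone to show $q(\alpha,\alpha)<q(B^{n-1},B^{n-1})$, whence $q(\gamma,\gamma)<0$. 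Your route is shorter and more transparent; the paper's route buys an explicit parallel with the divisor-side story (1)--(3) of \cite{Bou04} recalled at the start of the section, and in particular exhibits the inequality $q(\alpha,\alpha)\leq q(B^{n-1},B^{n-1})$ as a direct analogue of $q(D,D)\leq q(P_{\sigma}(D),P_{\sigma}(D))$.
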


\begin{proof}
For (1), since the complete intersection cone coincides with $\psi(\Nef^{1}(X))$ it is $q$-dual to the dual cone of $\Nef^{1}(X)$.  For (2), by Proposition \ref{hyperkahlerposprod} we have
\begin{align*}
q(\psi(A),\psi(A)) = q(A,A) & = \vol(A)^{2/n} \\
& = \widehat{\vol}(\psi(A))^{2(n-1)/n}.
\end{align*}
For (3), we have
\begin{equation*}
q(B^{n-1},\gamma) = \psi^{-1}(B^{n-1}) \cdot \gamma = \vol(B)^{n-2/n} B \cdot \gamma = 0.
\end{equation*}
For the final statement $q(\gamma,\gamma)<0$, note that
\begin{equation*}
q(\alpha,\alpha) = q(B^{n-1},B^{n-1}) + q(\gamma,\gamma)
\end{equation*}
so it suffices to show that $q(\alpha,\alpha) < q(B^{n-1},B^{n-1})$.  Set $D = \psi^{-1}\alpha$.  The desired inequality is clear if $q(D,D) \leq 0$, so by \cite[Corollary 3.10 and Erratum Proposition 1]{Huy99} it suffices to restrict our attention to the case when $D$ is big.  (Note that the case when $-D$ is big can not occur, since $q(D,A) = A \cdot \alpha > 0$ for an ample divisor class $A$.)  Let $D = P_{\sigma}(D) + N_{\sigma}(D)$ be the $\sigma$-decomposition of $D$.  By \cite[Proposition 4.2]{Bou04} we have $q(N_{\sigma}(D),B) \geq 0$.  Thus
\begin{align*}
\vol(B)^{2(n-1)/n} = q(B^{n-1},B^{n-1}) & = q(\alpha,B^{n-1}) \\
& = \vol(B)^{n-2/n} q(D,B) \geq \vol(B)^{n-2/n}q(P_{\sigma}(D),B).
\end{align*}
Arguing just as in the proof of Proposition \ref{hyperkahlerposprod}, we see that
\begin{equation*}
q(P_{\sigma}(D),B) \geq \vol(P_{\sigma}(D))^{1/n} \vol(B)^{1/n}
\end{equation*}
with equality if and only if $P_{\sigma}(D)$ and $B$ are proportional.  Combining the two previous equations we obtain
\begin{equation*}
\vol(B)^{n-1/n} \geq \vol(P_{\sigma}(D))^{1/n}.
\end{equation*}
and equality is only possible if $B$ and $P_{\sigma}(D)$ are proportional.  Then we calculate:
\begin{align*}
q(\alpha,\alpha) & = q(D,D) \\
& \leq q(P_{\sigma}(D),P_{\sigma}(D)) \textrm{ by \cite[Theorem 4.5]{Bou04}} \\
& = \vol(P_{\sigma}(D))^{2/n} \\
& \leq \vol(B)^{2(n-1)/n} = q(B,B).
\end{align*}
If $P_{\sigma}(D)$ and $B$ are not proportional, we obtain a strict inequality at the last step.  If $P_{\sigma}(D)$ and $B$ are proportional, then $N_{\sigma}(D) > 0$ (since otherwise $D=B$ and $\alpha$ is a complete intersection class).  Then by \cite[Theorem 4.5]{Bou04} we have a strict inequality $q(P_{\sigma}(D),P_{\sigma}(D)) > q(D,D)$ on the second line.  In either case we conclude
$q(\alpha,\alpha) < q(B,B)$ as desired.

\end{proof}

%\begin{rmk}
%Suppose that $\alpha$ is a nef curve class that is not in the complete intersection cone.  Then $q(\alpha,\alpha) = \mathfrak{M}(\alpha)^{2(n-1)/n}$ and $q(B^{n-1},B^{n-1}) = \widehat{\vol}(\alpha)^{2(n-1)/n}$.  Theorem \ref{thm solution to xiao} shows that $q(\alpha,\alpha) < q(B^{n-1},B^{n-1})$, giving another proof of the final statement of Theorem \ref{secondhyperkahlertheorem}.(3) in this special case.
%\end{rmk}

\section{Connections with birational geometry}
\label{applications sec}

We end with a discussion of several connections between positivity of curves and other constructions in birational geometry.  There is a large body of literature relating the positivity of a divisor at a point to its intersections against curves through that point.  One can profitably reinterpret these relationships in terms of the volume of curve classes.  A key result conceptually is:

\begin{prop} \label{multiplicityestimate}
Let $X$ be a smooth projective variety of dimension $n$.  Choose positive integers $\{k_{i} \}_{i=1}^{r}$.  Suppose that $\alpha \in \Mov_{1}(X)$ is represented by a family of irreducible curves such that for any collection of general points $x_{1},x_{2},\ldots,x_{r},y$ of $X$, there is a curve in our family which contains $y$ and contains each $x_{i}$ with multiplicity $\geq k_{i}$.  Then
\begin{equation*}
\widehat{\vol}(\alpha)^{\frac{n-1}{n}} \geq \frac{\sum_{i} k_{i}}{r^{1/n}}.
\end{equation*}
\end{prop}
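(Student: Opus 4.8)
The plan is to exploit the definition of $\widehat{\vol}$ directly. Since the map $A \mapsto A\cdot\alpha/\vol(A)^{1/n}$ is continuous on the big and nef cone and ample classes are dense there, we have $\widehat{\vol}(\alpha)^{(n-1)/n} = \inf_{A} \big(A \cdot \alpha\big)/\vol(A)^{1/n}$ with $A$ ranging over ample divisor classes. Hence it suffices to prove that
\begin{equation*}
A \cdot \alpha \;\geq\; \frac{\sum_{i=1}^r k_i}{r^{1/n}}\,\vol(A)^{1/n}
\end{equation*}
for every ample $A$. Fix such an $A$ and a small $\delta>0$.

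The key step is an interpolation argument. Fix general points $x_1,\ldots,x_r \in X$ and set $t = t(m) := \lceil (1-\delta)(A^n/r)^{1/n} m \rceil$. By asymptotic Riemann--Roch $h^0(X,mA) = \frac{A^n}{n!}m^n + O(m^{n-1})$, while vanishing to order $t$ at a point imposes at most $\binom{t+n-1}{n}$ linear conditions; since $r\binom{t+n-1}{n} = (1-\delta)^n\frac{A^n}{n!}m^n(1+o(1))$, for $m$ large we get $r\binom{t+n-1}{n} < h^0(X,mA)$, so there is a nonzero $s \in H^0(X,mA)$ vanishing to order $\geq t$ at each $x_i$. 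Let $D \in |mA|$ be its divisor, so $\mult_{x_i} D \geq t$ for all $i$. Now choose a general point $y \in X \setminus D$. By hypothesis there is an irreducible curve $C$ of the family, of class $\alpha$, with $y \in C$ and $\mult_{x_i} C \geq k_i$ for all $i$; since $y \in C \setminus D$ we have $C \not\subset D$. Using the elementary estimate $\mult_{x}(D\cdot C) \geq \mult_x(D)\,\mult_x(C)$,
\begin{equation*}
m\,(A \cdot \alpha) \;=\; D \cdot C \;\geq\; \sum_{i=1}^r (\mult_{x_i} D)(\mult_{x_i} C) \;\geq\; t \sum_{i=1}^r k_i.
\end{equation*}
Dividing by $m$, and using $t/m \geq (1-\delta)(A^n/r)^{1/n}$, gives $A \cdot \alpha \geq (1-\delta)(A^n/r)^{1/n}\sum_i k_i$; letting $\delta \to 0$ and recalling $\vol(A) = A^n$ for ample $A$ yields the displayed inequality, hence the proposition after taking the infimum over $A$.

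The routine ingredients are the two generality requirements (the $x_i$ general so that $C$ exists with the prescribed multiplicities; $y$ avoiding the \emph{already fixed} divisor $D$, which is possible because the family dominates $X$) together with the two standard numerical facts used above. The one point requiring genuine care -- and the main obstacle -- is the order of the choices: $D$ depends on the points $x_i$, so one must fix $x_1,\ldots,x_r$ first, produce $D$, and only then pick $y \notin D$ and invoke the hypothesis; this is precisely why the statement is phrased with $y$ ranging over general points after the $x_i$ are chosen. As a remark, taking the vanishing orders $t_i \propto k_i^{1/(n-1)}$ rather than equal yields the a priori stronger bound $\widehat{\vol}(\alpha)^{(n-1)/n} \geq \big(\sum_i k_i^{n/(n-1)}\big)^{(n-1)/n}$, which implies the stated one by H\"older's inequality; the symmetric choice above already suffices for the proposition.
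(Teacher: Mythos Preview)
Your proof is correct and follows essentially the same approach as the paper: both reduce to the inequality $L \cdot C \geq r^{-1/n}\vol(L)^{1/n}\sum_i k_i$ for a single nef divisor class, construct a divisor in $|mL|$ with multiplicity roughly $m r^{-1/n}\vol(L)^{1/n}$ at the chosen general $x_i$ by a dimension count, and then intersect with a member of the family through an extra general point $y$ lying off that divisor. The paper simply cites the existence of the high-multiplicity divisor as standard (and works with big and nef rather than ample classes), whereas you spell out the asymptotic Riemann--Roch argument; your closing remark on the sharper H\"older-type bound via unequal vanishing orders is a nice observation not present in the paper.
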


This is just a rephrasing of well-known results in birational geometry; see for example \cite[V.2.9 Proposition]{k96}.

\begin{proof}
By continuity and rescaling invariance, it suffices to show that if $L$ is a big and nef Cartier divisor class then
\begin{equation*}
\left( \sum_{i=1}^{r} k_{i} \right) \frac{\vol(L)^{1/n}}{r^{1/n}} \leq L \cdot C.
\end{equation*}
A standard argument (see for example \cite[Example 8.22]{lehmann14}) shows that for any $\epsilon > 0$ and any general points $\{ x_{i} \}_{i=1}^{r}$ of $X$ there is a positive integer $m$ and a Cartier divisor $M$ numerically equivalent to $mL$ and such that $\mult_{x_{i}}M \geq mr^{-1/n}\vol(L)^{1/n} - \epsilon$ for every $i$.  By the assumption on the family of curves we may find an irreducible curve $C$ with multiplicity $\geq k_{i}$ at each $x_{i}$ that is not contained $M$.  Then
\begin{equation*}
m(L \cdot C) \geq \sum_{i=1}^{r} k_{i} \mult_{x_{i}}M \geq \left(\sum_{i = 1}^{r} k_{i} \right)  \left(\frac{ m\vol(L)^{1/n}}{r^{1/n}} - \epsilon \right).
\end{equation*}
Divide by $m$ and let $\epsilon$ go to $0$ to conclude.
\end{proof}

\begin{exmple}
The most important special case is when $\alpha$ is the class of a family of irreducible curves such that for any two general points of $X$ there is a curve in our family containing them.  Proposition \ref{multiplicityestimate} then shows that $\widehat{\vol}(\alpha) \geq 1$.
\end{exmple}

\subsection{Seshadri constants}

Let $X$ be a smooth projective variety of dimension $n$ and let $A$ be a big and nef $\mathbb{R}$-Cartier divisor on $X$.  Recall that for points $\{ x_{i} \}_{i=1}^{r}$ on $X$ the Seshadri constant of $A$ along the $\{ x_{i} \}$ is
\begin{equation*}
\varepsilon(x_{1},\ldots,x_{r},A) := \inf_{C \ni x_{i}} \frac{A \cdot C}{\sum_{i} \mult_{x_{i}}C}.
\end{equation*}
where the infimum is taken over all reduced irreducible curves $C$ containing at least one of the points $x_{i}$.  An easy intersection calculation on the blow-up of $X$ at the $r$ points shows that
\begin{equation*}
\varepsilon(x_{1},\ldots,x_{r},A)  \leq \frac{\vol(A)^{1/n}}{r^{1/n}}.
\end{equation*}
When the $r$ points are very general, $r$ is large, and $A$ is sufficiently ample, one ``expects'' the two sides of the inequality to be close.  This heuristic can fail badly, but it is interesting to analyze how close it is to being true.  In particular, the Seshadri constant should only be very small compared to the volume in the presence of a ``Seshadri-exceptional fibration'' (see \cite{ekl95}, \cite{hk03}).
This motivates the following definition:

\begin{defn}
Let $A$ be a big and nef $\mathbb{R}$-Cartier divisor on $X$.  Set $\varepsilon_{r}(A)$ to be the Seshadri constant of $A$ along $r$ points $\mathbf{x} := \{ x_{i} \}$ of $X$.  We define the Seshadri ratio of $A$ to be
\begin{equation*}
sr_{\mathbf{x}}(A) := \frac{r^{1/n} \varepsilon(x_{1},\ldots,x_{r},A)}{\vol(A)^{1/n}}.
\end{equation*}
\end{defn}

Note that the Seshadri ratio is at most $1$, and that low values should only arise in special geometric situations.   The principle established by \cite{ekl95}, \cite{hk03} is that if the Seshadri ratio for $A$ is small, then the curves which approximate the bound in the Seshadri constant can not ``move too much.''

In this section we revisit these known results on Seshadri constants from the perspective of the volume of curves.  In particular we demonstrate how the Zariski decomposition can be used to bound the classes of curves $C$ which give small values in the Seshadri computations above.

\begin{prop} \label{easyseshadriestimate}
Let $X$ be a smooth projective variety of dimension $n$ and let $A$ be a big and nef $\mathbb{R}$-Cartier divisor on $X$.  Fix $\delta > 0$ and fix $r$ points $x_{1},\ldots,x_{r}$.  Suppose that $C$ is a curve containing at least one of the $x_{i}$ and such that
\begin{equation*}
\varepsilon(x_{1},\ldots,x_{r},A) (1+ \delta) > \frac{A \cdot C}{\sum_{i} \mult_{x_{i}}C}.
\end{equation*}
Letting $\alpha$ denote the numerical class of $C$, we have
\begin{equation*}
sr_{\mathbf{x}}(A)  (1+\delta) \geq r^{1/n}\frac{\widehat{\vol}(\alpha)^{n-1/n}}{\sum_{i} \mult_{x_{i}}C}
\end{equation*}
\end{prop}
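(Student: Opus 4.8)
The plan is to unwind the definitions: the asserted inequality turns out to be a formal consequence of the variational description of $\widehat{\vol}$ once one uses $A$ itself as a test class. First I would invoke Definition \ref{defn:widehatvol} with the big and nef class $A$: since $\alpha$ is the numerical class of $C$, the infimum defining $\widehat{\vol}(\alpha)$ is bounded above by the term coming from $A$, so
\[
\widehat{\vol}(\alpha)^{\frac{n-1}{n}} \;\le\; \frac{A\cdot\alpha}{\vol(A)^{1/n}} \;=\; \frac{A\cdot C}{\vol(A)^{1/n}}.
\]
This is the only nontrivial input, and it is nothing more than the fact that an infimum is dominated by any of its terms.

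Next I would divide by $\sum_i \mult_{x_i}C$, multiply by $r^{1/n}$, and feed in the hypothesis $\frac{A\cdot C}{\sum_i \mult_{x_i}C} < (1+\delta)\,\varepsilon(x_{1},\ldots,x_{r},A)$ satisfied by $C$. This yields
\begin{align*}
r^{1/n}\,\frac{\widehat{\vol}(\alpha)^{\frac{n-1}{n}}}{\sum_i \mult_{x_i}C}
&\le \frac{r^{1/n}(A\cdot C)}{\vol(A)^{1/n}\,\sum_i \mult_{x_i}C}\\
&< (1+\delta)\,\frac{r^{1/n}\,\varepsilon(x_{1},\ldots,x_{r},A)}{\vol(A)^{1/n}}.
\end{align*}
Finally I would recognize the right-hand side as $(1+\delta)\,sr_{\mathbf{x}}(A)$ by the definition of the Seshadri ratio, which is exactly the claimed bound (indeed with a strict inequality to spare).

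I do not expect any genuine obstacle here: the statement is purely formal given the machinery already in place. The only points requiring care are the bookkeeping of the reciprocal exponents $\tfrac{n-1}{n}$ versus $\tfrac{n}{n-1}$ appearing in the definition of $\widehat{\vol}$, and the observation that the passage of $C$ through the points $x_i$ is absorbed entirely into the multiplicity denominators, so that—in contrast to Proposition \ref{multiplicityestimate}—no blow-up or auxiliary divisor construction is needed.
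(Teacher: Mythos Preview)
Your proposal is correct and matches the paper's own proof essentially verbatim: the paper multiplies the hypothesis by $r^{1/n}/\vol(A)^{1/n}$ to recognize $sr_{\mathbf{x}}(A)$ and then invokes the ``obvious inequality'' $(A\cdot C)/\vol(A)^{1/n}\ge \widehat{\vol}(\alpha)^{(n-1)/n}$, which is exactly your use of $A$ as a test class in the definition of $\widehat{\vol}$. The only difference is the order of the two steps, and as you note the argument actually yields a strict inequality.
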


In fact, this estimate is rather crude; with better control on the relationship between $A$ and $\alpha$, one can do much better.

\begin{proof}
One simply multiplies both sides of the first inequality by $r^{1/n}/\vol(A)^{1/n}$ to deduce that
\begin{align*}
sr_{\mathbf{x}}(A)  (1+\delta) & \geq r^{1/n} \frac{A \cdot C}{\vol(A)^{1/n} \sum_{i} \mult_{x_{i}}C}
\end{align*}
and then uses the obvious inequality $(A \cdot C)/\vol(A)^{1/n} \geq \widehat{\vol}(C)^{n-1/n}$.
\end{proof}

We can then bound the Seshadri ratio of $A$ in terms of the Zariski decomposition of the curve.

\begin{prop}
Let $X$ be a smooth projective variety of dimension $n$ and let $A$ be a big and nef $\mathbb{R}$-Cartier divisor on $X$.  Fix $\delta > 0$ and fix $r$ distinct points $x_{i} \in X$.  Suppose that $C$ is a curve containing at least one of the $x_{i}$ such that the class $\alpha$ of $C$ is big and
\begin{equation*}
\varepsilon(x_{1},\ldots,x_{r},A) (1+ \delta) > \frac{A \cdot C}{\sum_{i} \mult_{x_{i}}C}.
\end{equation*}
Write $\alpha = B^{n-1} + \gamma$ for the Zariski decomposition.  Then $sr_{\mathbf{x}}(A) (1+\delta) > sr_{\mathbf{x}}(B)$.
\end{prop}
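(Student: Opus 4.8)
The plan is to reduce the statement to a single numerical identity coming from the Zariski decomposition together with the defining inequality for $\widehat{\vol}$. The key point is that, since $B$ is big and nef and $B\cdot\gamma=0$, the divisor $B$ meets any representative $C$ of $\alpha$ in exactly its volume:
\[
B\cdot C \;=\; B\cdot\alpha \;=\; B\cdot B^{n-1}+B\cdot\gamma \;=\; B^{n}\;=\;\vol(B)\;=\;\widehat{\vol}(\alpha),
\]
the last equality being Theorem \ref{thrm:existenceofzardecom}. Establishing this identity is the first and really the only substantive step; everything else is bookkeeping with the definitions of the Seshadri constant and the Seshadri ratio. Note that $\vol(B)>0$ since $B$ is big, so the divisions below are legitimate, and $\sum_{i}\mult_{x_{i}}C\ge 1>0$ since $C$ contains one of the $x_{i}$.

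Next I would bound $sr_{\mathbf{x}}(B)$ from above. Because $C$ is a reduced irreducible curve through one of the marked points, it is one of the curves over which $\varepsilon(x_{1},\ldots,x_{r},B)$ is computed as an infimum, so
\[
\varepsilon(x_{1},\ldots,x_{r},B)\;\le\;\frac{B\cdot C}{\sum_{i}\mult_{x_{i}}C}\;=\;\frac{\vol(B)}{\sum_{i}\mult_{x_{i}}C},
\]
and hence, multiplying by $r^{1/n}/\vol(B)^{1/n}$,
\[
sr_{\mathbf{x}}(B)\;\le\;\frac{r^{1/n}\,\vol(B)^{\frac{n-1}{n}}}{\sum_{i}\mult_{x_{i}}C}\;=\;\frac{r^{1/n}\,\widehat{\vol}(\alpha)^{\frac{n-1}{n}}}{\sum_{i}\mult_{x_{i}}C}.
\]

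Finally, for the matching lower bound on $sr_{\mathbf{x}}(A)(1+\delta)$ I would invoke Proposition \ref{easyseshadriestimate}: multiplying the hypothesis $\varepsilon(x_{1},\ldots,x_{r},A)(1+\delta) > (A\cdot C)/\sum_{i}\mult_{x_{i}}C$ by the positive constant $r^{1/n}/\vol(A)^{1/n}$ keeps the inequality strict, and then $A\cdot C=A\cdot\alpha\ge\vol(A)^{1/n}\widehat{\vol}(\alpha)^{\frac{n-1}{n}}$ (immediate from Definition \ref{defn:widehatvol}) gives
\[
sr_{\mathbf{x}}(A)(1+\delta)\;>\;\frac{r^{1/n}\,\widehat{\vol}(\alpha)^{\frac{n-1}{n}}}{\sum_{i}\mult_{x_{i}}C}.
\]
Comparing this with the bound on $sr_{\mathbf{x}}(B)$ yields $sr_{\mathbf{x}}(A)(1+\delta)>sr_{\mathbf{x}}(B)$, as desired. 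The main obstacle --- to the extent there is one --- is spotting that $B\cdot C$ equals $\vol(B)$ rather than some unrelated intersection number; once that identification is made via the vanishing $B\cdot\gamma=0$, the proof is entirely formal.
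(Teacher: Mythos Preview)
Your proof is correct and follows essentially the same route as the paper: both arguments invoke Proposition~\ref{easyseshadriestimate} to bound $sr_{\mathbf{x}}(A)(1+\delta)$ below by $r^{1/n}\widehat{\vol}(\alpha)^{(n-1)/n}/\sum_i \mult_{x_i}C$, and then bound $sr_{\mathbf{x}}(B)$ above by the same quantity using the definition of the Seshadri constant together with the key identity $B\cdot C=\widehat{\vol}(\alpha)$. You have simply spelled out more of the details (in particular the derivation of $B\cdot C=\vol(B)$ from $B\cdot\gamma=0$) than the paper's terse proof does.
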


\begin{proof}
By Proposition \ref{easyseshadriestimate} it suffices to show that
\begin{equation*}
r^{1/n}\frac{\widehat{\vol}(\alpha)^{n-1/n}}{\sum_{i} \mult_{x_{i}}C} \geq sr_{\mathbf{x}}(B),
\end{equation*}
But this follows from the definition of Seshadri constants along with the fact that $B \cdot C = \widehat{\vol}(C)$.
\end{proof}

These results are of particular interest in the case when the points are very general, when it is easy to deduce the bigness of the class of $C$.

Certain geometric properties of Seshadri constants become very clear from this perspective.  For example, following the notation of \cite{nagata60} we say that a curve $C$ on $X$ is abnormal for a set of $r$ points $\{x_{i}\}$ and a big and nef divisor $A$ if $C$ contains at least one $x_{i}$ and
\begin{equation*}
1 >  \frac{r^{1/n}(A \cdot C)}{\vol(A)^{1/n} \sum_{i} \mult_{x_{i}}C}.
\end{equation*}

\begin{cor}
Let $X$ be a smooth projective variety of dimension $n$ and let $A$ be a big and nef $\mathbb{R}$-Cartier divisor on $X$.  Fix $r$ very general points $x_{1},\ldots,x_{r}$.  Then no abnormal curve goes through a very general point of $X$ aside from the $x_{i}$.
\end{cor}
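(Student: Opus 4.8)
The plan is to argue by contradiction. Suppose the statement fails: there are very general points $x_{1},\ldots,x_{r}$, a very general point $y$ distinct from all of them, and a reduced irreducible abnormal curve $C$ (for $\{x_{i}\}$ and $A$) with $y \in C$. After relabeling, let $x_{1},\ldots,x_{s}$ be the $x_{i}$ lying on $C$, and set $m_{i} = \mult_{x_{i}}C \geq 1$; by the definition of abnormality we have $s \geq 1$, and $\sum_{j=1}^{r} \mult_{x_{j}}C = \sum_{i=1}^{s} m_{i}$. Write $\alpha = [C] \in N_{1}(X)$. The goal is to exploit very-generality of the points to produce enough ``room'' to move $C$ so that Proposition \ref{multiplicityestimate} applies, and then to contradict abnormality by comparing the resulting lower bound for $\widehat{\vol}(\alpha)$ with the obvious upper bound $\widehat{\vol}(\alpha)^{(n-1)/n} \leq (A\cdot C)/\vol(A)^{1/n}$.

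The key step — and the main obstacle — is a standard ``spreading out'' argument. The curve $C$ lies on some irreducible component $T$ of the space of reduced irreducible curves on $X$; the numerical class is constant on $T$, equal to $\alpha$, there are only countably many such components, for each the possible multiplicity vectors $(m_{i})$ at $s \leq r$ points form a finite set, and $s$ ranges over a finite set, so altogether only countably many data $(T, s, (m_{i}))$ can occur. For fixed such data consider the closed incidence scheme
\begin{equation*}
\mathcal{J} = \{ ([D], p_{1}, \ldots, p_{s}, q) \in T \times X^{s+1} : \mult_{p_{i}} D \geq m_{i}\ \forall i,\ q \in D \}
\end{equation*}
and its projection $\rho: \mathcal{J} \to X^{s+1}$. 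Any irreducible component of $\mathcal{J}$ whose $\rho$-image is not dense has that image contained in a proper closed subvariety of $X^{s+1}$; pulling all of these (over all data) back to $X^{r+1}$ along the appropriate coordinate projections, we obtain countably many proper closed subvarieties which a very general choice of $(x_{1},\ldots,x_{r},y)$ avoids. Since $([C], x_{1},\ldots,x_{s},y)$ lies in $\mathcal{J}$ for the data attached to $C$, the component $\mathcal{J}_{0}$ of $\mathcal{J}$ containing it must therefore dominate $X^{s+1}$. Hence for general $(p_{1},\ldots,p_{s},q) \in X^{s+1}$ there is a reduced irreducible curve $D$ with $[D]=\alpha$ through $q$ with $\mult_{p_{i}}D \geq m_{i}$ for all $i$. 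These curves cover a dense open subset of $X$ (let $q$ vary), so $\alpha \in \Mov_{1}(X)$ by \cite{BDPP13}, and the family they form satisfies the hypotheses of Proposition \ref{multiplicityestimate} with the $s$ points $p_{i}$, the point $q$, and $k_{i} = m_{i}$. (I would invoke \cite[V.2]{k96} for the routine bookkeeping — constructibility and dimension counts on Chow components, upper semicontinuity of multiplicity — which is where essentially all the technical work in this step resides.)

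Granting this, Proposition \ref{multiplicityestimate} gives
\begin{equation*}
\widehat{\vol}(\alpha)^{\frac{n-1}{n}} \geq \frac{\sum_{i=1}^{s} m_{i}}{s^{1/n}}.
\end{equation*}
On the other hand, by the definition of $\widehat{\vol}$ we always have $\widehat{\vol}(\alpha)^{\frac{n-1}{n}} \leq (A \cdot C)/\vol(A)^{1/n}$, while the abnormality inequality rearranges (multiplying through by $\vol(A)^{1/n}\big(\sum_{j}\mult_{x_{j}}C\big)/r^{1/n} > 0$) to
\begin{equation*}
\frac{A \cdot C}{\vol(A)^{1/n}} < \frac{\sum_{j=1}^{r} \mult_{x_{j}}C}{r^{1/n}} = \frac{\sum_{i=1}^{s} m_{i}}{r^{1/n}} \leq \frac{\sum_{i=1}^{s} m_{i}}{s^{1/n}},
\end{equation*}
where the last inequality uses $s \leq r$. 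Chaining the three displays yields $\widehat{\vol}(\alpha)^{\frac{n-1}{n}} < \widehat{\vol}(\alpha)^{\frac{n-1}{n}}$, a contradiction. (In particular this forces $\widehat{\vol}(\alpha) > 0$, recovering the remark that the class of $C$ is big.) Thus no abnormal curve passes through a very general point other than the $x_{i}$, as claimed.
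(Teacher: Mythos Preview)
Your proof is correct and follows essentially the same approach as the paper: both use a spreading-out argument to produce a covering family of class $\alpha$, invoke Proposition~\ref{multiplicityestimate} for the lower bound on $\widehat{\vol}(\alpha)^{(n-1)/n}$, and combine this with the trivial upper bound $(A\cdot C)/\vol(A)^{1/n}$ coming from the definition of $\widehat{\vol}$ to contradict the abnormality inequality. Your write-up is in fact more careful than the paper's two-line sketch---you make the incidence-scheme argument explicit, and you handle the case $s<r$ (where $C$ misses some of the $x_i$) by tracking the extra factor $s^{1/n}\leq r^{1/n}$, a detail the paper suppresses.
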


\begin{proof}
Since the $x_{i}$ are very general, any curve going through at least one more very general point deforms to cover the whole space, so its class is big and nef.  Then combine Proposition \ref{easyseshadriestimate} and Proposition \ref{multiplicityestimate} to deduce that if the Seshadri constant of the $\{x_{i}\}$ is computed by a curve through an additional very general point then $sr_{\mathbf{x}}(A) = 1$.
\end{proof}

\subsection{Rationally connected varieties}
\label{rcexample}

Given a rationally connected variety $X$ of dimension $n$, it is interesting to ask for the possible volumes of curve classes representing rational curves.  In particular, one would like to know if one can find classes whose volumes satisfy a uniform upper bound depending only on the dimension.  There are four natural options:
\begin{enumerate}
\item Consider all classes of rational curves.
\item Consider all classes of chains of rational curves which connect two general points.
\item Consider all classes of irreducible rational curves which connect two general points.
\item Consider all classes of very free rational curves.
\end{enumerate}
Note that each criterion is more special than the previous ones.  We call a class of the second kind an RCC class and a class of the fourth kind a VF class.  Every one of the classes (2), (3), (4) has positive volume; indeed, \cite{8authors} shows that if two general points of $X$ can be connected via a chain of curves of class $\alpha$, then $\alpha$ is a big class.

\

On a Fano variety of Picard rank $1$, the minimal volume of an RCC class is determined by the degree and the minimal degree of an RCC class against the ample generator (or equivalently, the degree, the index, and the length of an RCC class).  The minimum volume is thus related to these well studied invariants.

In higher dimensions, the work of \cite{KMM92} and \cite{campana92} shows that there are constants $C(n)$, $C'(n)$ such that any $n$-dimensional smooth Fano variety carries an RCC class satisfying $-K_{X} \cdot \alpha \leq C(n)$, and a VF class satisfying $-K_{X} \cdot \beta \leq C'(n)$.  We then also obtain explicit bounds on the minimal volume of an RCC or VF class on $X$.  It is interesting to ask what happens for arbitrary rationally connected varieties.

\begin{exmple}
We briefly discuss bounds on the volumes of rational curve classes on smooth surfaces.  Consider first the Hirzebruch surfaces $\mathbb{F}_{e}$.  It is clear that on a Hirzebruch surface a curve class is RCC if and only if it is big, and one easily sees that the minimum volume for an RCC class is $\frac{1}{e}$.  Thus there is no non-trivial universal lower bound for the minimum volume of an RCC class.

In terms of upper bounds, note that if $\pi: Y \to X$ is a birational map and $\alpha$ is an RCC class, then $\pi_{*}\alpha$ is an RCC class as well.  Conversely, given any RCC class $\beta$ on $X$, there is some preimage $\beta'$ on $Y$ which is also an RCC class.  Thus by Proposition \ref{birvolstatement}, we see that any rational surface carries an RCC class of volume no greater than that of an RCC class on a minimal surface.  This shows that any smooth rational surface has an RCC class of volume at most $1$.

On a surface any VF class is necessarily big and nef, so the universal lower bound on the volume is $1$.  In the other direction, consider again the Hirzebruch surface $\mathbb{F}_{e}$.  Any VF class will have the form $aC_{0} + bF$ where $C_{0}$ is the section of negative self-intersection and $F$ is the class of a fiber.  Note that the self intersection is $2ab - a^{2}e$.  For a VF class we clearly must have $a \geq 1$, so that $b \geq ea$ to ensure nefness.  Thus the smallest possible volume of a VF class is $e$, and this is achieved by the class $C_{0} + eF$.  Note that there is no uniform upper bound on the minimum volume of a VF class.
\end{exmple}

As indicated in the previous example, it is most interesting to look for upper bounds on the minimum volume of an RCC class.  Indeed, by taking products with projective spaces, one sees that in any dimension the only uniform lower bound for volumes of RCC classes is $0$.  Furthermore, there is no uniform upper bound for the minimum volume of a VF class.  The crucial distinction is that VF classes are nef, while RCC classes need not be, so that a uniform bound on the volume of a VF class can only be expected for bounded families of varieties.

The following question gives a ``birational'' version of the well-known results of \cite{KMM92}.

\begin{ques}
Let $X$ be a smooth rationally connected variety of dimension $n$.  Is there a bound $d(n)$, depending only on $n$, such that $X$ admits an RCC class of volume at most $d(n)$?
\end{ques}

It is also interesting to ask for optimal bounds on volumes.  The first situation to consider are the ``extremes'' in the examples above.  Note that the lower bound of the volume of a VF class is $1$ by Proposition \ref{multiplicityestimate}, so it is interesting to ask when the minimum is achieved.

\begin{ques}
For which varieties $X$ is the smallest volume of an RCC class equal to $1$?

For which varieties $X$ is the smallest volume of a VF class equal to $1$?
\end{ques}

\section{Appendix}
\subsection{Reverse Khovanskii-Teissier inequalities}
An important step in the analysis of the Morse inequality is the ``reverse" Khovanskii-Teissier inequality for big and nef divisors $A$, $B$, and a movable curve class $\beta$:
\begin{equation*}
n(A \cdot B^{n-1})(B \cdot \beta) \geq B^{n} (A \cdot \beta).
\end{equation*}
We prove a more general statement on ``reverse" Khovanskii-Teissier inequalities in the analytic setting.
Some related work has appeared independently in the recent preprint \cite{popovici15}.

\begin{thrm}
\label{thrm appendix reserve KT}
Let $X$ be a compact K\"ahler manifold of dimension $n$. Let $\omega, \beta, \gamma\in \overline{\mathcal{K}}$ be three nef classes on $X$. Then we have
$$
(\beta^k \cdot \alpha^{n-k})\cdot (\alpha^k\cdot \gamma^{n-k})\geq \frac{k!(n-k)!}{n!}\alpha^n\cdot (\beta^k \cdot\gamma^{n-k}).
$$
\end{thrm}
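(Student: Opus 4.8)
The plan is to adapt the Monge--Amp\`ere argument behind Theorem \ref{thm KT inequality}, solving the equation against a well-chosen \emph{mixed} density; I write $\alpha$ for the first of the three K\"ahler classes. First I would reduce to the case that $\alpha,\beta,\gamma$ are K\"ahler: each term is polynomial, hence continuous, in the classes, every nef class is a limit of K\"ahler ones, and if $\beta^{k}\cdot\gamma^{n-k}=0$ the inequality is trivial (a product of nef classes is a nef, hence pseudo-effective, curve class, so the left-hand side is $\ge 0$). So assume $\alpha,\beta,\gamma$ are K\"ahler, fix K\"ahler forms $b\in\beta$, $c\in\gamma$, and note that $b^{k}\wedge c^{n-k}$ is a smooth strictly positive $(n,n)$-form of total mass $\beta^{k}\cdot\gamma^{n-k}>0$. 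By Yau's theorem there is a K\"ahler metric $\omega\in\alpha$ with
\[
\omega^{n}=\frac{\alpha^{n}}{\beta^{k}\cdot\gamma^{n-k}}\,b^{k}\wedge c^{n-k}.
\]

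Next I would carry out the pointwise linear algebra. Set, intrinsically, $g_{1}:=\binom{n}{k}\tfrac{b^{k}\wedge\omega^{n-k}}{\omega^{n}}$, $g_{2}:=\binom{n}{k}\tfrac{\omega^{k}\wedge c^{n-k}}{\omega^{n}}$ and $F:=\binom{n}{k}\tfrac{b^{k}\wedge c^{n-k}}{\omega^{n}}$, all smooth functions on $X$. At a point, pick a coframe unitary for $\omega$ that diagonalizes $b$, so $\omega=i\sum\theta^{j}\wedge\bar\theta^{j}$, $b=i\sum\lambda_{j}\theta^{j}\wedge\bar\theta^{j}$ with $\lambda_{j}\ge 0$, and $c=i\sum c_{j\bar l}\theta^{j}\wedge\bar\theta^{l}$ with $(c_{j\bar l})$ positive Hermitian; the standard expansion of wedge powers then gives
\[
g_{1}=e_{k}(\lambda),\qquad g_{2}=\sum_{|V|=n-k}\det c_{V},\qquad F=\sum_{|S|=k}\Big(\prod_{j\in S}\lambda_{j}\Big)\det c_{S^{c}},
\]
where $e_{k}$ is the $k$-th elementary symmetric function, $c_{V}$ is the principal submatrix of $(c_{j\bar l})$ indexed by $V$, and $S^{c}=\{1,\dots,n\}\setminus S$. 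Two observations close the proof: by the Monge--Amp\`ere equation $F\equiv\binom{n}{k}\tfrac{\beta^{k}\cdot\gamma^{n-k}}{\alpha^{n}}$ is \emph{constant} on $X$; and pointwise $F\le g_{1}g_{2}$, since $\prod_{j\in S}\lambda_{j}\le e_{k}(\lambda)$ for every $S$, each $\det c_{S^{c}}\ge 0$, and $\sum_{|S|=k}\det c_{S^{c}}=\sum_{|V|=n-k}\det c_{V}$.

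Finally I would integrate against the probability measure $d\mu:=\omega^{n}/\alpha^{n}$. Since $\beta^{k}\cdot\alpha^{n-k}=\binom{n}{k}^{-1}\alpha^{n}\int g_{1}\,d\mu$, $\alpha^{k}\cdot\gamma^{n-k}=\binom{n}{k}^{-1}\alpha^{n}\int g_{2}\,d\mu$ and $\beta^{k}\cdot\gamma^{n-k}=\binom{n}{k}^{-1}\alpha^{n}F$, the claimed inequality reduces to $\big(\int g_{1}\,d\mu\big)\big(\int g_{2}\,d\mu\big)\ge F$, which follows from Cauchy--Schwarz combined with the pointwise bound $g_{1}g_{2}\ge F$ and the constancy of $F$:
\[
\Big(\int g_{1}\,d\mu\Big)\Big(\int g_{2}\,d\mu\Big)\ \ge\ \Big(\int\sqrt{g_{1}g_{2}}\,d\mu\Big)^{2}\ \ge\ \Big(\int\sqrt{F}\,d\mu\Big)^{2}\ =\ F .
\]

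The main obstacle is spotting the right normalization: prescribing $\omega^{n}$ proportional to the \emph{mixed} density $b^{k}\wedge c^{n-k}$, rather than an arbitrary volume form, is exactly what forces $F$ to be constant, and only then does this a priori ``wrong-direction'' use of Cauchy--Schwarz close the estimate. The remaining points are routine but need care: the reduction to K\"ahler classes (so that $b^{k}\wedge c^{n-k}$ is strictly positive and Yau's theorem applies) and the bookkeeping of the multilinear-algebra identities. One could alternatively argue directly for big and nef classes using degenerate Monge--Amp\`ere solutions on the ample locus, as in the proof of Theorem \ref{thm KT inequality}, but approximating from the K\"ahler cone is cleaner.
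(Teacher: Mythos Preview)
Your proof is correct and is essentially the same as the paper's: you solve the same Monge--Amp\`ere equation $\omega^{n}\propto b^{k}\wedge c^{n-k}$ via Yau's theorem, apply the same Cauchy--Schwarz step, and verify the same pointwise linear-algebra inequality $g_{1}g_{2}\ge F$ (which, unwinding your normalizations, is exactly the paper's bound $\frac{\beta^{k}\wedge\alpha_{\psi}^{n-k}}{\alpha_{\psi}^{n}}\cdot\frac{\alpha_{\psi}^{k}\wedge\gamma^{n-k}}{\beta^{k}\wedge\gamma^{n-k}}\ge\frac{k!(n-k)!}{n!}$). Your explicit emphasis on the constancy of $F$ and the separate treatment of the degenerate case $\beta^{k}\cdot\gamma^{n-k}=0$ are nice touches, but the argument is the paper's.
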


\begin{proof}
The proof depends on solving Monge-Amp\`{e}re equations and the method of \cite{Pop14}. Without loss of generality, we can assume $\gamma$ is normalised such that $\beta^k\cdot \gamma^{n-k}=1$. Then we need to show
\begin{align}
\label{eq need to prove}
(\beta^k \cdot \alpha^{n-k})\cdot (\alpha^k\cdot \gamma^{n-k})\geq \frac{k!(n-k)!}{n!}\alpha^n.
\end{align}
We first assume $\alpha,\beta,\gamma$ are all K\"ahler classes. We will use the same symbols to denote the K\"ahler metrics in corresponding K\"ahler classes. By the Calabi-Yau theorem \cite{Yau78}, we can solve the following Monge-Amp\`{e}re equation:
\begin{align}
\label{eq MA}
(\alpha+i\partial\bar \partial \psi)^n= \left(\int \alpha^n \right) \beta^k\wedge \gamma^{n-k}.
\end{align}
Denote by $\alpha_\psi$ the K\"ahler metric $\alpha+i\partial\bar \partial \psi$. Then we have
\begin{align*}
%\label{eq cauchy-schwarz}
(\beta^k \cdot \alpha^{n-k})\cdot (\alpha^k\cdot \gamma^{n-k})&=\int \beta^k \wedge \alpha_\psi^{n-k}\cdot \int \alpha_\psi^k \wedge \gamma^{n-k}\\
&= \int \frac{\beta^k \wedge \alpha_\psi^{n-k}}{\alpha_\psi ^n}\alpha_\psi ^n\cdot \int \frac{\alpha_\psi^k \wedge \gamma^{n-k}}{\alpha_\psi ^n}\alpha_\psi ^n\\
&\geq \left( \int \left(\frac{\beta^k \wedge \alpha_\psi^{n-k}}{\alpha_\psi ^n} \cdot \frac{\alpha_\psi^k \wedge \gamma^{n-k}}{\alpha_\psi ^n} \right)^{1/2}\alpha_\psi ^n\right)^2.
\end{align*}

The last line follows because of the Cauchy-Schwarz inequality. We claim that the following pointwise inequality holds:
\begin{align*}
%\label{eq pointwise}
\frac{\beta^k \wedge \alpha_\psi^{n-k}}{\alpha_\psi ^n} \cdot \frac{\alpha_\psi^k \wedge \gamma^{n-k}}{\beta^k \wedge \gamma^{n-k}} \geq \frac{k!(n-k)!}{n!}.
\end{align*}
Then by (\ref{eq MA}) it is clear the above pointwise inequality implies the desired inequality (\ref{eq need to prove}). For any fixed point $p\in X$, we can choose some coordinates such that at the point $p$: $$\alpha_\psi= i\sum_{j=1}^n dz^j \wedge d\bar z ^j,
\quad \beta = i\sum_{j=1}^n \mu_j dz^j \wedge d\bar z ^j,
$$
and
 $$\gamma^{n-k}=i^{n-k} \sum_{|I|=|J|=n-k} \Gamma_{IJ} dz_I \wedge d\bar z_{J}.$$
Denote by $\mu_J$ the product $\mu_{j_1}...\mu_{j_k}$ with index $J=(j_1<...<j_k)$ and denote by $J^c$ the complement index of $J$. Then it is easy to see at the point $p$ we have
$$
\frac{\beta^k \wedge \alpha_\psi^{n-k}}{\alpha_\psi ^n} \cdot \frac{\alpha_\psi^k \wedge \gamma^{n-k}}{\beta^k \wedge \gamma^{n-k}}= \frac{k!(n-k)!}{n!} \frac{(\sum_{J}\mu_J)(\sum_K \Gamma_{KK})}{\sum_{J}\mu_J \Gamma_{J^c J^c}}\geq \frac{k!(n-k)!}{n!}.
$$
This finishes the proof of the case when $\alpha,\beta,\gamma$ are all K\"ahler classes. If they are just nef classes, by taking limits, then we get the desired inequality.
\end{proof}

\begin{rmk}
\label{rmk appendix movable}
By \cite[Section 2.1.1]{xiao15}, for $k=1$ we can always replace $\gamma^{n-1}$ in Theorem \ref{thrm appendix reserve KT} by an arbitrary movable class.
\end{rmk}

\begin{rmk}
It would be interesting to find an algebraic approach to Theorem \ref{thrm appendix reserve KT}, thus generalizing it to projective varieties defined over arbitrary fields.
\end{rmk}

\subsection{Towards the transcendental holomorphic Morse inequality}

Recall that the (weak) transcendental holomorphic Morse inequality over compact K\"ahler manifolds conjectured by Demailly is stated as follows:
\begin{itemize}
\item Let $X$ be a compact K\"ahler manifold of dimension $n$, and let $\alpha, \beta\in \overline{\mathcal{K}}$ be two nef classes. Then we have $\vol(\alpha-\beta)\geq \alpha^n -n \alpha^{n-1}\cdot \beta$. In particular, if $\alpha^n -n \alpha^{n-1}\cdot \beta>0$ then there exists a K\"ahler current in the class $\alpha-\beta$.
\end{itemize}
Indeed, the last statement has been proved in the recent work \cite{Xia13, Pop14}. The missing part is how to bound the volume $\vol(\alpha-\beta)$ by $\alpha^n -n \alpha^{n-1}\cdot \beta$.

By \cite[Theorem 2.1 and Remark 2.3]{xiao15} the volume for transcendental pseudo-effective $(1,1)$-classes is conjectured to be characterized as following:
\begin{align}
\label{eq vol conj}
\vol(\alpha)=\inf_{\gamma\in \mathcal{M}, \mathfrak{M}(\gamma)=1}(\alpha\cdot \gamma)^n
\end{align}
For the definition of $\mathfrak{M}$ in the K\"ahler setting, see \cite[Definition 2.2]{xiao15}. If we denote the right hand side of (\ref{eq vol conj}) by $\overline{\vol}(\alpha)$, then we can prove the following:

\begin{thrm}
\label{thm tran morse}
Let $X$ be a compact K\"ahler manifold of dimension $n$, and let $\alpha, \beta\in \overline{\mathcal{K}}$ be two nef classes. Then we have $$\overline{\vol}(\alpha-\beta)^{1/n}\vol(\alpha)^{n-1/n}\geq \alpha^n -n \alpha^{n-1}\cdot \beta.$$
\end{thrm}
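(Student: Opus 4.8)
The plan is to combine the ``reverse'' Khovanskii--Teissier inequality of Theorem \ref{thrm appendix reserve KT} with the expression of $\overline{\vol}$ as an infimum over movable classes, in exactly the same way that Theorem \ref{general morse} is deduced from Proposition \ref{prop abstract reverse KT inequality} in the abstract formalism (with $s=n$, $f=\vol$ and $D(\alpha)=\alpha^{n-1}$). First I would dispose of the degenerate cases. If $\alpha^n = 0$ then $\vol(\alpha)=0$ for the nef class $\alpha$, so the left-hand side vanishes, while $\alpha^n - n\alpha^{n-1}\cdot\beta = -n\alpha^{n-1}\cdot\beta \le 0$; and if $\alpha^n - n\alpha^{n-1}\cdot\beta \le 0$ the right-hand side is $\le 0$ while the left-hand side is non-negative. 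So one may assume $\alpha^n>0$ and $\alpha^n - n\alpha^{n-1}\cdot\beta > 0$, and recall that $\vol(\alpha)=\alpha^n$ since $\alpha$ is nef.

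For the core estimate, fix any movable $(n-1,n-1)$-class $\mu$. Applying Theorem \ref{thrm appendix reserve KT} with $k=1$, together with Remark \ref{rmk appendix movable} (which permits replacing $\gamma^{n-1}$ by an arbitrary movable class), gives $n(\alpha^{n-1}\cdot\beta)(\alpha\cdot\mu)\ge \alpha^n(\beta\cdot\mu)$, that is
$$\beta\cdot\mu \ \le\ \frac{n(\alpha^{n-1}\cdot\beta)}{\alpha^n}\,(\alpha\cdot\mu).$$
Consequently
$$(\alpha-\beta)\cdot\mu\ \ge\ (\alpha\cdot\mu)\left(1-\frac{n\,\alpha^{n-1}\cdot\beta}{\alpha^n}\right)\ =\ \frac{\alpha\cdot\mu}{\alpha^n}\bigl(\alpha^n - n\,\alpha^{n-1}\cdot\beta\bigr).$$
Next I would use the ``easy'' Khovanskii--Teissier direction built into the definition of $\mathfrak{M}$ (cf.\ \cite{xiao15}): for the nef class $\alpha$ and any movable $\mu$ one has $(\alpha\cdot\mu)^n\ge \vol(\alpha)\,\mathfrak{M}(\mu)=\alpha^n\,\mathfrak{M}(\mu)$, so that $\alpha\cdot\mu\ge (\alpha^n)^{1/n}$ whenever $\mathfrak{M}(\mu)=1$. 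Since $\alpha^n - n\alpha^{n-1}\cdot\beta>0$, combining the two previous displays yields, for every movable $\mu$ with $\mathfrak{M}(\mu)=1$,
$$(\alpha-\beta)\cdot\mu\ \ge\ \frac{\alpha^n - n\,\alpha^{n-1}\cdot\beta}{(\alpha^n)^{n-1/n}}\ >\ 0.$$

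Finally, I would take the infimum over all movable $\mu$ with $\mathfrak{M}(\mu)=1$. Because $t\mapsto t^n$ is increasing on $t\ge 0$ and the lower bound above is a fixed positive number, $\overline{\vol}(\alpha-\beta)=\inf_{\mu}\bigl((\alpha-\beta)\cdot\mu\bigr)^n$ is bounded below by $\bigl((\alpha^n - n\alpha^{n-1}\cdot\beta)/(\alpha^n)^{n-1/n}\bigr)^n$; taking $n$-th roots and multiplying by $\vol(\alpha)^{n-1/n}=(\alpha^n)^{n-1/n}$ gives precisely $\overline{\vol}(\alpha-\beta)^{1/n}\vol(\alpha)^{n-1/n}\ge \alpha^n - n\alpha^{n-1}\cdot\beta$. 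The only non-formal ingredient — and hence the main point — is the reverse Khovanskii--Teissier inequality itself, supplied by Theorem \ref{thrm appendix reserve KT}; everything else is bookkeeping with the definitions of $\vol$ on nef classes and of the normalization functional $\mathfrak{M}$.
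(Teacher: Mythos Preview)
Your proof is correct and follows essentially the same route as the paper's: reduce to the case $\alpha^n - n\alpha^{n-1}\cdot\beta>0$, apply the reverse Khovanskii--Teissier inequality (Theorem \ref{thrm appendix reserve KT} with $k=1$ together with Remark \ref{rmk appendix movable}) to bound $\beta\cdot\mu$ by a multiple of $\alpha\cdot\mu$, then use the normalization $\mathfrak{M}(\mu)=1$ to bound $\alpha\cdot\mu$ below by $\vol(\alpha)^{1/n}$, and take the infimum. Your treatment of the degenerate cases and the monotonicity of $t\mapsto t^n$ is slightly more explicit than the paper's, but the argument is otherwise identical.
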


\begin{proof}
We only need to consider the case when $\alpha^n -n \alpha^{n-1}\cdot \beta>0$. And \cite{Pop14} implies the class $\alpha-\beta$ is big. By the definition of $\overline{\vol}$, we have
$$
\overline{\vol}(\alpha-\beta)^{1/n}=\inf_{\gamma\in \mathcal{M}, \mathfrak{M}(\gamma)=1}(\alpha-\beta)\cdot \gamma.
$$
So we need to estimate $(\alpha-\beta)\cdot \gamma$ with $\mathfrak{M}(\gamma)=1$:
\begin{align*}
(\alpha-\beta)\cdot \gamma&=\alpha\cdot \gamma-\beta\cdot \gamma\\
&\geq \alpha\cdot \gamma-\frac{n(\alpha^{n-1}\cdot \beta)\cdot(\alpha\cdot\gamma)}{\alpha^n}\\
&=\frac{\alpha\cdot\gamma}{\alpha^n}(\alpha^n-n\alpha^{n-1}\cdot \beta)\\
&\geq \vol(\alpha)^{1-n/n}(\alpha^n-n\alpha^{n-1}\cdot \beta),
\end{align*}
where the second line follows from Theorem \ref{thrm appendix reserve KT} and Remark \ref{rmk appendix movable}, and the last line follows the definition of $\mathfrak{M}$ and $\mathfrak{M}(\gamma)=1$.

By the arbitrariness of $\gamma$ we get
$$\overline{\vol}(\alpha-\beta)^{1/n}\vol(\alpha)^{n-1/n}\geq \alpha^n -n \alpha^{n-1}\cdot \beta. $$
\end{proof}

\begin{rmk}
Without using the conjectured equality (\ref{eq vol conj}), it is observed independently by \cite{tosatti2015current} and \cite{popovici15} that one can replace $\overline{\vol}$ by the volume function $\vol$ in Theorem \ref{thm tran morse}.
\end{rmk}

\subsection{Non-convexity of the complete intersection cone}

We give an example explicitly verifying the non-convexity of $\CI_{1}(X)$.  %Undoubtedly there are simpler examples, but this is the first one we wrote down.

\begin{exmple}
\cite{fs09} gives an example of a smooth toric threefold $X$ such that every nef divisor is big.  We show that for this toric variety $\CI_{1}(X)$ is not convex.

 Let $X$ be the toric variety defined by a fan in $N = \mathbb{Z}^{3}$ on the rays
\begin{align*}
v_{1} & = (1,0,0)  \qquad \qquad v_{2} = (0,1,0)  & v_{3} & = (0,0,1) \qquad \qquad v_{4} = (-1, -1,-1) \\
v_{5} & = (1,-1,-2) \qquad \, \, \, v_{6} = (1,0,-1) & v_{7} & = (0,-1,-2) \qquad \, \, \, v_{8} = (0,0,-1)
\end{align*}
with maximal cones
\begin{align*}
\langle v_{1}, v_{2}, v_{3} \rangle, \, \langle v_{1}, v_{2}, v_{6} \rangle, \, \langle v_{1}, v_{3}, v_{4} \rangle, \, \langle v_{1}, v_{4}, v_{5} \rangle, \\
\langle v_{1}, v_{5}, v_{6} \rangle, \, \langle v_{2}, v_{3}, v_{4} \rangle, \, \langle v_{2}, v_{4}, v_{8} \rangle, \, \langle v_{2}, v_{5}, v_{6} \rangle, \\
\langle v_{2}, v_{5}, v_{8} \rangle, \, \langle v_{4}, v_{5}, v_{7} \rangle, \, \langle v_{4}, v_{7}, v_{8} \rangle, \, \langle v_{5}, v_{7}, v_{8} \rangle.
\end{align*}

Since $X$ is the blow-up of $\mathbb{P}^{3}$ along 4 rays, it has Picard rank $5$.  Let $D_{i}$ be the divisor corresponding to the ray $v_{i}$ and $C_{ij}$ denote the curve corresponding to the face generated by $v_{i}$ and $v_{j}$.  Standard toric computations show that the pseudo-effective cone of divisors is simplicial and is generated by $D_{1},D_{5},D_{6},D_{7},D_{8}$.  The pseudo-effective cone of curves is also simplicial and is generated by $C_{14}, C_{16}, C_{25}, C_{47}, C_{48}$.  From now on we will write divisor or curve classes as vectors in these (ordered) bases.

\begin{comment}
\begin{shaded}
To Jian, May 1: You can verify these using the complete table of intersections:
\begin{equation*}
\begin{array}{c|c|c|c|c|c|c|c|c}
& D_{1} & D_{2} & D_{3} & D_{4} & D_{5} & D_{6} & D_{7} & D_{8}  \\ \hline
C_{12} & -1 & 0 & 1 & 0 & 0 & 1 & 0 & 0 \\ \hline
C_{13} & 1 & 1 & 1 & 1 & 0 & 0 & 0 & 0 \\ \hline
C_{14} & -2 & 0 & 1 & -1 & 1 & 0 & 0 & 0 \\ \hline
C_{15} & 1 & 0 & 0 & 1 & -1 & 1 & 0 & 0 \\ \hline
C_{16} & 1 & 1 & 0 & 0 & 1 & -2 & 0 & 0 \\ \hline
C_{23} & 1 & 1 & 1 & 1 & 0 & 0 & 0 & 0 \\ \hline
C_{24} & 0 & 0 & 1 & 0 & 0 & 0 & 0 & 1 \\ \hline
C_{25} & 0 & -1 & 0 & 0 & -1 & 1 & 0 & 1 \\ \hline
C_{26} & 1 & 1 & 0 & 0 & 1 & -2 & 0 & 0 \\ \hline
C_{28} & 0 & 2 & 0 & 1 & 1 & 0 & 0 & -3 \\ \hline
C_{34} & 1 & 1 & 1 & 1 & 0 & 0 & 0 & 0 \\ \hline
C_{45} & 1 & 0 & 0 & 0 & -1 & 0 & 1 & 0 \\ \hline
C_{47} & 0 & 0 & 0 & 1 & 1 & 0 & -2 & 1 \\ \hline
C_{48} & 0 & 1 & 0 & 0 & 0 & 0 & 1 & -2 \\ \hline
C_{56} & 1 & 1 & 0 & 0 & 1 & -2 & 0 & 0 \\ \hline
C_{57} & 0 & 0 & 0 & 1 & 1 & 0 & -2 & 1 \\ \hline
C_{58} & 0 & 1 & 0 & 0 & 0 & 0 & 1 & -2 \\ \hline
C_{78} & 0 & 0 & 0 & 1 & 1 & 0 & -2 & 1
\end{array}
\end{equation*}
Note the equalities
\begin{align*}
C_{13}=C_{23}=C_{34} \\
C_{16}=C_{26}=C_{56} \\
C_{48}=C_{58} \\
C_{47}=C_{57}=C_{78}
\end{align*}
\end{shaded}
\end{comment}

The intersection matrix is:
\begin{equation*}
\begin{array}{c|c|c|c|c|c}
& D_{1} & D_{5} & D_{6} & D_{7} & D_{8} \\ \hline
C_{14} & -2 & 1 & 0 & 0 & 0 \\ \hline
C_{16} & 1 & 1 & -2 & 0 & 0 \\ \hline
C_{25} & 0 & -1 & 1 & 0 & 1 \\ \hline
C_{47} & 0 & 1 & 0 & -2 & 1 \\ \hline
C_{48} & 0 & 0 & 0 & 1 & -2
\end{array}
\end{equation*}
The nef cone of divisors is dual to the pseudo-effective cone of curves.  Thus it is simplicial and has generators $A_{1},\ldots,A_{5}$ determined by the columns of the inverse of the matrix above:
\begin{align*}
A_{1} & = (1,3,2,2,1) \\
A_{2} & = (3,6,4,4,2) \\
A_{3} & = (6,12,9,8,4) \\
A_{4} & = (2,4,3,2,1) \\
A_{5} & = (4,8,6,5,2)
\end{align*}

%To determine the complete intersection cone, we need to determine how to intersect divisors $D_{i}$:
%\begin{equation*}
%\begin{array}{c|c|c|c|c|c}
%& D_{1} & D_{5} & D_{6} & D_{7} & D_{8} \\ \hline
%D_{1} & (1,-2,-3,-1,-2) & (0,1,3,1,2) & (0,1,0,0,0) & 0 & 0 \\ \hline
%D_{5} & (0,1,3,1,2) & (0,-1,-1,-1,-1) & (0,1,0,0,0) & (0,0,0,1,0) & (0,0,0,0,1) \\ \hline
%D_{6} & (0,1,0,0,0) & (0,1,0,0,0) & (0,-2,0,0,0) & 0 & 0 \\ \hline
%D_{7} & 0 & (0,0,0,1,0) & 0 & (0,0,0,-2,0) & (0,0,0,1,0) \\ \hline
%D_{8} & 0 & (0,0,0,0,1) & 0 & (0,0,0,1,0) & (0,0,0,-2,-3)
%\end{array}
%\end{equation*}

A computation shows that for real numbers $x_{1},\ldots,x_{5}$,
\begin{align*}
\left( \sum_{i=1}^{5} x_{i}A_{i} \right)^{2} = & (1,3,6,2,4) (x_{1}^{2}+6x_{1}x_{2}+12x_{1}x_{3}+4x_{1}x_{4} + 8x_{1}x_{5}) + \\
& (9,22,45,15,30) x_{2}^{2} + \\
& (12,30,60,20,40) (x_{2}x_{4}+2x_{2}x_{5}+3x_{2}x_{3}+3x_{3}^{2}+2x_{3}x_{4}+4x_{3}x_{5}) + \\
& (4,10,20,6,13) x_{4}^{2} + \\
& (16,40,80,26,52) (x_{4}x_{5} + x_{5}^{2}).
\end{align*}

Note that the five vectors above form a basis of $N_{1}(X)$ and each one is proportional to one of the $A_{i}^{2}$.

It is clear from this explicit description that the cone is not convex.  For example, the vector
\begin{equation*}
v = (9,22,45,15,30) + (4,10,20,6,13)
\end{equation*}
can not be approximated by curves of the form $H^{2}$ for an ample divisor $H$.  Indeed, if we have a sequence of ample divisors $H_{j} = \sum x_{i,j}A_{i}$ with $x_{i,j} > 0$ such that $H_{j}^{2}$ converges to $v$, then
\begin{equation*}
\lim_{j \to \infty} x_{2,j} = 1 \qquad \qquad \textrm{and} \qquad \qquad \lim_{j \to \infty} x_{4,j} = 1.
\end{equation*}
But then the limit of the coefficient of $(12,30,60,20,40)$ is at least $1$, a contradiction.  Exactly the same argument shows that the closure of the set of all products of two (possibly different) ample divisors is not convex.
\end{exmple}

%\nocite{*}
\bibliography{cnvxzar}
\bibliographystyle{amsalpha}

\noindent
\textsc{Brian Lehmann}\\
\textsc{Department of Mathematics, Boston College,
Chestnut Hill, MA 02467, USA}\\
\verb"Email: lehmannb@bc.edu"\\

\noindent
\textsc{Jian Xiao}\\
\textsc{Institute of Mathematics, Fudan University, 200433 Shanghai, China}\\

\noindent
\textsc{Current address:}\\
\textsc{Institut Fourier, Universit\'{e} Joseph Fourier, 38402 Saint-Martin d'H\`{e}res, France}\\
\verb"Email: jian.xiao@ujf-grenoble.fr"\\

\end{document}